\newcommand*{\B}[1]{\ifmmode\bm{#1}\else\textbf{#1}\fi}
\newtheorem{theorem}{Theorem}[section]
\newtheorem{lemma}[theorem]{Lemma}
\newtheorem{corollary}[theorem]{Corollary}
\newcounter{assumptionA}
\theoremstyle{remark}
\newtheorem{definition}[theorem]{Definition}
\newtheorem{assumption}{Assumption}[assumptionA]
\newcommand{\dd}{{\rm d}}
\newcommand{\argmin}{\mathop{\rm argmin~}}
\newcommand{\argmax}{\mathop{\rm argmax~}}
\newcommand{\matnorm}[1]{{\left\vert\kern-0.25ex\left\vert\kern-0.25ex\left\vert #1 
    \right\vert\kern-0.25ex\right\vert\kern-0.25ex\right\vert}_{\rm op}}
\newcommand{\ri}{(\textrm{i})}
\newcommand{\rii}{(\textrm{ii})}
\newcommand{\riii}{(\textrm{iii})}
\newcommand{\wht}{\widehat}
\newcommand{\wt}{\widetilde}
\newcommand{\mx}{\mbox}
\newcommand{\KL}{D_{\rm KL}}
\newcommand{\id}{\textrm{Id}}
\def\ms{\mathscr}
\def\mb{\mathbb}
\def\m{\mathcal}
\newcommand{\PX}{\ms P_2^r(\mb R^d)}
\title{\vspace{-2em} Mean-Field Variational Inference via Wasserstein\\
Gradient Flow}
\author{Rentian Yao}
\author{Yun Yang}
\affil{Department of Statistics, University of Illinois at Urbana-Champaign \authorcr Email: \{rentian2, yy84\}@illinois.edu}
\date{\vspace{-2em}}
\begin{document}
\maketitle

\begin{abstract}
Variational inference, such as the mean-field (MF) approximation, requires certain conjugacy structures for efficient computation. These can impose unnecessary restrictions on the viable prior distribution family and further constraints on the variational approximation family. In this work, we introduce a general computational framework to implement MF variational inference for Bayesian models, with or without latent variables, using the Wasserstein gradient flow (WGF), a modern mathematical technique for realizing a gradient flow over the space of probability measures. Theoretically, we analyze the algorithmic convergence of the proposed approaches, providing an explicit expression for the contraction factor. We also strengthen existing results on MF variational posterior concentration from a polynomial to an exponential contraction, by utilizing the fixed point equation of the time-discretized WGF. Computationally, we propose a new constraint-free function approximation method using neural networks to numerically realize our algorithm. This method is shown to be more precise and efficient than traditional particle approximation methods based on Langevin dynamics.
\end{abstract}

\vspace{-1em}
\tableofcontents

\section{Introduction}
One of the core problems of modern Bayesian inference is to compute the posterior distribution, a joint probability measure over unknown quantities, such as model parameters and unobserved latent variables, obtained by combining data information with prior knowledge in a principled manner.
Modern statistics often rely on complex models for which the posterior distribution is analytically intractable 
and requires approximate computation.
As a common alternative strategy to conventional Markov chain Monte Carlo (MCMC) sampling approach for approximating the posterior, variational inference (VI,~\cite{bishop2006pattern}), or variational Bayes~\cite{fox2012tutorial}, finds the closest member in a user specified class of analytically tractable distributions, referred to as the variational (distribution)
family, to approximate the target posterior. Although MCMC is asymptotically exact, VI is usually orders of magnitude faster \cite{blei2017variational, salimans2015markov} since it turns the sampling or integration into an optimization problem.  VI has successfully demonstrated its power in a wide variety of applications, including clustering~\cite{blei2006variational, corduneanuvariational}, semi-supervised learning~\cite{kingma2014semi}, neural-network training~\cite{anderson1987mean, opper1997mean}, and probabilistic modeling~\cite{jordan1999introduction, blei2003latent}. Among various approximating schemes, the mean-field (MF) approximation, which originates from statistical mechanics and uses the approximating family consisting of all fully factorized density functions over (blocks of) the unknown quantities, is the most widely used and representative instance of VI that is conceptually simple yet practically powerful.

On the downside, VI still requires certain conditional conjugacy structure to facilitate efficient computation (c.f.~Section~\ref{sec:MFVI}), in the same spirit as the requirement of a closed-form E-step in the expectation-maximization (EM,~\cite{dempster1977maximum}) algorithm, a famous iterative method 
for parameter estimation in statistical models involving unobserved latent variables.
Such a requirement unfortunately may: 1.~add restrictions to the viable prior distribution family, limiting the applicability of VI; 2.~call for specifically designed tricks for the implementation, making the VI methodology less generic and user-friendly; 3.~need impose further constraints on the variational family, leading to increased approximation error.  
For example, when implementing Bayesian Gaussian mixture models for clustering, although independent Gaussian priors of cluster centers meet the aforementioned conditional conjugacy property, it is sensible to instead employ a class of repulsive priors~\cite{xie2020bayesian} to encourage the well-separatedness of cluster centers and reduce the potential redundancy of components. Unfortunately, the complicated dependence structure introduced by the repulsive prior destroys the conditional conjugacy, making the standard coordinate ascent variational inference (CAVI,~\cite{bishop2006pattern}) algorithm for implementing the MF approximation inapplicable (see Section~\ref{subsec: RGMM} for further details).
Another example is Bayesian logit model~\cite{jaakkola1997variational}. Due to the lack of conditional conjugacy,~\cite{jaakkola2000bayesian} proposes to use a tangent transformation motivated by convex duality to make the variational approximation computationally tractable. For the mixed multinomial logit model,~\cite{braun2010variational} derives a  variational procedure based on the multivariate delta method for moments, which again requires specialized treatments and lacks generality.

In this paper, we propose a new computational framework for MF variational inference based on Wasserstein gradient flow, that is, running a ``gradient descent'' over the Wasserstein space, the space of all probability distributions with finite second moments endowed with the $2$-Wasserstein metric $W_2$~\citep{ambrosio2008gradient}.
Comparing to existing approaches, our approach does not impose any extra restrictions on the MF variational family, and can be applied to Bayesian models without any structural constraint on the prior and data likelihood function. 

\subsection{Related work}
There are a number of studies aiming at building generic VI procedures for dealing with non-conjugate models while maintaining the computational tractability. For example,~\cite{wang2012variational} develops two generic methods, Laplace variational inference and delta method variational inference, for a class of non-conjugate models with certain constraints (more precisely, partly-conjugate models), by enforcing the variational family for the model parameters in the MF approximation to be the (multivariate) location-scale Gaussian family.
Automatic differentiation variational inference (ADVI)~\citep{kucukelbir2017automatic, wingate2013automated} provides an automatic scheme that derives an iterative algorithm for implementing the variational inference based on automatic differentiation and stochastic gradient ascent; but the performance of ADVI heavily depends on the parametrization of the variational family, and little theory has been developed to analyze its algorithmic convergence. In a related thread,~\cite{ranganath2014black} proposes black box variational inference (BBVI) based on stochastic optimization, which is shown to have exponential convergence up to the noise level of stochastic gradient. However, both ADVI and BBVI only apply to parametric variational families that are finite-dimensional, which may unnecessarily impose additional constraints on top of the MF approximation --- the constituting components of the density product in the MF family should be restriction-free and may not be characterized by a finite number of parameters.

The notion of Wasserstein gradient flow is first introduced in the influential work of~\cite{jordan1998variational}. The authors reveal an appealing connection between: 1.~the dynamics of a gradient flux, or steepest descent, for minimizing the free energy with respect to the Wasserstein metric; 2.~a special class of partial differential equations (PDE), called the Fokker-Planck equation~\cite{gardiner1985handbook,risken1996fokker}, which describes the evolution of the probability density for the position of a particle whose motion is
described by a corresponding Ito stochastic differential equation (SDE). Specifically, \cite{jordan1998variational} constructs a discrete and iterative variational scheme, also called the Jordan-Kinderlehrer-Otto (JKO) scheme, which extends from the Euclidean gradient descent and whose solutions (weakly) converge to the solution of the Fokker-Planck equation with the gradient of a potential as the drift term. Later,~\cite{otto2001geometry} extends this connection to the porous medium equation, and points out the resemblance between the Wasserstein space and an ``infinite-dimenisonal'' Riemannian manifold. A comprehensive development of gradient flows in a general metric space, including the Wasserstein space as a representative application, is provided in the monograph~\cite{ambrosio2008gradient}. 
The deep connection between Wasserstein gradient flows and a rich class of PDE (SDE) builds a bridge between geometric analysis, optimal transport, control theory and partial differential equations; and also motivates a class of particle based methods~\cite{carrillo2019blob,carrillo2022primal,frogner2020approximate} for numerically solving PDE (SDE). It is worth highlighting that the development of Wasserstein gradient flows
heavily relies on recent techniques from modern optimal transport theory~\cite{brenier1991polar,monge1781memoire,villani2003topics, villani2009optimal}.

Some recent works also apply gradient flow over the space of probability measures to faciliate the computation of Bayesian statistics. For example,~\cite{trillos2020bayesian} considers sampling from the posterior distribution based on gradient flows in a different context, by treating the posterior distribution as the minimizer of functionals with certain forms; and they propose to use the gradient flow to guide the choice of proposals for MCMC methods. 
While we are preparing the manuscript, we learnt that a concurrent work~\cite{lambert2022variational} also study the application of Wasserstein gradient flow to the computation of variational inference. Unlike our work, \cite{lambert2022variational} focuses on Gaussian variational inference, where the target posterior (without latent variables) is approximated by the closest member in the Gaussian (local-scale) distribution family. Since the Gaussian distribution family is a parametric family, their gradient flow is defined on the Bures-Wasserstein space of Gaussian measures and is intrinsically finite-dimensional. \cite{lambert2022variational} proves the exponential convergence of a time-discretized version of the evolutionary ODE on the mean vector and covariance matrix, under the assumption that the target posterior distribution is strictly log-concave. In contrast, the mean-field (MF) variational approximation considered in our work involves an infinite-dimensional family, and our Bayesian latent variable model framework accommodates latent variables that are of discrete types. Moreover, we also study the large-sample statistical properties of the MF approximation, utilizing the fixed-point equation of our proposed time-discretized Wasserstein gradient flow.

\subsection{Contribution summary}
The main contribution of this paper is to propose a mean-field Wasserstein gradient flow (MF-WGF) algorithm for implementing the MF variational inference and to build a general theoretical framework for analyzing its statistical and algorithmic convergence for a generic class of Bayesian models (under the frequentist perspective).

Methodology-wise, by viewing the KL divergence as an objective functional over the space of all factorized probability measures, we develop a minimization scheme for implementing the MF approximation based on a time-discretized WGF. For Bayesian models without latent variables, the proposed algorithm is a distributional version of parallel coordinate proximal descent for updating the constituting components in the MF approximation. For Bayesian latent variable models, the proposed algorithm resembles a distributional version of the classical Expectation–Maximization algorithm, consisting of an E-step of updating the latent variable variational distribution and an M-step of conducting steepest descent over the variational distribution of model parameters; the developed algorithm can also be viewed as an extension of the general Majorize-Minimization (MM) principal to minimizing a functional over the space of probability measures. 

Theoretically, since a Wasserstein gradient flow extends the usual Euclidean gradient flow, we analogously define the notion of (local) ``convexity'' and ``smoothness'' for a generic functional in the Wasserstein space, under which (local) exponential convergence towards the optimum of the functional can be proved.
To prove and quantify the algorithmic convergence, we illustrate how the ``convexity'' and ``smoothness'' of the objective functional in VI, which is the Kullback–Leibler divergence to the target posterior distribution, translate into conditions of the statistical model. 
As a result, we explicitly determine the algorithmic contraction rate in terms of various problem characteristics such as step size, sample size, smoothness of the likelihood function, missing data Fisher information, and observed data Fisher information. As an intermediate result in our proof, we show that the MF approximation to the posterior distribution inherits the consistency and contraction of the latter (Theorems~\ref{thm: MFVI_posterior_convergence_rate} and~\ref{thm: posterior_convergence_rate}); our result of a squared-exponential (or sub-Gaussian) type deviation bound on the MF approximation is stronger than most existing results that only implies a polynomially decay bound. In addition, unlike many previous works relying on case-by-case analysis~\citep{hall2011theory,hall2011asymptotic,ormerod2012gaussian,titterington2006convergence,westling2015establishing,bickel2013asymptotic,zhang2020theoretical} or applying some information inequality that relates the variational objective functional value to certain risk function evaluating the estimation error~\cite{alquier2020concentration,pati2018statistical,yang2020alpha,zhang2020convergence}, our proof is general and based on identifying and analyzing the fixed point of the iterative scheme in MF-WGF. Our proof strategy offers a somewhat more direct insight explaining why MF approximation leads to consistent estimation, and can be potentially useful for investigating statistical properties of other approximation schemes beyond the mean-field. 

Computation-wise, we discuss and compare two concrete numerical methods for realizing the JKO scheme. 
The first method is a Langevin SDE-based particle method for approximately realizing the JKO scheme, which is commonly used in the literature. However, according to our numerical experiments and discussion, the SDE approach suffers from a systematic error that remains undiminished even with more iterations and number of particles due to a long term bias term.
This motivates us to propose an alternative method based on function approximation (FA) using neural networks. As we illustrate, the FA approach is unbiased, meaning that its output precisely solves the JKO scheme. Consequently, the unique fixed point of the iterative process from FA precisely yields the MF approximation solution; and there is no long term systematic bias arising from using a finite step size.
We also highlight that different from the previous work on functional approximation such as~\cite{mokrov2021large}, our function approximation approach is based on an unconstrained formulation (c.f.~Theorem~\ref{thm: FA_approx}) without the need of restricting the transport map into a gradient vector field of a convex potential.
This property allows flexible choices of numerical methods for solving the corresponding optimization problem, and significantly enhances the convergence speed and overall performance of the algorithm.

\subsection{Organization}
The remainder of this paper is organized as follows.  Section~\ref{sec: Background and examples} provides some preliminary results and the problem formulation. Specifically, we start with some background introduction to optimal transport theory and Wasserstein gradient flows; then we provide some new theoretical results about contraction properties of a discretized Wasserstein gradient flow, called the one-step minimization movement or the JKO scheme, with an explicit contraction rate; lastly, we discuss the connection between Wasserstein gradient flows and mean-field variational inference, and formulate the problem to be addressed in this work. 
In Section \ref{sec:MF-WGF}, we first provide a general computational framework for mean-field inference via alternating minimization, and then propose a new algorithm based on the discretized Wasserstein gradient flow. Section \ref{sec: main results} presents our main theoretical result about the statistical concentration of the mean-field approximation and the algorithmic contraction of the proposed algorithm.
In Section~\ref{sec:numeric_method}, we introduce and compare two numerical methods, particle approximation via SDE and function approximation method, for implementing the JKO scheme.
In Section \ref{sec:thm_app}, we apply our theoretical results to two representative examples, namely, the Gaussian mixture model and the mixture of regression model; we also conduct some numerical experiments to compliment the theoretical findings.
All proofs and other technical details are postponed to a supplementary material, which includes all the appendices.

\subsection{Notation}
We use $\ms P(\mb R^d)$ to denote the space of all probability measures on $\mb R^d$, and use $\ms P_2(\mb R^d)$ to denote the subset of $\ms P(\mb R^d)$ composed of all measures with finite second-order moment, i.e.
\begin{displaymath}
    \ms P_2(\mb R^d) = \Big\{\mu\in\ms P(\mb R^d): \int_{\mb R^d}\|x\|^2\ \dd\mu(x)<\infty\Big\}.
\end{displaymath}
Let $\ms P_2^r(\mb R^d)$ denote the space of all probability measures in $\ms P_2(\mb R^d)$ that admit a density function relative to the Lebesgue measure of $\mb R^d$. For any measure $\mu$ on $\mb R^d$ and map $T:\, \mb R^d\to \mb R^d$, the pushforward measure $\nu = T_\#\mu$ is defined as the unique measure on $\mb R^d$ such that $\nu(A) = \mu\big( T^{-1}(A)\big)$ holds for any measurable set $A$ on $\mb R^d$. We use $\KL(p\,\|\,q)$ to denote the KL divergence between two probability measures $p$, $q\in\PX$. Depending on the context, we may use upper letters to denote probability measures, and lower letters to denote their probability density functions. 
For any $\alpha\in[1, \infty)$, let $\psi_{\alpha}:\,\mb R_+\to \mb R_+$ be the function defined by $\psi_{\alpha} (x) = \exp(x^{\alpha})-1$.  We use the notation $\| \xi \|_{\psi_\alpha}=\inf \big\{ C>0: \,\mathbb{E}[ \,\psi_{\alpha}( | \xi | /C)\,] \leq 1\big\}$ to denote the $\alpha$-th order \emph{Orlicz norm} of a real-valued random variable $\xi$ (see Appendix~\ref{Appendix: conc} for a brief review). We also use $\m L(\xi)$ to denote the law (distribution) of random variable $\xi$. We use $\matnorm{M}=\sup_{v\in\mb S^{n-1}} \|Mv\|$ to denote the matrix operator norm of a matrix $\bm M\in\mb R^{m\times n}$, where $\mb S^{n-1}$ is the $(n-1)$-dimensional unit sphere. We use $\id$ to denote the identity map.

\section{Preliminary Results and Problem Formulation} \label{sec: Background and examples}
In this section, we first briefly review some concepts and basic results from optimal transport theory. After that, we discuss the notion of Wasserstein gradient flow and its discrete-time version, and present some new results about the contraction of one-step discretized Wasserstein gradient flow, which will be useful in our later analysis of alternating minimization for solving mean-field variational inference. Finally, we setup the Bayesian framework, review the mean-field inference, and formulate the problem to be addressed in this paper. Further details and techniques, such as subdifferential calculus in the Wasserstein space for analyzing the optimization landscape of functionals of probability measures and its connection with the usual Gateaux derivative (a.k.a.~first variation), are deferred to Appendix~\ref{app:background}.

\subsection{Optimal transport and Wasserstein space}\label{sec: WGF}
The Wasserstein space $\mb W_2(\mb R^d)=\big(\ms P_2(\mb R^d), W_2\big)$ is the separable metric space that endows $\ms P_2(\mb R^d)$ with the $2$-Wasserstein metric $W_2$~\citep{ambrosio2008gradient}. In particular, the 2-Wasserstein distance between two distributions $\mu$ and $\nu$ in $\ms P_2(\mb R^d)$ is defined as
\begin{equation}
    \label{eqn:kantorovich_problem}
    W_2^2(\mu, \nu) := \inf_{\gamma\in\Pi(\mu,\nu)} \Big\{ \int_{\mb R^d \times \mb R^d} \|x - y\|^2 \; \dd \gamma(x,y) \Big\},\qquad\mbox{(KP)}
\end{equation}
where $\Pi(\mu,\nu)$ consists of all possible distributions over $\mb R^d\times\mb R^d$ with marginals $\mu$ and $\nu$, and any $\gamma\in \Pi(\mu,\nu)$ is called a coupling between $\mu$ and $\nu$. It can be proved (Section 5 of~\cite{santambrogio2015optimal}) that $W_2$ is indeed a metric on $\ms P_2(\mb R^d)$ and satisfies the triangle inequality; moreover, convergence with respect to $W_2$ is equivalent to the usual weak convergence of probability measures plus convergence of second moments. If one of the distributions, say $\mu$, is absolutely continuous with respect to the Lebesgue measure of $\mb R^d$, or $\mu\in\ms P_2^r(\mb R^d)$, then the optimal coupling $\gamma^\ast =(\id,\, T^\ast)_\# \mu$ is unique (Theorem 1.22,~\cite{santambrogio2015optimal}) and supported on the graph of a map $T^\ast:\,\mb R^d\to\mb R^d$, called the optimal transport map from $\mu$ to $\nu$; see Appendix~\ref{app:OPTmap} for further properties of this optimal transport map.

\subsection{Wasserstein gradient flow}
\label{section: gradient_flow_in_probability_space}
Consider the problem of minimizing a functional $\m F:\ms P_2(\mb R^d) \to \mb R$ in the Wasserstein space $\mb W_2(\mb R^d)$ via ``steepest descent''. A direct generalization of the ODE formulation of the Euclidean gradient flow (c.f.~Appendix~\ref{app:EGF}) is to define a time-dependent measure $\rho_t\in\PX$ satisfying $\partial_t\rho_t = -\nabla_{W_2}\m F(\rho_t)$ for $t>0$, with some initialization $\rho_0\in\PX$. Here $\nabla_{W_2}\m F$ stands for some proper notion of gradient, or steepest (ascent) direction, of $\m F$ with respect to the $W_2$ metric in $\mb W_2(\mb R^d)$. To formally define and prove the well-posedness of this Wasserstein gradient flow (WGF), one can first consider a minimization movement scheme, also called the Jordan-Kinderlehrer-Otto (JKO) scheme~\cite{jordan1998variational} (see Figure~\ref{fig: JKO vs GF} for an illustration),
\begin{equation}\label{eqn: JKO_scheme}
    \rho_{k+1}^\tau = \argmin_{\rho\in\ms P_2(\mb R^n)} \m F(\rho) + \frac{1}{2\tau}W_2^2(\rho_k^\tau, \rho) \quad\mx{for $k\geq 0$}; \qquad\mx{(JKO)}
\end{equation}
and then show by using a generalised version of Arzel\`{a}–Ascoli theorem that after suitable interpolation, the solution of this JKO scheme admits a limit as step size $\tau\to 0_+$; finally this limit as an absolutely continuous curve (proved by a priori estimate) in $\mb W_2(\mb R^d)$ is defined as the Wasserstein gradient flow for minimizing $\m F$ starting from $\rho_0$. Details of a complete proof in the more general setting of gradient flows in metric spaces can be found in Chapter 3 of~\cite{ambrosio2008gradient}. A proof in the case of Fokker-Planck equation as the Wasserstein gradient flow of the KL functional~\eqref{eqn: functional_of_interest} (c.f.~Section~\ref{sec:KL_flow}) can be found in~\cite{jordan1998variational} or Chapter~8.3 of~\cite{santambrogio2015optimal}. 
\begin{figure}
    \centering
    \includegraphics[scale=0.6]{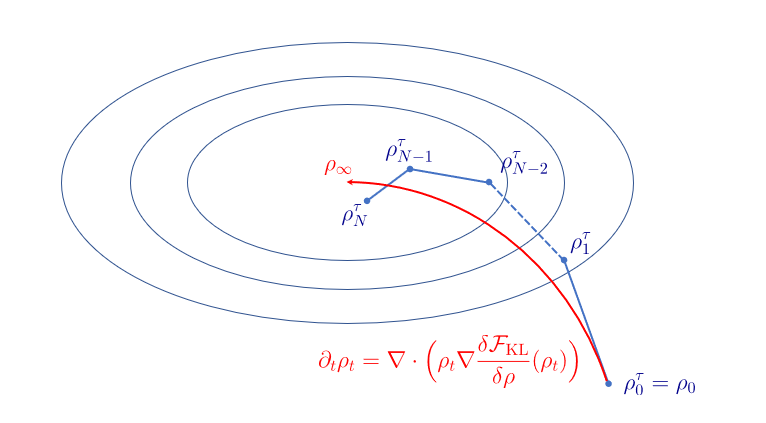}
    \caption{Illustration of a Wasserstein gradient flow (WGF, red curve) and its time-discretization via JKO-scheme (blue curve) with step size $\tau$. For a functional $\m F_{\rm KL}$ that is strictly convex along generalized geodesics, WGF converges to its global minimizer $\rho_\infty$ exponentially fast. JKO-scheme discretizes the WGF, has the same limiting (or stationary) point $\rho_\infty$ as WGF, and weakly converges to WGF as $\tau\to 0$.}
    \label{fig: JKO vs GF}
\end{figure}

Under the above perspective, it can be shown that the WGF for minimizing $\m F$ (by taking the limit of JKO scheme as $\tau\to 0_+$) can be characterized by the following partial differential equation (PDE), also called continuity equation,
\begin{align}\label{eqn:cont_equ}
    \partial_t \rho_t = - \nabla\cdot (\rho_t v_t), \quad \mx{with } v_t = -\nabla \frac{\delta \m F}{\delta\rho}(\rho_t),
    \quad \mx{for }t>0,
\end{align}
where $v_t:\, \mb R^d\to \mb R^d$ is the flow velocity vector field at time $t$, corresponding to the location-dependent steepest descent direction (i.e.~negative subdifferential) given in Lemma~\ref{lem:FV_SD}. 
Here, we have abused the notation by using $\rho_t\in\PX$ to denote both the probability measure and its density function. Similarly, in the rest of the paper, we will use the notation $\rho$ for a generic regular probability measure in $\PX$ and its density. PDE~\eqref{eqn:cont_equ} provides the Eulerian description of the WGF for functional $\m F$, and motivates one numerical method for implementing WGF via functional approximation (Section~\ref{sec:numeric_method}). 

In the continuity equation~\eqref{eqn:cont_equ}, we can view the time derivative $\partial_t\rho_t$ as the accumulation of probability mass, and intepret $\nabla\cdot (\rho_t v_t)$ as the ``Wasserstein gradient'' $\nabla_{W_2}\m F$, where $\rho_t v_t$ is the flux and the divergence term $\nabla\cdot (\rho_t v_t)$ represents the difference in flow in versus flow out. 
Note that the continuity equation~\eqref{eqn:cont_equ} may be interpreted as the equation governing the evolution of the density $\{\rho_t = (Y_t)_\#\rho_0:\, t > 0\}$ of a family of particles initially distributed according to
$\rho_0$, and each of which follows the flow $\{Y_t:\,t>0\}$. Here, the map $Y_t:\,\mb R^d\to\mb R^d$ is defined through $Y_t(x)=y_x(t)$ where, for any $x\in\mb R^d$, $\{y_x(t):\, t\geq 0\}$ is the solution to the following ODE,
\begin{align}\label{eqn:flow_ODE}
    \dot{x}_t=v_t(x_t), \quad\mx{for }t>0, \quad\mx{with }x_0=x,
\end{align}
where $v_t$ specifies the (steepest descent) direction of particles in the gradient flow. This ODE corresponds to a Lagrangian description of the WGF that characterizes the state of each individual ``particle" at each time, rather than counting the number of ``particles" sharing the same state (e.g., location and velocity), and motivates another numerical method for implementing WGF via particle approximation (Section~\ref{sec:numeric_method}).

\subsection{Contraction of one-step minimization movement}\label{sec:con_one_step}
The following functional $\m F_{\tau,\mu}:\, \ms P_2^r(\mb R^d)\to (-\infty,\infty]$ defined as
\begin{align}\label{Eqn:Phi_fun}
\m F_{\tau,\mu}(\nu) =  \m F(\nu) + \frac{1}{2\tau} W_2^2(\nu,\mu),
\end{align}
has been used in defining the minimization movement scheme~\eqref{eqn: JKO_scheme} for minimizing $\m F$ on $\mb W_2(\mb R^d)$.
We assume that for some $\tau_\ast>0$, $\m F_{\tau,\mu}$ admits at least a minimum point $\mu_\tau$, for all $\tau\in(0,\tau_\ast)$ and $\mu\in\ms P_2^r(\mb R^d)$. The map $\mu\mapsto \mu_\tau$ can be seen as a generalization from the usual Euclidean space to $\ms P_2^r(\mb R^d)$ of the proximal operator associated with functional $\tau \m F$, where the Euclidean distance is replaced by the Wasserstein distance. 


We can use $\mu_\tau$ or $\m F_{\tau,\mu}$ to define the one-step discretization of the Wasserstein gradient flow, which can then be used for both formally defining the gradient flow (as in Section~\ref{section: gradient_flow_in_probability_space}) and providing a numeric scheme for approximating the gradient flow.
Such a one-step discretization will also serve as the building block of the proposed MF-WGF with $\m F$ being the KL divergence to the target posterior (c.f.~Section~\ref{sec:alter_min}). In the rest of this subsection, we provide a theoretical analysis of the one-step minimization movement of minimizing $\m F_{\tau,\mu}$. This technical result will be useful in analyzing the convergence of the proposed MF-WGF method later. 

Convexity plays an important role in proving convergence and deriving explicit convergence rates of gradient flows in Euclidean space. To extend the notion of convexity to the Wasserstein space, one approach is to consider convexity along generalized geodesics. This requires the target functional $\mathcal{F}$ to exhibit convexity along certain interpolating curve that connects any pair of probability measures in $\mathbb{W}_2(\mathbb{R}^d)$. For a formal definition and additional properties, please refer to Appendix~\ref{app:convexity}. For any $\lambda>0$, we say that $\mathcal{F}$ is $\lambda$-convex along generalized geodesics if it is $\lambda$-convex along any generalized geodesic in the usual sense (as a univariate function under the constant speed parameterization of the curve). Using this notion, we have the following theorem about the contraction of one-step minimization movement.
Its proof is left to Appendix~\ref{app:proof_thm: implicit_WGF_conv}, which utilizes a key Lemma~\ref{Lem:stong_conx_generalized} to derive a contraction with an explicit contraction factor. 

\begin{theorem}\label{thm: implicit_WGF_conv}
Let $\m F:\, \ms P_2^r(\mb R^d)\to (-\infty,\infty]$ be $\lambda$-convex along generalized geodesics. Then for any $\mu$, $\pi\in \ms P_2^r(\mb R^d)$, 
\begin{align*}
(1+\tau\lambda )\, W_2^2(\mu_\tau,\pi) \leq  W_2^2(\mu,\pi) - 2\tau\big[\m F(\mu_\tau) - \m F(\pi)\big] - W_2^2(\mu_\tau,\mu),
\end{align*}
where
\begin{displaymath}
\mu_\tau = \argmin_{\rho\in\ms P_2^r(\mb R^d)} \m F(\rho) + \frac{1}{2\tau}W_2^2(\mu, \rho).
\end{displaymath}
In particular, if $\pi^\ast$ is any minimizer of $\m F$, then
\begin{align*}
W_2^2(\mu_\tau,\pi^\ast) \leq (1+\tau\lambda)^{-1} \, W_2^2(\mu,\pi^\ast), \quad \forall\mu\in\ms P_2^r(\mb R^d).
\end{align*}
\end{theorem}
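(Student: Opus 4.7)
The plan is to combine the first-order optimality condition for $\mu_\tau$ as the minimizer of $\m F_{\tau,\mu}$ with Lemma~\ref{Lem:stong_conx_generalized}, exploiting the freedom in convexity along generalized geodesics to choose $\mu$ itself as the base point of the interpolating curve. First I would compute the strong Fr\'echet subdifferential of the proximal functional at its minimizer. Since $\mu,\mu_\tau\in\PX$, Brenier's theorem supplies a unique optimal transport map $T_{\mu_\tau}^\mu$, and a standard computation (Theorem~10.4.12 in~\cite{ambrosio2008gradient}) shows that $\tau^{-1}(\id - T_{\mu_\tau}^\mu)$ is a strong subdifferential of $\nu\mapsto\frac{1}{2\tau}W_2^2(\nu,\mu)$ at $\mu_\tau$. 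Combining this with the necessary condition $0\in\partial\m F_{\tau,\mu}(\mu_\tau)$ produces a strong subdifferential $\xi\in\partial\m F(\mu_\tau)$ of the form $\xi(x)=\tau^{-1}\bigl(T_{\mu_\tau}^\mu(x)-x\bigr)$ for $\mu_\tau$-a.e.\ $x$.

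Next I would apply Lemma~\ref{Lem:stong_conx_generalized} with base point $\mu^1=\mu$, start $\mu^2=\mu_\tau$ and end $\mu^3=\pi$. Using $T_{\mu_\tau}^\mu\circ T_\mu^{\mu_\tau}=\id$ $\mu$-a.e.\ to evaluate $\xi\circ T_\mu^{\mu_\tau}$, the cross integral in the lemma becomes
$$\frac{1}{\tau}\int_{\mb R^d}\langle x-T_\mu^{\mu_\tau}(x),\,T_\mu^\pi(x)-T_\mu^{\mu_\tau}(x)\rangle\,\dd\mu(x).$$
The polarization identity $2\langle a-b,c-b\rangle=\|a-b\|^2+\|c-b\|^2-\|a-c\|^2$ applied with $a=x$, $b=T_\mu^{\mu_\tau}(x)$, $c=T_\mu^\pi(x)$ rewrites this integral as
$$\frac{1}{2\tau}\Bigl[W_2^2(\mu,\mu_\tau) - W_2^2(\mu,\pi) + \int_{\mb R^d}\|T_\mu^\pi(x)-T_\mu^{\mu_\tau}(x)\|^2\,\dd\mu(x)\Bigr].$$
The last integral is the cost of the coupling $(T_\mu^{\mu_\tau},T_\mu^\pi)_\#\mu$ between $\mu_\tau$ and $\pi$, hence is bounded below by $W_2^2(\mu_\tau,\pi)$. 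Substituting this lower bound into the inequality from Lemma~\ref{Lem:stong_conx_generalized} and rearranging yields the first displayed inequality of the theorem. For the contraction corollary, setting $\pi=\pi^\ast$ makes both $-2\tau[\m F(\mu_\tau)-\m F(\pi^\ast)]$ and $-W_2^2(\mu_\tau,\mu)$ non-positive, so the general inequality immediately collapses to $(1+\tau\lambda)W_2^2(\mu_\tau,\pi^\ast)\leq W_2^2(\mu,\pi^\ast)$.

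The main technical obstacle lies in the first step: one needs $\tau^{-1}(\id-T_{\mu_\tau}^\mu)$ to serve as a \emph{strong} (not merely Fr\'echet) subdifferential of $\frac{1}{2\tau}W_2^2(\cdot,\mu)$ at $\mu_\tau$, so that Lemma~\ref{Lem:stong_conx_generalized} can be applied along an arbitrary transport map rather than along an optimal one only. This relies on absolute continuity $\mu_\tau\in\PX$ (which is inherited from the fact that $\m F$ only takes finite values on $\PX$) and on a standard variation-along-maps argument that upgrades the $o(W_2)$ remainder to $o(\|T-\id\|_{L^2(\mu_\tau)})$. Once the strong subdifferential is established, the rest of the argument is a bookkeeping exercise involving the polarization identity and the defining bound $W_2^2(\mu_\tau,\pi)\leq\int\|T_\mu^\pi-T_\mu^{\mu_\tau}\|^2\,\dd\mu$.
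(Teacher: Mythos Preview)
Your proposal is correct and follows essentially the same approach as the paper: both combine the strong subdifferential $\xi=\tau^{-1}(T_{\mu_\tau}^\mu-\id)\in\partial\m F(\mu_\tau)$ (the paper cites this directly as Lemma~10.1.2 of~\cite{ambrosio2008gradient} rather than deriving it via the subdifferential of $W_2^2$) with Lemma~\ref{Lem:stong_conx_generalized} at base point $\mu^1=\mu$, and use the coupling bound $W_2^2(\mu_\tau,\pi)\le\int\|T_\mu^\pi-T_\mu^{\mu_\tau}\|^2\,\dd\mu$ together with a polarization identity. The only difference is the order of presentation---the paper starts from the coupling inequality and expands, whereas you start from the lemma and then invoke the coupling bound---but the ingredients and logic are identical.
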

As a direct consequence of the theorem, the time-discretized Wasserstein gradient flow for minimizing a $\lambda$-convex (along generalized geodesics) functional $\m F$ obtained by repeatedly applying the one-step minimization movement achieves an exponential convergence to the unique global minimizer of $\m F$, with contraction factor $(1+\tau \lambda)^{-1}\in(0,1)$ for any step size $\tau>0$. Note that this convergence behavior is similar to the implicit Euler scheme for minimizing a $\lambda$-convex function on $\mb R^d$, while the explicit Euler scheme is convergent only when $\tau$ is smaller than some threshold inverse proportional to the largest eigenvalue of the Hessian $\nabla^2 F$, indicating the robustness and stability of implicit schemes.

\subsection{KL divergence functional}\label{sec:KL_flow}
In this paper, we are interested in functionals over $\PX$ of the following form, due to the close connection with the KL divergence as the optimization objective in VI,
\begin{align}\label{eqn: functional_of_interest}
    \m F_{\rm KL}(\rho) = \underbrace{\int_{\mb R^d} V(x)\,\dd \rho(x)}_{\small \mx{potential energy $\m V(\rho)$}} +\ \  \underbrace{\int_{\mb R^d}\log\rho(x)\,\dd \rho(x)}_{\small \mx{entropy $\m E(\rho)$}}.
\end{align}
The KL functional $\m F_{\rm KL}$ consists of an entropy functional $\rho\mapsto \int\log\rho\,\dd\rho$ and a potential energy functional $\rho\mapsto \int V\dd\rho$, where $V:\,\mb R^d\to\mb R$ is the potential (function). 

When specialized to the KL functional $\m F_{\rm KL}$, the continuity equation~\eqref{eqn:cont_equ} for characterizing its Wasserstein gradient flow becomes the famous Fokker-Planck equation
\begin{equation}\label{eqn: Fokker_Planck_equation}
    \frac{\partial\rho_t}{\partial t} - \Delta\rho_t - \nabla\cdot(\rho_t\nabla V)=0,
\end{equation}
since the first variation $\frac{\delta \m F_{\rm KL}}{\delta\rho} = V + \log \rho +C$ (first variation is defined up to a constant) and $v_t=-\nabla V - \nabla \log \rho_t$.
It is well known that the solution $\rho_t$ to the Fokker-Planck equation also corresponds to the law of Langevin stochastic differential equation (SDE)
\begin{equation}\label{eqn: Langevin_dynamics}
\dd X_t = -\nabla V(X_t)\,\dd t + \sqrt{2}\,\dd W_t, \quad X_0 \sim \rho_0.
\end{equation}
This connection will motivate one of our discretizing schemes for realizing the Wasserstein gradient flow for $\m F_{\rm KL}$ (c.f.~Section~\ref{sec:particle_approx}).

It turns out that the entropy $\m E$ is convex along generalized geodesics (Lemma~\ref{lemma:entropy} in Appendix~\ref{app:convexity}) and the potential energy $\m V$ is 
$\lambda$-convex along generalized geodesics if the corresponding potential function $V$ is a $\lambda$-convex function over $\mb R^d$
(Lemma~\ref{lemma:potential}  in Appendix~\ref{app:convexity}).
Therefore, using Theorem~\ref{thm: implicit_WGF_conv}, we obtain the following corollary characterizing the contraction property of one-step movement minimization for minimizing the KL functional $\m F_{\rm KL}$.

\begin{corollary}\label{coro:one_step_KL}
If potential $V:\mb R^d\to\mb R$ is a $\lambda$-convex function over $\mb R^d$, then for any $\mu$, $\pi\in \ms P_2^r(\mb R^d)$, 
\begin{align*}
(1+\tau\lambda )\, W_2^2(\mu_\tau,\pi) \leq  W_2^2(\mu,\pi) - 2\tau\big[\m F_{\rm KL}(\mu_\tau) - \m F_{\rm KL}(\pi)\big] - W_2^2(\mu_\tau,\mu),
\end{align*}
where
\begin{displaymath}
\mu_\tau = \argmin_{\rho\in\ms P_2^r(\mb R^d)} \m F_{\rm KL}(\rho) + \frac{1}{2\tau}W_2^2(\mu, \rho).
\end{displaymath}
In particular, if $\pi^\ast(x)\propto e^{-V(x)}$ for $x\in\mb R^d$, then
\begin{align*}
W_2^2(\mu_\tau,\pi^\ast) \leq (1+\tau\lambda)^{-1} \, W_2^2(\mu,\pi^\ast), \quad \forall\mu\in\ms P_2^r(\mb R^d).
\end{align*}
\end{corollary}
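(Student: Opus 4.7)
The plan is to reduce the corollary directly to Theorem~\ref{thm: implicit_WGF_conv} by verifying that $\m F_{\rm KL}$ is $\lambda$-convex along generalized geodesics under the hypothesis on $V$. The decomposition $\m F_{\rm KL}=\m V+\m E$ singles out two pieces that are already covered by Lemmas~\ref{lemma:entropy} and~\ref{lemma:potential}: the potential energy functional $\m V$ inherits $\lambda$-convexity along generalized geodesics from the $\lambda$-convexity of $V$ on $\mb R^d$, while the entropy $\m E$ is $0$-convex along generalized geodesics. Hence, I would first observe that convexity along generalized geodesics is preserved under addition (the inequality in the definition simply adds when written for $\m V$ and $\m E$ along the same generalized geodesic induced by a common plan $\bm \mu$), so $\m F_{\rm KL}$ is $\lambda$-convex along generalized geodesics with the same constant $\lambda$ as $V$.

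Once this is established, the first assertion of the corollary is an immediate specialization of Theorem~\ref{thm: implicit_WGF_conv} with $\m F=\m F_{\rm KL}$: the minimizer $\mu_\tau$ of $\m F_{\rm KL}(\rho)+\frac{1}{2\tau}W_2^2(\mu,\rho)$ satisfies
\[
(1+\tau\lambda)\,W_2^2(\mu_\tau,\pi)\leq W_2^2(\mu,\pi)-2\tau\bigl[\m F_{\rm KL}(\mu_\tau)-\m F_{\rm KL}(\pi)\bigr]-W_2^2(\mu_\tau,\mu)
\]
for every $\pi\in\ms P_2^r(\mb R^d)$.

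For the contraction statement, I would identify $\pi^\ast(x)\propto e^{-V(x)}$ as the unique minimizer of $\m F_{\rm KL}$. This is the standard Gibbs-variational observation: for any $\rho\in\ms P_2^r(\mb R^d)$,
\[
\m F_{\rm KL}(\rho)=\int \rho(x)\log\frac{\rho(x)}{e^{-V(x)}}\,\dd x=\KL(\rho\,\|\,\pi^\ast)+\log Z,
\]
where $Z=\int e^{-V(x)}\,\dd x$ is the normalizing constant (which is finite whenever such a $\pi^\ast$ exists in $\ms P_2^r(\mb R^d)$). Since $\KL(\rho\,\|\,\pi^\ast)\geq 0$ with equality iff $\rho=\pi^\ast$, the measure $\pi^\ast$ is the unique minimizer of $\m F_{\rm KL}$. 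Specializing the inequality above to $\pi=\pi^\ast$ and dropping the nonpositive term $-2\tau[\m F_{\rm KL}(\mu_\tau)-\m F_{\rm KL}(\pi^\ast)]\leq 0$ together with $-W_2^2(\mu_\tau,\mu)\leq 0$ yields $(1+\tau\lambda)W_2^2(\mu_\tau,\pi^\ast)\leq W_2^2(\mu,\pi^\ast)$, which is the claimed contraction.

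The main effort is really bookkeeping: checking that convexity along generalized geodesics is additive (immediate from the defining inequality, applied along a common plan $\bm \mu\in\Pi(\mu^1,\mu^2,\mu^3)$), and that $\pi^\ast\in\ms P_2^r(\mb R^d)$, which is implicitly assumed by stating the contraction in $\ms P_2^r(\mb R^d)$. The only mild subtlety I anticipate is ensuring that the same plan $\bm \mu$ can be used simultaneously for $\m V$ and $\m E$ in the additivity step, but since Lemmas~\ref{lemma:entropy} and~\ref{lemma:potential} certify convexity along \emph{every} generalized geodesic, any choice of $\bm \mu$ works and the additive inequality carries over verbatim to $\m F_{\rm KL}$.
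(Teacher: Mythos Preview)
Your proposal is correct and follows essentially the same route as the paper: the paper derives the corollary by combining Lemma~\ref{lemma:entropy} and Lemma~\ref{lemma:potential} to conclude that $\m F_{\rm KL}$ is $\lambda$-convex along generalized geodesics, and then applies Theorem~\ref{thm: implicit_WGF_conv}. Your additional details (additivity of generalized-geodesic convexity along a common plan, and the Gibbs-variational identification of $\pi^\ast$ as the minimizer) simply make explicit what the paper leaves implicit.
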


\subsection{Mean-field variational inference}
\label{sec:MFVI}
A generic probabilistic model consists of a collection of observed variables $X\in\m X$ and a collection of hidden variables $Z\in\m Z$, where $Z$ may contain model parameters and latent variables as its components in a Bayesian setting. The goal is to (approximately) learn the posterior distribution $p(Z\,|\,X)=p(X,\,Z)/p(X)$ of the hidden variables given the observed ones. In a typical problem setting, the joint distribution $p(X,Z)$ is only known up to a constant; therefore, the exact computation of $p(Z\,|\,X)$ is intractable due to the high-dimensional integral involved in computing the normalization constant.

A generic variational inference (VI) approaches this task by turning the integration problem into an optimization one as below,
\begin{align}\label{Eqn:VI_for}
    \widehat q =\argmin_{q\in\Gamma} \KL\big(q\,\|\,p(\,\cdot\,|\,X)\big),
\end{align}
where $\Gamma$ is an user-specified distribution family over the hidden variable space $\m Z$, called the variational family. In another word, VI uses a closest member (relative to KL divergence) in the variational family $\Gamma$ to approximate the target posterior. The KL divergence is used as the discrepancy measure for two reasons: 1.~it can be computed up to a constant without the knowledge of the normalization constant in the posterior; 2.~it captures the information geometry in the statistical models. 


MF inference is a special case of VI when the variational family $\Gamma_{\rm MF}$ is composed of all factorized $q$ with the following form,
\begin{displaymath}
    \qquad\qquad q(z) = q_1(z_1)\, q_2(z_2)\,\cdots\, q_m(z_m), \quad \mx{for }z=(z_1,\,z_2,\,\ldots,\,z_m) \in \m Z=\m Z_1\times\cdots\times\m Z_m,
\end{displaymath}
where each component (block) $z_j$ of $z$ may contain more than one variables. 
In general, to alleviate the bias incurred by ignoring the dependence among the blocks $\{Z_j\}_{j=1}^m$, it is preferable to use a reduced number of blocks while maintaining the computational tractability of solving problem~\eqref{Eqn:VI_for}. In this work, we consider two model settings: Bayesian models with and without latent variables.



\smallskip
\noindent {\bf Bayesian models without latent variables.} In this setting, the hidden variables $Z$ solely consist of model parameters $\theta \in \Theta \subset \mathbb{R}^d$ and we consider mean-field approximation over (blocks of) components of $\theta$. We consider a standard model setting where the observations $X = X^n = \{X_1, \cdots, X_n\}$ are i.i.d.~given $\theta$. We denote the prior and posterior distributions of $\theta$ as $\pi_\theta$ and $\pi_n$, respectively, where
\begin{align}
    \pi_n(\theta) = \frac{\pi_\theta(\theta)\prod_{i=1}^n p(X_i\,|\,\theta)}{\int_\Theta\pi_\theta(\theta)\prod_{i=1}^np(X_i\,|\,\theta)\,\dd\theta}, 
    \quad\mx{for }\theta\in\Theta.
\end{align}
We further divide the parameter space into $m$ blocks, i.e., $\Theta = \bigotimes_{j=1}^m \Theta_j$, where $\Theta_j \subset \mathbb{R}^{d_j}$ and $d_1 + \cdots + d_m = d$. The corresponding MF approximation to $\pi_n$ to be  studied is 
\begin{align}\label{eqn: blockwise_optimize}
\wht q_\theta = \bigotimes_{j=1}^d \wht q_j \in\argmin_{q = \otimes_{j=1}^d q_j}\KL(q\,\|\,\pi_n).
\end{align}
Our theoretical result in Section~\ref{sec:Analysis_MF} demonstrates that point estimators obtained from above MF approximation achieve the same rate of convergence in estimation error as those obtained from the full posterior $\pi_n$, under the frequentist perspective that assumes $X^n$ to be generated from a true underlying data generating model indexed by a true parameter $\theta^\ast$.

\smallskip
\noindent {\bf Bayesian models with latent variables.} In this setting, we have observed variable $X=X^n$ as before but the hidden variable $Z$ now includes model parameter $\theta\in\Theta\subset\mb R^d$ and a collection of latent variables $Z^n = \{Z_1,\cdots, Z_n\}\in \m Z^n$, such that $(X_i, Z_i)_{i=1}^n$ are i.i.d.~given $\theta$. For simplicity, we assume the latent variables to be discrete, such as the latent class (cluster) indicators in Gaussian mixture models. Let $\pi_\theta$ denote the prior distribution defined on parameter space $\Theta$. To maintain a minimal number of blocks for maximally reducing the potential bias, we consider the following (two-block) mean-field approximation over the parameter block $\theta$ and latent variables block $Z^n$,
\begin{equation}\label{eqn: mean_field_optimization_problem}
(\widehat{q}_\theta, \,\widehat{q}_{Z^n}) = \argmin_{q_\theta\in \ms P(\Theta),\, q_{Z^n}\in\ms P(\m Z^n)}D_{KL}(q_{\theta}\otimes q_{Z^n}\,\|\,\pi_n),
\end{equation}
where in this case $\pi_n$ denotes the joint posterior distribution of $(\theta, Z^n)$, given by
\begin{equation}\label{eqn: meanfield_VI}
\pi_n(\theta, z^n) = \frac{\pi_{\theta}(\theta)\prod_{i=1}^np(X_i, z_i\,|\,\theta)}{\sum_{z^n\in\m Z^n}\int_{\Theta}\pi_{\theta}(\theta)\prod_{i=1}^np(X_i, z_i\,|\,\theta)\,\dd\theta},
\quad\mx{for }\theta\in\Theta \ \ \mx{and} \ \ z^n\in\m Z^n.
\end{equation}
It is also possible to consider a full mean-field approximation by also factorizing $q_{\theta}$ over blocks of $\theta$. However, this scheme may introduce additional complications without providing further insights due to its overlap with the first setting without latent variables.
A similar theoretical result in Section~\ref{sec:Analysis_MF} demonstrates the statistical optimality of point estimation using MF approximation~\eqref{eqn: mean_field_optimization_problem}.

\smallskip
\noindent {\bf Computation via coordinate ascent variational inference.} Alternating minimization is a natural and commonly used algorithm for optimizing over quantities taking a product form as in MF inference. 
The idea of alternative minimizing is to optimize over one component of $q$ at a time while fixing the others. Consider the generic MF approximation~\eqref{Eqn:VI_for} and let $q_{-j}(z_{-j})=\prod_{s\neq j} q_{s}(z_s)$ denote the joint distribution of $z_{-j}$, all components in $z$ except for $z_j$. 
When optimizing over the $j^{\text {th }}$ component $q_j$, one may
explicitly solve the optimizer $q_{j}^{*}:\,=\argmax_{q_j} \KL\big(q_j\otimes q_{-j}\,\|\,p(\cdot\,|\,X)\big)$ as
\begin{equation}\label{cavi}
q_{j}^{*}(z_j)\propto \exp \Big\{\int_{\m Z_{-j}} \log p(z_j,z_{-j}, \,X) \,\dd q_{-j}(z_{-j})\Big\},\quad\mx{for }z_j\in\m Z_j.
\end{equation}
However, to make the computation of $q_{j}^{*}$ tractable, one requires certain conditional conjugacy structures so that the integral inside the exponent can be explicitly calculated and the normalization constant of $q_{j}^{*}$ can be identified.
To avoid overly aggressive moves that may lead to non-convergence of the algorithm, it may be necessary to introduce a partial step size into the above update if $q_j^\ast$ can be recognized as a member of some parametric family~\cite{bhattacharya2023convergence}, leading to the so-called coordinate ascent variational inference (CAVI) algorithm~\cite{bishop2006pattern}. 

\smallskip
\noindent {\bf Goal of this work.} The main problem to be addressed in this work is to design a new class of computational algorithms for solving the above optimization problems for MF variational inference based on Wasserstein gradient flow while adapting the idea of alternating minimization, and to study their theoretical properties. Since Wasserstein gradient flow directly operates over the space of probability measures, the new method does not need
impose any extra restrictions on the MF variational family (which may unnecessarily increase the approximation error), and can be applied to Bayesian models without any structural constraint on the prior and data likelihood function. Moreover, a step size tuning parameter is naturally incorporated to prevent overly aggressive moves which can cause the algorithm to diverge.

\section{Mean-Field Variational Inference via Wasserstein Gradient Flow}
\label{sec:MF-WGF}
In this section, we propose a generic computational framework of MF variational inference for models with and without latent variables by alternating minimization and coordinate ascent in the Wasserstein space via repeatedly applying a one-step discretized Wasserstein gradient flow to components in the MF approximation. 

\subsection{Bayesian models without latent variables}\label{sec: models_without_latent}

Recall that the mean-field variational family $\Gamma = \{q = \bigotimes_{j=1}^m q_j: q_j\in\ms P_2(\Theta_j)\}$ is the set of all factorized distributions over $m$ blocks of parameter $\theta$. We use the shorthand $q_{-j}^{(k)} = \bigotimes_{l\neq j} q_l^{(k)}$ to denote the joint variational distribution of $\theta_{-j}$, the parameter vector $\theta$ without its $j$-th block $\theta_j$, in the $k$-th iteration. A standard algorithm for solving optimization problems involving multiple variables is alternating minimization. For technical convenience, we consider a parallel (simultaneous) update scheme for implementing the alternative minimization framework~\eqref{cavi} to motivate our proposed method, which takes the following form under the current model setting,
\begin{align}\label{eqn:AMinimization}
    q_j^{(k+1)} = \argmin_{q_j}\KL(q_j\otimes q_{-j}^{(k)}\,\|\,\pi_n)\quad\mx{for }j\in[m]\ \ \mx{and}\ \ k = 0, 1, \cdots.
\end{align}
Alternative minimization for solving MF can diverge due to its overly aggressive moves~\cite{bhattacharya2023convergence}. A common solution to avoid divergence when optimizing a multivariate function in Euclidean space is to use a one-step gradient descent, rather than fully minimizing the target function. In light of this, we propose replacing the update of $q_j$ by solving~\eqref{eqn:AMinimization} with a one-step discretized Wasserstein gradient flow for the functional $\KL(q_j\otimes q_{-j}^{(k)}\,\|\,\pi_n)$. This leads to a new computational framework for implementing the mean-field approximation~\eqref{eqn: blockwise_optimize} for Bayesian models without latent variables, which we call \emph{mean-field Wasserstein gradient flow} (MF-WGF), by iteratively solving $m$ sub-problems associated with discretized WGF in each iteration, which can be formulated as
\begin{align}\label{eqn: JKO_update_para}
    q_j^{(k+1)} \in \argmin_{q_j} \KL(q_j\otimes q_{-j}^{(k)}\,\|\,\pi_n) + \frac{1}{2\tau} W_2^2(q_j, q_j^{(k)})\quad\mx{for } j\in[m] \ \ \mx{and}\ \ k = 0, 1, \cdots.
\end{align}
The iterative updating formula~\eqref{eqn: JKO_update_para} can be treated as the coordinate proximal descent algorithm in the Wasserstein space for minimizing the multi-input functional $\KL(q_1\otimes\cdots\otimes q_m\,\|\,\pi_n)$. 
Here, we consider the parallel scheme which allows us to compute $q_j^{(k+1)}$ for different $j$ parallelly, making the algorithm computationally efficient for large $m$.

Another appealing feature of MF-WGF is that the time discretization via the minimization movement scheme does not introduce any bias---the MF solution $\wht q_\theta$ in~\eqref{eqn: blockwise_optimize} is the (unique) fixed point of the corresponding iterative procedure, as shown by our theoretical results in Section~\ref{sec: main results}.  Furthermore, the iterative procedure has exponential convergence to this solution. It is straightforward to show that $\wht q_\theta = \bigotimes_{j=1}^m \wht q_j$, as a fixed point to MF-WGF, satisfies the distributional equations
\begin{align}\label{eqn: dist_equation_para}
    \wht q_j(\theta_j) = \frac{\exp\big\{\int_{\Theta_{-j}}\log\pi_\theta(\theta) + \sum_{k=1}^n\log p(X_k\,|\,\theta)\,\dd\wht q_{-j}(\theta_{-j})\big\}}{\int_{\Theta_j}\exp\big\{\int_{\Theta_{-j}}\log\pi_\theta(\theta) + \sum_{k=1}^n\log p(X_k\,|\,\theta)\,\dd\wht q_{-j}(\theta_{-j})\big\}\,\dd\theta_j}, \quad\mx{for }j\in[m],
\end{align}
which can be proved by applying the first order optimality condition to~\eqref{eqn: JKO_update_para} in terms of the first variation as described in Section~\ref{sec:subdiff}. Later, we will use this fixed point equation to show the concentration of MF approximation $\wht q_\theta$ towards the true parameter $\theta^\ast$ (c.f.~Theorem~\ref{thm: MFVI_posterior_convergence_rate}, also see Section~\ref{sec:Analysis_MF} for a sketched proof). 
Unlike Bayesian latent variable models, we do not need this concentration property to prove the linear convergence of $\wht q^{(k)}$ towards $\wht q_\theta$ in the sense of $W_2$ metric, as stated in Theorem~\ref{thm: para_update_whp} in the next section.

\subsection{Bayesian latent variable models}\label{sec:alter_min}
Due to the conditional independence among discrete latent variables $Z_1,\ldots,Z_n$ given $\theta$ and $X^n$, it is easy to verify that any minimizer $\wht q_{Z^n}$ of optimization problem~\eqref{eqn: mean_field_optimization_problem} also factorizes as $\wht q_{Z^n} = \bigotimes_{i=1}^n\wht q_{Z_i}$. 
Consequently, the alternating minimization framework~\eqref{cavi} for solving mean-field optimization~\eqref{eqn: mean_field_optimization_problem} under this model setting can be formulated as: for iteration $k=0,1,\ldots$,
\begin{align}
\mx{\bf Latent variable update:} \quad q_{Z_i}^{(k+1)} &=  \Phi(q^{(k)}_\theta, X_i), \quad i=1,2,\ldots,n,    \quad\mx{with}    \notag\\
  \quad  \Phi(q_\theta, X_i)(z) &= \frac{ \exp\big\{\mb E_{q_\theta} \log p(z\,|\, X_i,\theta)\big\}}   { \sum_{z\in\m Z}\exp\big\{ \mb E_{q_\theta} \log p(z\,|\, X_i,\theta)\big\}},\ \ z\in\m Z; \label{Eqn:Phi_def}\\
\mx{\bf Parameter update:}\qquad \quad q_\theta^{(k+1)} &=\argmin_{q_\theta} V_n\big( q_\theta \,\big| \,q_\theta^{(k)}\big),\quad\mx{with}  \notag\\
\mx{(sample energy functional)} &\quad V _n(q_\theta\,|\,q_\theta') :\,= n \,\mb E_{q_\theta} \big[U_n(\theta;\, q'_\theta) \big]+ \KL(q_\theta\,||\,\pi_\theta),   \quad\mbox{and} \notag\\
\mx{(sample potential function)} & \quad   U_n(\theta,\, q'_\theta) :\, = -\frac{1}{n}\sum_{i=1}^n\sum_{z\in\m Z} \log p(X_i,z\,|\,\theta) \, \Phi(q_\theta', X_i)(z),\notag
\end{align}
where $\Phi:\, \ms P^r(\Theta) \times \m X \mapsto \ms P(\m Z)$ denotes the map that turns a $(q_\theta, x)$ pair to a probability measure (pmf) over $\m Z$. Since $\m Z$ is discrete, the latent variable update can be easily performed using the closed form formula. The $V_n$ functional above resembles the $Q$ function computed in the E-step of a generic EM algorithm, and is equivalent up to a constant to the KL divergence $D_{KL}\big(q_{\theta}\otimes q^{(k+1)}_{Z^n}\,\big\|\,\pi_n\big)$.

Writing the updating formula for $q_\theta$ via minimizing the KL divergence functional $V_n$ is more convenient for the design of algorithms and theoretical analysis. 
Since $Z_i$'s are discrete, 
in the preceding alternating minimization algorithm, updating $q_{Z^n}$ with a given $q_\theta$ amounts to solving $\min_{q_{Z^n}} D_{KL}(q_{\theta}\otimes q_{Z^n}\,\|\,\pi_n)$, which admits a closed form expression with tractable normalization. 
However, the step of updating $q_\theta$ by solving for
the exact minimizer of $V_n(\cdot\,|\,q_\theta^{(k)})$ may not be computationally tractable unless some conditional conjugacy condition is satisfied. Instead, we view $V_n(\cdot\,|\,q_\theta^{(k)})$ as the KL divergence functional over $\ms P^r_2(\Theta)$ and propose to update $q_\theta$ via its associated one-step discretized Wasserstein gradient flow. Note that in the situation where $Z^n$ is continuous, we may also apply a one-step discreteized Wasserstein gradient flow to update $q_{Z^n}$ rather than exactly minimize $D_{KL}(q_{\theta}\otimes q_{Z^n}\,\|\,\pi_n)$ over $q_{Z^b}$; the resulting algorithm then becomes coordinate descent over the space of all factorized probability distributions. We leave the formal study of this case to future work.

\begin{figure}[t]
    \centering
    \includegraphics[scale=0.7]{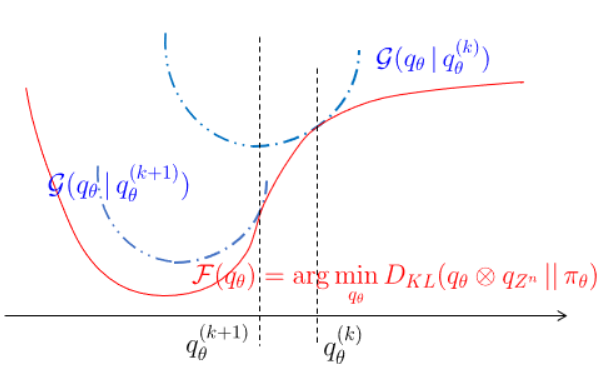}
    \caption{Mean-field Wasserstein gradient flow (MF-WGF) as an extension of the Majorize-Minimization (MM) algorithm~\cite{lange2016mm} for minimizing (profile-KL) functional $\m F=\min_{q_{Z^n}}D_{KL}\big(q_{\theta}\otimes q_{Z^n}\,\big\|\,\pi_n\big)$ over the space of all probability measures on parameter space $\Theta$. Here, $\m G(q_\theta\,|\,q_\theta'):\,=V_n(q_\theta\,|\,q_\theta') + \frac{1}{2\tau}W_2^2(q_\theta,\, q_\theta')$ majorizes $\m F$.}
    \label{fig: WFMGF as MM}
\end{figure}

The perspective of viewing the parameter update step as minimizing a time-dependent KL divergence functional over $\ms P_2(\Theta)$ leads to our new computational framework of MF-WGF for Bayesian latent variable models. More precisely, MF-WGF involves iteratively cycling through the following two steps for iteration $k=0,1,\ldots$:

\emph{{\bf Step 1} (Local latent variable):}  For $i=1,\ldots,n$, compute $q^{(k+1)}_{Z_i}$ based on the updating formula~\eqref{Eqn:Phi_def};

\emph{{\bf Step 2} (Global model parameter):} Compute the energy functional  $V_n(q_\theta\,|\,q_\theta^{(k)})$ using the most recent $q^{(k+1)}_{Z^n}=\bigotimes_{i=1}^n q^{(k+1)}_{Z_i}$, and
update $q_\theta$ via the one-step minimization movement scheme~\eqref{eqn: JKO_scheme} with objective functional $V_n(q_\theta\,|\,q_\theta^{(k)})$,
    \begin{equation}\label{eqn: JKO_update_qtheta}
        q_\theta^{(k+1)} = \argmin_{q_\theta}V_n(q_\theta\,|\,q_\theta^{(k)}) + \frac{1}{2\tau}W_2^2(q_\theta,\, q_\theta^{(k)}).
    \end{equation}

\noindent The two steps of MF-WGF resemble a distributional version of the E-step and the M-step respectively in the classical EM algorithm for dealing with missing data problems. One can also view MF-WGF as an Majorize-Minimization (MM) algorithm~\cite{lange2016mm} for distributional optimization (see Figure~\ref{fig: WFMGF as MM} for an illustration) where $\m G(q_\theta\,|\,q_\theta'):\,=V_n(q_\theta\,|\,q_\theta') + \frac{1}{2\tau}W_2^2(q_\theta,\, q_\theta')$ serves as the majorized version of the (profile) objective functional $q_\theta\mapsto \min_{q_{Z^n}}D_{KL}\big(q_{\theta}\otimes q_{Z^n}\,\big\|\,\pi_n\big)$, where $q_{Z^n}$ has been profiled out since the minimizing over $q_{Z^n}$ admits a closed form solution as in Step 1 of MF-WGF.

Similar to Bayesian models without latent variables, in Section~\ref{sec: main results} we show that the MF solution $(\wht q_\theta,\, \wht q_{Z^n})$ in~\eqref{eqn: mean_field_optimization_problem} is a unique fixed point of the corresponding iterative procedure in a constant radius $W_2$-neighhorhood around the solution, and the iterative procedure has exponential convergence to this solution given it is initialized in this neighborhood. It is straightforward to show that $\wht q_\theta$, as a fixed point to MF-WGF, satisfies
\begin{align}\label{eqn: distributional_equation}
    \mu(\theta) = \frac{1}{Z_n(\mu)}\,\pi_\theta(\theta)\, e^{-n\,U_n(\theta;\, \mu)}, \quad\mx{with } Z_n(\mu) = \int_\Theta\pi_\theta(\theta)\, e^{-n\,U_n(\theta;\, \mu)}\,\dd\theta,
\end{align}
which can be proved by applying the first order optimality condition to~\eqref{eqn: JKO_update_qtheta} in terms of the first variation (see Appendix~\ref{sec:fix_point_proof} for further details). This fixed point equation is helpful to show the concentration of MF approximation $\wht q_\theta$ towards the true parameter $\theta^\ast$ (c.f.~Theorem~\ref{thm: posterior_convergence_rate}, also see Section~\ref{sec:Analysis_MF} for a sketched proof). Heuristically, when $n$ is large, $\wht q_\theta$ is expected to concentrate around the point mass measure $\delta_{\theta^\ast}$ at $\theta^\ast$, so that we can roughly approximate $\wht q_\theta$ by the right hand side of~\eqref{eqn: distributional_equation} with $\mu$ being replaced by $\delta_{\theta^\ast}$; then the convergence follows by the fact that $\theta^\ast$ approximately minimizes the potential $U_n(\theta;\,\delta_{\theta^\ast})$.


\section{Theoretical Results}\label{sec: main results}
In this subsection, we present two main theoretical results of this work: concentration of the MF approximation $\wht q_\theta$ to the true parameter $\theta^\ast$, and the convergence of the proposed MF-WGF algorithm. In the next section, we will apply the theoretical results to three representative examples by verifying the assumptions. All proofs are deferred to the Appendices in the supplement of the paper.

\subsection{Analysis of mean-field approximation}\label{sec:Analysis_MF}
We adopt the frequentist perspective by assuming that data $X^n$ are generated from a data generating model indexed by a true parameter $\theta^\ast$. Before presenting the formal result, we make the following assumptions, most are standard for proving concentration of Bayesian posteriors~\cite{ghosal2000convergence,shen2001rates} and their MF counterpart~\cite{pati2018statistical,yang2020alpha,alquier2020concentration,zhang2020convergence}.

\begin{assumption}[test condition]\label{assump: test condition}
For some constants $c_1, c_2 > 0$ and any $\varepsilon > c_1\sqrt{\log n/n}$, there is a test function $\phi_n$, such that
\begin{align*}
\mb E_{\theta^\ast}[\phi_n] \leq e^{-c_2 n\varepsilon^2},\quad \sup_{\theta:\exists j\in[m], s.t.\|\theta_j-\theta_j^\ast\| > \varepsilon} \mb E_\theta[1 - \phi_n] \leq e^{-c_2n\varepsilon^2}.
\end{align*}
\end{assumption}

\smallskip
\noindent In a typical parametric setting, the existence of such a test can be proved by decomposing $\{\theta_j: \|\theta_j-\theta_j^\ast\| > \varepsilon\}$ into a countable union of annuluses. Each annuluses can be covered by a finite number of balls, within each ball the likelihood ratio type test can be employed \cite{birge1979theoreme,le2012asymptotic}. When $m=1$ in Bayesian latent variable models, this is just the standard test condition discussed in \cite[][Section 7]{ghosal2000convergence}.

\begin{assumption}[prior thickness]\label{assump: prior condition}
There is a measure $\widetilde Q = \otimes_{j=1}^m\widetilde Q_j$, subsets $\widetilde \Theta_j\subset \Theta_j$ for $j\in[m]$, and positive constants $c_3$ and $c_4$, such that for any $\theta\in\widetilde\Theta \coloneqq \otimes_{j=1}^m\widetilde \Theta_j$ we have
\begin{align*}
&\,\KL\big(p(\cdot\,|\,\theta^\ast)\,\|\, p(\cdot\,|\,\theta)\big) \leq c_4 \varepsilon_n^2, 
\quad \int_{\m X}\Big(\log\frac{p(x\,|\,\theta^\ast)}{p(x\,|\,\theta)}\Big)^2 p(x\,|\,\theta^\ast)\,\dd x \leq c_4\varepsilon_n^2,\\
&\, \log\frac{\dd\widetilde Q}{\dd \Pi_\theta}\leq c_4n\varepsilon_n^2 \quad \mbox{and} \quad \log\widetilde Q(\widetilde \Theta) = \sum_{j=1}^m\log\widetilde Q_j(\widetilde \Theta_j) \geq - c_3 n\varepsilon_n^2,
\end{align*}
where $\Pi_\theta$ denotes the prior distribution, and $\varepsilon_n = M\sqrt{\log n/n}$ for some $M > 1$.
\end{assumption}

\smallskip
\noindent The case of $m=1$ reduces to the Bayesian posterior without MF approximation. Under $m=1$, this assumption is implied by the standard prior thickness assumption~\cite{ghosal2000convergence} by taking $\widetilde Q = \Pi_\theta$. When $m > 1$, this assumption requires the existence of a fully factorized probability measure $\widetilde Q$ in the MF family that is close to the prior distribution and puts enough mass around the ground truth $\theta^\ast$.

\vspace{0.5em}
For Bayesian latent variable models, we need an additional assumption on the conditional likelihood function of latent variable $Z$ given $\theta$ and observation $X$.

\begin{assumption}[local bound of KL divergence]\label{assump: qudratic growth of KL}
The marginal distribution $p(x\,|\,\theta)$ of observation $X$ under $\theta$ and the conditional distribution $p(z\,|\,x,\theta)$ of latent variable $Z$ given $X=x$ and $\theta$ satisfy
\begin{align}    
D_{KL}\big(p(\cdot\,|\,x,\theta^\ast)\,\|\,p(\cdot\,|\,x,\theta)\big) &\leq G(x)\,\varepsilon_n^2,\quad\forall\,\theta\in\widetilde\Theta,
\end{align}
where $\widetilde \Theta$ is the local neighborhood of $\theta^\ast$ defined in Assumption~\ref{assump: prior condition}, and $G(X)$ is a sub-exponential random variable with parameters $\sigma_4$ under $p(\cdot\,|\,\theta^\ast)$, i.e. $\mb E_{\theta^\ast}\big[\exp\big\{\sigma_4^{-1}|G(X)|\big\}\big]\leq 2$. 
\end{assumption}

\smallskip
\noindent This assumption is a mild condition, which is implied by a quadratic growth of KL divergence for $\theta\in\widetilde\Theta$, i.e.~$\KL(p(\cdot\,|\,x, \theta^\ast)\,\|\,p(x, \theta)) \lesssim G(X)\|\theta-\theta^\ast\|^2$. This quadratic growth property holds if the logarithms of both distributions (density or mass function) are twice differentiable with controlled Hessians. For simple presentation, we adopt the assumption that $G(X)$ is sub-exponential to derive a high probability upper bound of $n^{-1}\sum_{i=1}^n G(X_i)$. This sub-exponential assumption on $G(X)$ can be generalized to $G(X)$ having a finite Orlicz-norm. See Appendix~\ref{Appendix: conc} for the definition of the Orlicz norm of a random variable and further details.

\smallskip
\noindent {\bf Bayesian models without latent variables.}
Recall that $\widehat{q}_\theta$ is the solution of the mean-field optimization problem~\eqref{eqn: blockwise_optimize}, which should satisfy the following optimality condition (see Lemma~\ref{lem: functional_Wn} in Appendix~\ref{app: concentration_MFVI}),
\begin{align}\label{eqn: MFVI_FOC_again}
\wht q_j(\theta_j) = \frac{\exp\big\{\int_{\Theta_{-j}}\log\pi_\theta(\theta) + \sum_{k=1}^n\log p(X_k\,|\,\theta)\,\dd\wht q_{-j}(\theta_{-j})\big\}}{\int_{\Theta_j}\exp\big\{\int_{\Theta_{-j}}\log\pi_\theta(\theta) + \sum_{k=1}^n\log p(X_k\,|\,\theta)\,\dd\wht q_{-j}(\theta_{-j})\big\}\,\dd\theta_j}, \quad\mx{for }j\in[m].
\end{align}
A major challenge in our analysis of the MF solution $\wht q_\theta$ is how to deal with the normalization constant (denominator) in the preceding display, which depends on $\wht q_\theta$ and complicates the analysis. To address this issue, we rewrite equation~\eqref{eqn: MFVI_FOC_again} by adding a $\theta_j$-independent term to both nominator and denominator, 
\begin{align*}
\wht q_j(\theta_j) = \frac{\exp\big\{\int_{\Theta_{-j}}\log\frac{\pi_\theta(\theta)}{\widetilde Q_j(\theta_j)\wht q_{-j}(\theta_{-j})} + \sum_{i=1}^n\log\frac{p(X_i\,|\,\theta)}{p(X_i\,|\,\theta^\ast)}\,\dd\wht q_{-j}\big\}\widetilde Q_j(\theta_j)}{\int_{\Theta_j}\exp\big\{\int_{\Theta_{-j}}\log\frac{\pi_\theta(\theta)}{\widetilde Q_j(\theta_j)\wht q_{-j}(\theta_{-j})} + \sum_{i=1}^n\log\frac{p(X_i\,|\,\theta)}{p(X_i\,|\,\theta^\ast)}\,\dd\wht q_{-j}\big\}\,\dd\widetilde Q_j}.
\end{align*}
To prove the concentration of $\wht q_j$ around the parameter $\theta_j^\ast$, we can proceed as the usual steps for proving posterior concentration (e.g.~\cite{ghosal2000convergence,shen2001rates}) by proving an upper and a lower bound to the numerator and the denominator respectively. Applying a union bound then yields the concentration of $\wht q_\theta$ around $\theta^\ast$ due to the factorization structure of $\wht q_\theta$.

For the lower bound to the denominator in the preceding display, denoted as $D_j$, we utilize the following equivalent expression,
\begin{align}
   & \log D_j = -\widetilde W_n(\wht q_\theta) = - \min_{q_\theta=\otimes_{j=1}^m q_j}\widetilde  W_n(q_\theta), \quad\mx{with }\\
   \widetilde W_n(q_\theta) = \int_\Theta &\sum_{i=1}^n\log\frac{p(X_i\,|\,\theta^\ast)}{p(X_i\,|\,\theta)}\,\dd q_1(\theta_1)\cdots\dd q_m(\theta_m) + \KL(q_1\otimes\cdots\otimes q_m\,\|\,\pi_\theta).\notag
\end{align}
Here, functional $\widetilde W_n$ is, up to a $q_\theta$-independent constant, the same as the objective functional $q_\theta = \bigotimes_{j=1}^m q_j \mapsto D_{KL}\big(q_{\theta}\,\big\|\,\pi_n\big)$. Thus, $\wht q_\theta$ minimizes $\widetilde W_n$.
Since $\wht q_\theta$ is expected to be concentrated around $\theta^\ast$, we may use $-\widetilde W_n(q_\theta)$ with some carefully constructed $q_\theta$ (the $\widetilde Q$ from Assumption~\ref{assump: prior condition}) suitably concentrated around $\theta^\ast$ (e.g.~a uniform distribution supported on a small neighborhood around $\theta^\ast$) for providing a lower bound to $\log D_j$.

For the upper bound to the numerator, the first term can be controlled by directly applying Jensen's inequality; the second term $\sum_{i=1}^n\log\frac{p(X_i\,|\,\theta)}{p(X_i\,|\,\theta^\ast)}$, which is roughly negative $n$ times $\KL\big[\,p(\cdot\,|\,\theta^\ast)\,\big\|\,p(\cdot\,|\,\theta)\big]$ since $\{X_i\}_{i=1}^n$ are marginally i.i.d.~from $p(\cdot\,|\,\theta^\ast)$ under the frequentist perspective. To formally bound the numerator, or more precisely, the integral of numerator over set $\Theta_\varepsilon=\{\|\theta-\theta^\ast\|\geq \varepsilon\}$ for suitably large $\varepsilon>0$, we use the commonly adopted test condition (i.e.,~Assumption~\ref{assump: test condition}) for uniformly controlling the log-likelihood ratio process over $\Theta_\varepsilon$.

The following theorem shows the concentration of $\widehat{q}_\theta$ by characterizing the tail probability of being away from the true parameter $\theta^\ast$. Recall that we are adopting a frequentist perspective, where the randomness in all high probability bound is coming from the randomness in the samples $X_1, \cdots, X_n$ that are generated under a true parameter $\theta^\ast$.

\begin{theorem}[Exponential posterior concentration without latent variables]\label{thm: MFVI_posterior_convergence_rate}
Under Assumptions~\ref{assump: test condition} and~\ref{assump: prior condition}, if the sample size satisfies $c_4n^{c_2 M^2} \geq 3m M^2\log n$, then for any $M\geq 1$, the MF variational approximation $\wht Q_\theta$ to the posterior distribution of $\theta$ satisfies the following with probability at least $1 - \frac{2c_4}{n\varepsilon_n^2} = 1 - \frac{2c_4}{M^2\log n}$,
\begin{align}
&\wht{Q}_\theta\big(\exists\,j\in[m]\,\,s.t.\,\,\|\theta_j - \theta_j^\ast\| > \varepsilon\big) \leq e^{-c_2n\varepsilon^2/2},\notag\\
&\qquad\qquad\qquad \mx{for all}\quad\varepsilon > M\Big(3 + c_1 + \frac{2c_4 + c_3 + 1}{c_2}\Big)\sqrt{\frac{\log n}{n}}. \label{eqn: constraint_epsilon}
\end{align}
\end{theorem}

\smallskip
\noindent {\bf Bayesian latent variable models.}
Let $\m Z =\{1, 2, \ldots ,K\}$ be the support of each discrete latent variable.
In this case, $\widehat{q}_\theta$ is the solution of the mean-field optimization problem~\eqref{eqn: mean_field_optimization_problem}, satisfying (see Lemma~\ref{lem:functional_Vn} in Appendix~\ref{app:proof_posterior_con} and Appendix~\ref{sec:fix_point_proof}),
\begin{align}\label{MF_solution_form}
    \widehat{q}_\theta(\theta)  = \frac{1}{\wht Z_n}\,\pi_\theta(\theta)\,e^{-n\,U_n(\theta,\, \widehat{q}_\theta)}, \quad\mx{with }\wht Z_n = \int_\Theta\pi_\theta(\theta)\,e^{-n\,U_n(\theta,\, \widehat{q}_\theta)}\,\dd\theta,
\end{align}
where the (sample) potential function $U_n:\,\Theta\times \ms P^r(\Theta) \to \mb R$ is
\begin{align}
    U_n(\theta,\,q_\theta) &= -\frac{1}{n}\sum_{i=1}^n\sum_{z=1}^K\log p(X_i, z\,|\,\theta)\,\Phi(q_\theta,X_i)(z), \quad\mx{where} \label{eq:U_n_exp}\\
     \Phi(\widehat{q}_\theta,&\, X_i)(z):\,=\widehat{q}_{Z_i}(z) = \frac{\exp\big\{\mb E_{\widehat{q}_\theta}\big[\log p(X_i, z\,|\,\theta)\big]\big\}}{\sum_{k=1}^K\exp\big\{\mb E_{\widehat{q}_\theta}\big[\log p(X_i, k\,|\,\theta)\big]\big\}},\quad z\in[K]. \notag
\end{align}

Similar to the analysis of Bayesian models without latent variables, we rewrite equation~\eqref{MF_solution_form} by adding a $\theta$-independent term to both nominator and denominator, 
\begin{align*}
    \widehat{q}_\theta(\theta)= \frac{\exp\big\{-\sum_{i=1}^n\sum_{z=1}^K\Phi(\widehat{q}_\theta, X_i)(z)\log\frac{\Phi(\widehat{q}_\theta, X_i)(z)}{p(z\,|\,X_i,\theta)} - \sum_{i=1}^n\log\frac{p(X_i\,|\,\theta^\ast)}{p(X_i\,|\,\theta)}\big\}\,\pi_\theta(\theta)}{\int_{\Theta}\exp\big\{-\sum_{i=1}^n\sum_{z=1}^K\Phi(\widehat{q}_\theta, X_i)(z)\log\frac{\Phi(\widehat{q}_\theta, X_i)(z)}{p(z\,|\,X_i,\theta)} - \sum_{i=1}^n\log\frac{p(X_i\,|\,\theta^\ast)}{p(X_i\,|\,\theta)}\big\}\,\dd\pi_\theta(\theta)}.
\end{align*}
For the lower bound to the denominator, denoted as $D_n$, we can use the following equivalent expression, which is more convenient to analyze,
\begin{align}
   & \log D_n = - W_n(\wht q_\theta) = - \min_{q_\theta} W_n(q_\theta), \quad\mx{with } W_n(q_\theta) = \label{eqn:profile_KL}\\
     \int_\Theta \bigg\{\sum_{i=1}^n  &\, \sum_{z=1}^K  \Phi(q_\theta, X_i)(z) \log\frac{\Phi(q_\theta, X_i)(z)}{p(X_i,z\,|\,\theta)} + \sum_{i=1}^n\log\frac{p(X_i\,|\,\theta^\ast)}{p(X_i\,|\,\theta)}\bigg\}\,\dd q_\theta(\theta) + \KL(q_\theta\,\|\,\pi_\theta).\notag
\end{align}
Here, functional $W_n$ is, up to a $q_\theta$-independent constant, the same as the (profile) objective functional $q_\theta\mapsto \min_{q_{Z^n}}D_{KL}\big(q_{\theta}\otimes q_{Z^n}\,\big\|\,\pi_n\big)$ after $q_{Z^n}$ being maxed out or replaced by $\Phi(q_\theta, X_i)$; so $\wht q_\theta$ minimizes $W_n$.
Again, we may use $-W_n(q_\theta)$ with some carefully constructed $q_\theta$ suitably concentrated around $\theta^\ast$ (e.g.~a uniform distribution supported on a small neighborhood around $\theta^\ast$) for providing a lower bound to $\log D_n$.

For the upper bound to the numerator, the second term can be treated in the same way as in Bayesian models without latent variables. For the first term in the exponent, just note that $$-\sum_{z=1}^K\Phi(\widehat{q}_\theta, X_i)(z)\log\frac{\Phi(\widehat{q}_\theta, X_i)(z)}{p(z\,|\,X_i,\theta)}$$
is the negative KL divergence between two discrete measures, and therefore is non-positive. 

The following theorem shows the concentration of $\widehat{q}_\theta$ by characterizing the tail probability of being away from the true parameter $\theta^\ast$ in Bayesian latent variable models. 

\begin{theorem}[Exponential posterior concentration with latent variables]\label{thm: posterior_convergence_rate}
Under Assumptions \ref{assump: test condition}, \ref{assump: prior condition}, and \ref{assump: qudratic growth of KL}, if the sample size satisfies $6M\log n\leq \min\{n^{c_2M}, e^{4n\sigma_4^{-1}}\}$, then for any $M\geq 1$, the MF variational approximation $\wht Q_\theta$ to the marginal posterior of $\theta$ satisfies the following with probability at least $1-\frac{2c_4}{n\varepsilon_n^2} = 1 - \frac{2c_4}{M^2\log n}$,
\begin{align}
    &\wht Q_\theta \big(\|\theta - \theta^\ast\| > \varepsilon\big) \leq e^{-{c_2n\varepsilon^2}/2}, \label{eqn: concentration_target_measure}\\
 &\qquad\qquad \mx{for all}\quad \varepsilon \geq M\Big(3+c_1 + \frac{\mb E[G(X)] + c_3 + c_4 + 2}{c_2}\Big)\,\sqrt{\frac{\log n}{n}}.\label{cond: r_in_posterior_convergence_rate}
\end{align}
\end{theorem}


Concentration properties of variational inference have been studied in the recent literature under different criteria. \cite{alquier2020concentration} and~\cite{ yang2020alpha} consider a variant of the usual variational inference, called the $\alpha$-variational inference,
obtained by raising the likelihood to a fractional power $\alpha\in(0,1]$ to facilitate the theoretical analysis. They prove upper bounds for the variational Bayes risk, defined as the expected R\'{e}nyi divergence with respect to their $\alpha$-fractional variational posterior. When $\alpha$ is strictly small than one, they only need a prior concentration assumption (similar to our Assumption~\ref{assump: prior condition}); however, under some mild conditions their risk function behaves like the second moment $\mb E_{\wht Q_{\theta}}\big[\|\theta-\theta^\ast\|^2\big]$, which is much weaker than our sub-Gaussian type tail result. 
For the usual variational inference (or $\alpha$-variational inference with $\alpha=1$), \cite{pati2018statistical,yang2020alpha,zhang2020convergence} proves high probability upper bounds to some similar variational Bayes risks that scales as $\mb E_{\wht Q_{\theta}}\big[\|\theta-\theta^\ast\|^2\big]$ under similar test conditions (as our Assumption~\ref{assump: test condition}) and a stronger version of the prior concentration assumption. Their proofs avoid assuming the compactness of parameter space by considering a sequence of sieve sets. \cite{han2019statistical} proves a similar sub-Gaussian concentration result as ours; their proof is based on a perturbation analysis specifically tailored to the MF approximation and does not seem easily generalizable to other variatioal or hybrid schemes.


Our proof technique is very different from existing proofs of the variational posterior concentration in the literature, most of which are based on applying the varitional characterization of KL divergence, $\KL(p\,||\,q) = \sup_{h}\big\{\int h p -\log(\int e^h q)\big\}$. We instead view the variational posterior $\wht Q_\theta$ as a point in the Wasserstein space $\mb W_2(\mb R^d)$ that minimizes a KL divergence functional, and uses its first order optimality condition (or equivalently, its stationarity to the time-discrete WGF) to show the concentration via subdiffential calculus in $\mb W_2(\mb R^d)$. This general perspective might be useful in extending the developed proof technique to other approximation scheme beyond MF, such as many recent generative model based variational inference procedures. 
Our obtained rate of convergence is also nearly optimal in the parametric setting; and the assumptions we made needed are standard in Bayesian asymptotics literature, and appears to be among the weakest in the context of variational inference. In addition, our proof techniques can be straightforwardly extended to non-parametric settings where the optimal rate of convergence is slower than root-$n$.


\subsection{Analysis of MF-WGF algorithm}\label{sec:MF-WGF_analysis}
We separately analyze the convergence of MF-WGF for Bayesian models with/without latent variables.

\smallskip
\noindent {\bf Bayesian models without latent variables.}
Recall that in the $k$-th iteration, the MF-WGF algorithm updates the joint variational distribution $\bigotimes_{j=1}^m q_j^{(k)}$ into $\bigotimes_{j=1}^m q_j^{(k+1)}$ with
\begin{align*}
q_j^{(k+1)} = \argmin_{q_j} n\,\mb E_{q_j\otimes q_{-j}^{(k)}} \big[U_n\big] + \KL\big(q_j\otimes q_{-j}^{(k)}\,\big\|\,\pi_\theta\big) + \frac{1}{2\tau}W_2^2(q_j, q_j^{(k)}), \quad j\in[m],
\end{align*}
where $U_n(\theta) = -\frac{1}{n}\sum_{i=1}^n \log p(X_i\,|\,\theta)$ is the sample potential function. The corresponding population-level potential function is
\begin{align*}
U(\theta) = -\int_{\mb R^d}\log p(x\,|\,\theta)\,p(\dd x\,|\,\theta^\ast).
\end{align*}
We expect that $U_n$ and $U$ are uniformly close enough when the sample size $n$ is sufficiently large (e.g.~Theorem 1 in \cite{mei2018landscape}).

Before formally presenting our result, we begin by introducing several assumptions that are commonly used to prove exponential convergence of iterative algorithms in optimization literature.

\stepcounter{assumptionA}
\begin{assumption}[strong convexity of population-level potential]\label{assump: MFVI_SC}
There exists $\lambda > 0$ such that $U$ is $\lambda$-strongly convex, i.e.
\begin{align*}
U\big((1-t)\theta + t\theta'\big) \leq (1-t)\,U(\theta) + t\,U(\theta') - \frac{\lambda}{2}\,t(1-t)\|\theta - \theta'\|^2
\end{align*}
for all $t\in[0, 1]$ and $\theta, \theta'\in\Theta$. Moreover, the parameter space $\Theta = \bigotimes_{j=1}^m\Theta_j\subset\mb R^d$ is convex and contained a ball centered at the origin with radius $R$. 
\end{assumption}
\noindent This strong convexity assumption guarantees that $\KL(\cdot\,\|\,\pi_n)$ is strongly convex along generalized geodesics on $\ms P_2^r(\Theta_1)\times\cdots\times \ms P_2^r(\Theta_m)\subset\ms P_2^r(\Theta)$. 
It is possible to relax this global strong convexity to a local strong convexity  within a small but constant-radius neighborhood around the true parameter $\theta^\ast$, which is always true for regular models with non-singular Fisher information matrix. One simple strategy is to assume that both the prior and the initialization distribution of the algorithm are supported within this neighborhood. In practice, one can construct this initialization distribution by identifying a reasonably good initial point estimate of $\theta^\ast$, for example, using simple and fast methods such as the method of moments; and also modify the prior by restricting it onto a constant neighborhood around the estimate. Due to the flexibility in selecting the prior distribution for MF-WGF, such a modification will have a minimal impact on the implementation. A second technical strategy is to further impose a dissipative condition (see~\cite{raginsky2017non} for definition). A dissipative condition is commonly made to guarantee the long term stability of sampling algorithms such as Langevin dynamics~\cite{raginsky2017non} as it causes most probability mass absorbed into a constant neighborhood of $\theta^\ast$ after a number of iterations; then the behavior of the algorithm inside this neighborhood is driven by the local convexity of the potential. Since Theorem~\ref{thm: MFVI_posterior_convergence_rate} tells that there is at most $O\big(e^{-cn}\big)$ probability mass of $\wht q_\theta$ outside this neighborhood, we expect our current analysis to be valid up to an extra  $O\big(e^{-cn}\big)$ remainder term.
Due to the significant complexity of our current proof with global convexity, we will leave a systematic study on such an extension in a separate work.

\begin{assumption}[smoothness of population-level potential]\label{assump: L-smooth}
There exists $L > 0$ such that $U$ is $L$-smooth, which is defined by
\begin{align}\label{eqn: L-smooth}
U\big((1-t)\theta + t\theta'\big) \geq (1-t)\,U(\theta) + t\,U(\theta') - \frac{L}{2}\,t(1-t)\|\theta - \theta'\|^2
\end{align}
for all $t\in[0, 1]$ and $\theta, \theta'\in\Theta$.
\end{assumption}
\noindent A smoothness condition on the objective function is usually necessary to prove exponential convergence of a coordinate descent-type optimization algorithms~\cite{wright2015coordinate, wright2022optimization}. When $U$ is twice differentiable, the above Assumption~\ref{assump: L-smooth} is equivalent to $\matnorm{\nabla^2 U(\theta)}\leq L$ for all $\theta\in\Theta$. In our proof, we can slightly relax this condition since we only require $\|\nabla_j U(\theta_j, \theta_{-j}) - \nabla_j U(\theta_j, \theta_{-j}')\| \leq L\|\theta_{-j} - \theta_{-j}'\|$, where $\nabla_j$ is the gradient with respect to the $j$th component of $U$. This inequality is equivalent to $\matnorm{\nabla_j\nabla_{-j} U(\theta)}\leq L$ when $U$ is twice differentiable and is weaker than Assumption~\ref{assump: L-smooth}. 

\smallskip
To show that the sample-level potential $U_n$ is uniformly close to its population version $U$ and inherits the convexity and the smoothness of $U$ (see the proof of Theorem~\ref{thm: para_update_whp} in Appendix~\ref{app: mainthm_nolatent}), we need the following assumption which characterizes the continuity and the sub-exponential tail of the (higher-order) derivatives of the log-likelihood functions. 
\begin{assumption}[regularity of log-likelihood function]\label{assump: MFVI_reg}
The log-likelihood function $\log p(x\,|\,\theta)$ is twice differentiable with respect to $\theta\in\Theta$. Let $X$ denote a sample generated from the true distribution $p(\cdot\,|\,\theta^\ast)$. Then, the following regularity assumptions hold.
\begin{enumerate}
\item[1.] The Lipschitz constant (relative to the matrix operator norm) of the log-likelihood Hessian
\begin{align*}
    J(X) := \sup_{\theta\neq\theta'} \frac{\matnorm{\nabla^2\log p(X\,|\,\theta) - \nabla^2\log p(X\,|\,\theta')}}{\|\theta-\theta'\|}
\end{align*}
satisfies $\mb E_{\theta^\ast}[J(X)] < J_\ast$ for some finite $J_\ast$.
\item[2.]
For any $v\in B_{\mb R^d}(0, 1)$ and $\theta\in\Theta$, $\big\langle v, \nabla^2\log p(X\,|\,\theta)v\big\rangle$ is sub-exponential with parameter $\sigma_5$. In particular, a sufficient condition for this to be true is $\matnorm{\nabla^2\log p(X\,|\,\theta)}$ being sub-exponential for any $\theta\in \Theta$.
\end{enumerate}
\end{assumption}
\noindent The first part of this assumption can be checked by controlling the third order derivatives of $\log p(X\,|\,\theta)$ with respect to $\theta$ when it is sufficiently smooth; the second part can be verified by directly calculating the first order Orlicz norm of $v^T\nabla^2\log p(X\,|\,\theta)\,v$, and can also be extended to a bounded $\psi_\alpha$ (Orlicz) norm for some $\alpha > 0$.

\begin{theorem}[MF-WGF without latent variables]\label{thm: para_update_whp}
Suppose Assumptions~\ref{assump: MFVI_SC}--\ref{assump: MFVI_reg} hold, and $\log\pi_\theta$ is twice differentiable. Then there is a universal constant $C>0$, such that for any fixed $\eta\in(0, 1)$, the following inequality holds with probability at least $1 - \eta$,
\begin{align*}
    W_2^2(q^{(k)}, \wht q) \leq \big(1 + 2\tau\lambda_{lb} -  L_{ub}^2\tau^2m\big)^{-k}W_2^2(q^{(0)}, \wht q),\quad k \geq 1,
\end{align*}
when $n \geq Cd\log d\cdot\max\big\{\log J_\ast / \log d, \log(R\sigma_5/\eta), 1\big\}$ and the step size $\tau$ satisfies
\begin{align*}
    \tau^{-1} \geq \sqrt{m}L_{ub}, \quad\mx{and}\quad 1 + 2\tau\lambda_{lb} -  L_{ub}^2\tau^2m \geq 0,
\end{align*}
where
\begin{align*}
\lambda_{lb} &\coloneqq n{\lambda} - \lambda_{\max}(\nabla^2\log\pi_\theta) - \sigma_5^2\sqrt{\frac{Cd\log n}{n}\cdot\max\Big\{\frac{\log J_\ast}{\log d}, \log\frac{R\sigma_5}{\eta}, 1\Big\}},\\
L_{ub} &\coloneqq n L - \lambda_{\min}(\nabla^2\log\pi_\theta) + \sigma_5^2\sqrt{\frac{Cd\log n}{n}\cdot\max\Big\{\frac{\log J_\ast}{\log d}, \log\frac{R\sigma_5}{\eta}, 1\Big\}},
\end{align*}
and $\lambda_{\max}(\nabla^2\log\pi_\theta)$ and $\lambda_{\min}(\nabla^2\log\pi_\theta)$ are the largest and the smallest eigenvalues of the Hessian matrix $\nabla^2\log\pi_\theta$ in $\Theta$ respectively.
In particular, if we take $\tau = \lambda_{lb}/( L_{ub}^2m)$, then
\begin{align}\label{eqn: MFVI_convergence_rate}
    W_2^2(q^{(k)}, \wht q) \leq \Big(1 + \frac{\lambda_{lb}^2}{ L_{ub}^2m}\Big)^{-k}W_2^2(q^{(0)}, \wht q), \quad k \geq 1.
\end{align}
\end{theorem}

\noindent The proof of this theorem does not require utilizing the concentration property on $\wht q_\theta$ as stated in Theorem~\ref{thm: MFVI_posterior_convergence_rate}, and the exponential convergence is solely driven by the convexity of population level potential $U$. However, when an effective potential $U_n$ varies across iterations, which is the case in MF-WGF for Bayesian latent variable models, the concentration property becomes essential to manage the fluctuation of $U_n$.

 Equation~\eqref{eqn: MFVI_convergence_rate} implies that $O\big(\frac{m L_{ub}^2}{\lambda_{lb}^2}\log\big(\frac{1}{\varepsilon}\big)\big) = O\big(\frac{m L^2}{\lambda^2}\log\big(\frac{1}{\varepsilon}\big)\big)$ iterations are sufficient for the algorithm to achieve an accuracy of $\varepsilon\in(0,1)$ in computing $\wht q$. This iteration complexity matches a typical iteration complexity of coordinate gradient descent for minimizing a strongly convex and smooth function in the Euclidean space  when taking the step size in the order of $O\big(\frac{\lambda}{L^2 m}\big)$ as we do (see, e.g.,~Theorem 6.3 of~\cite{wright2022optimization}). However, analyzing the coordinate proximal gradient descent in a product Wasserstein space presents some unique challenges.

In the Euclidean space, $\|(x^{(k+1)} - x^{(k)}) - (\wht x - x^{(k)})\| = \|x^{(k+1)} - \wht x\|$ holds for any arbitrary points $\wht x$, $x^{(k)}$, and $x^{(k+1)}$. However, we only have $\big\|T_{q_j^{(k)}}^{q_j^{(k+1)}} - T_{q_j^{(k)}}^{\wht q_j}\big\|^2_{L^2(q_j^{(k)};\Theta_j)} \geq W_2^2(q_j^{(k+1)}, \wht q_j)$ due to the positive curvature of the Wasserstein space. This difference indicates that we have to evaluate the change of KL divergence along the generalized geodesics (see Appendix~\ref{app:convexity} for a definition) connecting $q_j^{(k+1)}$ and $\wht q_j$ with the base measure $q_j^{(k)}$, rather than the geodesics connecting $q_j^{(k+1)}$ and $\wht q_j$.

In the proof, the convexity of the potential function $U(\theta)$ plays two roles: (1) allowing us to apply Theorem~\ref{Lem:stong_conx_generalized} to control the subdifferential; 
 (2) deriving a quadratic growth property of the KL-divergence functional, i.e.
\[
\KL(q_1\otimes \cdots\otimes q_m\,\|\,\pi_n) - \KL(\wht q_1\otimes\cdots\otimes \wht q_m\,\|\,\pi_n) \gtrsim W_2^2(q, \wht q_\theta), \quad \forall\,q_j\in\ms P_2^r(\Theta_j),\,\,j\in[m].
\]
$L$-smoothness of $U(\theta)$ helps guarantee that the KL divergence between $q^{(k)}$ and $\pi_n$ is decreasing when the step size is small. The detailed proof is postponed to Appendix~\ref{app: mainthm_nolatent}.

\medskip
\noindent {\bf Bayesian latent variable models.}
Recall that with the presence of latent variables, the MF-WGF algorithm can be summarized by the following iterative updating rule: for $k=0,1,\ldots$,
\begin{equation*}
        q_\theta^{(k+1)} = \argmin_{q_\theta}V_n(q_\theta\,|\,q_\theta^{(k)}) + \frac{1}{2\tau}W_2^2(q_\theta,\, q_\theta^{(k)}),
\end{equation*}
where for any $q_\theta'\in\ms P(\theta)$, the (sample) energy (or KL divergence) functional $V_n(\cdot\,|\,q_\theta')$ is defined as
\begin{align*}
    V _n(q_\theta\,|\,q_\theta') :\,= n \,\mb E_{q_\theta} \big[U_n(\theta;\, q'_\theta) \big]+ \KL(q_\theta\,||\,\pi_\theta),
\end{align*}
and $U_n(\cdot\,;\,q'_\theta)$ is the (sample) potential function given in~\eqref{eq:U_n_exp}. The corresponding population version of the potential is
\begin{equation}\label{eqn:pop_U}
    U(\theta;\, q_\theta') = -\int_{\mb R^d}\bigg\{\sum_{z=1}^K\log p(x,z\,|\,\theta)\,\Phi(q_\theta', x)(z)\bigg\}\,p(\dd x\,|\,\theta^\ast).
\end{equation}

The main difficulty in analyzing this MF-WGF algorithm is that the energy functional $V_n(\,\cdot\,|\,q_\theta^{(k)})$ determining $q_\theta^{(k+1)}$ also depends on the previous iterate $q_\theta^{(k)}$. With a time-independent energy functional, whose global minimizer denoted as $\pi^\ast$, we may directly apply Theorem~\ref{thm: implicit_WGF_conv} with $\pi = \pi^\ast$ to prove the contraction of the one-step discrete WGF towards $\pi^\ast$. However, by directly applying Theorem~\ref{thm: implicit_WGF_conv} with $\pi$ therein being the minimizer of $V_n(\,\cdot\,|\,q_\theta^{(k)})$, we can only prove the one-step contraction of MF-WGF towards this minimizer, which changes over iteration count $k$ and is generally different from the target $\wht q_\theta$.

To overcome this difficulty in the convergence analysis, we may introduce an accompanied population-level MF-WGF, defined as
\begin{align}\label{eqn:pop_MF-WGF}
     \wt q_\theta^{(k+1)} = \argmin_{q_\theta}V(q_\theta\,|\,\delta_{\theta^\ast}) + \frac{1}{2\tau}W_2^2(q_\theta,\, \wt q_\theta^{(k)}),
\end{align}
obtained by replacing $q_\theta^{(k)}$ in $V_n(\cdot\,|\,q_\theta^{(k)})$ by the point mass measure $\delta_{\theta^\ast}$ at $\theta^\ast$, and the sample energy functional by its population counterpart
\begin{align}
     V(q_\theta\,|\,q_\theta') :\,= n \,\mb E_{q_\theta} \big[U(\theta;\, q'_\theta) \big]+ \KL(q_\theta\,||\,\pi_\theta).
\end{align}
Since the energy function $V(\cdot\,|\,\delta_{\theta^\ast})$ in the population-level MF-WGF is time-independent, we can apply Theorem~\ref{thm: implicit_WGF_conv} or Corollary~\ref{coro:one_step_KL} to prove its contraction. Moreover,
since $\wht q_\theta$ is expected to be concentrated around $\theta^\ast$, we may expect the trajectory of the sample-level MF-WGF to be close to that of the population-level MF-WGF under the same initialization.

Based on this discussion, a natural strategy to prove the convergence of the MF-WGF algorithm can be divided into two main steps: 1.~control the difference between the sample-level iterates $\big\{q_\theta^{(k)}:\,k\geq 0\big\}$ and the population-level iterates $\big\{\wt q_\theta^{(k)}:\,k\geq 0\big\}$; 2.~analyze the convergence of population-level MF-WGF~\eqref{eqn:pop_MF-WGF}. Our actual proof is slightly different from the above heuristics. Specifically, to simplify the proof, we do not explicitly bound the difference between the sample-level and population-level iterates, but use some population level quantities, such as potential $U$ and energy functional $V$, to substitute their sample versions and properly control the resulting extra error terms in analyzing the sample iterates (see Appendix~\ref{app:proof_main_theorem} for further details). In particular, the freedom of choosing an arbitrary $\pi$ in Theorem~\ref{thm: implicit_WGF_conv} allows us to directly apply the theorem to analyze the sample-level MF-WGF by taking $\pi=\wht q_\theta$; however, some careful perturbation analysis will be required for the proof to go through.

The following assumptions are needed to formally prove the convergence of MF-WGF.

\stepcounter{assumptionA}
\begin{assumption}[strong convexity of population-level potential]\label{assump: strong_convexity}
There exists some constant $r > 0$, such that for any $\mu\in B_{\mb W_2}(\delta_{\theta^\ast}, r) :\,=\big\{\mu\in \PX:\, W_2(\mu,\,\delta_{\theta^\ast})\leq r\big\}$, function $U(\cdot\,;\, \mu):\Theta\to\mb R$ is $\lambda$-strongly convex, i.e.
\begin{equation}
    U\big(\,(1-t)\,\theta + t\,\theta'\,;\, \mu\big) \leq (1-t)\,U(\theta,\mu) + t\,U(\theta',\mu) - \frac{\lambda}{2}\,t(1-t)\,\|\theta - \theta'\|^2
\end{equation}
for all $t\in[0,1]$ and $\theta,\theta'\in\Theta$. Moreover, parameter space $\Theta\subset\mb R^d$ is convex and contained a ball centered at the origin with radius $R$. 
\end{assumption}
\noindent  Notice that this assumption only requires $U(\cdot\,;\,\mu)$ to be strongly convex when $\mu$ is close to $\delta_{\theta^\ast}$. We will verify this assumption for the two applications considered in Section~\ref{sec:thm_app}. When the initialization $q_\theta^{(0)}$ is close enough to the $\delta_{\theta^\ast}$, it can be proved by induction that any later iterates $q^{(k)}_\theta$ will stay in the same $W_2$ neighborhood. Therefore, we do not need $U(\cdot\,;\,\mu)$ to be strongly convex for all $\mu\in\ms P_2^r(\Theta)$. This assumption plays a similar role as Assumption~\ref{assump: MFVI_SC} for models without latent variable. Similarly, we expect that the strong convexity of $U(\theta\,;\,\mu)$ with respect to parameter $\theta$ can be relaxed to a local strong convexity within a neighborhood of $\theta^\ast$ with sufficiently small constant radius for all $\mu\in B_{\mb W_2}(0, r)$.
Here, we simply assume the global strictly convexity of $U$ in the current analysis to avoid these technicalities without affecting the convey of our main proof ideas.  


To show that the sample-level potential $U_n$ is uniformly close to its population version $U$ and inherits the convexity property of $U$ (see Lemma~\ref{lem: ULLN_of_potential_function} in Appendix~\ref{app:proof_main_theorem}), we need the following assumption characterizing continuity and sub-Gaussianity of the (higher-order) derivatives of the log-likelihood functions with the latent variable and the observed data. 

\begin{assumption}[regularity of log-likelihood function]\label{assump: continuity_of_Hessian}
The log-conditional-likelihood function $\log p(z\,|\,x,\theta)$ of the latent variable $Z$ and the log-marginal-likelihood function $\log p(x\,|\,\theta)$ of the observation $X$ are twice differentiable with respect to $\theta$ for all $z\in[K]$ and $x\in \mb R^d$. Let $X$ denote a sample from the true underlying data generating distribution $p\,(\,\cdot\,|\,\theta^\ast)$, then the following properties hold.
\begin{enumerate}
    \item For $i=1, 2$, the random variable $S_i(X) := \sum_{k=1}^K\big\|\nabla\log p(k\,|\,X,\theta^\ast)\big\|_2^i$
has finite expectation; and $S_2(X)$ is sub-exponential with parameter $\sigma_3 < \infty$, i.e. $\mb E\exp\{\sigma_3^{-1}|S_2(X)|\} \leq 2.$

\item If we denote the Lipschitz constant of the log-likelihood Hessian by
\begin{equation*}
J_k(X) := \sup_{\theta\neq\theta'}\frac{\matnorm{\nabla^2\log p(X,k\,|\,\theta) - \nabla^2\log p(X,k\,|\,\theta')}}{\|\theta-\theta'\|},
\end{equation*}
then there exist some finite constant $J_\ast$ such that $\sum_{k=1}^K \mb E_{\theta^\ast}[J_k(X)] \leq J_\ast$.

\item For any $v\in B_{\mb R^d}(0, 1)$ and $\theta\in\Theta$,
$\sum_{k=1}^K p(k\,|\,X,\theta^\ast)\cdot \big\langle v, \nabla^2\log p(X,k\,|\,\theta)\,v\big\rangle$
is sub-exponential with parameter $\sigma_1$. In particular, a sufficient condition for this to hold is $\matnorm{\nabla^2\log p(X, k\,|\,\theta)}$ being sub-exponential for any $\theta\in \Theta$ and $k\in[K]$.

\item $\lambda(X):\,= \sup_{\theta\in\Theta, k\in[K]}\matnorm{\nabla^2\log p(k\,|\,X,\theta)}$ is sub-exponential with parameter $\sigma_2$.
\end{enumerate}
\end{assumption}

\noindent 
Parts 3 and 4 of the assumption can also be extended to a bounded $\psi_\alpha$ (Orlicz) norm for some $\alpha > 0$. 
Now we present our main theoretical result on the convergence of MF-WGF for Bayesian latent variable models.
\begin{theorem}[MFVI with latent variables]\label{thm: main_theorem}
Suppose Assumptions~\ref{assump: test condition}--\ref{assump: qudratic growth of KL} and~\ref{assump: strong_convexity}--\ref{assump: continuity_of_Hessian} hold, and $\log\pi_\theta$ is twice differentiable. Let $\gamma$ denote the operator norm of the missing data Fisher information matrix $I_S(\theta^\ast)$, i.e.~$\gamma = \matnorm{I_S(\theta^\ast)}$ where
\begin{align*}
    I_S(\theta^\ast) = \int_{\mb R^d} \sum_{z=1}^K p(z\,|\,x,\theta^\ast)\big[\nabla\log p(z\,|\,x,\theta^\ast)\big]\big[\nabla\log p(z\,|\,x,\theta^\ast)\big]^T p(x\,|\,\theta^\ast)\,\dd x,
\end{align*}
and recall that $r$ is the radius of the $W_2$-ball in Assumption \ref{assump: strong_convexity}. Assume $\kappa := \frac{\lambda}{\gamma} > 2$, and define
\begin{align*}
    R_W := \min\bigg\{\sqrt{\frac{\lambda(\kappa-2)}{32A(\kappa+3)}}, \frac{\lambda(\kappa-2)}{16C(\kappa+3)}, \frac{\lambda(\kappa-2)}{8B(\kappa+3)}, \frac{r}{3}\bigg\},
\end{align*}
where explicit expressions of the constants $A$, $B$ and $C$ are provided in the proof. If the initial distribution $q_\theta^{(0)}$ satisfies
\begin{displaymath}
W_2(q_\theta^{(0)},\,\delta_{\theta^\ast}) = \sqrt{\mb E_{q_{\theta}^{(0)}}\big[\|\theta-\theta^\ast\|^2\big]}  \leq R_W,
\end{displaymath}
and the sample size $n$ is large enough (explicit lower bound of $n$ provided in the proof),
then the $k$-th iterate $q_\theta^{(k)}$ satisfies
\begin{align*}
    W_2^2(\,q_\theta^{(k)}, \widehat{q}_\theta) \leq \bigg(1 - \frac{(\kappa-2)(3\kappa+2) - \frac{2(3\kappa+2)}{n\gamma}\lambda_{\max}(\nabla^2\log\pi_\theta)}{(4\kappa^2+\kappa-2)- \frac{2(3\kappa+2)}{n\gamma}\lambda_{\max}(\nabla^2\log\pi_\theta)+ \frac{2(3\kappa+2)}{n\tau\gamma}}\bigg)^k\, W_2^2(q_\theta^{(0)}, \widehat{q}_\theta).
\end{align*}
with probability at least $1 - \frac{2}{\log n} - ne^{-\frac{\sqrt{n}\lambda(\kappa-2)}{4d\sigma_1(3\kappa+2)}} - 2e^{3d-\frac{cn\sigma_3\gamma(\kappa-2)}{4(3\kappa+2)}} - 2e^{-cn^{1/6}\sigma_2^{-1}}-4e^{-cn^{1/6}\sigma_3^{-1}}$ for some universal constant $c > 0$. Again, $\lambda_{\max}(\nabla^2\log\pi_\theta)$ is the largest eigenvalue of the Hessian matrix $\nabla^2\log\pi_\theta$ in $\Theta$. This means MF-WGF algorithm has exponential convergence towards the MF approximation $\wht q_\theta$.
\end{theorem}

The contraction factor provided in the theorem decreases as the step size $\tau$ increases, with limit $1 - \frac{(\kappa-2)(3\kappa+2)}{4\kappa^2+\kappa - 2}$ as $\tau\to\infty$. As we argued in Section \ref{sec:particle_approx}, JKO-scheme can be viewed as an implicit scheme in Wasserstein space. In the Euclidean setting, an implicit Euler scheme converges without any restriction on the step size. Similarly, we do not need any restriction on $\tau$ in our theory, and the existence of the solution of the optimization problem~\eqref{eqn: JKO_update_qtheta} for any $\tau>0$ is proved in \cite[Proposition 8.5,][]{santambrogio2015optimal}. However, in practice, we need the $k$-th step size to satisfy $\tau_k\leq\frac{1}{2L_k+1}$ to guarantee the convergence of discretized Langevin SDE scheme. Here $L_k$ denotes the Lipschitz constant of $n\nabla U_n(\cdot\,;\, q_\theta^{(k-1)}) - \nabla\log\pi_\theta$ (see Lemma \ref{lem: onestep_err} and its proof for more details). The extra factor $n$ is due to the leading multiplicative factor $n$ in the definition of $V_n$. As a consequence, the theoretical upper bound requirement of step size is of order $O(n^{-1}|\!|\!|\nabla^2 U_n(\cdot\,;\,q_\theta^{(k-1)})|\!|\!|_{\rm op}^{-1})$, which matches the typical requirement of step sizes in gradient descents for empirical risk minimization.

The EM algorithm can be seen as a specific instance of our approach when the distributions in the MF family are further restricted to point mass measures. Thus, it is not surprising that our algorithm can only guarantee local convergence as the EM algorithm. Here, our Assumption~\ref{assump: strong_convexity} does not directly impose local convexity on the population version of the negative log-likelihood. Instead, we focus on the population version of a distributional counterpart of the standard $Q$-function in the EM algorithm, which is defined in~\eqref{eqn:pop_U}.  In other words, our assumption allows the negative log-conditional likelihood function of observed data $x$ given each latent variable value $z$ to be non-convex in $\theta$, as long as their weighted average remains convex; this assumption also does not require the conditional posterior of the parameter $\theta$ given latent variables $Z^n$ to be log-concave. A similar local convexity assumption is made in the recent refined analysis of the EM algorithm by~\cite{balakrishnan2017statistical}.   In addition, although our algorithm requires initializing in a neighborhood of the solution, the neighborhood radius from our theory is a \emph{constant}, independent of sample size $n$, as opposed to a radius decreasing in $n$. This means that any initial estimator that is consistent can lead to a good initialization for our algorithm.
While it may be feasible to relax this local convexity assumption, the primary focus of this work is not to enhance the existing convergence analysis of the EM algorithm but to show that many of the desirable properties associated with point estimators in frequentist literature can be extended to Bayesian cases. A primary message we wish to convey is that the computational framework of the Wasserstein gradient flow aligns well with existing analyses in conventional optimization (over Euclidean space) literature. This framework can leverage the inherent ``convexity" structure to guarantee the convergence of certain MF algorithms. In contrast, it is unclear whether the traditional CAVI algorithm for MF implementation (despite its limitation of requiring conditional conjugacy) can benefit from convexity, given that it can be interpreted as a gradient flow with respect to the KL divergence rather than the Wasserstein metric.

Our result requires an informative initialization which exists as long as $\kappa > 2$. This lack of global convergence is due to the following two reasons: (1) the strong convexity~\eqref{assump: strong_convexity} is only required to hold in a neighborhood of $\delta_{\theta^\ast}$; (2) MF-WGF is an EM-type algorithm---it is known that, even in the Euclidean case, the EM algorithm converges to the true parameter with high probability when the initialization is good enough, but may converge to bad local optima with an uninformative initialization~\cite{balakrishnan2017statistical}.
In practice, in order to choose a $q_\theta^{(0)}$ to satisfy the initial condition, we can run a simple and fast algorithm to get a consistent estimator of the parameter in order to construct a good initialization before applying MF-WGF. For example, in clustering problems, we may apply the EM algorithm or the K-means method to derive pilot estimates of all parameters; then, we may add independent noises with constant order variance to the previous estimates to generate i.i.d. particles inducing an initialization $q_\theta^{(0)}$.

Our proof of the theorem is based on an induction argument, by repeatedly applying Theorem~\ref{thm: implicit_WGF_conv} to analyze the evolution of one-step discretized WGF~\eqref{eqn: JKO_update_qtheta} for minimizing the energy functional $V(q_\theta\,|\,q_\theta^{(k)})$ whose form changes over the iteration count $k$. When sample size $n$ is sufficiently large, the prior tend to have diminishing impact on the algorithm. If $\lambda \gg \gamma$ is also satisfied, then the derived algorithmic contraction rate is roughly of order $\m O(\gamma/\lambda)$. Interestingly, $\gamma$ reflects the amount of missing data information (by viewing latent variables as missing data), since recall that $\gamma$ is defined as the operator norm of the missing data Fisher information $I_S(\theta^\ast)$; while $\lambda$ corresponds to the complete data information, since it provides a lower bound to the complete data Fisher information $I_C(\theta^\ast)$ as the Hessian matrix of potential $U(\,\cdot\,;\,\delta_{\theta^\ast})$ at the point mass measure at $\theta^\ast$.
In comparison, the algorithmic contraction rate of the classical EM algorithm has a local contraction rate bounded by the largest eigenvalue of $\big[I_C(\theta^\ast)\big]^{-1}I_S(\theta^\ast)$~\cite{dempster1977maximum}; and is consistent with the derived contraction rate of our MF-WGF algorithm viewed as a distributional extension of the EM.

By drawing an analogue from the local contraction rate of the EM algorithm, we believe that by incorporating some local geometric structures into the algorithm and our theoretical analysis, the current technical assumption $\lambda > 2\gamma$ can also be weakened to $I_C(\theta^\ast) \succeq a \,I_S(\theta^\ast)$ for all $\mu\in B_{\mb W_2}(0, r)$ and any constant $a>1$. 
For example, we may use a weighted Euclidean norm, defined through $\|x - y\|_I^2 = (x-y)^T\big[I_C(\theta^\ast)\big]^{-1}(x-y)$, to substitute the isotropic Euclidean norm $\|x-y\|$ when defining the $W_2$ distance~\eqref{eqn:kantorovich_problem} and the one-step minimization movement scheme~\eqref{eqn: JKO_update_qtheta}. 
With this substitution, we may define the strongly convexity coefficient of $U(\cdot\,; \mu)$ to be with respect to the $\|\cdot\|_I$ metric in the theoretical analysis, so that the key matrix $\big[I_C(\theta^\ast)\big]^{-1}I_S(\theta^\ast)$ will naturally appear when analyzing the contraction of the discrete gradient flow using Theorem~\ref{thm: implicit_WGF_conv}. We leave a formal methodological and theoretical investigation about this improvement as a future direction.

The block MF approximation to posteriors in Bayesian latent variable models becomes accurate when the dependence between the parameter $\theta$ and latent variables $Z^n$ is weak; or more formally, when the missing data Fisher information matrix $I_S(\theta^\ast)$ is small, such that the latent variable distributions are not sensitive to perturbations or changes in the parameter $\theta$. In fact, it is proved in~\cite{han2019statistical} that under this block MF, the marginal variational distribution $\widehat Q_\theta$ of the parameter $\theta$ approaches $N\big(\theta^{\rm MLE}, (n I_C(\theta^\ast))^{-1}\big)$ as the sample size $n$ approaches infinity, where $\theta^{\rm MLE}$ denotes the maximum likelihood estimator of $\theta$, $I_C(\theta^\ast)=I_S(\theta^\ast) + I(\theta^\ast)$ is the complete data Fisher information and $I(\theta^\ast)$ denotes the (marginal) Fisher information matrix. In comparison, the classical Bernstein von-Mises theorem shows that the exact marginal posterior distribution of $\theta$ is close to $N\big(\theta^{\rm MLE}, (n I(\theta^\ast))^{-1}\big)$. Therefore, the block MF provides a good approximation to the target posterior distribution if and only if $I_S(\theta^\ast)$ is small. As an interesting implication, our Theorem~\ref{thm: main_theorem} on the convergence of MFVI also suggests that the computational efficiency of MFVI improves as the statistical difficulty of approximating the joint posterior via MFVI decreases.


\section{Computation}\label{sec:numeric_method}
Note that both updating formulas~\eqref{eqn: JKO_update_para} and~\eqref{eqn: JKO_update_qtheta} require solving the JKO scheme~\eqref{eqn: JKO_scheme} when specializing $\m F$ to be the KL-divergence type functional $\m F_{\rm KL}$, i.e.
\begin{align}\label{eqn: KL_JKO}
    \rho_{k+1}^\tau = \argmin_{\rho\in\ms P_2^r} \underbrace{\int V\,\dd\rho + \int \rho\log\rho}_{\m F_{\rm KL}(\rho)} + \frac{1}{2\tau}W_2^2(\rho, \rho_k^\tau).
\end{align}
In this section, we will consider and compare two numerical methods for numerically solving~\eqref{eqn: KL_JKO}:  particle approximation via SDE/diffusion and function approximation (FA) approach based on neural networks.



\paragraph{SDE approach.}
Recall that the JKO scheme~\eqref{eqn: KL_JKO} for KL divergence is an implicit scheme for discretizing the Fokker--Planck equation~\eqref{eqn: Fokker_Planck_equation}, which is also known as the WGF of $\m F_{\rm KL}$. According to Section~\ref{sec:KL_flow}, the WGF of $\m F_{\rm KL}$ starting from $\rho_0$ is the evolution of the following Langevin stochastic differential equation,
\begin{align}\label{eqn: Langevin_again}
\dd X_t = -\nabla V(X_t)\,\dd t + \sqrt{2}\,\dd W_t, \quad X_0\sim\rho_0.
\end{align}
This connection between SGD and WGF motivates one to discretize the WGF by discretizing its corresponding SDE, and approximate the solution $\rho_{k+1}^\tau$ of the JKO scheme~\eqref{eqn: KL_JKO} by the evolution of the discretized SDE through the empirical measure of particles which satisfy the following updating formula,
\begin{align}\label{eqn: discrete_SDE}
X^{(k+1)}_b - X_b^{(k)} = -\nabla V(X_b^{(k)})\tau + \sqrt{2\tau}\eta_b^{(k)}, \quad b\in[B]
\end{align}
where $\big\{X_b^{(k)}: b\in[B]\big\}$ are $B$ samples generated from $\rho_k^\tau$, and $\eta_b^{(k)}$ are i.i.d. samples generated from $\m N(0, I)$. This recursive equation is the discretized representation of the SDE~\eqref{eqn: Langevin_again} for approximating~\eqref{eqn: KL_JKO}, and $\rho_{k+1}^\tau$ can be approximated by the empirical distribution of $\big\{X_b^{(k+1)}: b\in[B]\big\}$. See Appendix~\ref{sec:particle_approx} for further discussion about particle approximation and a numerical error analysis of its implementation via SDE.

\paragraph{FA approach.}
The function approximation method converts the JKO scheme~\eqref{eqn: KL_JKO} into an optimization problem over the function space. Note that finding the solution $\rho_{k+1}^\tau$ of~\eqref{eqn: KL_JKO} is equivalent to finding a transport map $T$ such that $T_\#\rho_k^\tau$ minimizes~\eqref{eqn: KL_JKO}. To be precise, we present the following theorem.
\begin{theorem}[JKO scheme via function approximation]\label{thm: FA_approx}
If $\rho_k^\tau\in\ms P_2^r$, and 
\begin{align}\label{eqn: func_approx_JKO}
T_k^\tau = \argmin_T \int V\circ T\,\dd\rho_k^\tau - \int \log\lvert\det \nabla T\rvert\,\dd\rho_k^\tau + \frac{1}{2\tau} \int\|T - \id\|^2\,\dd\rho_k^\tau,
\end{align}
then $\rho_{k+1}^\tau \coloneqq (T_k^\tau)_\#\rho_k^\tau$ minimizes~\eqref{eqn: KL_JKO}.
\end{theorem}

We want to highlight a key property that the optimization problem~\eqref{eqn: func_approx_JKO} is unconstrained, although the last term $\int\|T - \id\|^2\,\dd\rho_k^\tau$ corresponds to $W_2^2(\rho, \rho_k^\tau)=\min_{T,\,\mbox{\scriptsize st}\, T_\# \rho = \rho^k_\tau} \mb E_{\rho}\big[\|X-T(X)\|^2\big]$, and requires the optimal transport map $T_k^\tau$ from $\rho_k^\tau$ to $\rho_{k+1}^\tau$ to be the gradient of a convex function according to Brenier's Theorem~\cite{brenier1991polar} (see Appendix \ref{app:OPTmap} for more details). Most existing methods in the literature for numerically solving the JKO scheme, such as~\cite{mokrov2021large}, require solving a constrained optimization problem by restricting $T = \nabla\phi$ to the gradient of a convex function $\phi$, where the convexity is imposed by using an input-convex neural network (ICNN)~\cite{amos2017input}. However, although ICNN is known to provide universal approximation to convex functions~\cite{chen2018optimal}, it is not clear whether its gradient also provides universal approximation to the gradients of convex functions. Moreover, based on our empirical observations, the inclusion of the convexity constraint tends to make the optimization problem particularly difficult to solve due to numerous local minima, extremely slow convergence and high sensitivity to tuning.
On the contrary, Theorem~\ref{thm: FA_approx} shows that solving the unconstrained optimization problem is equivalent to solving the JKO scheme, and even restricting $T$ to be a gradient vector field is not necessary. The intuition is that, if a solution $\widetilde T_k^\tau$ to problem~\eqref{eqn: func_approx_JKO} is not the optimal transport map $T_k^\tau$ from $\rho_k^\tau$ to $\rho_{k+1}^\tau$, then changing $T_k^\tau$ to $\widetilde T_k^\tau$ in the objective function~\eqref{eqn: func_approx_JKO} will strictly decrease the last transport cost term while keeping the rest terms unchanged. This contradicts to the optimality of $\widetilde T_k^\tau$. A formal proof is deferred to Appendix~\ref{app: proof_FA}.

In practice, the optimization problem in Theorem~\ref{thm: FA_approx} over the function space can be solved by using function approximation methods, for example, based on (deep) neural networks. If we use $T_k^\tau$ to denote the transport map computed in the $k$-th iteration and choose an initial distribution $\rho_0^\tau$ that is easy to sample from, then we can approximate the objective functional in~\eqref{eqn: func_approx_JKO} up to arbitrary accuracy by Monte Carlo approximation via sampling from $\rho_k^\tau = \big[T_{k-1}^\tau\circ T_{k-1}^\tau\circ\cdots\circ T_0^\tau\big]_\# \rho_0^\tau$.
Concretely, suppose $\{X_b^{(k)}: b\in[B]\}$ are $B$ samples drawn from $\rho_k^\tau$ using the transport maps. We can compute the optimal transport map $T_k^\tau$ from $\rho_k^\tau$ to $\rho_{k+1}^\tau$ by solving
\begin{align*}
T_k^\tau = \argmin_T \frac{1}{B}\sum_{b=1}^B \Big[V\circ T(X_b^{(k)}) - \log\big\lvert\det\nabla T(X_b^{(k)})\big\rvert + \frac{1}{2\tau}\big\|X_b^{(k)} - T(X_b^{(k)})\big\|^2\Big].
\end{align*}

\paragraph{FA versus SDE.} 
We recommend FA over SDE due to two major deficiencies arising in the SDE approach. 

First, the SDE approach might introduce a systematic error that remains undiminished even with more iterations and number of particles.
Specifically, it is known in the literature (e.g.,~\cite{cheng2018sharp,chewi2021optimal}) that applying a time-discretized SDE plus particle approximation to numerically compute a gradient flow over the space of all distributions suffers from two source of errors. 
One is the space and/or time discretization error due to a finite step size $\tau$ and a finite number $B$ of particles; and another is the long term bias due to the mismatch between the limiting distribution of the Markov chain induced by the time-discretized SDE and the limiting distribution of the continuous time SDE. As a consequence, to attain an accuracy of $\varepsilon\in(0,1)$ in the $W_2$ distance, both space and time complexities are $\mathcal O(\varepsilon^{-2})$ up to logarithmic factors. The second error resulting from the limiting bias, can be mitigated by incorporating a Metropolis-Hastings correction step. This gives rise to the Metropolis-adjusted Langevin algorithm (MALA, see e.g., \cite{roberts1996exponential}). However, MALA is computationally much more expensive and it is not clear whether such a correction would be beneficial in our context.
In comparison, each step of the FA approach is unbiased, meaning that any fixed point of the FA iterative formula~\eqref{eqn: KL_JKO} precisely gives a critical point to the target functional $\mathcal F_{\rm KL}$. As a consequence, unlike the SDE approach, the numerical error from earlier iterations of the FA approach will not accumulate provided the dynamics converge exponentially, which is the case in our scenario. Indeed, we observe this numerical issue with the SDE in our numerical experiments (e.g., refer to Figures~\ref{fig: BayesianLinearRegression},~\ref{fig: GMM} and~\ref{fig: MR} in Section~\ref{sec:thm_app}), where the optimization error from the SDE approach initially reduces but becomes unimprovable with increased iterations, stemming from the accumulated error and the limiting bias. In comparison, the optimization error in the FA approach continues to decline exponentially and shows a steeper decline in the logarithmic scale, suggesting a smaller contraction factor. 

Second, the SDE approach corresponds to forward scheme that necessitates an upper bound on the step size $\tau$ to avoid divergence. In comparison, the FA approach is an implicit scheme that does not diverge for any $\tau$, provided that the associated optimization problem~\eqref{eqn: func_approx_JKO} is solved effectively.
A typical upper bound on $\tau$ for SDE is proportional to the inverse smoothness parameter $\sup_{\theta\in\Theta}\|\nabla U_n(\theta)\|_{\rm Lip}^{-1}$. This restricts the use of larger step sizes for problems involving a fluctuating sample potential $U_n$, which, in turn, necessitates more iterations for SDE to converge to a reasonably good estimate (see Figures~\ref{fig: BayesianLinearRegression},~\ref{fig: GMM} and~\ref{fig: MR}).


\section{Examples}\label{sec:thm_app}
In this section, we apply our theoretical results to three representative Bayesian models and discuss their consequences. We also conduct some numerical studies to compliment the theoretical predictions.

\subsection{Bayesian linear regression}

\begin{figure}[htbp]
	\centering
	\subfloat[Numerical error of $\theta$.]{\includegraphics[width=.45\columnwidth]{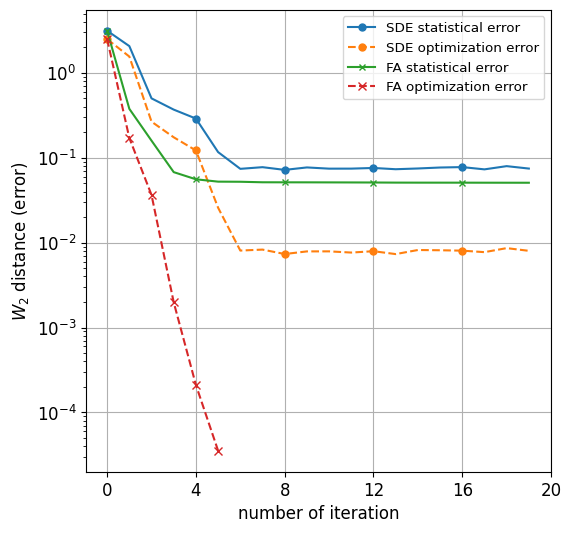}\label{fig: SDE vs FA, BLR_theta}}\hspace{5pt}
    \subfloat[Numerical error of $\alpha = \beta^{-2}$.]{\includegraphics[width=.45\columnwidth]{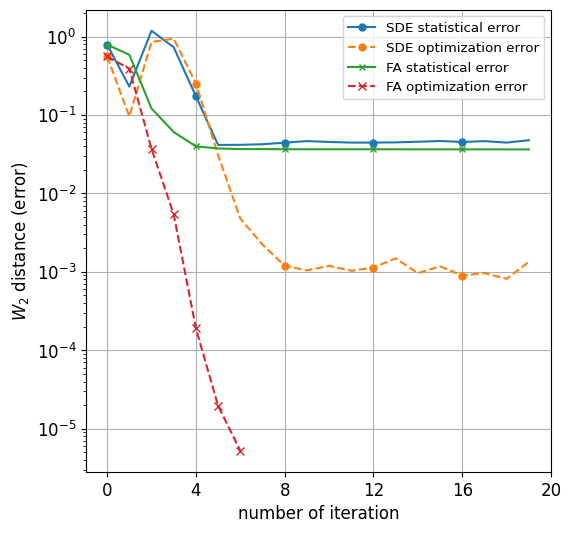}\label{fig: SDE vs FA, BLR_alpha}}\\
    \subfloat[Numerical error for different sample size.]{\includegraphics[width=.45\columnwidth]{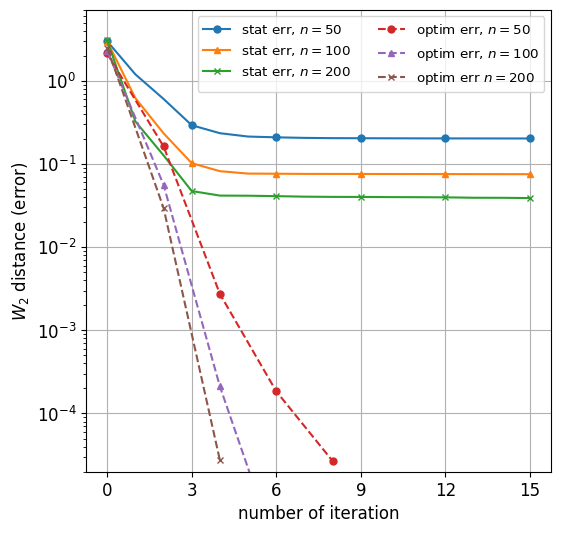}\label{fig: diff_sample_size, BLR}}\hspace{5pt}
    \subfloat[Contour plot.]{\includegraphics[width=.445\columnwidth]{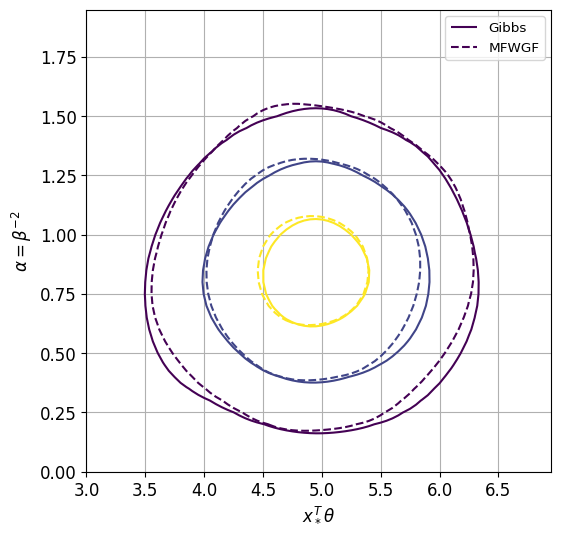}\label{fig: MF-WGF vs Gibbs, BLR}}
	\caption{Numerical results in the Bayesian linear regression example with sample size $n=100$, $\theta^\ast = (1, -2, 3)$, and $\beta^\ast = 1$.
    (a) and (b) Comparison of the numerical errors obtained by using the FA approach and the SDE approach of $\alpha = \beta^{-2}$ and $\theta$. Both approaches have similar statistical errors, but different from the FA approach, the optimization error in the SDE approach converges to the approximation error after several iterations. A smaller step size leads to a smaller error when using the SDE approach; as a trade-off, it takes more iterations to converge. 
    (c) Comparison of the numerical errors of $\theta$ obtained by using the FA approach with different sample sizes. When the sample size gets larger, the statistical error gets smaller, and the contraction rate does not change too much for sufficiently large sample size.
    (d) Comparison of contours from the joint posterior distribution of $(x_\ast^T\theta, \alpha)$ with $x_\ast = (-2, 1, 3)$ from Gibbs sampling versus their MF approximation output from MF-WGF. The MF approximation computed via MF-WGF is quite close to the true posterior.
    }\label{fig: BayesianLinearRegression}
\end{figure}

We consider Bayesian linear regression models as a representative example for Bayesian models without latent variables and verify the assumptions in Theorem \ref{thm: para_update_whp}. We consider a random design case where $n$ i.i.d.~pairs $(X_i, y_i)$ are sampled from
\begin{align*}
    y_i = \theta^TX_i + \varepsilon_i, \qquad X_i\overset{iid}{\sim}N(0, I_d) \qquad\mx{and} \qquad \varepsilon_i\stackrel{\text{i.i.d.}}{\sim}\m N(0, \beta^2).
\end{align*} 
In this example, we assume both the coefficient $\theta$ and the variance $\beta^2$ are unknown parameters with the prior distribution $\pi(\theta, \beta^2)$. More specifically, we have
\begin{align*}
    y_i\,|\,X_i, \theta, \beta^2 \sim \m N(\theta^TX_i, \beta^2), \quad X_i\stackrel{\text{i.i.d.}}{\sim} \m N(0, \Sigma), \quad \mbox{and}\quad (\theta, \beta^2)\sim \pi(\theta, \beta^2),
\end{align*}
where the covariance matrix $\Sigma$ is positive definite. In the traditional setting, due to computational tractability, the prior of $\alpha := \beta^{-2}$ is usually Gamma distribution and the conditional prior distribution of $\theta\,|\,\beta^{-2}$ is chosen as a normal distribution. Here, we directly choose a uniform prior for $\theta$ and $\beta^2$, but our method can be easily implemented for all prior distributions that are absolutely continuous with respect to the Lebesgue measure.

\begin{corollary}\label{coro: BLR}
Let $\Theta_\alpha$ and $\Theta_\theta$ be the parameter spaces of $\alpha$ and $\theta$ respectively. Assume $0 < \alpha_{lb} < \alpha < \alpha_{ub}$ for all $\alpha\in\Theta_\alpha$, and
\begin{align}\label{eqn: para_space, BLR}
\sup_{\theta\in\Theta_\theta}\|\theta - \theta^\ast\| =: R_\theta < \sqrt{\frac{\lambda_1}{2\alpha_{ub}\lambda_d^2}}.
\end{align}
If $\lambda_1 I_d\preceq \Sigma \preceq \lambda_d I_d$, then we have
\begin{align*}
W_2^2(q_\theta^{(k)}\otimes q_\alpha^{(k)}, \wht q_{\theta}\otimes\wht q_\alpha) \leq \Big(1 + \frac{\lambda_{lb}^2}{ L_{ub}^2m}\Big)^{-k} W_2^2(q_\theta^{(0)}\otimes q_\alpha^{(0)}, \wht q_{\theta}\otimes\wht q_\alpha),
\end{align*}
where
\begin{align*}
    \lambda_{lb} &=  \frac{n(\frac{\lambda_1}{2\alpha_{ub}} - \lambda_d^2R_\theta^2)}{\max\{\alpha_{ub}\lambda_1, \frac{1}{2\alpha_{lb}^2}\} + \lambda_d R_\theta} - \lambda_M(\nabla^2\log\pi_\theta) \\
    &\qquad\qquad\qquad\qquad\qquad- \sigma_5^2\sqrt{\frac{Cd\log n}{n}\cdot\max\Big\{\frac{\log\big(2d + \frac{\alpha_{ub}}{\alpha_{lb}^4}\big)}{\log d}, \log\frac{R_\theta\sigma_5}{\eta}, 1\Big\}}\\
    L_{ub} &= n\Big(\max\Big\{\alpha_{ub}\lambda_d, \frac{1}{2\alpha_{lb}^2}\Big\} + \lambda_dR_\theta\Big) - \lambda_m(\nabla^2\log\pi_\theta)\\
    &\qquad\qquad\qquad\qquad\qquad+ \sigma_5^2\sqrt{\frac{Cd\log n}{n}\cdot\max\Big\{\frac{\log\big(2d + \frac{\alpha_{ub}}{\alpha_{lb}^4}\big)}{\log d}, \log\frac{R_\theta\sigma_5}{\eta}, 1\Big\}}.
\end{align*}

\end{corollary}

Figure~\ref{fig: BayesianLinearRegression} summarizes the numerical results to support our theory in Bayesian models without latent variables. In this experiment, we consider the regression model with true parameters $\theta^\ast = (1, -2, 3)$ and $\beta^\ast = 1$. We choose the sample size $n=100$ and use $B=1000$ to approximate the posterior distribution. When applying the SDE approach, the particles which are used to approximate the distribution of $\alpha$ may go beyond the origin and become negative due to the unboundedness of Gaussian noise. To address this issue, we choose a threshold $\epsilon = 0.1$. At the end of each iteration, we add a projection step $\alpha_{b, \rm{proj}}^{(t)} = \alpha^{(t)}_b1\{\alpha^{(t)}_b > \epsilon\} + \epsilon1\{\alpha^{(t)}_b\leq\epsilon\}$ for all $b\in[B]$. We choose $\tau = 0.01$ for the SDE approach since it is the largest step size for SDE without incuring divergence, and use $\tau=1$ for the FA approach.

Figure~\ref{fig: SDE vs FA, BLR_theta} presents the statistical error $W_2^2(\delta_{\theta^\ast}, q_\theta^{(k)})$ and the optimization error $W_2^2(\wht q_\theta, q_\theta^{(k)})$ for the linear coefficient $\theta$.
Figure~\ref{fig: SDE vs FA, BLR_alpha} shows the statistical error $W_2^2(\delta_{\alpha^\ast}, q_\alpha^{(k)})$ and the optimization error $W_2^2(\wht q_\alpha, q_\alpha^{(k)})$ for the inverse of noise $\alpha=\beta^{-2}$. The increase of the statistical and optimization error in the first several iterates in the SDE approach is due to the existence of the projection step. As we can see, in both figures, the optimization error in the FA approach indicated by red dashed curves keeps decaying exponentially fast as predicted by our theory. However, in the SDE approach, the optimization error indicated by the orange dashed lines will finally be dominated by the approximation error and converge to quite large values compared with the FA approach. Choosing a smaller step size in the SDE approach can help decrease the approximation error. As a sacrifice, the algorithm takes more iterations to converge. In comparison, the statistical error indicated by solid curves has exponential decay at some initial period and then stabilizes in both approaches, which indicating the dominance of statistical error over optimization error in the later period.

Figure~\ref{fig: diff_sample_size, BLR} studies the the effect of the sample size on the contraction rate and the statistical error when $\tau = 1$. In the plot, we can see that the statistical error indicated by the solid lines decreases when the sample size gets larger, which is consistent to the common knowledge in statistics. As for the contraction rate of the optimization error indicated by the dashed lines, increasing the sample size is helpful to get a smaller contraction rate when the sample size is small (compare $n=50$ with $n=100$); however, once a sufficient sample size has been acquired, further increase in sample size may result in little improvement (compare $n=100$ with $n=200$).

Figure~\ref{fig: MF-WGF vs Gibbs, BLR} shows the contour plot of the joint posterior distribution of $(\alpha, x_\ast^T\theta)$ with $x_\ast = (-2, 1, 3)$, computed by Gibbs sampling, versus their MF approximation output by MF-WGF. From the plot, we see that the MF approximation is close to the true joint posterior distribution, meaning that prediction and its associated uncertainty quantification using MF tends to be accurate at $x_\ast$.

\subsection{Repulsive Gaussian mixture model}\label{subsec: RGMM}
In this example, we consider the Gaussian mixture model (GMM) as a simplest latent variable model that are widely used for clustering.
We focus on the following (isotropic) Gaussian mixture model (GMM) with $K$ components in $\mb R^d$,
\begin{displaymath}
p(x\,|\,m) = \sum_{k=1}^K w_i\,\m N(x\,|\,m_i,\, \beta^2I_d),
\end{displaymath}
where the common covariance matrix is $\beta^2$ times the identity matrix $I_d$, $w = (w_1,\cdots, w_K)\in\mb R^K$ are the nonnegative mixing weight parameters satisfying $\sum_{k=1}^K w_k =1$, and cluster centers $m = (m_1, \cdots, m_K)\in\mb R^{d\times K}$ are the primary parameters of interest. For theoretical convenience, we assume both nuisance parameters $\beta^2$ and $w$ to be known; in our numerical results to follow, we treat $w$ as unknown as well and use MF-WGF to approximate the joint posterior of $(w,m)$. Our theory can also cover the model with unknown $w$ and $\beta^2$ as long as the corresponding parameter spaces are convex, compact, and bounded away from zero.

As a common practice to simplify the likelihood computation and facilitate the interpretation, we introduce a latent variable $Z\in[K]:= \{1, \cdots, K\}$ to indicate which underlying mixture component an observation $X$ from GMM belongs to. Under this data augmentation, the full Bayesian latent variable model can be formulated as
\begin{align*}
\big[X_i\,\big|\,m, \,Z_i=k\big] &\ \sim\  \m N(m_k, \,\beta^2I_d),\quad
Z_i \ \stackrel{\textrm{i.i.d.}}{\sim} \ \textrm{Multi}([K], w), \quad\mbox{and}\quad 
m \ \sim\  \pi_m,
\end{align*}
where $\pi_m$ denotes the prior distribution over $m$.
We apply a block MF approximation to the joint posterior distribution $\pi_n(m, \,Z^n)$ over parameter $m\in\mb R^{d\times K}$ and latent variables $Z^n=\{Z_1,\ldots,Z_n\}$, by using variational distributions of the form $q_{m,\,Z^n}=q_m\otimes q_{Z^n}$, to maximally preserve the dependence structure while retaining the computational tractability. 

In the literature, there is a class of priors $\pi_m$, called repulsive priors \cite{petralia2012repulsive,xie2020bayesian}, that are preferable to use than independent priors over $\{m_k\}_{k=1}^K$. Let $d_{\text{min}} = \min_{1\leq i<j\leq K}\|m_i - m_j\|$ denote the minimum distance between cluster centers. A typical repulsive prior takes the form of
\begin{equation}\label{eqn: repulsive_prior}
\pi_m \ \propto\  g(m;\, g_0)\cdot\prod_{k=1}^K\m N(m_k\,|\,0, \sigma^2 I_d),
\end{equation}
which modifies the independent priors with a repulsive function $g(m; g_0) = \frac{d_{\text{min}}}{d_{\text{min}} + g_0}$ that encourages the well-separatedness of cluster centers and reduces the potential redundancy of components. The complicated dependence structure introduced in the repulsive prior destroys the conditional conjugacy, making the standard coordinate ascent variational inference (CAVI,~\cite{bishop2006pattern}) algorithm for finding the best MF approximation $q_{m,\,Z^n}$ inapplicable. In comparison, the proposed MF-WGF can be easily applied in a straightforward manner. We want to emphasize that while we presented the repulsive prior as an illustrative example, our method is flexible and can be applied to various other priors without demanding additional restrictive conditions, such as the conditional conjugacy condition required by the widely-used CAVI algorithm.
The following corollary proves the exponential convergence of MF-WGF and characterizes the explicit dependence of various problem characteristics on the contraction rate.



\begin{figure}[htbp]
	\centering
	\subfloat[$\beta=3$]{\includegraphics[width=.45\columnwidth]{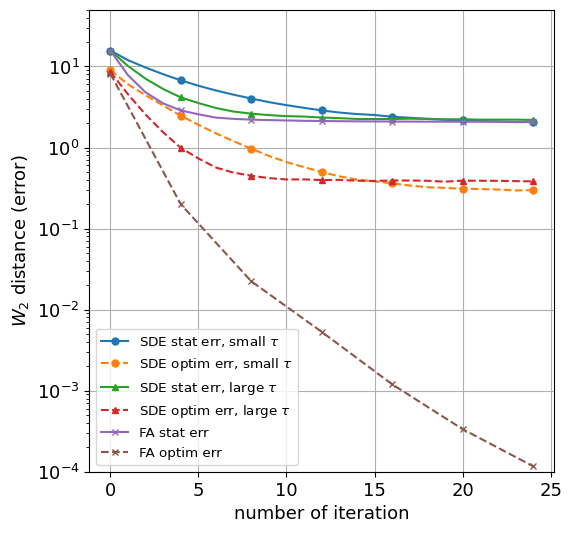}\label{fig: FA vs SDE, largenoise, GMM}}\hspace{5pt}
	\subfloat[$\beta=2$]{\includegraphics[width=.45\columnwidth]{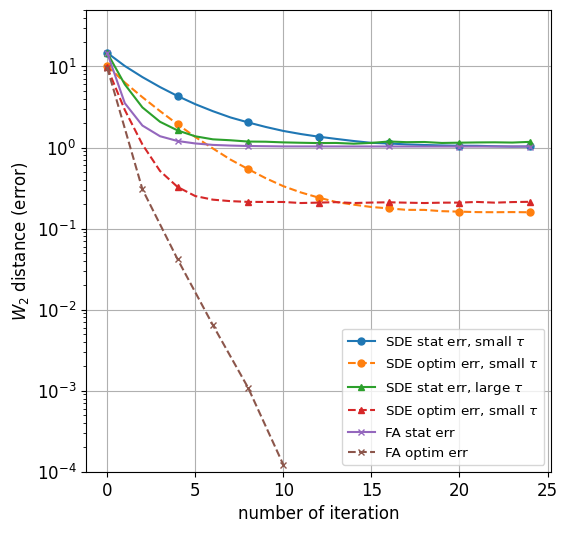}\label{fig: FA vs SDE, smallnoise, GMM}}\\
	\subfloat[different sample size when $\beta=2$]{\includegraphics[width=.45\columnwidth]{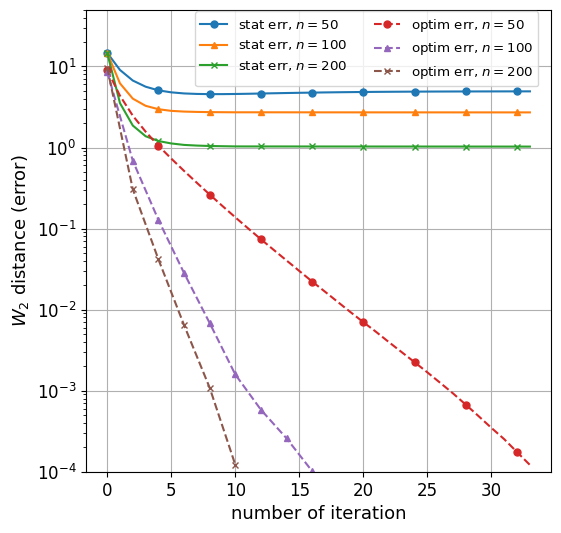}\label{fig: diff_sample_size, GMM}}\hspace{5pt}
	\subfloat[Contour plot.]{\includegraphics[width=.445\columnwidth]{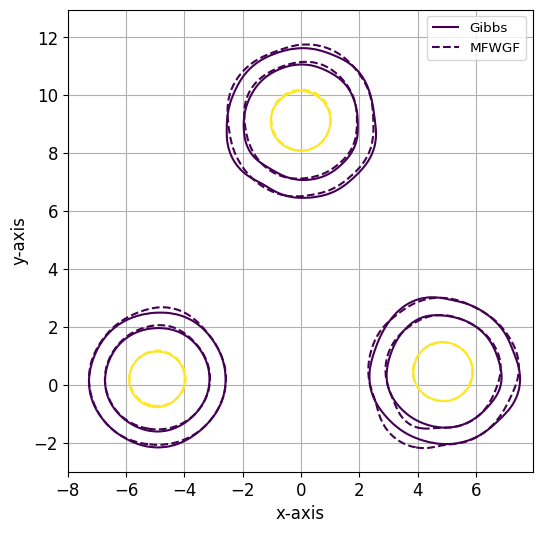}\label{fig: MF-WGF vs Gibbs, GMM}}
	\caption{Numerical results of repulsive GMM with sample size $n=200$ and repulsive parameter $g_0=1$. True centers are $m_1^\ast = (5, 0)$, $m_2^\ast = (0, 5\sqrt{3})$, and $m_3^\ast = (-5, 0)$ with weights $w^\ast = (0.27, 0.27, 0.46)$.
    (a) and (b): Comparison of the numerical errors obtained by using the FA approach and the SDE approach under different noise levels are shown in the figure. Statistical errors $W_2^2(q_\theta^{(t)}, \delta_{\theta^\ast})$ of both approaches converge to similar values. However, the optimization error $W_2^2(q_\theta^{(t)}, \wht q_{\theta})$ in the SDE approach is dominated by the approximation error after several iterations. A smaller step size leads to a smaller error when using the SDE approach; as a trade-off, it takes more iterations to converge. 
    (c) Comparison of the numerical errors obtained by using the FA approach with different sample sizes. Larger sample sizes yields smaller statistical error and faster convergence rate.
    (d) Comparison of contours from the marginal distributions of the cluster centers $m_1$, $m_2$ and $m_3$ from Gibbs sampling versus their MF approximation output from MF-WGF. The MF approximation computed via MF-WGF is quite close to the distribution computed via Gibbs sampling.}
    \label{fig: GMM}
\end{figure}


\begin{corollary}\label{coro: GMM}
Let $\pi_m$ be the prior of centers $m = (m_1, \cdots, m_K)$ satisfying Assumption \ref{assump: prior condition} and the parameter space $\Theta\subset B_{\mb R^{Kd}}(0, R)$. Assume the signal-to-noise ratio (SNR) $\kappa_{\rm SNR} = \frac{d_{\min}}{\beta} > C$ for some constant $C = C(w, K) > 0$. Then, there exists constants $R_W = R_W(m^\ast, w, K, \beta)$ and $N = N(m^\ast, \beta, R_W, w, \pi_m, d)$ such that when $n > N$ and $W_2(\delta_{m^\ast}, q_m^{(0)}) \leq R_W$ (see (\ref{eqn: GMM_RW}) for explicit expressions), we have that
\begin{align*}
    W_2^2(\wht q_m, q_m^{(k)}) \leq \rho^k\,W_2^2(\wht q_m, q_m^{(0)}),\quad\forall\, k\in\mb N
\end{align*}
holds with probability at least $1-\frac{3}{\log n}$. Here, the contraction factor $\rho$ takes the form of
\begin{align*}
    1 - \frac{(\zeta-2)(3\zeta+2)}{4\zeta^2 + \zeta - 2}\quad \mx{with} \ \ \zeta=\frac{w_{\min}^2}{6K}\cdot\frac{\exp\{\kappa_{\rm SNR}^2/256\}}{2+\kappa_{\rm SNR}^2},
\end{align*}
as $n\to\infty$, which monotonically decreases to $\frac{1}{4}$ as $\kappa_{\rm SNR}\to\infty$.
\end{corollary}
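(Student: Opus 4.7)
The plan is to apply the master convergence Theorem~\ref{thm: main_theorem} to this specific model, so the proof splits naturally into (a) verifying Assumptions~\ref{assump: qudratic growth of KL}--\ref{assump: prior condition} and~\ref{assump: strong_convexity}--\ref{assump: continuity_of_Hessian} for the repulsive GMM, and (b) translating the problem-specific constants $\lambda$ (local strong-convexity modulus of the population potential) and $\gamma=\matnorm{I_S(m^\ast)}$ (missing-data Fisher information norm) into the SNR-dependent expression for $\zeta$. The starting point is the closed form of the log complete-data likelihood $\log p(X,Z=k\,|\,m) = \log w_k - \tfrac{1}{2\beta^2}\|X-m_k\|^2 + \textrm{const}$, which yields the population potential $U(m;\mu) = -\mathbb{E}_X\sum_{k=1}^K \Phi(\mu, X)(k)\log p(X,k\,|\,m)$ whose Hessian is block-diagonal in $k$ with block $\tfrac{1}{\beta^2}\mathbb{E}_X[\Phi(\mu, X)(k)]\,I_d$. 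This block structure is what makes the subsequent computation of $\lambda$ and $\gamma$ tractable.

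The core computation is the two-sided bound on $\kappa=\lambda/\gamma$. For $\lambda$, the plan is to show that uniformly over $\mu\in B_{\mb W_2}(\delta_{m^\ast},r)$ for a suitably small $r=r(w,K,\beta)$, the posterior membership weights satisfy $\mathbb{E}_X[\Phi(\mu,X)(k)] \ge w_k/2$ by combining Lipschitz continuity of $\mu\mapsto\Phi(\mu,X)$ in the $W_2$ metric with the fact that $\Phi(\delta_{m^\ast},X)(k)=p(k\,|\,X,m^\ast)$ integrates to $w_k$. This yields $\lambda \gtrsim w_{\min}/\beta^2$. For $\gamma$, the explicit gradient identity $\nabla_{m_j}\log p(k\,|\,X,m^\ast)=\tfrac{1}{\beta^2}(X-m_j^\ast)\bigl[\mathbbm{1}(k=j)-p(j\,|\,X,m^\ast)\bigr]$ reduces $I_S(m^\ast)$ to an expectation that vanishes whenever the soft labels are near $\{0,1\}$. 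When $X$ is drawn from cluster $\ell$, a Gaussian tail bound controls $\mathbb{P}(p(j\,|\,X,m^\ast)\geq 1/2)\lesssim\exp(-d_{\min}^2/(c\beta^2))$ for $j\neq\ell$, and after accounting for the $\|X-m_j^\ast\|^2$ factor this yields $\gamma \lesssim K\beta^{-2}(2+\kappa_{\rm SNR}^2)\exp(-\kappa_{\rm SNR}^2/256)$. Dividing the two bounds produces precisely the $\zeta$ appearing in the corollary, and $\kappa_{\rm SNR}>C$ for a suitable $C$ makes $\zeta>2$ so that the contraction factor of Theorem~\ref{thm: main_theorem} applies.

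Verifying the remaining assumptions is mostly routine for the Gaussian model on a bounded domain $\Theta\subset B_{\mb R^{Kd}}(0,R)$. Assumption~\ref{assump: qudratic growth of KL} follows from a second-order Taylor expansion of the log-marginal and log-conditional densities in $m$, since Hessians of $\log p(x\,|\,m)$ and $\log p(z\,|\,x,m)$ are polynomial in $(X,m)$ with $\beta^{-2}$ scaling; the resulting $G(X)$ is a quadratic polynomial in $\|X\|$ and hence sub-exponential. Assumption~\ref{assump: test condition} follows from the standard covering/likelihood-ratio construction of Birg\'e--LeCam tests for a compact Euclidean parameter. Assumption~\ref{assump: prior condition} needs extra care because of the repulsion factor $g(m;g_0)=d_{\min}/(d_{\min}+g_0)$, but on the Kullback neighbourhood $B_n$ around a well-separated $m^\ast$, one has $d_{\min}\ge\tfrac{1}{2}\min_{i\neq j}\|m_i^\ast-m_j^\ast\|$ so $g(m;g_0)$ is bounded away from $0$, reducing the prior-thickness bound to the familiar Gaussian-prior calculation. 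Assumption~\ref{assump: continuity_of_Hessian} holds because $\log p(X,k\,|\,m)$ is a quadratic polynomial in $m$ whose coefficients are polynomial in $X$; the required Lipschitz constants and Orlicz norms follow from Gaussian moment bounds.

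The main obstacle is producing the sharp exponent $1/256$ in $\zeta$: this forces a careful Gaussian tail analysis when bounding $\gamma$, because the easy approach of using $p(j\,|\,X,m^\ast)\le 1$ gives only a polynomial bound. The tight computation requires splitting the expectation according to which cluster generated $X$, and, on each event, applying a sub-Gaussian concentration inequality to $\langle X-m_\ell^\ast, m_j^\ast-m_\ell^\ast\rangle$ to extract the exponential factor $\exp(-\|m_j^\ast-m_\ell^\ast\|^2/(c\beta^2))$ while keeping the multiplicative constants explicit. A secondary but minor point is propagating the strong-convexity bound from $\delta_{m^\ast}$ to a $W_2$-neighbourhood of radius $R_W$: this is handled by bounding $|\mathbb{E}_X[\Phi(\mu,X)(k)]-w_k|$ via the Lipschitz dependence of $\Phi$ on $\mu$ and choosing $R_W$ so that the perturbation is at most $w_{\min}/2$, which determines the explicit form of $R_W$ and of the sample-size threshold $N$.
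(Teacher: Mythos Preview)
Your proposal is correct and follows essentially the same path as the paper's proof: verify Assumptions~\ref{assump: qudratic growth of KL}--\ref{assump: continuity_of_Hessian} via explicit Gaussian-mixture computations, use the block-diagonal Hessian $\nabla^2 U(m;\mu)=\mathrm{diag}\bigl(\beta^{-2}\mathbb{E}_X[\Phi(\mu,X)(k)]\,I_d\bigr)_k$ together with Lipschitz continuity of $\Phi$ in $\mu$ to obtain $\lambda\gtrsim w_{\min}/\beta^2$, and bound $\gamma$ by conditioning on the generating cluster and applying a Gaussian tail estimate. The only minor deviation is in the last step: the paper extracts the specific exponent $1/256$ by splitting on the event $\{\|\eta\|\le d_{\min}/(4\beta)\}$ (where $X=m_Z^\ast+\beta\eta$) and using a $\chi^2$ tail bound---the extra halving of the exponent comes from a Cauchy--Schwarz step that pairs the indicator of the complementary event with the quadratic factor $\|X-m_j^\ast\|^2$---rather than on the one-dimensional projection $\langle X-m_\ell^\ast,\,m_j^\ast-m_\ell^\ast\rangle$ you propose; your route should work, but matching the constant $256$ exactly will require aligning your thresholds with the paper's.
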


Here, we want to make several remarks: 1.~the repulsive prior~\eqref{eqn: repulsive_prior} satisfies Assumption \ref{assump: prior condition}, which then implies the exponential convergence of $q_m^{(k)}$ to $\wht q_m$ in the $W_2$ metric (the proof can be found in Appendix \ref{app:pf_example}); 2.~in practice, the compactness assumption on the parameter space is usually not necessary. Moreover, our algorithm can be straightforwardly extended to the setting where both cluster centers $m$ and weights $w$ are unknown as in the numerical studies shown below; 
3.~the lower bound of $\kappa_{\rm SNR}$ is not tight and can be improved. From our numerical experiments, a much smaller $\kappa_{\rm SNR}$ value is sufficient to ensure convergence to the global minimum. However, some lower bound on the 
$\kappa_{\rm SNR}$ is necessary to ensure the exponential convergence of the algorithm with a constant factor of contraction rate. With a low SNR, the model falls into the singular regime, and the EM algorithm (as well as our algorithm) may converge extremely slowly; see, for example,~\cite{dwivedi2020singularity}. This slow-convergence is natural since in the singular regime, the parameter itself becomes statistically non-identifiable (due to a near singular Fisher information matrix) and cannot be accurately estimated. The same remark also applies to the next mixture of regression example.

Figure~\ref{fig: GMM} summarizes some numerical results to complement the theoretical predictions. In this experiment, we consider GMM with three classes centered at $m_1 = (5, 0)$, $m_2 = (0, 5\sqrt{3})$, and $m_3 = (-5, 0)$ with weights $w_1 = w_2 = 0.27$ and $w_3 = 0.46$. We choose the repulsive prior~\eqref{eqn: repulsive_prior} with $g_0 = 1$ and $\sigma^2=10$. We let the sample size $n=200$. For the SDE approach under both noise settings ($\beta=2$ and $\beta=3$), we choose the largest step size while trying not to increase the statistical error significantly. We construct the initialization by applying the $K$-means clustering to obtain an initial estimates of $m$. We use 1000 particles in the simulation in order to estimate the optimization (numerical) error $W_2^2(\wht q_\theta, q_\theta^{(k)})$. Far less particles will be needed for conducting accurate inference on the model parameters. As indicated by Corollary~\ref{coro: GMM}, we define $d/\beta$ as the signal to noise ratio (SNR) that characterizes the algorithmic convergence, and vary it in the simulation by tuning $\beta$.
Since we are using the log-scale for the vertical axis, straight lines means our considered squared $W_2$ distance, either the optimization error $W_2^2(\widehat{q}_\theta, q_\theta^{(k)})$ or the statistical error $W_2^2(\delta_{\theta^\ast}, q_\theta^{(k)})$, decays exponentially fast in the number of iterations, with the slope corresponding to the logarithm of the contraction rate.

In Figure~\ref{fig: FA vs SDE, largenoise, GMM}, we choose $\tau = 0.03$ and $\tau=0.015$ for the SDE approach and $\tau = 0.08$ for the FA approach. Similar to the phenomenon in Figure~\ref{fig: BayesianLinearRegression}, in the FA approach, the optimization error indicated by dashed curves keeps decaying exponentially fast as predicted by our theory; in the SDE approach, the optimization error will finally be dominated by the approximation error. Choosing a smaller step size can help decrease the approximation error but makes the algorithm take more iterations to converge. In comparison, the statistical error indicated by solid curves has exponential decay at some initial period and then stabilizes in both approaches, which indicating the dominance of statistical error over optimization error in the later period. 

In Figure~\ref{fig: FA vs SDE, smallnoise, GMM}, we choose $\tau = 0.025$ and $\tau = 0.01$ for the SDE approach and $\tau = 0.1$ for the FA approach. The same trend of curves as in Figure~\ref{fig: FA vs SDE, largenoise, GMM} is observed as well. Comparing with Figure~\ref{fig: FA vs SDE, largenoise, GMM}, we can see that a higher SNR (smaller $\beta$) corresponds to a faster decay, i.e.~smaller contraction rate. SNR also encodes the statistical hardness of the problem: a higher SNR corresponds to a higher statistical error as the stabilized values of solid curves in the plots.

In Figure~\ref{fig: diff_sample_size, GMM}, we consider the effect of the sample size on the contraction rate and the statistical error when $\tau=0.1$. In the plot, we can see that the statistical error indicated by the solid lines decreases when the sample size gets larger, which is consistent to the common knowledge in statistics. As for the contraction rate of the optimization error indicated by the dashed lines, increasing the sample size is helpful to get a smaller contraction rate when the sample size is small (compare $n=50$ with $n=100$); however, once a sufficient sample size has been acquired, further increases in sample size may result in little improvement (compare $n=100$ with $n=200$).

Figure~\ref{fig: MF-WGF vs Gibbs, GMM} shows the contour plots of the true posterior distribution of cluster centers (solid curves) versus their MF approximation output by MF-WGF (dashed curves), which are pretty close to each other.

\subsection{Mixture of regression}\label{subsec: FMNR}
\begin{figure}[htbp]
	\centering
	\subfloat[$\beta=4$]{\includegraphics[width=.45\columnwidth]{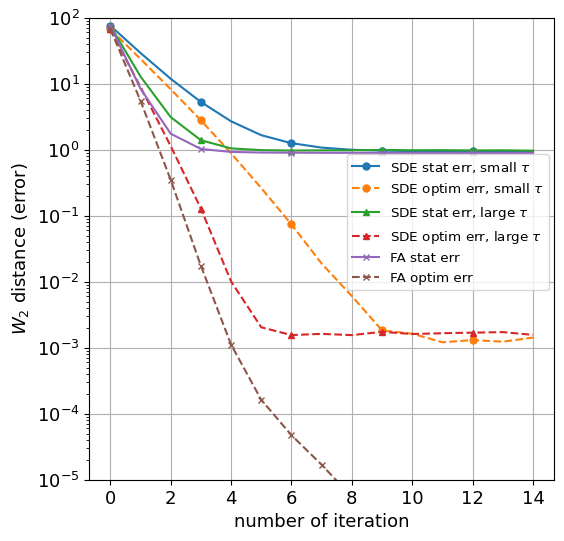}\label{fig: FA vs SDE, largenoise, MR}}\hspace{5pt}
	\subfloat[$\beta=2$]{\includegraphics[width=.45\columnwidth]{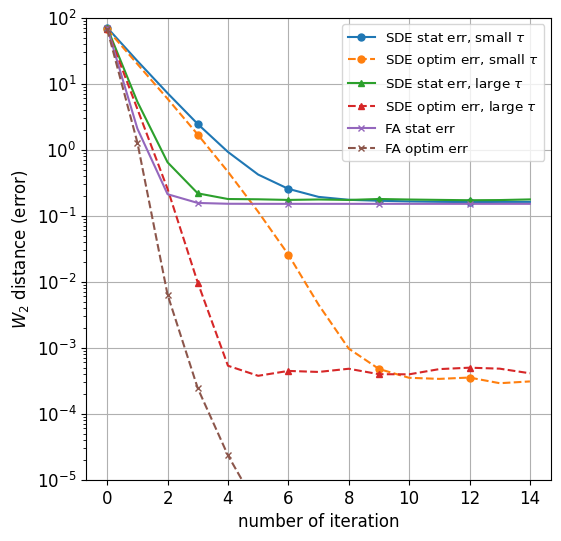}\label{fig: FA vs SDE, smallnoise, MR}}\\
	\subfloat[different sample size when $\beta=2$]{\includegraphics[width=.45\columnwidth]{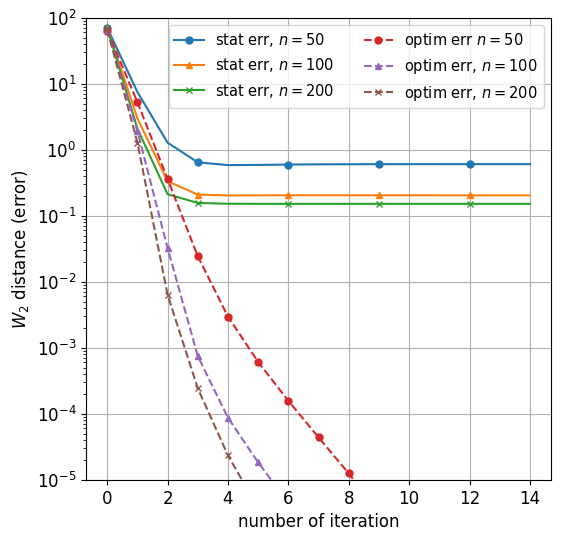}\label{fig: diff_sample_size, MR}}\hspace{5pt}
	\subfloat[Contour plot.]{\includegraphics[width=.445\columnwidth]{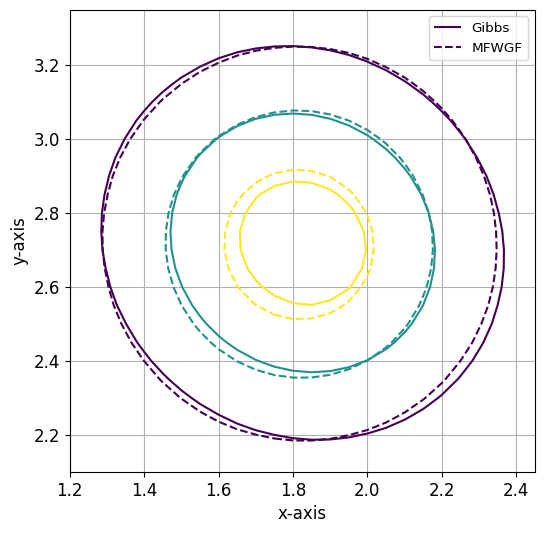}\label{fig: MF-WGF vs Gibbs, MR}}
	\caption{Numerical results in the mixture of regression example with sample size $n=200$, $\theta^\ast = (2, 3)$.
    (a) and (b): Comparison of the numerical errors obtained by using the FA approach and the SDE approach under different noise levels are shown in the figure. Both approaches have similar statistical errors $W_2^2(\wht q_\theta, \delta_{\theta^\ast})$, but different from the FA approach, the optimization error $W_2^2(q_\theta^{(t)}, \wht q_\theta)$ in the SDE approach converges to the approximation error after several iterations. A smaller step size leads to a smaller error when using the SDE approach; as a trade-off, it takes more iterations to converge. 
    (c) Comparison of the numerical errors obtained by using the FA approach with different sample sizes. When the sample size gets larger, the statistical error gets smaller.
    (d) Comparison of contours from the posterior distribution of the coefficient $\theta$ from Gibbs sampling versus their MF approximation output from MF-WGF. The MF approximation computed via MF-WGF is quite close to the true posterior.}
    \label{fig: MR}
\end{figure}

We consider as the third illustrative example a finite mixture regression model (FMRM,~\cite{sung2004gaussian,viele2002modeling}), where can be viewed as an extension of the GMM by including covariates in the mixture formulation. In the standard (random-design) linear regression model, we observe $n$ i.i.d.~pairs $(y_i, X_i)_{i=1}^n$ from
\begin{align*}
    y_i = X_i^T\theta + \varepsilon_i,\qquad \varepsilon_i\stackrel{\textrm{i.i.d.}}{\sim}\m N(0, \,\beta^2)
\end{align*}
where $X_i\in\mb R^d$ denotes the $i$th covariant vector, $y_i$ is the $i$th response variable, $\theta\in\mb R^d$ is the unknown regression coefficient vector parameter of interest, and the Gaussian noise $\varepsilon_i$ is independent of $(X_i, Y_i)$. In our theoretical analysis, we assume $X_i$ to be sampled from $\m N(0, I_d)$. By introducing clustering structures on the conditional distribution of $Y_i$ given $X_i$, we reach the FMRM. Concretely, 
we focus on the simple case with two equally weighted symmetric clusters, where each cluster is determined by its own regression coefficient vector in $\mb R^d$, as
\begin{align*}
    \big[y_i\,\big|\,X_i,\theta,Z_i\big] \ \sim\  \m N(Z_iX_i^T\theta, \beta^2), \ \ \ Z_i \ \stackrel{\textrm{i.i.d.}}{\sim}\  \textrm{Unif}\{1, -1\},\ \ \ 
    X_i \ \stackrel{\textrm{i.i.d.}}{\sim}\ \m N(0, I_d),\ \ \ \mbox{and} \ \ \
    \theta \ \sim\ \pi_\theta,
\end{align*}
where $\pi_\theta$ denotes the prior of $\theta$.
In this model, we are interested in approximating the posterior distribution of $\theta$. For the sake of parameter identifiability, we assume the first non-zero component of $\theta$ is positive ($\theta$ and $-\theta$ correspond to the same model). We also consider the block MF approximation by using $q_{\theta,\,Z^n} =q_\theta \otimes q_{Z^n}$ to approximate the joint posterior of $(\theta,\,Z^n)$, where $Z^n=\{Z_1,\ldots,Z_n\}$. The following corollary provides the algorithmic convergence of MF-WGF algorithm for computing the MF solution $\wht q_\theta$.

\begin{corollary}\label{coro: MR}
Let $\pi_\theta$ be any prior satisfying Assumption \ref{assump: prior condition}. If the SNR of the problem, defined as $\kappa_{\rm SNR}=\frac{\|\theta^\ast\|}{\beta}$, is sufficiently large so that the $\zeta$ to be defined below satisfies $\zeta>2$, then there exists constant $N = N(\theta^\ast, \beta, d, \pi_\theta)$, such that as long as the initialization satisfies
\begin{align*}
    &W_2(\delta_{\theta^\ast}, q_\theta^{(0)}) \leq R_W\\ &= \frac{C'\beta^{-2}(\zeta-2)}{\Big[K^2\big(d\beta^{-3}(\kappa_{\rm SNR}^2+1)^{3/2}+1\big) + K^3\big(d\beta^{-6}(\kappa_{\rm SNR}^2+1)^3+1\big)\Big]\,(\zeta+3)}
\end{align*}
for some constants $C'>0$, $\zeta$ and $n>N$, we have that
\begin{align*}
    W_2^2(\wht q_\theta, q_\theta^{(k)}) \leq \rho^kW_2^2(\wht q_\theta, q_\theta^{(0)})\quad\forall\, k\in\mb N
\end{align*}
holds with probability at least $1-\frac{3}{\log n}$. The contraction factor $\rho$ takes the form of
\begin{align*}
    1 - \frac{(\zeta-2)(3\zeta+2)}{4\zeta^2+\zeta - 2} \quad\mx{with} \ \ \zeta=\frac{(16+\kappa_{\rm SNR}^2)^{1/4}}{2174},
\end{align*}
as $n\to\infty$, which is decreasing in $\kappa_{\rm SNR}$.
\end{corollary}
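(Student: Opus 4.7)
\noindent\textbf{Proof proposal for Corollary~\ref{coro: MR}.}
The plan is to apply Theorem~\ref{thm: main_theorem} to the mixture of regression model by checking each of its hypotheses and reading off the explicit values of the missing-data Fisher information operator norm $\gamma$ and the strong convexity constant $\lambda$ of the population potential, which together give the ratio $\zeta=\lambda/\gamma$ appearing in the contraction rate. The overall structure of the argument parallels the proof of Corollary~\ref{coro: GMM}, but the computations are driven by the logistic/soft-max structure of the conditional posterior over the sign label $Z$.

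First I would record the explicit form of the conditional distribution of $Z$ given $(X,y,\theta)$: in the two-component symmetric FMRM one has $p(z=1\mid X,y,\theta)=\sigma\!\big(2yX^\top\theta/\beta^2\big)$ and $p(z=-1\mid X,y,\theta)=1-\sigma\!\big(2yX^\top\theta/\beta^2\big)$, where $\sigma$ is the logistic function. The population potential then takes the form
\begin{equation*}
U(\theta;\mu) = \tfrac{1}{2\beta^2}\,\mathbb E_{X,y}\Big[\sum_{z=\pm 1} \Phi(\mu,(X,y))(z)\,(y-zX^\top\theta)^2\Big] + \text{const},
\end{equation*}
whose Hessian $\nabla^2 U(\theta;\mu)= \beta^{-2}\mathbb E[XX^\top]=\beta^{-2} I_d$ is constant in $\theta$ (thanks to the symmetry in $z$ and $\mathbb E[X]=0$ with $X\sim\mathcal N(0,I_d)$). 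Thus Assumption~\ref{assump: strong_convexity} holds with $\lambda\ge c\beta^{-2}$ for all $\mu$ in a constant-radius neighborhood of $\delta_{\theta^\ast}$, possibly after absorbing a perturbation term that is controlled by $W_2(\mu,\delta_{\theta^\ast})$ together with boundedness of the higher-order derivatives. Assumption~\ref{assump: qudratic growth of KL} reduces to Gaussian mixture KL computations that are quadratic in $\theta-\theta^\ast$, and Assumption~\ref{assump: test condition} is standard for finite-dimensional exponential-family-type models (likelihood ratio tests combined with a covering of annuli, as cited in the paper). Assumption~\ref{assump: continuity_of_Hessian} reduces to sub-exponential control of quadratic forms in the Gaussian vector $X$ and of the truncated logistic score, both of which follow from Hermite-polynomial type tail bounds; the Lipschitz constant of $\nabla^2\log p(X,z\mid\theta)$ is zero because the model is linear-Gaussian in $\theta$, so part (2) of Assumption~\ref{assump: continuity_of_Hessian} is trivial.

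The core computation is the upper bound on $\gamma=\matnorm{I_S(\theta^\ast)}$. Using $\nabla_\theta\log p(z\mid X,y,\theta)=z\,\tfrac{2yX}{\beta^2}(1-\sigma(2zyX^\top\theta/\beta^2))$, we have
\begin{equation*}
I_S(\theta^\ast)=\tfrac{4}{\beta^4}\,\mathbb E_{X,\epsilon,Z}\Big[ y^2\,XX^\top\,\sigma\!\big(2yX^\top\theta^\ast/\beta^2\big)\big(1-\sigma\!\big(2yX^\top\theta^\ast/\beta^2\big)\big)\Big]
\end{equation*}
with $y=ZX^\top\theta^\ast+\epsilon$. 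I would bound the operator norm by picking a direction $v\in S^{d-1}$, splitting the integral into the region $\{|yX^\top\theta^\ast|\le M\beta^2\}$ and its complement, and exploiting the sub-Gaussian decay of $\sigma(1-\sigma)$ on the complement. By optimizing the truncation level $M$ against $\kappa_{\rm SNR}=\|\theta^\ast\|/\beta$, one gets an upper bound for $\gamma$ that decays like $(16+\kappa_{\rm SNR}^2)^{-1/4}\beta^{-2}$ (up to the universal constant $2174$), from which $\zeta=\lambda/\gamma\ge (16+\kappa_{\rm SNR}^2)^{1/4}/2174$. The radius $R_W$ and the threshold $N$ are then read directly from Theorem~\ref{thm: main_theorem} by plugging in the same $\lambda,\gamma$ together with the explicit constants $A,B,C$ obtained from the $\psi_\alpha$-norms computed above, and the prior-dependent lower bound on $n$ comes from the requirement that the Hessian sampling error in Lemma~\ref{lem: ULLN_of_potential_function} satisfy $\tfrac{2d\sigma_1\log(n/\eta)}{\sqrt n}<\lambda-2\gamma$.

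The main obstacle is getting a sharp enough upper bound on $\gamma$ of the specific polynomial form $(16+\kappa_{\rm SNR}^2)^{-1/4}$; a naive bound using $\sigma(1-\sigma)\le 1/4$ gives only $\gamma\lesssim \mathbb E[y^2\|X\|^2]/\beta^4$ which grows with $\kappa_{\rm SNR}$ and does not yield any contraction gain from a large SNR. One must therefore show quantitatively that the latent label $Z$ becomes nearly deterministic as $\kappa_{\rm SNR}$ grows and that the $\sigma(1-\sigma)$ weight concentrates on the exponentially small event $\{|yX^\top\theta^\ast|\lesssim\beta^2\}$. The remaining technical burden is the perturbation analysis showing that the Hessian of $U(\cdot;\mu)$ remains $\lambda$-strongly convex for $\mu$ in the $W_2$-ball of radius $R_W$, which requires controlling how the soft-labels $\Phi(\mu,\cdot)$ change under small $W_2$-perturbations of $\mu$; this is handled exactly as in the proof of Corollary~\ref{coro: GMM} by using a third-order Taylor expansion and the uniform boundedness of $\nabla^3 U$ on the relevant set.
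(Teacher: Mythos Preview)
Your proposal is correct and follows essentially the same route as the paper: verify Assumptions~\ref{assump: qudratic growth of KL}--\ref{assump: prior condition} and \ref{assump: strong_convexity}--\ref{assump: continuity_of_Hessian}, compute $\lambda$ and $\gamma$ explicitly, and invoke Theorem~\ref{thm: main_theorem}. Two points of comparison are worth noting.

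First, you are working harder than necessary on the strong convexity of $U(\cdot;\mu)$. Because $\nabla^2_\theta\,(y-zX^\top\theta)^2/(2\beta^2)=\beta^{-2}XX^\top$ does not depend on $z$, the weights $\Phi(\mu,X,y)(z)$ sum out and $\nabla^2 U(\theta;\mu)=\beta^{-2}\mathbb E[XX^\top]=\beta^{-2}I_d$ \emph{exactly}, for every $\mu$ and every $\theta$. There is no perturbation term to absorb and no third-order Taylor argument needed; Assumption~\ref{assump: strong_convexity} holds globally with $\lambda=\beta^{-2}$ and arbitrary $r$. The paper records this directly.

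Second, your truncation for bounding $\gamma$ is on the event $\{|yX^\top\theta^\ast|\le M\beta^2\}$, whereas the paper conditions on $(X,Z)$, writes $y=ZX^\top\theta^\ast+\beta\eta$, and splits on $\{|\eta|\le |X^\top\theta^\ast|/(4\beta)\}$. On that event the paper bounds $\big|Z-\tanh(yX^\top\theta^\ast/\beta^2)\big|\le 2\exp\{-(X^\top\theta^\ast)^2/(2\beta^2)\}$ directly, then applies Cauchy--Schwarz and an explicit Gaussian moment computation (using $X^\top\theta^\ast\sim\mathcal N(0,\|\theta^\ast\|^2)$) to arrive at $\gamma<2174\,\beta^{-2}(16+\kappa_{\rm SNR}^2)^{-1/4}$. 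Your $\sigma(1-\sigma)$ formulation is equivalent and arguably cleaner, but note that the precise exponent $-1/4$ and constant $2174$ come from that final Gaussian integral $\mathbb E\big[\big((X^\top\theta^\ast/\beta)^2+1\big)^2\exp\{-(X^\top\theta^\ast)^2/(32\beta^2)\}\big]^{1/2}$; if you split on $|yX^\top\theta^\ast|$ instead you will have to re-derive this moment, and the constants you obtain may differ.
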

The proof of the above corollary is postponed to Appendix~\ref{app: MR_proof}. Directly solving $\zeta > 2$ with $\zeta$ defined above provides a very loose bound of $\kappa_{\rm SNR} > 1.8\times 10^7$. In fact, the lower bound requirement can be substantially improved to a positive constant less than $10$ by numerically calculating an analytically intractable constant in our proof. More details are referred to the end of Appendix~\ref{app: MR_proof}.

Figure~\ref{fig: MR} shows the simulation results for implementing the mixture of regression model via MF-WGF. We set $\theta^\ast = (2, 3)$ in the data generative model and generate $n=200$ i.i.d. samples. For $\beta = 4$, we choose $\tau = 0.055$ and $\tau = 0.035$ for the SDE approach and $\tau = 0.2$ for the FA approach. For $\beta = 2$, we choose $\tau = 0.015$ and $\tau = 0.009$ for the SDE approach and $\tau = 0.2$ for the FA approach. We vary the SNR value $\|\theta^\ast\|/\beta$ by changing the noise variance $\beta^2$. 
Similar to the GMM example, we observe nearly straight lines for the numerical error $\log W_2^2(\widehat{q}_\theta, q_\theta^{(k)})$ versus the iteration count, indicating the exponential convergence of the algorithm. Moreover, a higher SNR corresponds to a smaller contraction rate as predicted by our theory. The statistical error $W_2^2(\delta_{\theta^\ast}, q_\theta^{(k)})$, indicated by solid curves in the plot, is at first dominated by the optimization error, but afterwards dominates the latter and stabilizes. 
Figure~\ref{fig: FA vs SDE, largenoise, MR} and Figure~\ref{fig: FA vs SDE, smallnoise, MR} compares the FA approach with the SDE approach and the numerical error under different noise levels. Figure~\ref{fig: diff_sample_size, MR} studies the affect of the sample size on the contraction rate and the statistical error when $\tau = 0.2$. Figure~\ref{fig: MF-WGF vs Gibbs, MR} shows the contour plots of the true posterior distribution of $\theta$ and its MF approximation.


\section{Discussion}
In this paper, we have proposed a general computational framework for realizing the mean-field variational approximation to Bayesian posteriors via Wasserstein gradient flows. We also applied the developed methods and theory to three concrete examples, linear regression model for Bayesian models without latent variables, and Gaussian mixture model and mixture of regression model for Bayesian latent variable models. Our analysis implies the exponential convergence of the algorithm given a good initialization.  

We also expect the development of this paper can be extended to other varitional approximation schemes, and the theoretical results to hold under weaker assumptions. For instance, as we briefly remarked in Section~\ref{sec:MF-WGF_analysis}, we may relax the global convexity condition on $U$ relative to the parameter $\theta$ into a local one, and the condition on the ``condition number'' $\kappa$ in Theorem~\ref{thm: main_theorem} into a weaker one. It is also possible to use a pre-conditioned Wasserstein distance with cost function as a weighted Euclidean norm square in constructing the discrete-time Wasserstein gradient flow, so that the local geometric structure can be captured while updating the variational distribution. This variant can be viewed as the generalization of the usual quasi-Newton's method to the Wasserstein space, which may enjoy a faster rate of algorithmic convergence. We leave all these threads into future directions.


\bibliographystyle{plain}
\bibliography{bibliography}


\newpage
\appendix
\begin{center}
{\bf\Large Supplementary Materials: Appendix}
\end{center}
\numberwithin{equation}{section}





\section{Background on optimal transport and Wasserstein gradient flow}\label{app:background}

\subsection{Optimal transport map}\label{app:OPTmap}
The optimization problem defining the $W_2$ metric is called the Kantorovich formulation of the optimal transport problem (KP) with quadratic cost. 
Problem (KP) always admits a solution~(Theorem 1.7,~\cite{santambrogio2015optimal}), called an optimal transport plan, which is not unique in general. Therefore, we cannot replace the inf in (KP) above by min. We use the notation $\Pi_o(\mu,\nu)$ to denote the set of all optimal transport plans. If one of the distributions, say $\mu$, is absolutely continuous with respect to the Lebesgue measure of $\mb R^d$, or $\mu\in\ms P_2^r(\mb R^d)$, then the optimal transport plan is unique and takes the form of $(\id,\, T^\ast)_\# \mu$ (Theorem 1.22,~\cite{santambrogio2015optimal}), implying $\nu= T^\ast_\#\mu$ (also see, e.g.~\cite{brenier1991polar,mccann1995existence}). Any map $T^\ast$ pushforwarding $\mu$ to $\nu$ such that $(\id,\, T^\ast)_\# \mu$ solves (KP) is called an optimal transport map from $\mu$ to $\nu$. In particular, if $\mu\in\ms P_2^r(\mb R^d)$, then the following Monge formulation of optimal transport problem (MP) with quadratic cost admits a unique solution as $T^\ast$,
\begin{align*}
   \inf_{T} \int_{\mb R^d} \|x - T(x)\|^2 \;\dd \mu(x), \quad \mbox{s.t.}\quad T_\#\mu=\nu. \qquad\mbox{(MP)}
\end{align*}
Moreover, the solution can be uniquely written as the gradient of a convex function $u^\ast$, i.e.~$T^\ast=\nabla u^\ast$.
It is worth noting that problem (MP) may admit no solution if $\mu$ contains singular components (e.g.~see Section 1.4 of~\cite{santambrogio2015optimal}), although the inf is always well-defined. If $\nu$ also belongs to $\ms P_2^r(\mb R^d)$, then $T_{\mu}^\nu$ is invertible and there exists another convex function $v^\ast$ such that $[\nabla v^\ast]_\# \nu= \mu$, where $v^\ast$ is the convex conjugate to $u^\ast$, i.e.~$v^\ast(x) = \sup_y\{\langle x,y\rangle - u^\ast(y)\}$.

To indicate the dependence on $\mu,\nu\in \ms P_2^r(\mb R^d)$, we will use the notation $T_{\mu}^\nu$ to denote the unique optimal transport map from $\mu$ to $\nu$; then $T_\mu^\nu = (T_\nu^\mu)^{-1}$. 
It is well-known that Wasserstein space $\mb W_2(\mb R^d)$ is a geodesic space with non-negative curvature in the Alexandrov sense~\citep{Lott2008}. For any $\mu_0, \mu_1\in\ms P_2(\mb R^d)$ and $\gamma\in \Pi_o(\mu_0, \mu_1)$, the (constant-speed) geodesic connecting $\mu_0$ and $\mu_1$ is $\mu_t = (\pi_t)_{\#}\gamma$ for $t\in[0,1]$, where $\pi_t = (1-t)\pi^0 + t\pi^1$. Here $\pi^0, \pi^1: \mb R^{2d}\to \mb R^d$ are the projection maps defined by $\pi^0(x, y) = x$ and $\pi^1(x, y) = y$. 
In particular, when $\mu_0,\mu_1\in \ms P_2^r(\mb R^d)$, the geodesic is uniquely given by $\mu_t= \big[(1-t) \,\id + t T_{\mu_0}^{\mu_1}\big]_\# \mu_0$ for $t\in[0,1]$.

\subsection{Subdifferential calculus in $\mb W_2(\mb R^d)$}\label{sec:subdiff}
Let $\m F:\, \ms P_2^r(\mb R^d)\to (-\infty,\infty]$ be a proper and lower semicontinuous functional on $\ms P_2^r(\mb R^d)$. 
We say that $\xi\in L_2(\mu;\mb R^d)$ belongs to the Fr\'{e}chet subdifferential $\partial \m F(\mu)$ of $\m F$ at $\mu$ if for any $\nu\in \ms P_2^r(\mb R^d)$,
\begin{align}\label{eqn: def_subdifferential}
\m F(\nu) \geq \m F(\mu) + \int_{\mb R^d} \langle \xi(x), T_\mu^\nu(x) - x\rangle\, \dd \mu(x) + o\big(W_2(\mu,\nu)\big),\ \ \mx{as}\ W_2(\mu,\nu)\to 0.
\end{align}
In addition, if $\xi\in \partial \m F(\mu)$ also satisfies the following for any (transport) map $T:\,\mb R^d\to\mb R^d$, 
\begin{align*}
\m F(T_\# \mu) \geq \m F(\mu) + \int_{\mb R^d} \langle \xi(x), T(x) - x\rangle\, \dd \mu(x) + o\big(\|T- \id\|_{L^2(\mu;\mb R^d)}\big),\ \ \mx{as }\|T- \id\|_{L^2(\mu;\mb R^d)}\to 0,
\end{align*}
then $\xi$ is called a strong subdifferential of $\m F$ at $\mu$. For the Wasserstein space, the ``differentiablity'' of a functional $\m F$ can be an overly stringent property, but subdifferential exists under fairly mild conditions (Chapter 10,~\cite{ambrosio2008gradient}).
Similar to the Euclidean case, the notion of subdifferential generalizes ``gradient'' and is useful in characterizing local minima of functional $\m F$ and rigorously defining a gradient flow. For example, the first order optimality condition based on subdifferential is as follows: if $\mu$ is a local minimum of $\m F$ over $\PX$, then $0$ (the zero map from $\mb R^d$ to $\mb R^d$) belongs to $\partial \m F(\mu)$. 

\smallskip
\noindent \emph{Connection with first variation.}
Another common technique for finding maxima and minima of generic functionals is based on the first variation (or Gateaux derivative) from the field of calculus of variations. When specialized to all regular measures from $\PX$ in the Wasserstein space, a map $\frac{\delta\m F}{\delta\mu}(\mu):\, \mb R^d \to \mb R$ is called the first variation of a functional $\m F : \ms P^r(\mb R^d) \to \mb R$ at $\mu\in\ms P^r(\mb R^d)$, if
\begin{displaymath}
\frac{\dd}{\dd\varepsilon} \m F(\mu +\varepsilon\chi)\bigg|_{\varepsilon=0} = \int_{\mb R^d}\frac{\delta\m F}{\delta\mu}(\mu)\,\dd\chi
\end{displaymath}
for any perturbation $\chi = \tilde{\mu} - \mu$ with $\tilde{\mu} \in \ms P^r(\mb R^d)$. 
$\frac{\delta\m F}{\delta\mu}$ can be regarded as an infinite-dimensional gradient in $\PX$. A first order optimality condition of functional $\m F$ in $\PX$ using the first variation can be stated as follows: if $\mu^\ast\in\ms P_r^2(\mb R^d)$ is a local minimum of $\m F$ and $\frac{\delta\m F}{\delta\mu}(\mu^\ast)$ is a measurable map, then $\frac{\delta\m F}{\delta\mu}(\mu^\ast)$ attains its essential infimum a.e.~on $\{\mu^\ast > 0\}$ (that is, it is a constant $\mu^\ast$-almost everywhere,~See Proposition 7.20 in \cite{santambrogio2015optimal} for a proof).
The following lemma provides a connection between first variation and subdifferential, whose proof is provided in Appendix~\ref{sec:proof_lem:FV_SD}.

\begin{lemma}\label{lem:FV_SD}
If $\mu\in\PX$ satisfies $\m F(\mu)<\infty$ and $\xi$ is a strong subdifferential of $\m F$ at $\mu$, then
\begin{align*}
    \xi(x) = \nabla \frac{\delta \m F}{\delta \mu}(\mu)(x) \quad \mx{for $\mu$-a.e.}\  x\in\mb R^d.
\end{align*}
Conversely, if $\m F$ is Fr\'{e}chet differentiable at $\mu$ relative to the $W_2$ metric (which implies the Gateaux differentiability with the same derivative), that is,
$$\m F(\nu) \geq \m F(\mu)+  \int_{\mb R^d}\frac{\delta\m F}{\delta\mu}(\mu)\,\dd(\nu-\mu) +o\big(W_2(\mu,\nu)\big)\quad \mx{as } W_2(\mu,\nu)\to 0,
$$
then $\xi(x) = \nabla \frac{\delta \m F}{\delta \mu}(\mu)(x)$ is a subdifferential of $F$ at $\mu$.
\end{lemma}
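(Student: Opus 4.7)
Both directions hinge on a single identity: for a test vector field $\eta\in C_c^\infty(\mb R^d;\mb R^d)$ and $T_\varepsilon = \id + \varepsilon\eta$, the curve $\varepsilon\mapsto \m F\big((T_\varepsilon)_\#\mu\big)$ has derivative at $\varepsilon=0$ equal to $\int \nabla\phi\cdot\eta\,\dd\mu$, where $\phi:=\frac{\delta\m F}{\delta\mu}(\mu)$. This will follow by combining the definition of first variation with the change-of-variables identity $\int\phi\,\dd\big((T_\varepsilon)_\#\mu\big) = \int \phi\big(x+\varepsilon\eta(x)\big)\,\dd\mu(x)$ and differentiating in $\varepsilon$ (or, equivalently, interpreting $\partial_\varepsilon \mu_\varepsilon|_{\varepsilon=0} = -\nabla\cdot(\mu\eta)$ in the sense of distributions and integrating by parts).

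\textbf{Forward direction.} The plan is to feed $T = T_\varepsilon$ into the strong subdifferential inequality. Since $\|T_\varepsilon-\id\|_{L^2(\mu)}=|\varepsilon|\,\|\eta\|_{L^2(\mu)}$, dividing by $\varepsilon>0$ and sending $\varepsilon\to 0^+$ gives $\int \nabla\phi\cdot\eta\,\dd\mu \ge \int \xi\cdot\eta\,\dd\mu$. Applying the same with $-\eta$ in place of $\eta$ yields the reverse inequality, so the two combine to $\int\langle\nabla\phi-\xi,\,\eta\rangle\,\dd\mu=0$ for every $\eta\in C_c^\infty(\mb R^d;\mb R^d)$. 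Density of $C_c^\infty(\mb R^d;\mb R^d)$ in $L^2(\mu;\mb R^d)$ will then force $\xi=\nabla\phi$ $\mu$-a.e.

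\textbf{Converse direction.} Given $\nu\in\PX$, I would first use change of variables $\int\phi\,\dd\nu = \int\phi\circ T_\mu^\nu\,\dd\mu$ to rewrite the hypothesis as
\begin{align*}
\m F(\nu)\ge \m F(\mu)+\int\big[\phi(T_\mu^\nu(x))-\phi(x)\big]\,\dd\mu(x) + o\big(W_2(\mu,\nu)\big).
\end{align*}
A first-order Taylor expansion $\phi(T_\mu^\nu(x))-\phi(x) = \langle\nabla\phi(x),T_\mu^\nu(x)-x\rangle + R(x)$, with pointwise remainder bounded by $C\|T_\mu^\nu(x)-x\|^2$ under local Lipschitz continuity of $\nabla\phi$, together with the isometry $\|T_\mu^\nu-\id\|_{L^2(\mu)}=W_2(\mu,\nu)$, makes the integrated remainder $O\big(W_2^2(\mu,\nu)\big)=o\big(W_2(\mu,\nu)\big)$. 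Substituting back recovers exactly the subdifferential inequality~\eqref{eqn: def_subdifferential} with $\xi=\nabla\phi$.

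\textbf{Main obstacle.} The delicate step will be the converse: Fr\'echet differentiability in $W_2$ controls $T_\mu^\nu-\id$ only in $L^2(\mu)$, not pointwise, so the Taylor remainder has to be handled via an $L^2$/Cauchy--Schwarz argument rather than a uniform one. This will require enough regularity of $\phi$ (e.g.\ a locally Lipschitz gradient together with a mild growth condition on its modulus of continuity) so that $|R(x)|$ is integrable uniformly over the family of candidate $\nu$'s. In the downstream applications $\phi$ is essentially the gradient of a log-posterior and these conditions will be automatic; absent such regularity one would need to relax the conclusion to the extended subdifferential framework developed in Chapter 10 of~\cite{ambrosio2008gradient}.
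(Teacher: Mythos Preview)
Your proposal is correct and, for the converse direction, follows essentially the same route as the paper: change of variables $\int\phi\,\dd\nu=\int\phi\circ T_\mu^\nu\,\dd\mu$ followed by a first-order Taylor expansion of $\phi$, with the remainder absorbed into $o\big(W_2(\mu,\nu)\big)$ via $\|T_\mu^\nu-\id\|_{L^2(\mu)}=W_2(\mu,\nu)$. The paper's step~(ii) is in fact somewhat informal on exactly the point you flag as the ``main obstacle'' (it does not spell out the regularity of $\phi=\frac{\delta\m F}{\delta\mu}(\mu)$ needed to control the integrated remainder), so your discussion there is a genuine refinement rather than an omission.

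For the forward direction the two approaches differ in presentation but not in substance. The paper simply cites Lemma~10.4.1 of~\cite{ambrosio2008gradient} and moves on. Your argument---feed $T_\varepsilon=\id+\varepsilon\eta$ into the strong-subdifferential inequality, identify the derivative of $\varepsilon\mapsto\m F\big((T_\varepsilon)_\#\mu\big)$ as $\int\langle\nabla\phi,\eta\rangle\,\dd\mu$ via the first variation and integration by parts, then vary $\eta$ over $C_c^\infty$---is precisely the mechanism behind that lemma in the Ambrosio--Gigli--Savar\'e framework. One small caveat: the paper's definition of first variation is stated for linear perturbations $\mu+\varepsilon\chi$, whereas you differentiate along the pushforward curve $(T_\varepsilon)_\#\mu$; bridging the two (via $\partial_\varepsilon\mu_\varepsilon|_{\varepsilon=0}=-\nabla\cdot(\mu\eta)$ as you indicate) requires the same mild regularity on $\m F$ that makes $\phi$ well-defined and differentiable, so be explicit about this when you write it out.
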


\subsection{Euclidean gradient flow}\label{app:EGF}
To provide motivation for defining the gradient flow in the Wasserstein space $\mathbb{W}_2(\mathbb{R}^d)$ as discussed in Section~\ref{section: gradient_flow_in_probability_space}, let us draw a comparison with an equivalent definition of the gradient flow in the Euclidean space $\mathbb{R}^d$ that can be extended to a general metric space.
Let $F: \mb R^d\to\mb R$ be a smooth function and $x^0\in\mb R^d$ a point. 
Intuitively, a gradient flow initialized at $x_0$ is an evolution starting from $x^0$ and always moving in the direction where $F$ decreases the most (a.k.a.~steepest descent) and thus gradually minimizing $F$. Rigorously, it is the solution of the following ordinary differential equation (ODE)
\begin{align}\label{eqn:Euclidean_GF}
    \dot{x}_t = -\nabla F(x_t), \quad\mx{for }t>0, \quad \mx{with }x_0=x^0,
\end{align}
where the negative gradient $-\nabla F(x)$ gives the steepest direction towards a (local) minimizer.
This is a standard Cauchy problem which has a unique solution if $\nabla F$ is Lipschitz continuous.
In particular, when $F$ is strictly convex, the gradient flow has exponential convergence to the unique global minimizer. Unfortunately, such a definition of gradient flow via ODE will encounter a number of obstacles when adapted to $\mb W_2(\mb R^d)$. There is an equivalent perspective that is relatively easy to generalize and also provides a numeric scheme for practically approximating the Euclidean gradient flow. 
More precisely, one can define or view the continuous-time gradient flow~\eqref{eqn:Euclidean_GF} as the weak convergence limit of the following iterative variational scheme, also called minimization movements, as the step size $\tau$ tends to zero,
\begin{align}\label{eqn: gradient_flow_in_Rn}
    x_{k+1}^{\tau} \in \argmin_{x\in\mb R^n} F(x) + \frac{1}{2\tau}\|x_k^\tau - x\|^2, \quad\mx{for $k\geq 0$} \quad \mx{with }x_0^\tau = x^0.
\end{align}
Under some mild conditions on $F$, such as eigenvalues of Hessian $\nabla^2 F$ bounded from below, above minimization problem admits a unique solution for all sufficiently small $\tau$.
It is also worth noting that the first order optimality condition for solving above minimization problem is exactly the discrete-time implicit Euler scheme for the ODE~\eqref{eqn:Euclidean_GF}.
For any fixed time horizon $T$, by letting $\tau = \frac{T}{n}$ and $\tilde{x}^{(n)}_t = x^\tau_{[t/\tau]}$, the piecewise constant interpolation $\tilde{x}^{(n)}_t$ uniformly converges to the unique solution to the ODE~\eqref{eqn: gradient_flow_in_Rn} on $[0, T]$ as $n\to\infty$ given $\nabla F$ is Lipschitz continuous.

\subsection{Convexity along generalized geodesics}\label{app:convexity}
Convexity has a particularly prominent role in proving the convergence and deriving an explicit convergence rate of gradient flows in the Euclidean space.  To analyze the optimization landscape of minimizing a proper and lower semicontinuous functional $\m F:\, \ms P_2^r(\mb R^d)\to (-\infty,\infty]$ on $\ms P_2^r(\mb R^d)$, it would be helpful to properly extend the notion of convexity to the Wasserstein space $\mb W_2(\mb R^d)$. 
In the Euclidean space, for every $x_0, x_1, z\in\mb R^d$, we have
\begin{align*}
    t\,\|x_1-z\|^2 + (1-t)\,\|x_0-z\|^2 - \|x_t - z\|^2 \ = \  t(1-t)\,\|x_0-x_1\|^2 \ \geq\ 0,
\end{align*}
where $x_t=tx_0+(1-t)x_1$ for $t\in[0,1]$ is the (constant-speed) geodesic connecting $x_0$ and $x_1$.
This indicates that the squared distance $\|\cdot \,- \,z\,\|^2$\, is convex (along geodesics) for all $z\in\mb R^d$, which is essential to study the basic regularity properties of gradient flows in $\mb R^d$ since geodesics are the (locally) shortest paths that locally interpolates the gradient flow. 
However, the geodesic in a general length space~\cite{burago2001course,mccann1997convexity}, such as $\mb W_2(\mb R^d)$, does not have this property; and we need to define the convexity of a functional along different interpolating curves, along which the $W_2^2(\cdot,\,\mu)$ exhibits a nicer behavior. 
This motivates the following definition of convexity along generalized geodesics in $\mb W_2(\mb R^d)$ (Chapter 9 of~\cite{ambrosio2008gradient}).

Let $\pi^1$, $\pi^2$, $\pi^3$ be the projections onto the first, second and third coordinate in $\big(\mb R^d)^3$, respectively, and $\pi_t^{2\to 3} = (1-t)\pi^2+t\pi^3$ for $t\in[0,1]$. Let $\Pi(\mu_1,\mu_2,\mu_3)$ denote the space of all joint distributions (couplings) over $\big(\mb R^d\big)^3$ with marginals $\mu_1,\mu_2$ and $\mu_3\in\ms P_2(\mb R^d)$.

\begin{definition}[$\lambda$-convexity along generalized geodesics]
A generalized geodesic joining $\mu^2$ to $\mu^3$ (with base $\mu^1$) is a curve in $\PX$ of the type
\begin{align*}
\mu_t^{2\to 3} = (\pi_t^{2\to 3})_\# \bm \mu, \quad t\in[0,1],
\end{align*}
where $\bm \mu\in \Pi(\mu^1,\mu^2,\mu^3)$, $\pi^{1,2}_\# \bm \mu \in \Pi_o(\mu^1,\mu^2)$ and $\pi^{1,3}_\# \in\Pi_o(\mu^1,\mu^2)$.
A functional $\m F:\, \ms P_2^r(\mb R^d)\to (-\infty,\infty]$ is said to be $\lambda$-convex along generalized geodesics in $\PX$ if for any $\mu^1$, $\mu^2$, $\mu^3\in \PX$ there exists a generalized geodesic $\mu_t^{2\to 3}$ induced by a plan $\bm \mu\in \Pi(\mu^1,\mu^2,\mu^3)$ such that
\begin{align*}
\m F(\mu_t^{2\to 3}) \leq (1-t)\, \m F(\mu^2) + t \,\m F(\mu^3) -\frac{\lambda}{2}\, t(1-t)\, W_{\bm \mu}^2(\mu^2,\mu^3),\quad\forall t\in[0,1],
\end{align*}
where $W_{\bm \mu}^2(\mu^2,\mu^3):\,=\int_{(\mb R^d)^3}\|x_3-x_2\|^2\,\dd\bm \mu(x_1,x_2,x_3) \geq W_2^2(\mu^2,\mu^3)$.
\end{definition}

If a functional $\m F$ is $\lambda$-convex along all generalized geodesics in $\PX$ whose starting point $\mu^2=\mu^1$ is the same as the base point $\mu^1$, then $\m F$ is said to be $\lambda$-convex along geodesics (also called displacement convexity,~\cite{mccann1997convexity}). Therefore, the convexity along generalized geodesics is stronger than that along geodesics. 
As another remark, in dimensions greater than one, $\frac{1}{2}W_2^2(\cdot, \mu)$ is not $1$-convex along geodesics (in fact, it satisfies the opposite inequality); however, it is $1$-convex along all generalized geodesics with base point $\mu^1=\mu$. This property is important to define and study the convergence of Wasserstein gradient flows.

\smallskip
\noindent \emph{Connection with subdifferentiability.}
Convexity along (generalized) geodesics strengthens the local notion of Fr\'{e}chet subdifferentiability described in Section~\ref{sec:subdiff} into a global one, similar to the subdifferential for convex functions in $\mb R^d$.
\begin{lemma} [Section~10.1.1~in~\cite{ambrosio2008gradient}]
Suppose $\m F:\, \ms P_2^r(\mb R^d)\to (-\infty,\infty]$ is $\lambda$-convex along geodesics. Then a vector $\mb\xi \in L^2(\mu;\mb R^d)$ belongs to the Fr\'{e}chet subdifferential of $\m F$ as $\mu$ if and only if
\begin{align*}
\m F(\nu) \geq \m F(\mu) + \int_{\mb R^d} \langle \xi(x), T_\mu^\nu - x\rangle\, \dd \mu(x) + \frac{\lambda}{2} W_2^2(\mu,\nu) \quad \forall\nu\in \PX.
\end{align*}
In particular, if $\xi_i\in \partial \m F(\mu_i)$, $i=1,2$, and $T=T_{\mu_1}^{\mu_2}$ is the optimal transport map, then
\begin{align*}
\int_{\mb R^d} \big\langle \xi_2\big(T(x)\big) -\xi_1(x), T(x) - x\big\rangle \, \dd\mu_1(x) \geq \lambda W_2^2(\mu_1,\mu_2).
\end{align*}
\end{lemma}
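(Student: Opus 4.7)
The plan is to prove the stated global inequality by combining the local (Fr\'echet) subdifferential condition from~\eqref{eqn: def_subdifferential} with the $\lambda$-convexity inequality evaluated along the displacement interpolation. The reverse implication (from global to Fr\'echet subdifferential) is immediate, since $\frac{\lambda}{2}W_2^2(\mu,\nu)=o(W_2(\mu,\nu))$ as $W_2(\mu,\nu)\to 0$, so the global inequality trivially implies~\eqref{eqn: def_subdifferential}. The bulk of the work lies in the forward implication.

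For the forward direction, fix $\mu,\nu\in\ms P_2^r(\mb R^d)$ and let $T=T_\mu^\nu$ be the (a.s.~unique) optimal transport map, which equals $\nabla u^\ast$ for a convex potential $u^\ast$. Consider the displacement interpolation $\mu_t=[(1-t)\,\id+tT]_\#\mu$ for $t\in[0,1]$, which is the (constant-speed) geodesic from $\mu$ to $\nu$ recalled in Section~\ref{sec: WGF}. The key observation I would use is that $S_t:\,=(1-t)\,\id+tT=\nabla\bigl(\tfrac{1-t}{2}\|x\|^2+tu^\ast(x)\bigr)$ is the gradient of a convex function, hence by Brenier's theorem $S_t$ is itself the optimal transport map from $\mu$ to $\mu_t$. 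Consequently $T_\mu^{\mu_t}(x)-x = t\,(T(x)-x)$ and $W_2(\mu,\mu_t)=t\,W_2(\mu,\nu)$. Applying the Fr\'echet subdifferential inequality~\eqref{eqn: def_subdifferential} to the pair $(\mu,\mu_t)$ yields
\begin{align*}
\m F(\mu_t)-\m F(\mu) \;\geq\; t\int_{\mb R^d}\langle \xi(x),\,T(x)-x\rangle\,\dd\mu(x) + o(t),\quad t\to 0^+.
\end{align*}

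Combining this with $\lambda$-convexity along the geodesic $\{\mu_t\}$,
\begin{align*}
\m F(\mu_t)\;\leq\;(1-t)\,\m F(\mu) + t\,\m F(\nu) - \tfrac{\lambda}{2}\,t(1-t)\,W_2^2(\mu,\nu),
\end{align*}
rearranging to isolate $\m F(\nu)$, dividing by $t$, and sending $t\to 0^+$ produces exactly the global inequality. For the ``in particular'' part, I would apply the just-proved global inequality twice: once with base point $\mu_1$ and comparison $\mu_2$ (using $\xi_1$ and $T=T_{\mu_1}^{\mu_2}$), and once with the roles swapped (using $\xi_2$ and $T^{-1}=T_{\mu_2}^{\mu_1}$). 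After changing variables $y=T(x)$ in the second inequality via $T_\#\mu_1=\mu_2$, addition cancels the functional values and leaves
\begin{align*}
0\;\geq\;\int_{\mb R^d}\langle \xi_1(x)-\xi_2(T(x)),\,T(x)-x\rangle\,\dd\mu_1(x) + \lambda\,W_2^2(\mu_1,\mu_2),
\end{align*}
which is the desired monotonicity relation.

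The main obstacle I anticipate is the identification of the displacement interpolant as an optimal map from $\mu$, which is what makes the local subdifferential inequality produce the \emph{correct} linear term $\int\langle\xi,T_\mu^\nu-\id\rangle\,\dd\mu$ in the limit (rather than a quantity along some other curve connecting $\mu$ to $\mu_t$). This hinges on the absolute continuity assumption $\mu\in\ms P_2^r(\mb R^d)$, Brenier's theorem, and McCann's fact that convex combinations of gradients of convex potentials remain gradients of convex potentials. Once these pieces are in hand, the rest is a short Taylor-type expansion at $t=0$ followed by the symmetrization argument described above.
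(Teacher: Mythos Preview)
Your proof is correct. The paper does not actually prove this lemma---it is stated with a direct citation to Section~10.1.1 of~\cite{ambrosio2008gradient} and no proof is given. However, your argument (apply the local subdifferential inequality along the displacement interpolation, exploit that $(1-t)\,\id+tT_\mu^\nu$ is itself the optimal map from $\mu$ to $\mu_t$, combine with the $\lambda$-convexity inequality, divide by $t$ and pass to the limit) is the standard one, and it is precisely the strategy the paper employs in its proof of the closely related Lemma~\ref{Lem:stong_conx_generalized} for the generalized-geodesic case; your symmetrization for the ``in particular'' part is also standard and correct.
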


If the stronger property of convexity along generalized geodesics holds, then we can replace the optimal transport map $T_\mu^\nu$ in one direction of the above lemma through any transport map $\bm  t^{\mu^3}_{\mu^1} \circ T_{\mu^2}^{\mu^1}$ that interpolates $\mu^2$ and $\mu^3$ via any intermediate probability measure $\mu^1$, as in the following lemma. Due to this additional flexibility of choosing the interpolating curve, such a lemma will play a crucial role when analyzing the convergence of a discretized Wasserstein gradient flow (c.f.~Theorem~\ref{thm: implicit_WGF_conv}), and a proof is provided in Appendix~\ref{sec:proof_Lem:stong_conx_generalized}.

\begin{lemma}\label{Lem:stong_conx_generalized}
Suppose $\m F:\, \ms P_2^r(\mb R^d)\to (-\infty,\infty]$ is $\lambda$-convex along generalized geodesics, and $\mu^1$, $\mu^2$, $\mu^3\in\PX$. If $\xi$ is a strong Fr\'{e}chet subdifferential of $\m F$ at $\mu^2$, then 
\begin{align*}
\m F(\mu^3) \geq  \m F(\mu^2) + \int_{\mb R^d} \big\langle \xi\big(T_{\mu^1}^{\mu^2}(x)\big), T_{\mu^1}^{\mu^3}(x) - T_{\mu^1}^{\mu^2}(x)\big\rangle\, \dd \mu^1(x) + \frac{\lambda}{2} W_2^2(\mu^2,\mu^3).
\end{align*}
\end{lemma}

The following results provide conditions under which the two constituting functionals of the KL divergence functional $\m F_{\rm KL}$ are convex along generalized geodesics.
\begin{lemma}[Entropy functional, Proposition 9.3.9~in~\cite{ambrosio2008gradient}]\label{lemma:entropy}
Consider the entropy functional $\m E:\PX\to (-\infty, \infty]$, $\m E(\rho) = \int \log \rho(x)\,\dd \rho(x)$ for all $\rho\in \PX$. Then $\ms F$ is convex along generalized geodesics in $\PX$.
\end{lemma}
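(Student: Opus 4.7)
The plan is to adapt McCann's classical displacement-convexity argument for the entropy functional to the setting of generalized geodesics. Fix $\mu^1, \mu^2, \mu^3 \in \PX$. Since $\mu^1$ is absolutely continuous with respect to the Lebesgue measure, Brenier's theorem (invoked in Section~\ref{sec: WGF}) guarantees the existence of convex potentials $u_2, u_3$ such that $T_2 := \nabla u_2 = T_{\mu^1}^{\mu^2}$ and $T_3 := \nabla u_3 = T_{\mu^1}^{\mu^3}$ are the unique optimal transport maps from $\mu^1$. The canonical three-plan $\bm\mu = (\id, T_2, T_3)_\# \mu^1$ then lies in $\Pi(\mu^1, \mu^2, \mu^3)$ with optimal $(1,2)$- and $(1,3)$-marginals, so the induced generalized geodesic takes the explicit form $\rho_t := \mu_t^{2\to 3} = (T_t)_\# \mu^1$ with $T_t := (1-t)\, T_2 + t\, T_3$, and by linearity of pushforward it suffices to establish convexity of $t\mapsto \m E(\rho_t)$ on $[0,1]$ for this particular $\bm\mu$.

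Next I would exploit the fact that $T_t = \nabla u_t$ for the convex combination $u_t := (1-t) u_2 + t u_3$, so that $T_t$ inherits the structural properties of a Brenier map. By Alexandrov's theorem each $u_i$ is twice differentiable Lebesgue-a.e.~with symmetric positive semi-definite Hessian, hence so is $u_t$, and in particular the approximate Jacobian $\nabla T_t$ is symmetric positive semi-definite $\mu^1$-a.e. Monotonicity of $T_t$ together with absolute continuity of $\mu^1$ ensures that $\rho_t$ is absolutely continuous, and the Monge--Amp\`ere change of variables yields, for $\mu^1$-a.e.~$x$, the Jacobian identity
\begin{align*}
\mu^1(x) \ = \ \rho_t\big( T_t(x) \big)\, \det\big( \nabla T_t(x) \big).
\end{align*}
Taking logarithms and integrating against $\mu^1$, together with $\int f\,\dd\rho_t = \int (f\circ T_t)\,\dd\mu^1$, produces the representation
\begin{align*}
\m E(\rho_t) \ = \ \int_{\mb R^d} \log \mu^1(x)\,\dd\mu^1(x) \ - \ \int_{\mb R^d} \log \det\big( \nabla T_t(x) \big)\,\dd\mu^1(x).
\end{align*}

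The first term on the right-hand side is independent of $t$, so it remains to control the $t$-dependence of the determinant term. The key analytic input is the concavity of $A \mapsto \log \det A$ on the cone of symmetric positive definite matrices, equivalently the convexity of $A \mapsto -\log \det A$. Applied pointwise to $\nabla T_t(x) = (1-t)\nabla T_2(x) + t\, \nabla T_3(x)$ and integrated against $\mu^1$, this immediately delivers
\begin{align*}
\m E(\rho_t) \ \leq\ (1-t)\, \m E(\rho_0) + t\, \m E(\rho_1),
\end{align*}
which is precisely $0$-convexity along the generalized geodesic. Since $\mu^1, \mu^2, \mu^3$ were arbitrary, the lemma follows.

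The main technical obstacle I anticipate is the rigorous justification of the Monge--Amp\`ere change of variables for the non-smooth map $T_t$, together with the handling of the set on which $\nabla T_t$ is merely positive semi-definite (where $\log \det = -\infty$). Both issues are standard in the optimal transport literature: the Jacobian equation for gradients of convex functions is due to McCann, and the degenerate set is accommodated either by approximating $u_t$ via inf-convolution with quadratics to enforce strict convexity and then passing to the limit, or by adopting the convention $-\log \det \equiv +\infty$ on singular matrices, which remains consistent with the integrated convexity inequality. Aside from this technicality, the argument is a direct reduction to a pointwise matrix inequality, mirroring the classical displacement-convexity proof, with the single Brenier map replaced by the interpolated map $T_t$ anchored at the common reference base $\mu^1$.
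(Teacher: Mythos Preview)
The paper does not supply its own proof of this lemma; it simply records the statement and cites Proposition~9.3.9 of \cite{ambrosio2008gradient}. Your proposal is exactly the argument that underlies that proposition: push the entropy back to the base measure via the Monge--Amp\`ere relation for the interpolated Brenier map $T_t = (1-t)T_{\mu^1}^{\mu^2} + t\,T_{\mu^1}^{\mu^3}$, and reduce to the pointwise concavity of $A\mapsto\log\det A$ on positive semi-definite matrices. So your approach is correct and coincides with the source the paper defers to; the technical caveats you flag (Alexandrov differentiability, McCann's Jacobian identity for gradients of convex functions, and the degenerate set where $\det\nabla T_t = 0$) are precisely the points handled in \cite{ambrosio2008gradient}.
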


Recall that the $\lambda$-convexity of a function $V:\mb R^d\to\mb R$ over $\mb R^d$ means for all $x_1,x_2\in\mb R^d$,
\begin{align*}
V\big((1-t) x_1+tx_2\big) \leq (1-t)V(x_1) + tV(x_2) -\frac{\lambda}{2}\,t(1-t)\, \|x_1-x_2\|^2,\ \forall t\in[0,1].
\end{align*}

\begin{lemma}[Potential energy functional, Proposition 9.3.2~in~\cite{ambrosio2008gradient}]\label{lemma:potential}
If potential $V:\mb R^d\to\mb R$ is a $\lambda$-convex function over $\mb R^d$, then the potential energy functional $\m V:\PX\to \mb R$, $\m V(\mu) = \int_\mb R^d V(x)\,\dd\mu(x)$ for $\mu\in\PX$, is $\lambda$-convex along generalized geodesics.
\end{lemma}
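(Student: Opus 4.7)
The plan is to unfold the definition of a generalized geodesic directly, push the $\lambda$-convexity of the potential $V$ from $\mathbb{R}^d$ up to the functional $\mathcal{V}$ on $\PX$ via pointwise integration, and then identify the resulting integrals as the terms appearing in the definition of generalized geodesic convexity.

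Fix $\mu^1, \mu^2, \mu^3 \in \PX$ and let $\boldsymbol{\mu} \in \Pi(\mu^1,\mu^2,\mu^3)$ be a plan inducing a generalized geodesic, that is, $\pi^{1,2}_\#\boldsymbol{\mu}\in\Pi_o(\mu^1,\mu^2)$ and $\pi^{1,3}_\#\boldsymbol{\mu}\in\Pi_o(\mu^1,\mu^3)$. Set $\mu_t^{2\to 3} = (\pi_t^{2\to 3})_\#\boldsymbol{\mu}$, so by the change-of-variables formula for pushforwards,
\begin{align*}
\m V(\mu_t^{2\to 3}) \,=\, \int_{\mb R^d} V\,\dd\mu_t^{2\to 3} \,=\, \int_{(\mb R^d)^3} V\big((1-t)\,x_2+t\,x_3\big)\,\dd\boldsymbol{\mu}(x_1,x_2,x_3).
\end{align*}
The key step is to apply the pointwise $\lambda$-convexity of $V$ with $x=x_2$, $y=x_3$ under the integrand. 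This yields, for each $(x_1,x_2,x_3)$,
\begin{align*}
V\big((1-t)\,x_2+t\,x_3\big) \,\leq\, (1-t)\,V(x_2) + t\,V(x_3) - \frac{\lambda}{2}\,t(1-t)\,\|x_2-x_3\|^2.
\end{align*}

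Integrating this inequality against $\boldsymbol{\mu}$, I would then identify each of the three resulting integrals. Using $\pi^2_\#\boldsymbol{\mu}=\mu^2$ and $\pi^3_\#\boldsymbol{\mu}=\mu^3$ (inherited from the marginal constraints on $\boldsymbol{\mu}$), the first two integrals collapse to $\m V(\mu^2)$ and $\m V(\mu^3)$ respectively. The last integral is, by the very definition given just before the lemma on generalized geodesic convexity, equal to $W_{\boldsymbol{\mu}}^2(\mu^2,\mu^3)$. Combining these gives
\begin{align*}
\m V(\mu_t^{2\to 3}) \,\leq\, (1-t)\,\m V(\mu^2) + t\,\m V(\mu^3) - \frac{\lambda}{2}\,t(1-t)\,W_{\boldsymbol{\mu}}^2(\mu^2,\mu^3),
\end{align*}
which is exactly the defining inequality of $\lambda$-convexity along generalized geodesics.

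Honestly, there is no real obstacle: the potential energy functional is linear in $\mu$, so its behaviour along any curve of the form $(\pi_t)_\#\boldsymbol{\mu}$ is completely determined by the behaviour of $V$ along straight lines in $\mb R^d$. The only mild subtlety is verifying integrability so that Fubini/linearity applies cleanly; one may either assume $V$ is bounded from below by a quadratic (so that all integrals make sense in $(-\infty,+\infty]$ and the decomposition is valid), or allow the inequality to hold trivially when $\m V(\mu^2)$ or $\m V(\mu^3)$ equals $+\infty$. Notably, the argument does \emph{not} use the optimality of the couplings $\pi^{1,2}_\#\boldsymbol{\mu}$ or $\pi^{1,3}_\#\boldsymbol{\mu}$, which reflects that $\m V$ is in fact convex along \emph{every} interpolation of this form, not only those based at $\mu^1$.
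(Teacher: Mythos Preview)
Your argument is correct and is precisely the standard proof: pushforward under $\pi_t^{2\to 3}$, apply pointwise $\lambda$-convexity of $V$, integrate against $\boldsymbol{\mu}$, and identify the three terms. The paper itself does not supply a proof of this lemma---it is stated with a direct citation to Proposition~9.3.2 of \cite{ambrosio2008gradient}---so there is nothing to compare against beyond noting that your write-up matches the argument in that reference.
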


\section{Proofs related to subdifferential calculus and gradient flows in Wasserstein space}
In this appendix, we collect all proofs of the results appearing in Section~\ref{sec: Background and examples} of the main paper about subdifferential calculus and gradient flows in the Wasserstein space $\mb W_2(\mb R^d)$.

\subsection{Proof of Lemma \ref{lem:FV_SD}}\label{sec:proof_lem:FV_SD}
The first part follows from Lemma 10.4.1 of~\cite{ambrosio2008gradient}.
To prove the second part, we use the Fr\'{e}chet differentiability property of functional $\m F$ to obtain
\begin{align*}
    \m F(\nu)\geq&\, \m F(\mu)+  \int_{\mb R^d}\frac{\delta\m F}{\delta\mu}(\mu)\,\dd(\nu-\mu) +o\big(W_2(\mu,\nu)\big)\\
    =&\,\m F(\mu) + \int_{\mb R^d} \frac{\delta\m F}{\delta\mu}(\mu)(x)\, \dd \nu(x) - \int_{\mb R^d} \frac{\delta\m F}{\delta\mu}(\mu)(x) \,\dd \mu(x) + o\big(W_2(\mu,\nu)\big)\\
    \overset{\ri}{=} &\, \m F(\mu) + \int_{\mb R^d} \Big[\frac{\delta\m F}{\delta\mu}(\mu)\big(T_{\mu}^\nu (x)\big) -  \frac{\delta\m F}{\delta\mu}(\mu)(x)\Big] \, \dd \mu(x) + o\big(W_2(\mu,\nu)\big)\\
    \overset{\rii}{=} &\, \m F(\mu) + \int_{\mb R^d} \Big\langle \nabla \frac{\delta\m F}{\delta\mu}(\mu)(x),\, T_{\mu}^\nu (x) - x\Big\rangle \, \dd \mu(x) + o\big(W_2(\mu,\nu)\big) 
\end{align*}
where step (i) follows by the change of variable $x\mapsto T_{\mu}^\nu(x)$ to the first integral, and step (ii) follows by applying the Taylor expansion to $\frac{\delta\m F}{\delta\mu}(\mu)(x)$ and using the fact that 
$$\mb E_{\mu}\big[\|T_{\mu}^\nu (x) - x\|\big] \leq \sqrt{\mb E_{\mu}\big[\|T_{\mu}^\nu (x) - x\|^2\big]}  = W^2_2(\mu,\nu),
$$
since $T_\mu^\nu$ is the optimal transport map from $\mu$ to $\nu$. Therefore, $\nabla \frac{\delta\m F}{\delta\mu}(\mu)$ is a subdifferential of $\m F$ at $\mu$.

\subsection{Proof of Lemma~\ref{Lem:stong_conx_generalized}}\label{sec:proof_Lem:stong_conx_generalized}
By the $\lambda$-convexity along generalized geodesics, we have
\begin{align*}
\frac{\m F(\mu_t^{2\to 3}) - \m F(\mu^2)}{t} \leq  \m F(\mu^3) - \m F(\mu^2) - \frac{\lambda}{2}\, (1-t) W_{\bm \mu}^2(\mu^2,\mu^3),\quad\forall t\in(0,1].
\end{align*}
By using the identity $\mu_t^{2\to 3} = \big((1-t) T_{\mu^1}^{\mu^2}+t T_{\mu_1}^{\mu^3}\big)_\# \mu^1 = \big((1-t) \,\id+t \,T_{\mu_1}^{\mu^3}\circ T_{\mu^2}^{\mu^1} \big)_\# \mu^2$ and that fact that $\xi$ is a strong sub-differential of $\m F$ at $\mu^2$, we obtain
\begin{align*}
\m F(\mu_t^{2\to 3}) - \m F(\mu^2)&  \geq t\int_{\mb R^d} \big\langle \xi(x), T_{\mu_1}^{\mu^3}\big(T_{\mu^2}^{\mu^1}(x)\big)-x\big\rangle\,\dd \mu^2(x) + o(t)\\
& = t\int_{\mb R^d} \big\langle \xi\big(T_{\mu^1}^{\mu^2}(x)\big), T_{\mu_1}^{\mu^3}(x)-t_{\mu^1}^{\mu^2}(x)\big\rangle\,\dd \mu^1(x) + o(t), \quad\mx{as }t\to 0_+.
\end{align*}
By combining the previous two displays and taking the limit as $t\to 0_+$, we obtain
\begin{align*}
\int_{\mb R^d} \big\langle \xi\big(T_{\mu^1}^{\mu^2}(x)\big), T_{\mu_1}^{\mu^3}(x)-t_{\mu^1}^{\mu^2}(x)\big\rangle\,\dd \mu^1(x) &\leq \m F(\mu^3) - \m F(\mu^2) - \frac{\lambda}{2}\, W_{\bm \mu}^2(\mu^2,\mu^3)\\
&\leq \m F(\mu^3) - \m F(\mu^2) - \frac{\lambda}{2}\, W_2^2(\mu^2,\mu^3),
\end{align*}
which implies the claimed inequality.

\subsection{Proof of Theorem~\ref{thm: implicit_WGF_conv}}\label{app:proof_thm: implicit_WGF_conv}
We first collect some results of subdifferential calculus that are will be used in the proof. For a proper and lower semicontinuous functional $\m F: \ms P_2^r(\mb R^d)\to (-\infty, \infty]$, define functional $\m F_{\tau, \mu}$ as
\begin{displaymath}
\m F_{\tau,\mu}(\nu) = \m F(\nu) + \frac{1}{2\tau}W_2^2(\nu, \mu).
\end{displaymath}
Assume that for some $\tau_\ast > 0$, $\m F_{\tau,\mu}$ admits at least a minimum point $\mu_\tau$, for all $\tau\in(0, \tau_\ast)$ and $\mu\in\ms P_2^r(\mb R^d)$. The map $\mu\mapsto \mu_\tau$ can be seen as a generalization from the usual Euclidean space to $\ms P_2^r(\mb R^d)$ of the proximal operator associated with functional $\m F$ with step $\tau$, where the Euclidean distance is replaced by the Wasserstein distance.
\begin{lemma}[lemma 10.1.2 in \cite{ambrosio2008gradient}]\label{lem: appendB1}
Let $\mu_\tau$ be a minimum of $\m F_{\tau,\mu}$, then
\begin{displaymath}
\frac{t_{\mu_\tau}^\mu - \id}{\tau} \in \partial\m F(\mu_\tau)
\end{displaymath}
is a strong subdifferential of $\m F$ at $\mu_\tau$.
\end{lemma}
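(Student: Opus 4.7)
The plan is to exploit the minimality of $\mu_\tau$ against arbitrary competitors of the form $T_\#\mu_\tau$, and control the Wasserstein penalty term via a carefully chosen \emph{suboptimal} coupling built from $T$ together with the optimal transport map $t_{\mu_\tau}^\mu$ (which exists uniquely by Brenier's theorem since $\mu_\tau\in \ms P_2^r(\mb R^d)$ as the minimizer of a functional defined on $\ms P_2^r(\mb R^d)$). Setting $\xi := (t_{\mu_\tau}^\mu - \id)/\tau$, the task reduces to verifying the strong-subdifferential inequality with this candidate, and then deducing the Fr\'{e}chet-subdifferential inequality as a special case by choosing $T = t_{\mu_\tau}^\nu$.

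First I would note that for any transport map $T:\mb R^d \to \mb R^d$, the push-forward $(T, t_{\mu_\tau}^\mu)_\# \mu_\tau$ is a (generally not optimal) coupling between $T_\#\mu_\tau$ and $\mu$, so
\begin{equation*}
W_2^2(T_\#\mu_\tau, \mu) \ \leq\  \int_{\mb R^d} \|T(x) - t_{\mu_\tau}^\mu(x)\|^2\, \dd\mu_\tau(x).
\end{equation*}
Writing $T(x) - t_{\mu_\tau}^\mu(x) = (T(x) - x) - (t_{\mu_\tau}^\mu(x) - x)$ and expanding the square yields
\begin{equation*}
W_2^2(T_\#\mu_\tau, \mu) \ \leq\  W_2^2(\mu_\tau, \mu) + \|T - \id\|_{L^2(\mu_\tau;\mb R^d)}^2 - 2\int_{\mb R^d} \langle T(x) - x,\, t_{\mu_\tau}^\mu(x) - x\rangle\, \dd\mu_\tau(x).
\end{equation*}

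Next, invoking the minimality $\m F_{\tau,\mu}(T_\#\mu_\tau) \geq \m F_{\tau,\mu}(\mu_\tau)$ and plugging in the previous bound, I obtain
\begin{align*}
\m F(T_\#\mu_\tau) - \m F(\mu_\tau) &\ \geq\  \frac{1}{2\tau}\,\big[W_2^2(\mu_\tau, \mu) - W_2^2(T_\#\mu_\tau, \mu)\big] \\
&\ \geq\  \frac{1}{\tau}\int_{\mb R^d} \langle t_{\mu_\tau}^\mu(x) - x,\, T(x) - x\rangle\, \dd\mu_\tau(x) - \frac{1}{2\tau}\,\|T - \id\|_{L^2(\mu_\tau;\mb R^d)}^2 \\
&\ =\  \int_{\mb R^d} \langle \xi(x),\, T(x) - x\rangle\, \dd\mu_\tau(x) + o\big(\|T - \id\|_{L^2(\mu_\tau;\mb R^d)}\big),
\end{align*}
where the remainder term is of order $\m O\big(\|T - \id\|_{L^2(\mu_\tau;\mb R^d)}^2\big)$ and therefore $o\big(\|T - \id\|_{L^2(\mu_\tau;\mb R^d)}\big)$ as $\|T - \id\|_{L^2(\mu_\tau;\mb R^d)}\to 0$. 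This is exactly the defining inequality of a strong subdifferential.

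Finally, to obtain the Fr\'{e}chet subdifferential inequality~\eqref{eqn: def_subdifferential}, I would specialize the above with $T = t_{\mu_\tau}^\nu$ for an arbitrary $\nu\in \ms P_2^r(\mb R^d)$; since $t_{\mu_\tau}^\nu$ is the \emph{optimal} transport map pushing $\mu_\tau$ to $\nu$, we have $T_\#\mu_\tau = \nu$ and $\|T - \id\|_{L^2(\mu_\tau;\mb R^d)} = W_2(\mu_\tau,\nu)$, which delivers the required inequality. There is no genuine obstacle in this argument; the only subtle point is identifying the right suboptimal coupling, and verifying that the quadratic remainder is negligible against the linear lower bound in the appropriate $L^2(\mu_\tau)$ sense.
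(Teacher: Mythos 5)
Your proof is correct and is essentially the standard argument behind Lemma 10.1.2 of Ambrosio--Gigli--Savar\'{e}, which the paper cites rather than reproves: compare the minimizer against $T_\#\mu_\tau$, bound $W_2^2(T_\#\mu_\tau,\mu)$ by the suboptimal coupling $(T, t_{\mu_\tau}^\mu)_\#\mu_\tau$, expand the square, and absorb the quadratic term into the $o(\|T-\id\|_{L^2(\mu_\tau;\mb R^d)})$ remainder. The only points worth stating explicitly are that $\mu_\tau\in\ms P_2^r(\mb R^d)$ (so that $t_{\mu_\tau}^\mu$ and $t_{\mu_\tau}^\nu$ exist by Brenier) and that the comparison $\m F_{\tau,\mu}(T_\#\mu_\tau)\geq\m F_{\tau,\mu}(\mu_\tau)$ is interpreted with $\m F=+\infty$ off $\ms P_2^r(\mb R^d)$, both of which you implicitly handle.
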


Now let us return to the proof of Theorem \ref{thm: implicit_WGF_conv}.
Let $\xi = \frac{1}{\tau}(t_{\mu_\tau}^\mu - \id) \in\partial\m F(\mu_\tau)$ denote the strong subdifferential of $\m F$ at $\mu_\tau$ implied by Lemma \ref{lem: appendB1}, so that $t_{\mu_\tau}^\mu = \id + \tau\xi$ is the optimal transport map from $\mu_\tau$ to $\mu$. Notice that $(t_\mu^{\mu_\tau}, t_\mu^\pi)_\#\mu$ forms a coupling between $\mu_\tau$ and $\pi$. Consequently, we have
\begin{align*}
W_2^2(\mu_\tau, \pi) &\leq \|t_\mu^\pi - t_\mu^{\mu_\tau}\|^2_{L^2(\mu; \mb R^d)} = \|t_\mu^\pi - \id + \id - t_\mu^{\mu_\tau}\|_{L^2(\mu; \mb R^d)}^2\\
&= \|t_\mu^\pi - \id\|_{L^2(\mu; \mb R^d)}^2 + \|t_\mu^{\mu_\tau} - \id\|_{L^2(\mu; \mb R^d)}^2 - 2\big\langle t_{\mu}^\pi - \id, t_\mu^{\mu_\tau} - \id\big\rangle_{L^2(\mu; \mb R^d)}\\
&= \|t_\mu^\pi - \id\|_{L^2(\mu; \mb R^d)}^2 - \|t_\mu^{\mu_\tau} - \id\|_{L^2(\mu; \mb R^d)}^2 + 2\big\langle t_\mu^{\mu^\tau} - t_\mu^\pi, t_\mu^{\mu_\tau} - \id\big\rangle_{L^2(\mu;\mb R^d)}.
\end{align*}
Using the identity that $t_\mu^{\mu_\tau}\circ t_{\mu_\tau}^\mu = \id$ along with $t_{\mu_\tau}^\mu = \id + \tau\xi$, we obtain $t_\mu^{\mu_\tau} = \id - \tau\xi\circ t_{\mu_\tau}^\mu$. Therefore, the last term in the preceding display can be expressed as
\begin{align*}
2\big\langle t_\mu^{\mu^\tau} - t_\mu^\pi, \,t_\mu^{\mu_\tau} - \id\big\rangle_{L^2(\mu;\mb R^d)}
&= 2\tau\int_{\mb R^d}\big\langle\xi\circ t_\mu^{\mu_\tau}, t_\mu^\pi - t_\mu^{\mu_\tau}\big\rangle\,\dd\mu\\
&\leq 2\big(\m F(\pi) - \m F(\mu_\tau)\big) - \tau\lambda W_2^2(\mu_\tau, \pi),
\end{align*}
where in the last step we applied Lemma~\ref{Lem:stong_conx_generalized} with $\mu^1 = \mu, \mu^2 = \mu_\tau$ and $\mu^3 = \pi$. Combining the two preceding  displays and using the identities $\|t_\mu^\pi - \id\|_{L^2(\mu, \mb R^d)}^2 = W_2^2(\mu, \pi)$ and $\|t_\mu^{\mu_\tau} - \id\|_{L^2(\mu, \mb R^d)}^2 = W_2^2(\mu, \mu_\tau)$ yields the claimed inequality.

\section{Proof of main theorems}\label{sec: proof2}
In this appendix, we provide proofs of the results from Section~\ref{sec: main results} of the main paper about the concentration of MF approximation and the geometric convergence of MF-WGF algorithm.

\subsection{Proof of Theorem~\ref{thm: MFVI_posterior_convergence_rate}}\label{app: concentration_MFVI}
We start with the following lemma which characterizes the variational approximation $(\wht q_1, \cdots, \wht q_m)$. The proof will be provided in Section~\ref{app: proof_C.1} in Appendix~\ref{app:tech_results}.
\begin{lemma}\label{lem: functional_Wn}
The variational approximation $(\wht q_1, \cdots, \wht q_m)$ is a minimizer of the functional
\begin{align*}
\widetilde W_n(\rho_1, \cdots, \rho_m) = \int_\Theta \sum_{i=1}^n\log\frac{p(X_i\,|\,\theta^\ast)}{p(X_i\,|\,\theta)}\,\dd\rho_1(\theta_1)\cdots\dd\rho_m(\theta_m) + \KL(\rho_1\otimes\cdots\otimes\rho_m\,\|\,\Pi_\theta).
\end{align*}
In addition, we have
\begin{align}\label{eqn: FOC_qj}
\wht q_j(\theta_j) = \frac{\exp\big\{\int_{\Theta_{-j}}\log\pi_\theta(\theta) + \sum_{i=1}^n \log p(X_i\,|\,\theta)\,\dd\wht q_{-j}\big\}}{\int_{\Theta_j}\exp\big\{\int_{\Theta_{-j}}\log\pi_\theta(\theta) + \sum_{i=1}^n \log p(X_i\,|\,\theta)\,\dd\wht q_{-j}\big\}\,\dd\theta_j}
\end{align}
\end{lemma}

Let $B_j(\varepsilon) = \{\theta_j\in\Theta_j: \|\theta_j - \theta_j^\ast\| \leq \varepsilon\}$ be the ball with radius $\varepsilon$ centered at $\theta_j^\ast$ in $\Theta_j$. We have
\begin{align*}
&\quad\,\wht{Q}_\theta\big(\|\theta_j - \theta_j^\ast\| \leq \varepsilon, \quad \forall\,j\in[m]\big) = \prod_{j=1}^m \wht Q_j\big(\theta_j\in B_j(\varepsilon)\big)\\
&\stackrel{\ri}{=} \prod_{j=1}^m \frac{\int_{B_j(\varepsilon)}\exp\big\{\int_{\Theta_{-j}}\log\pi_\theta(\theta) + \sum_{i=1}^n \log p(X_i\,|\,\theta)\,\dd\wht q_{-j}\big\}\,\dd\theta_j}{\int_{\Theta_j}\exp\big\{\int_{\Theta_{-j}}\log\pi_\theta(\theta) + \sum_{i=1}^n \log p(X_i\,|\,\theta)\,\dd\wht q_{-j}\big\}\,\dd\theta_j}\\
&\stackrel{\rii}{=} \prod_{j=1}^m\frac{\int_{B_j(\varepsilon)}\exp\big\{\int_{\Theta_{-j}}\log\frac{\pi_\theta(\theta)}{\widetilde Q_j(\theta_j)\wht q_{-j}(\theta_{-j})} + \sum_{i=1}^n\log\frac{p(X_i\,|\,\theta)}{p(X_i\,|\,\theta^\ast)}\,\dd\wht q_{-j}\big\}\,\dd\widetilde Q_j}{\int_{\Theta_j}\exp\big\{\int_{\Theta_{-j}}\log\frac{\pi_\theta(\theta)}{\widetilde Q_j(\theta_j)\wht q_{-j}(\theta_{-j})} + \sum_{i=1}^n\log\frac{p(X_i\,|\,\theta)}{p(X_i\,|\,\theta^\ast)}\,\dd\wht q_{-j}\big\}\,\dd\widetilde Q_j}.
\end{align*}
Here, (i) is by the characterization~\eqref{eqn: FOC_qj} of $\wht q_\theta$; (ii) is by adding a same $\theta_j$-independent term to the exponents of the numerator and the denominator. Therefore, we only need to find an upper bound of 
\begin{align*}
\frac{\widetilde N_j(\varepsilon)}{\widetilde D_j} \coloneqq \frac{\int_{B_j(\varepsilon)^c}\exp\big\{\int_{\Theta_{-j}}\log\frac{\pi_\theta(\theta)}{\widetilde Q_j(\theta_j)\wht q_{-j}(\theta_{-j})} + \sum_{i=1}^n\log\frac{p(X_i\,|\,\theta)}{p(X_i\,|\,\theta^\ast)}\,\dd\wht q_{-j}\big\}\,\dd\widetilde Q_j}{\int_{\Theta_j}\exp\big\{\int_{\Theta_{-j}}\log\frac{\pi_\theta(\theta)}{\widetilde Q_j(\theta_j)\wht q_{-j}(\theta_{-j})} + \sum_{i=1}^n\log\frac{p(X_i\,|\,\theta)}{p(X_i\,|\,\theta^\ast)}\,\dd\wht q_{-j}\big\}\,\dd\widetilde Q_j},
\end{align*}
since we have
\begin{align}\label{eqn: MFVI_tail_prob}
\wht{Q}_\theta\big(\exists\,j\in[m]\,\,s.t.\,\,\|\theta_j - \theta_j^\ast\| > \varepsilon\big) = 1 - \prod_{j=1}^m\Big[1- \frac{\widetilde N_j(\varepsilon)}{\widetilde D_j}\Big] \leq \sum_{j=1}^m \frac{\widetilde N_j(\varepsilon)}{\widetilde D_j}.
\end{align}
Define the event
\begin{align*}
\widetilde{\m A}_n = \Big\{\frac{1}{\widetilde Q(\widetilde\Theta)}\int_{\widetilde\Theta}\sum_{i=1}^n\log\frac{p(X_i\,|\,\theta)}{p(X_i\,|\,\theta^\ast)}\,\dd\widetilde Q \leq -( c_4 + 1)n\varepsilon_n^2\Big\}.
\end{align*}
Then, we have the decomposition
\begin{align}
\mb E_{\theta^\ast}\Big[\frac{\widetilde N_j(\varepsilon)}{\widetilde D_j}\Big] 
&= \mb E_{\theta^\ast}\Big[ \phi_n \frac{\widetilde N_j(\varepsilon)}{\widetilde D_j}\Big] + \mb E_{\theta^\ast}\Big[(1- \phi_n)1_{\widetilde{\m A}_n}\frac{\widetilde N_j(\varepsilon)}{\widetilde D_j}\Big] + \mb E_{\theta^\ast}\Big[(1- \phi_n)1_{\widetilde{\m A}_n^c}\frac{\widetilde N_j(\varepsilon)}{\widetilde D_j}\Big]\nonumber\\
&\leq \mb E_{\theta^\ast}[ \phi_n] + \mb P_{\theta^\ast}\big(\widetilde{\m A}_n\big) + \mb E_{\theta^\ast}\Big[(1- \phi_n)1_{\widetilde{\m A}_n^c}\frac{\widetilde N_j(\varepsilon)}{\widetilde D_j}\Big],\label{eqn: MFVI_decomp}
\end{align}
where $\phi_n$ is the test function in Assumption~\ref{assump: test condition}.

To bound the denominator $\widetilde D_j$, note that
\begin{align*}
\widetilde D_j
&= \int_{\Theta_j} \exp\Big\{\int_{\Theta_{-j}}\log\frac{\pi_\theta(\theta)}{\widetilde Q_j(\theta_j)\wht q_{-j}(\theta_{-j})} + \sum_{i=1}^n \log\frac{p(X_i\,|\,\theta)}{p(X_i\,|\,\theta^\ast)}\,\dd\wht q_{-j}(\theta_{-j})\Big\}\,\dd\widetilde Q_j(\theta_j)\\
&= \int_{\Theta_j}\exp\Big\{\int_{\Theta_{-j}}\log \pi_\theta(\theta) + \sum_{i=1}^n \log p(X_i\,|\,\theta)\,\dd\wht q_{-j}(\theta_{-j})\Big\}\,\dd\theta_j\\
&\qquad\qquad\qquad\qquad\cdot\exp\Big\{\int_{\Theta_{-j}}-\log\wht q_{-j}(\theta_{-j}) - \sum_{i=1}^n \log p(X_i\,|\,\theta^\ast)\,\dd\wht q_{-j}(\theta_{-j})\Big\}\\
&\stackrel{\ri}{=} \wht q_j(\theta_j)^{-1} \exp\Big\{\int_{\Theta_{-j}}\log\pi_\theta(\theta) + \sum_{i=1}^n\log p(X_i\,|\,\theta)\,\dd\wht q_{-j}(\theta_{-j})\Big\}\\
&\qquad\qquad\qquad\qquad\cdot\exp\Big\{\int_{\Theta_{-j}}-\log\wht q_{-j}(\theta_{-j}) - \sum_{i=1}^n \log p(X_i\,|\,\theta^\ast)\,\dd\wht q_{-j}(\theta_{-j})\Big\}\\
&= \exp\Big\{\int_{\Theta_{-j}} \log\frac{\pi_\theta(\theta)}{\wht q_\theta(\theta)} + \sum_{i=1}^n \log\frac{p(X_i\,|\,\theta)}{p(X_i\,|\,\theta^\ast)}\,\dd\wht q_{-j}(\theta_{-j})\Big\}\\
&\stackrel{\rii}{=} \exp\Big\{\int_{\Theta} \log\frac{\pi_\theta(\theta)}{\wht q_\theta(\theta)} + \sum_{i=1}^n \log\frac{p(X_i\,|\,\theta)}{p(X_i\,|\,\theta^\ast)}\,\dd\wht q(\theta)\Big\}\\
&= \exp\{- \widetilde W_n(\wht q_1, \cdots, \wht q_m)\}.
\end{align*}
Here, (i) is due to the characterization~\eqref{eqn: FOC_qj}; (ii) is because the quantity inside the exponent is not a function of $\theta_j$. By Lemma~\ref{lem: functional_Wn}, we know $(\wht q_1, \cdots, \wht q_m) = \argmin\widetilde W_n(\rho_1, \cdots, \rho_m)$, which implies that
\begin{align*}
\widetilde W_n(\wht q_1, \cdots, \wht q_m) 
&\leq \widetilde W_n\Big(\frac{\widetilde Q_1 1_{\widetilde\Theta_1}}{\widetilde Q_1(\widetilde \Theta_1)}, \cdots, \frac{\widetilde Q_m 1_{\widetilde\Theta_m}}{\widetilde Q_m(\widetilde \Theta_m)}\Big)\\
&=\frac{1}{\widetilde Q(\widetilde\Theta)}\int_{\widetilde \Theta}\sum_{i=1}^n\log\frac{p(X_i\,|\,\theta^\ast)}{p(X_i\,|\,\theta)}\,\dd\widetilde Q(\theta) + \KL\Big(\frac{\widetilde Q_1 1_{\widetilde\Theta_1}}{\widetilde Q_1(\widetilde \Theta_1)}\otimes\cdots\otimes\frac{\widetilde Q_j 1_{\widetilde\Theta_j}}{\widetilde Q_j(\widetilde \Theta_j)}\,\Big\|\,\Pi_\theta\Big)\\
&=\frac{1}{\widetilde Q(\widetilde\Theta)}\int_{\widetilde \Theta}\sum_{i=1}^n\log\frac{p(X_i\,|\,\theta^\ast)}{p(X_i\,|\,\theta)}\,\dd\widetilde Q(\theta) + \frac{1}{\widetilde Q(\widetilde\Theta)}\int_{\widetilde\Theta}\log\frac{\widetilde Q(\theta)}{\pi_\theta(\theta)}\,\dd\widetilde Q - \log\widetilde Q(\widetilde\Theta).
\end{align*}
By Assumption~\ref{assump: prior condition}, we know
\begin{align*}
\widetilde W_n(\wht q_1, \cdots, \wht q_m)
&\leq \frac{1}{\widetilde Q(\widetilde\Theta)}\int_{\widetilde \Theta}\sum_{i=1}^n\log\frac{p(X_i\,|\,\theta^\ast)}{p(X_i\,|\,\theta)}\,\dd\widetilde Q(\theta) +  c_4n\varepsilon_n^2 +  c_3n\varepsilon_n^2.
\end{align*}
When $\widetilde{\m A}_n^c$ holds, we further have $\widetilde W_n(\wht q_1, \cdots, \wht q_m) \leq (2 c_4 +  c_3 + 1)n\varepsilon_n^2$. Therefore, we have
\begin{align*}
\mb E_{\theta^\ast}\Big[(1- \phi_n)1_{\widetilde{\m A}_n^c}\frac{\widetilde N_j(\varepsilon)}{\widetilde D_j}\Big] \leq e^{(2 c_4 +  c_3 + 1)n\varepsilon_n^2}\mb E_{\theta^\ast}\big[(1- \phi_n)\widetilde N_j(\varepsilon)\big].
\end{align*}

To bound $\mb E_{\theta^\ast}[(1- \phi_n)\widetilde N_j(\varepsilon)]$, note that
\begin{align*}
&\quad\,\mb E_{\theta^\ast}\big[(1- \phi_n)\widetilde N_j(\varepsilon)\big]\\
&= \int_{\m X^n}(1- \phi_n)\prod_{i=1}^n p(x_i\,|\,\theta^\ast)\int_{B_j(\varepsilon)^c}\exp\Big\{\int_{\Theta_{-j}}\log\frac{\pi_\theta(\theta)}{\widetilde Q_j(\theta_j)\wht q_{-j}(\theta_{-j})}+\sum_{i=1}^n\log\frac{p(x_i\,|\,\theta)}{p(x_i\,|\,\theta^\ast)}\,\dd\wht q_{-j}\Big\}\,\dd\widetilde Q_j\,\dd x_1\cdots\dd x_n\\
&= \int_{B_j(\varepsilon)^c}e^{\int_{\Theta_{-j}}\log\frac{\pi_\theta(\theta)}{\widetilde Q_j(\theta_j)\wht q_{-j}(\theta_{-j})}\,\dd\wht q_{-j}}\cdot\int_{\m X^n}(1- \phi_n)e^{\int_{\Theta_{-j}}\sum_{i=1}^n\log\frac{p(x_i\,|\,\theta)}{p(x_i\,|\,\theta^\ast)}\,\dd\wht q_{-j}}\prod_{i=1}^n p(x_i\,|\,\theta^\ast)\,\dd x_1\cdots\dd x_n \,\dd\widetilde Q_j\\
&=\int_{B_j(\varepsilon)^c}e^{\int_{\Theta_{-j}}\log\frac{\pi_\theta(\theta)}{\widetilde Q_j(\theta_j)\wht q_{-j}(\theta_{-j})}\,\dd\wht q_{-j}}\cdot\int_{\m X_n}(1- \phi_n)e^{\int_{\Theta_{-j}}\sum_{i=1}^n\log p(x_i\,|\,\theta)\,\dd\wht q_{-j}}\,\dd x_1\cdots\dd x_n\,\dd\widetilde Q_j.
\end{align*}
Due to the convexity of the exponential function, we have
\begin{align*}
e^{\int_{\Theta_{-j}}\log\frac{\pi_\theta(\theta)}{\widetilde Q_j(\theta_j)\wht q_{-j}(\theta_{-j})}\,\dd\wht q_{-j}} \leq \int_{\Theta_{-j}} \frac{\pi_\theta(\theta)}{\widetilde Q_j(\theta_j)\wht q_{-j}(\theta_{-j})}\,\dd\wht q_{-j} = \int_{\Theta_{-j}}\frac{\pi_\theta(\theta)}{\widetilde Q_j(\theta_j)}\,\dd\theta_{-j},
\end{align*}
and
\begin{align*}
e^{\int_{\Theta_{-j}}\sum_{i=1}^n\log p(x_i\,|\,\theta)\,\dd\wht q_{-j}} \leq \int_{\Theta_{-j}} \prod_{i=1}^n p(x_i\,|\,\theta)\,\dd\wht q_{-j},
\end{align*}
by applying Jensen's inequality. Thus,
\begin{align*}
\mb E_{\theta^\ast}\big[(1- \phi_n)\widetilde N_j(\varepsilon)\big] 
&\leq \int_{B_j(\varepsilon)^c}\bigg(\int_{\Theta_{-j}}\frac{\pi_\theta(\theta)}{\widetilde Q_j(\theta_j)}\,\dd\theta_{-j}\bigg)\bigg(\int_{\m X^n}(1- \phi_n)\int_{\Theta_{-j}}\prod_{i=1}^n p(x_i\,|\,\theta)\,\dd\wht q_{-j}\,\dd x_1\cdots\dd x_n\bigg)\dd\widetilde Q_j\\
&=\int_{B_j(\varepsilon)^c}\bigg(\int_{\Theta_{-j}}\frac{\pi_\theta(\theta)}{\widetilde Q_j(\theta_j)}\,\dd\theta_{-j}\bigg)\bigg(\int_{\Theta_{-j}}\mb E_{\theta}[1- \phi_n]\,\dd\wht q_{-j}\bigg)\dd\widetilde Q_j.
\end{align*}
By Assumption~\ref{assump: test condition}, we know $\mb E_\theta[1- \phi_n] \leq e^{- c_2n\varepsilon^2}$ when $\theta_j\in B_j(\varepsilon)^c$. This implies
\begin{align*}
\mb E_{\theta^\ast}\big[(1- \phi_n)\widetilde N_j(\varepsilon)\big] 
&\leq e^{- c_2n\varepsilon^2}\int_{B_j(\varepsilon)^c}\!\int_{\Theta_{-j}} \frac{\pi_\theta(\theta)}{\widetilde Q_j(\theta_j)}\,\dd\theta_{-j}\dd\widetilde Q_j(\theta_j) \leq e^{- c_2n\varepsilon^2}.
\end{align*}
Combining all pieces above yields
\begin{align*}
\mb E_{\theta^\ast}\Big[(1- \phi_n)1_{\widetilde{\m A}_n^c}\frac{\widetilde N_j(\varepsilon)}{\widetilde D_j}\Big] \leq e^{(2 c_4 +  c_3+1)n\varepsilon_n^2 -  c_2n\varepsilon^2}.
\end{align*}
By Assumption~\ref{assump: test condition} again, we have $\mb E_{\theta^\ast}[ \phi_n] \leq e^{- c_2n\varepsilon^2}$. Thus, we have
\begin{align}\label{eqn: good_event_uppbd}
\mb E_{\theta^\ast}\Big[1_{\widetilde{\m A}_n^c}\frac{\widetilde N_j(\varepsilon)}{\widetilde D_j}\Big] \leq 2e^{(2 c_4 +  c_3 + 1)n\varepsilon_n^2 - c_2n\varepsilon^2}
\end{align}

Finally, we will use the preceding arguments, the Markov inequality, and a simple union bound to prove the theorem, that is, the following inequality holds with high probability
\begin{align*}
\wht{Q}_\theta\big(\exists\,j\in[m]\,\,s.t.\,\,\|\theta_j - \theta_j^\ast\| > \varepsilon\big) \leq e^{- c_2n\varepsilon^2/2}
\end{align*}
when $\varepsilon$ is sufficiently large. By Markov's inequality, for any $k\geq 3$ we have
\begin{align*}
&\quad\,\mb P_{\theta^\ast}\Big(\widetilde{\m A}_n^c\cap\Big\{\wht Q_\theta\big(\exists\,j\in[m]\,\,s.t.\,\,\|\theta_j-\theta_j\|^\ast > k\varepsilon_n\big) > e^{- c_2n(k+1)^2\varepsilon_n^2/2}\Big\}\Big)\\
&=\mb P_{\theta^\ast}\Big(1_{\widetilde{\m A}_n^c}\cdot\wht Q_\theta\big(\exists\,j\in[m]\,\,s.t.\,\,\|\theta_j-\theta_j\|^\ast > k\varepsilon_n\big) > e^{- c_2n(k+1)^2\varepsilon_n^2/2}\Big)\\
&\stackrel{\ri}{\leq} e^{ c_2n(k+1)^2\varepsilon_n^2/2}\mb E_{\theta^\ast}\Big[1_{\widetilde{\m A}_n^c}\sum_{j=1}^m\frac{\widetilde N_j(k\varepsilon_n)}{\widetilde D_j}\Big]\\
&\stackrel{\rii}{\leq} e^{ c_2 n(k+1)^2\varepsilon_n^2/2}\cdot 2me^{(2 c_4 +  c_3 + 1)n\varepsilon_n^2 - c_2nk^2\varepsilon_n^2}\\
&\leq 2m^{-(k-5/2) c_2n\varepsilon_n^2}\cdot e^{(2 c_4+ c_3+1)n\varepsilon_n^2}
\end{align*}
Here, (i) is by Markov's inequality and~\eqref{eqn: MFVI_tail_prob}; (ii) is due to~\eqref{eqn: good_event_uppbd}. To derive the union bound, for any fixed $\varepsilon$, define the event
\begin{align*}
\widetilde{\m G}_\varepsilon = \Big\{\wht Q_\theta\big(\exists\,j\in[m]\,\, s.t. \,\,\|\theta_j-\theta_j^\ast\| > \varepsilon\big) > e^{- c_2n\varepsilon^2 / 2}\Big\}.
\end{align*}
The following relationship helps cover all $\widetilde{\m G}_\varepsilon$ with only those $\varepsilon$ as an integer multiple of $\varepsilon_n$,
\begin{align*}
\bigcup_{k\varepsilon_n\leq\varepsilon\leq(k+1)\varepsilon_n} \widetilde{\m G}_\varepsilon \subset \Big\{\wht Q_\theta\big(\exists\,j\in[m]\,\, s.t. \,\,\|\theta_j-\theta_j^\ast\| > k\varepsilon_n\big) > e^{- c_2n(k+1)^2\varepsilon_n^2 / 2}\Big\}.
\end{align*}
Let $N = \big\lceil (3+ c_1 + \frac{2 c_4 +  c_3 + 1}{ c_2})\big\rceil$. By applying a union bound, we obtain that
\begin{align*}
&\quad\,\mb P_{\theta^\ast}\Big(\widetilde{\m A}_n^c\cap \Big\{\wht Q_\theta\big(\exists\,j\in[m]\,\,s.t.\,\,\|\theta_j-\theta_j^\ast\| > \varepsilon\big) > e^{- c_2n\varepsilon^2/2},\quad\mx{for some $\varepsilon$ satisfies~\eqref{eqn: constraint_epsilon}}\Big\}\Big)\\
&\leq \sum_{k\geq N}\mb P_{\theta^\ast}\Big(\widetilde{\m A}_n^c\cap \Big\{\wht Q_\theta\big(\exists\,j\in[m]\,\, s.t. \,\,\|\theta_j-\theta_j^\ast\| > k\varepsilon_n\big) > e^{- c_2n(k+1)^2\varepsilon_n^2 / 2}\Big\}\Big)\\
&\leq \sum_{k\geq N} 2m^{-(k-5/2) c_2n\varepsilon_n^2}\cdot e^{(2 c_4+ c_3+1)n\varepsilon_n^2} = \frac{2me^{-(N-5/2) c_2n\varepsilon_n^2 + (2 c_4+ c_3 + 1)n\varepsilon_n^2}}{1 - e^{- c_2n\varepsilon_n^2}}\\
&\leq 3me^{- c_2n\varepsilon_n^2/2}
\end{align*}
for all $n\geq 3^{1/ c_2}$. To bound $\mb P_{\theta^\ast}(\widetilde{\m A}_n)$, we need the following lemma, whose proof is deferred to Section~\ref{app: proof_C.2} in Appendix~\ref{app:tech_results}.
\begin{lemma}\label{lem: prob_antilde}
Under Assumption~\ref{assump: prior condition}, we have
\begin{align*}
\mb P_{\theta^\ast}(\widetilde{\m A}_n) \leq \frac{ c_4}{n\varepsilon_n^2}.
\end{align*}
\end{lemma}
\noindent Combining all pieces above yields
\begin{align*}
\wht Q_\theta\big(\exists\,j\in[m]\,\,s.t.\,\,\|\theta_j-\theta_j^\ast\| > \varepsilon\big) \leq e^{- c_2n\varepsilon^2/2}
\end{align*}
holds for all $\varepsilon$ satisfies~\eqref{eqn: constraint_epsilon} with probability at least
\begin{align*}
1 - 3me^{- c_2n\varepsilon_n^2/2} - \frac{ c_4}{n\varepsilon_n^2} \geq 1 - \frac{2 c_4}{n\varepsilon_n^2}.
\end{align*}

\subsection{Proof of Theorem \ref{thm: posterior_convergence_rate}}\label{app:proof_posterior_con}
It is straightforward verify that the mean-field inference objective functional $\KL(q_\theta\otimes q_{Z^n}\,\|\,\pi_n)$ is equivalent to the following functional up to some constant independent of $(q_\theta,\,q_{Z^n})$,
\begin{align*}
   \int_{\Theta}\sum_{i=1}^n\sum_{z=1}^K&\log \frac{q_{Z_i}(z)}{p(z\,|\,X_i,\theta)}q_{Z_i}(z)\,q_\theta(\dd\theta) + \int_\Theta\sum_{i=1}^n\log\frac{p(X_i\,|\,\theta^\ast)}{p(X_i\,|\,\theta)}\,q_\theta(\dd\theta) + D_{KL}(q_\theta\,\|\,\pi_\theta). 
\end{align*}
The following lemma provides a characterization to the MF approximation $(\wht q_\theta,\wht q_{Z^n})$ as the minimizer of this functional.

\begin{lemma}\label{lem:functional_Vn}
Consider the following functional 
\begin{align*}
   W_n(\rho, F_x) 
   := \int_{\Theta}\sum_{i=1}^n\sum_{z=1}^K&\log \frac{F_{X_i}(z)}{p(z\,|\,X_i,\theta)}F_{X_i}(z)\,\rho(\dd\theta)\\ 
   &\quad+ \int_\Theta\sum_{i=1}^n\log\frac{p(X_i\,|\,\theta^\ast)}{p(X_i\,|\,\theta)}\,\rho(\dd\theta) + D_{KL}(\rho\,\|\,\pi_\theta), 
\end{align*}
where $\rho$ ranges over all distributions on $\Theta$ and $F_x$ ranges all measurable functions of $(x,z)$ such that for each $x\in\mb R^d$, $F_x(\cdot)$ is a probability mass function over $\{1,2,\ldots,K\}$. Then $\big(\wht q_\theta,\, \Phi(\wht q_\theta, \,x)\big)$ is a minimizer to this functional, where recall that $\Phi(q_\theta, x)$ is defined in~\eqref{Eqn:Phi_def} as
\begin{align*}
    \Phi(q_\theta, x)(z) &= \frac{ \exp\big\{\mb E_{q_\theta} \log p(z\,|\, x,\theta)\big\}}   { \sum_{z\in\m Z}\exp\big\{ \mb E_{q_\theta} \log p(z\,|\, x,\theta)\big\}},\ \ z\in[K].
\end{align*}
In addition, we have $\wht q_{Z_i} = \Phi(\wht q_\theta, X_i)$ for $i\in[n]$, and $\wht q_\theta$ satisfies 
\begin{align}\label{eqn:optimality_eqn}
    \wht q_\theta(\theta) =  \frac{\pi_\theta(\theta)\, e^{-n U_n(\theta,\wht q_\theta)}}{\int_{\Theta}\pi_\theta(\theta)\, e^{-n U_n(\theta,\wht q_\theta)}\,\dd\theta} \quad \mbox{for all} \quad \theta\in\Theta, 
\end{align}
where $U_n$ is the sample potential function defined in~\eqref{eq:U_n_exp}.
\end{lemma}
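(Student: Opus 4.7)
The plan is to recognise $W_n$ as the mean-field KL objective $D_{KL}(q_\theta \otimes q_{Z^n} \,\|\, \pi_n)$, modulo an additive constant independent of $(q_\theta, q_{Z^n})$, after restricting $q_{Z^n}$ to the product form $\bigotimes_i F_{X_i}$, and then to carry out a two-step (inner-over-$F$ then outer-over-$\rho$) minimisation. The key structural fact is that the true posterior factorises as $p(z^n\mid\theta, X^n) = \prod_i p(z_i\mid\theta, X_i)$, a consequence of the i.i.d.~observation model; this guarantees that restricting $q_{Z^n}$ to product form incurs no loss of generality, and hence any minimiser of the MF objective has the form $\wht q_{Z^n} = \bigotimes_i \wht q_{Z_i}$.

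The first step is to substitute the product form and expand the KL using the decomposition $\log\pi_n(\theta, z^n) = \log\pi_\theta(\theta) + \sum_i[\log p(z_i\mid X_i,\theta) + \log p(X_i\mid\theta)] - \log p(X^n)$. Inserting $\pm\sum_i\log p(X_i\mid\theta^\ast)$ to symmetrise the marginal-likelihood term yields the identity $D_{KL}\big(q_\theta \otimes \bigotimes_i F_{X_i} \,\|\, \pi_n\big) = W_n(q_\theta, F_x) + \mathrm{const}$, where the additive constant depends only on $X^n$ and $\theta^\ast$. Thus minimising $W_n$ is equivalent to solving the MF problem.

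Next, for fixed $\rho$, I would minimise $W_n$ over each pmf $F_{X_i}$ via a Lagrangian enforcing $\sum_z F_{X_i}(z) = 1$. Stationarity gives $F_{X_i}(z) \propto \exp\bigl(\int\log p(z\mid X_i,\theta)\,\rho(d\theta)\bigr)$, which after normalisation is precisely $\Phi(\rho, X_i)(z)$ --- the standard Gibbs variational minimiser. Setting $F^\ast(\rho) := \Phi(\rho, \cdot)$ and substituting back, the identity $\sum_z F^\ast_{X_i}(z)[\log p(z\mid X_i,\theta) + \log p(X_i\mid\theta)] = \sum_z F^\ast_{X_i}(z)\log p(X_i, z\mid\theta)$ merges the first and second integrals of $W_n$ into $n\int U_n(\theta,\rho)\,\rho(d\theta)$ up to a $\rho$-independent term, and simultaneously identifies $\wht q_{Z_i} = \Phi(\wht q_\theta, X_i)$.

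Finally, I would apply the first-variation optimality condition from Section~\ref{sec:subdiff} to the profile functional $\rho \mapsto W_n(\rho, F^\ast(\rho))$. By the envelope argument, the derivative contribution inherited through $F^\ast(\rho)$ vanishes because $F^\ast(\rho)$ is an inner minimiser, so the first variation reduces to $\delta W_n/\delta\rho(\theta) = n\,U_n(\theta,\rho) + \log\rho(\theta) + 1 - \log\pi_\theta(\theta) + \mathrm{const}$. Proposition~7.20 of~\cite{santambrogio2015optimal} forces this expression to be constant $\rho$-a.e.~on $\{\rho > 0\}$, which rearranges to the fixed-point equation~\eqref{eqn:optimality_eqn} upon specialising $\rho = \wht q_\theta$. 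The only subtle point is the self-referential character of that equation, and it is handled cleanly by the envelope step: because $F^\ast(\rho)$ is an inner optimum, only the partial derivative in the $\rho$-slot contributes, evaluated at $F = F^\ast(\rho)$, so the apparent circularity is precisely the first-order condition at the fixed point.
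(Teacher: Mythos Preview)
Your proposal is correct and follows the same two-stage structure as the paper: identify $W_n$ with the mean-field KL objective up to an additive constant, minimise over $F$ for fixed $\rho$ to obtain $\Phi(\rho,\cdot)$, and then extract the first-order condition for $\rho$. The only notable difference is in the last step. You compute the first variation of the \emph{profile} functional $\rho\mapsto W_n(\rho,\Phi(\rho,\cdot))$ and invoke an envelope argument to discard the indirect contribution through $\Phi(\rho,\cdot)$. The paper takes a slightly more elementary path: since $(\wht q_\theta,\Phi(\wht q_\theta,\cdot))$ is a \emph{joint} minimiser, $\wht q_\theta$ in particular minimises the partial functional $\rho\mapsto W_n\bigl(\rho,\Phi(\wht q_\theta,\cdot)\bigr)$ with the second argument \emph{frozen} at $\Phi(\wht q_\theta,\cdot)$; one then applies the first-variation condition (Proposition~7.20 of \cite{santambrogio2015optimal}) directly to this simpler functional, which is just a KL term plus a linear potential, with no chain-rule or envelope justification needed. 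Both routes yield the same fixed-point equation, but the frozen-coordinate version avoids having to argue regularity of the inner minimiser. A second, cosmetic difference: for the inner minimisation the paper rewrites the objective as $\sum_i D_{KL}\bigl(F_{X_i}\,\|\,\Phi(\rho,X_i)\bigr)$ and reads off the minimiser immediately, rather than going through a Lagrangian; this is equivalent to your Gibbs-variational computation.
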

\noindent A proof to this lemma is deferred to Section~\ref{sec:proof_lem:functional_Vn} in Appendix~\ref{app:tech_results}.

Now let us return to the proof of the theorem. By using the characterization~\eqref{eqn:optimality_eqn} of $\wht q_\theta$ from Lemma~\ref{lem:functional_Vn}, we can express the tail variational posterior probability of $B_\varepsilon^c = \{\theta\in\Theta: \|\theta - \theta^\ast\| > \varepsilon\}$ for any fixed $\varepsilon\geq \varepsilon_n$ as
\begin{eqnarray*}
&\quad\,&\wht Q_\theta(\|\theta - \theta^\ast\| > \varepsilon\,|\,X^n) = \frac{\int_{B_\varepsilon^c}\exp\{-nU_n(\theta, \widehat{q}_\theta)\}\,\dd\pi_\theta}{\int_{\Theta}\exp\{-nU_n(\theta, \widehat{q}_\theta)\}\,\dd\pi_\theta}\\
&\stackrel{\ri}{=}& \frac{\int_{B_\varepsilon^c}\exp\{\sum_{i=1}^n\sum_{z=1}^K \log p(X_i, z\,|\,\theta)\Phi(\widehat{q}_\theta, X_i)(z)\}\,\dd\pi_\theta}{\int_{\Theta}\exp\{\sum_{i=1}^n\sum_{z=1}^K \log p(X_i, z\,|\,\theta)\Phi(\widehat{q}_\theta, X_i)(z)\}\,\dd\pi_\theta}\\
&\stackrel{\rii}{=}& \frac{\int_{B_\varepsilon^c}\exp\{-\sum_{i=1}^n\sum_{z=1}^K\Phi(\widehat{q}_\theta, X_i)(z)\log\frac{\Phi(\widehat{q}_\theta, X_i)(z)}{p(z\,|\,X_i,\theta)} - \sum_{i=1}^n\log\frac{p(X_i\,|\,\theta^\ast)}{p(X_i\,|\,\theta)}\}\,\dd\pi_\theta}{\int_{\Theta}\exp\{-\sum_{i=1}^n\sum_{z=1}^K\Phi(\widehat{q}_\theta, X_i)(z)\log\frac{\Phi(\widehat{q}_\theta, X_i)(z)}{p(z\,|\,X_i,\theta)} - \sum_{i=1}^n\log\frac{p(X_i\,|\,\theta^\ast)}{p(X_i\,|\,\theta)}\}\,\dd\pi_\theta}\\
&=:&\frac{N_n(\varepsilon)}{D_n},
\end{eqnarray*}
where step (i) is due to the definition of $U_n$, and step (ii) is obtained by adding the same $\theta$-independent term to the exponents of the numerator and the denominator. 
Let $\ell_n(\theta) = \sum_{i=1}^n\log p(X_i\,|\,\theta)$ denote the log-likelihood function, and 
\begin{displaymath}
\m A_n = \bigg\{\frac{1}{\Pi(\widetilde\Theta)}\int_{\widetilde\Theta}\big[\ell_n(\theta) - \ell_n(\theta^\ast)\big]\,\dd\pi_\theta \leq -(c_4+1)n\varepsilon_n^2\bigg\}
\end{displaymath}
be the event that a weighted average of the log-likelihood function over the neighborhood $\widetilde\Theta$ (defined in Assumption~\ref{assump: prior condition}) around $\theta^\ast$ is small, which will be used for bounding the numerator $D_n$ from below. 

By Assumption \ref{assump: test condition}, there exists a set of test functions $\phi_n$ s.t. 
\begin{align*}
    \mb E_{\theta^\ast}\big[\phi_n\big] \leq e^{-c_2n\varepsilon^2}, \quad \mbox{and}\quad \sup_{\|\theta-\theta^\ast\|\geq \varepsilon}\mb E_\theta\big[1-\phi_n\big] \leq e^{-c_2n\varepsilon^2}.
\end{align*}
Based on $\phi_n$ and event $\m A_n$, we have the decomposition
\begin{eqnarray}
    \mb E_{\theta^\ast}\bigg[\frac{N_n(\varepsilon)}{D_n}\bigg]
    &=& \mb E_{\theta^\ast}\bigg[\phi_n\frac{N_n(\varepsilon)}{D_n}\bigg] + \mb E_{\theta^\ast}\bigg[(1-\phi_n)\,1_{\m A_n}\frac{N_n(\varepsilon)}{D_n}\bigg] + \mb E_{\theta^\ast}\bigg[(1-\phi_n)\,1_{\m A_n^c}\frac{N_n(\varepsilon)}{D_n}\bigg]\nonumber\\
    &\leq& \mb E_{\theta^\ast}[\phi_n] + \mb P_{\theta^\ast}(\m A_n) + \mb E_{\theta^\ast}\bigg[(1-\phi_n)1_{\m A_n^c}\frac{N_n(\varepsilon)}{D_n}\bigg],\label{inequal: decompositoin_of_prob}
\end{eqnarray}
since by definition, $N_n(\varepsilon)/D_n =\wht Q_\theta(\|\theta - \theta^\ast\| > \varepsilon\,|\,X^n) \in[0,1]$.

Recall the definition of $W_n(\rho, F_x)$ in Lemma~\ref{lem:functional_Vn}. Firstly, we shall prove $D_n = \exp\{-W_n(\widehat{q}_\theta, \Phi(\widehat{q}_\theta, x))\}$. In fact, we have the following series of identities
\begin{eqnarray*}
D_n
&=& \exp\bigg\{-\sum_{i=1}^n\sum_{z=1}^K\Phi(\widehat{q}_\theta, X_i)(z)\log\Phi(\widehat{q}_\theta, X_i)(z) - \sum_{i=1}^n\log p(X_i\,|\,\theta^\ast)\bigg\}\cdot\int_\Theta e^{-nU_n(\theta, \widehat{q}_\theta)}\,\dd\pi_\theta\\
&=& \exp\bigg\{-\sum_{i=1}^n\sum_{z=1}^K\Phi(\widehat{q}_\theta, X_i)(z)\log\Phi(\widehat{q}_\theta, X_i)(z) - \sum_{i=1}^n\log p(X_i\,|\,\theta^\ast)\bigg\}\cdot\frac{\pi_\theta e^{-nU_n(\theta, \widehat{q}_\theta)}}{\widehat{q}_\theta}\\
&=& \exp\bigg\{-\sum_{i=1}^n\sum_{z=1}^K\Phi(\widehat{q}_\theta, X_i)(z)\log\Phi(\widehat{q}_\theta, X_i)(z) - \sum_{i=1}^n\log p(X_i\,|\,\theta^\ast) -nU_n(\theta, \widehat{q}_\theta) + \log\frac{\pi_\theta}{\widehat{q}_\theta}\bigg\}\\
&\stackrel{\ri}{=}& \exp\bigg\{\int_\Theta\bigg[-\sum_{i=1}^n\sum_{z=1}^K\Phi(\widehat{q}_\theta, X_i)(z)\log\Phi(\widehat{q}_\theta, X_i)(z) - \sum_{i=1}^n\log p(X_i\,|\,\theta^\ast) -nU_n(\theta, \widehat{q}_\theta) + \log\frac{\pi_\theta}{\widehat{q}_\theta}\bigg]\,\dd\widehat{q}_\theta\bigg\}\\
&=& \exp\big\{-W_n\big(\widehat{q}_\theta, \Phi(\widehat{q}_\theta, x)\big)\big\}.
\end{eqnarray*}
Here, (i) is because the quantity inside the exponent of $D_n$ is not a function of $\theta$, and thus the integration with respect to the probability measure $\wht q_{\theta}$ will not change its value. 
Notice that by Lemma~\ref{lem:functional_Vn}, $\big(\widehat{q}_\theta, \Phi(\widehat{q}_\theta, x)\big)$ is the minimizer of functional $W_n$, therefore, we can apply this optimality to obtain that for another feasible pair
\begin{equation*}
    \rho(\theta) = \frac{\pi_\theta(\theta)}{\Pi(\widetilde\Theta)}1_{\widetilde\Theta} \quad \mbox{and}\quad F_x(z) = p(z\,|\,x,\theta^\ast),
\end{equation*}
it always holds that
\begin{eqnarray*}
&\quad\,&
W_n\big(\widehat{q}_\theta, \Phi(\widehat{q}_\theta, x)\big) \leq W_n(\rho, F_x)\\
&=& \frac{1}{\Pi(\widetilde\Theta)}\int_{\widetilde\Theta}\sum_{i=1}^n\sum_{z=1}^K\log\frac{p(z\,|\,X_i,\theta^\ast)}{p(z\,|\,X_i,\theta)}p(z\,|\,X_i,\theta^\ast)\,\dd\pi_\theta + \frac{1}{\Pi(\widetilde\Theta)}\int_{\widetilde\Theta}\sum_{i=1}^n\log\frac{p(X_i\,|\,\theta^\ast)}{p(X_i\,|\,\theta)}\,\dd\pi_\theta - \log\Pi(\widetilde\Theta)\\
&=& \frac{1}{\Pi(\widetilde\Theta)}\int_{\widetilde\Theta}\sum_{i=1}^n D_{KL}\big[p(\cdot\,|\,X_i,\theta^\ast)\, \big\|\,p(\cdot\,|\,X_i,\theta)\big]\,\dd\pi_\theta + \frac{1}{\Pi(\widetilde\Theta)}\int_{\widetilde\Theta}\sum_{i=1}^n\log\frac{p(X_i\,|\,\theta^\ast)}{p(X_i\,|\,\theta)}\,\dd\pi_\theta -\log\Pi(\widetilde\Theta)\\
&\leq& \frac{1}{\Pi(\widetilde\Theta)}\int_{\widetilde\Theta}\sum_{i=1}^n G(X_i)\,\varepsilon_n^2\,\dd\pi_\theta + \frac{1}{\Pi(\widetilde\Theta)}\int_{\widetilde\Theta}\sum_{i=1}^n\log\frac{p(X_i\,|\,\theta^\ast)}{p(X_i\,|\,\theta)}\,\dd\pi_\theta -\log\Pi(\widetilde\Theta)\\
&\leq& \varepsilon_n^2\sum_{i=1}^n G(X_i) + \frac{1}{\Pi(\widetilde\Theta)}\int_{\widetilde\Theta} \big[\ell_n(\theta^\ast) - \ell_n(\theta)\big]\,\dd\pi_\theta + c_3\,n\varepsilon_n^2,
\end{eqnarray*}
where in the last two inequalities, we used Assumptions~\ref{assump: prior condition} and~\ref{assump: qudratic growth of KL} so that $-\log\Pi(\widetilde\Theta) \leq c_3n\varepsilon_n^2$ and $\KL(p(\cdot\,|\,X_i, \theta^\ast)\,\|\,p(\cdot\,|\,X_i, \theta)) \leq G(X_i)\varepsilon_n^2$ for all $\theta\in \widetilde\Theta$. 

Let $\m C_n$ be the event defined by
\begin{equation*}
    \m C_n  := \bigg\{\frac{1}{n}\sum_{i=1}^n \big(G(X_i) - \mb E[G(X)]\big) > 1\bigg\}.
\end{equation*}
Since $G(X)$ is sub-exponential with parameters $\sigma_4$ by Assumption~\ref{assump: qudratic growth of KL}, we may apply Lemma \ref{lem: tail_iidsum} from Appendix~\ref{Appendix: conc} with $\alpha = 1$ to obtain $\mb P(\m C_n) \leq e^{-A_1n^\frac{1}{2}\sigma_4^{-1}}$. Under the complimentary event $\m C_n^c$, we then have
\begin{equation*}
    W_n\big(\widehat{q}_\theta, \Phi(\widehat{q}_\theta, x)\big) \leq \big(\mb E[G(X)]+ 1 + c_3\big)\,n\varepsilon_n^2 + \frac{1}{\Pi(\widetilde\Theta)}\int_{\widetilde\Theta}\big[\ell_n(\theta^\ast) - \ell_n(\theta)\big]\,\dd\pi_\theta.
\end{equation*}
By putting pieces together, we obtain the following lower bound to $D_n$ on the event $\m A_n$,
\begin{eqnarray}
D_n &=& \exp\big\{-W_n\big(\widehat{q}_\theta, \Phi(\widehat{q}_\theta, x)\big)\big\} \nonumber\\
&\geq& \exp\bigg\{-\big(\mb E[G(X)]+ 1 + c_3\big)\,n\varepsilon_n^2 - \frac{1}{\Pi(\widetilde\Theta)}\int_{\widetilde\Theta}\big[\ell_n(\theta^\ast) - \ell_n(\theta)\big]\,\dd\pi_\theta\bigg\} \notag\\
&\geq & \exp\big\{-\big(\mb E[G(X)]+ 2 + c_3 + c_4\big)\,n\varepsilon_n^2\big\}. \label{inequal: bound_of_Dn}
\end{eqnarray}

For the numerator $N_n(\varepsilon)$, by noticing that
\begin{displaymath}
\sum_{z=1}^K\Phi(\widehat{q}_\theta, X_i)(z)\log\frac{\Phi(\widehat{q}_\theta, X_i)(z)}{p(z\,|\,X_i,\theta)} = D_{KL}\big[\Phi(\widehat{q}_\theta, X_i)(\cdot)\,\big\|\,p(\cdot\,|\,X_i,\theta)\big] \geq 0,
\end{displaymath}
we obtain
\begin{eqnarray}
N_n(\varepsilon)
&=& \int_{B_\varepsilon^c}\exp\bigg\{-\sum_{i=1}^n\sum_{z=1}^K\Phi(\widehat{q}_\theta, X_i)(z)\log\frac{\Phi(\widehat{q}_\theta, X_i)(z)}{p(z\,|\,X_i,\theta)} - \sum_{i=1}^n\log\frac{p(X_i\,|\,\theta^\ast)}{p(X_i\,|\,\theta)}\bigg\}\,\dd\pi_\theta\nonumber\\
&\leq & \int_{B_\varepsilon^c}\exp\bigg\{-\sum_{i=1}^n\log \frac{p(X_i\,|\,\theta^\ast)}{p(X_i\,|\,\theta)}\bigg\}\,\dd\pi_\theta = \int_{B_\varepsilon^c}e^{\ell_n(\theta) - \ell_n(\theta^\ast)}\,\dd\pi_\theta. \label{inequal: bound_of_Nn}
\end{eqnarray}
Therefore, under event $\m C_n^c$, we can apply the lower bound of $D_n$ and the upper bound of $N_n(\varepsilon)$ together to control the third term in decomposition~\eqref{inequal: decompositoin_of_prob} as
\begin{align*}
&\quad\,\mb E_{\theta^\ast}\bigg[(1-\phi_n)\,1_{\m A_n^c}\,\frac{N_n(\varepsilon)}{D_n}\bigg]\\
&\leq \mb E_{\theta^\ast}\bigg[(1-\phi_n)\,1_{\m A_n^c}\,\frac{\int_{B_\varepsilon^c}e^{\ell_n(\theta) - \ell_n(\theta^\ast)}\,\dd\pi_\theta}{\exp\big\{-\big(\mb E[G(X)] + 2 + c_3 + c_4\big)\,n\varepsilon_n^2\big\}}\bigg]\\
&= \exp\big\{\big(\mb E[G(X)] + c_3 + c_4 + 2\big)\,n\varepsilon_n^2\big\} \cdot\mb E_{\theta^\ast}\bigg[(1-\phi_n)\,1_{\m A_n^c}\int_{B_\varepsilon^c}e^{\ell_n(\theta)-\ell_n(\theta^\ast)}\,\dd\pi_\theta\bigg]\\
&\leq \exp\big\{(\mb EG(X) + c_3 + c_4 + 2)n\varepsilon_n^2\big\} \cdot\mb E_{\theta^\ast}\bigg[(1-\phi_n)\int_{B_\varepsilon^c}e^{\ell_n(\theta)-\ell_n(\theta^\ast)}\,\dd\pi_\theta\bigg]\\
&\stackrel{\ri}{=} \exp\big\{(\mb EG(X) + c_3 + c_4 + 2)n\varepsilon_n^2\big\} \cdot \int_{B_\varepsilon^c}\mb E_{\theta}[1-\phi_n]\,\dd\pi_\theta.
\end{align*}
Here, step (i) is due to the following series of identities,
\begin{align*}
&\quad\,\mb E_{\theta^\ast}\bigg[(1-\phi_n)\int_{B_r^c}e^{\ell_n(\theta)-\ell_n(\theta^\ast)}\,\dd\pi_\theta\bigg]\\
&= \int_{\m X^n}\int_{B_r^c} \big(1-\phi_n(X^n)\big)\cdot \exp\bigg\{\sum_{i=1}^n\log\frac{p(X_i\,|\,\theta)}{p(X_i\,|\,\theta^\ast)}\bigg\}\cdot \prod_{i=1}^n p(X_i\,|\,\theta^\ast)\,\dd\pi_\theta\,\dd X^n\\
&= \int_{\m X^n}\int_{B_r^c}\big(1-\phi_n(X^n)\big)\cdot \prod_{i=1}^np(X_i\,|\,\theta)\,\dd\pi_\theta\,\dd X^n = \int_{B_r^c}\mb E_\theta[1-\phi_n]\,\dd\pi_\theta,
\end{align*}
where recall that $X^n = (X_1, \cdots, X_n)\in\m X^n$ denotes the $n$ observations. 

To use the decomposition~\eqref{inequal: decompositoin_of_prob} and Markov inequality for bounding $\wht Q_\theta(\|\theta - \theta^\ast\| > \varepsilon\,|\,X^n)$, it remains to bound $\mb P_{\theta^\ast}(\m A_n)$, which is standard in the Bayesian asymptotics literature and the result is summarized in the lemma below. For completeness, we also provide its proof in Section~\ref{sec:proof_lemma:A_n_bound}.

\begin{lemma}\label{lemma:A_n_bound}
If Assumption~\ref{assump: prior condition} holds, then for any $M>1$, the probability of event $\m A_n$ can be bounded from above as
\begin{align*}
    \mb P_{\theta^\ast}(\m A_n) \leq \frac{c_4}{n\varepsilon_n^2}.
\end{align*}
\end{lemma}
By putting all pieces together, we obtain that for any $\varepsilon>0$, there exists some test function $\phi_n$, such that
\begin{equation}\label{eqn:key_decomp}
    \begin{aligned}
    \mb E_{\theta^\ast}\Big[\frac{N_n(\varepsilon)}{D_n}\cdot 1_{\m A_n^c}\Big] &\leq \mb E_{\theta^\ast}[\phi_n] + \mb E_{\theta_\ast}\Big[(1-\phi_n)1_{\m A_n^c}\frac{N_n(\varepsilon)}{D_n}\Big]\\
    &\leq e^{-c_2n\varepsilon^2} + e^{(\mb E[G(X)] + c_3 + c_4 + 2)\,n\varepsilon_n^2}\cdot\int_{B_\varepsilon^c}e^{-c_2n\varepsilon^2}\,\dd\pi_\theta\\
    &\leq 2e^{-c_2n\varepsilon^2}\cdot e^{(\mb E[G(X)] + c_3 + c_4 + 2)\, n\varepsilon_n^2}.
\end{aligned}
\end{equation}

Finally, we will utilize the preceding display, the Markov inequality and a simple union bound argument to prove the claimed result in the theorem, that is, the following holds with high probability,
\begin{align*}
    \wht Q_\theta\big(\|\theta-\theta^\ast\| > \varepsilon \big)\leq e^{-c_2n\varepsilon^2/2}
\end{align*}
for all sufficiently large $\varepsilon$ satisfying the lower bound requirement~\eqref{cond: r_in_posterior_convergence_rate} in the theorem. Concretely, for each positive integer $j$, let $\phi_{n,j}$ be the test function such that
\begin{align*}
    \mb E_{\theta^\ast}\big[\phi_{n,j}\big]\leq e^{-c_2nj^2\varepsilon_n^2} \quad\mbox{and}\quad \sup_{\|\theta-\theta^\ast\|>j\varepsilon_n}\mb E_\theta\big[1-\phi_{n,j}\big] \leq e^{-c_2nj^2\varepsilon_n^2}.
\end{align*}
For any $j\geq 3$, by applying the Markov inequality and inequality~\eqref{eqn:key_decomp}, we have
\begin{align*}
&\quad\,\mb P_{\theta^\ast}\bigg(\m A_n^c \cap\Big\{\wht Q_\theta\big(\|\theta-\theta^\ast\| > j\varepsilon_n\,|\,X^n\big) > e^{-c_2n(j+1)^2\varepsilon_n^2/2}\Big\}\bigg)\\
&= \mb P_{\theta^\ast}\bigg(1_{\m A_n^c}\cdot\wht Q_\theta\big(\|\theta-\theta^\ast\| > j\varepsilon_n\,|\,X^n\big) > e^{-{c_2}n(j+1)^2\varepsilon_n^2/2}\bigg)\\
&\leq e^{c_2n(j+1)^2\varepsilon_n^2/2} \cdot \mb E_{\theta^\ast}\bigg[\frac{N_n(j\varepsilon_n)}{D_n}\cdot 1_{\m A_n^c}\bigg]\\
&\leq 2 e^{c_2n(j+1)^2\varepsilon_n^2/2-c_2nj^2\varepsilon_n^2}\cdot e^{(\mb E[G(X)]+c_3+c_4+2)\,n\varepsilon_n^2}\\
&\leq 2e^{-(j-5/2)\,c_2n\varepsilon_n^2}\cdot e^{(\mb E[G(X)]+c_3+c_4+2)\,n\varepsilon_n^2}.
\end{align*}

Let 
$\m G_\varepsilon := \Big\{\wht Q_\theta\big(\|\theta-\theta^\ast\| > \varepsilon\,|\,X^n\big) > e^{-c_2n\varepsilon^2/2}\Big\}$ denote the desired event in the theorem for a fixed $\varepsilon$. Then we have the following relationship that can be used to cover all $\m G_\varepsilon$ using only those $\varepsilon$ as an integer multiple of $\varepsilon_n$, 
\begin{align*}
    \bigcup_{j\varepsilon_n\leq \varepsilon\leq (j+1)\varepsilon_n}\m G_\varepsilon \subset \Big\{\wht Q_\theta\big(\|\theta-\theta^\ast\| > j\varepsilon_n\big) > e^{-c_2n(j+1)^2\varepsilon_n^2/2}\Big\}.
\end{align*}
Let $N := \big\lceil (\mb E[G(X)] + c_3 + c_4 + 2)/c_2 + 3\big\rceil$, where $\lceil x\rceil$ denotes the smallest integer that is greater than $x$. Then, by applying a union bound and the preceding display, we obtain that
\begin{align*}
    &\quad\,\mb P_{\theta^\ast}\bigg(\m A_n^c\cap\Big\{\wht Q_\theta\big(\|\theta-\theta^\ast\| > \varepsilon\big) > e^{-c_2n\varepsilon^2/2}, \ \ \mbox{for some $\varepsilon$ satisfies (\ref{cond: r_in_posterior_convergence_rate})\Big\}} \bigg)\\
    &\leq \sum_{j\geq N} \mb P_{\theta^\ast}\bigg(\m A_n^c\cap\Big\{\bigcup_{j\varepsilon_n\leq \varepsilon\leq (j+1)\varepsilon_n}\m G_\varepsilon\Big\}\bigg)\\
    &\leq \sum_{j\geq N} \mb P_{\theta^\ast}\bigg(\m A_n^c\cap\Big\{\wht Q_\theta\big(\|\theta-\theta^\ast\| > j\varepsilon_n\big) > e^{-c_2n(j+1)^2\varepsilon_n^2/2}\Big\}\bigg)\\
    &\leq \sum_{j\geq N} 2e^{-(j-5/2)\,c_2n\varepsilon_n^2}\cdot e^{(\mb E[G(X)]+c_3+c_4+2)\,n\varepsilon_n^2}\\
    &= \frac{2e^{-(N-5/2)c_2n\varepsilon_n^2}}{1-e^{-c_2n\varepsilon_n^2}}\cdot e^{(\mb E[G(X)]+c_3+c_4+2)n\varepsilon_n^2}
    \leq 3e^{-c_2n\varepsilon_n^2/2}
\end{align*}
for all $n\geq 3^{1/c_2}$. As a consequence, we proved that
\begin{align*}
    \wht Q_\theta\big(\|\theta-\theta^\ast\| > \varepsilon \big)\leq e^{-c_2n\varepsilon^2/2}
\end{align*}
holds for all $\varepsilon$ satisfying~\eqref{cond: r_in_posterior_convergence_rate} with probability at least
\begin{align*}
    &\quad\,1 - 3e^{-c_2n\varepsilon_n^2/2} - \mb P_{\theta^\ast}(\m A_n) - \mb P_{\theta^\ast}(\m C_n)\\
    &\geq 1 - 3e^{-c_2n\varepsilon_n^2/2} - \frac{c_4}{n\varepsilon_n^2} - e^{-A_1\sqrt{n}\sigma_4^{-1}} 
    \geq 1 - \frac{2c_4}{n\varepsilon_n^2}.
\end{align*}

\subsection{Proof of Theorem \ref{thm: para_update_whp}}\label{app: mainthm_nolatent}
By Theorem 1 in \cite{mei2018landscape}, we know
\begin{align*}
    \sup_{\theta\in\Theta}\matnorm{\frac{1}{n}\sum_{i=1}^n\nabla^2\log p(X_i\,|\,\theta) - \mb E_{\theta^\ast}\nabla^2\log p(X\,|\,\theta)} \leq \sigma_5^2 \sqrt{\frac{Cd\log n}{n}\cdot \max\Big\{\frac{\log\tilde J_\ast}{\log d}, \log\frac{R\sigma_5}{\eta}, 1\Big\}}
\end{align*}
with probability at least $1-\eta$. This implies
\begin{align*}
\tilde\lambda_n I_d\preceq -\sum_{i=1}^n \nabla^2\log p(X_i\,|\,\theta) - \nabla^2\log\pi_\theta(\theta) \preceq \tilde L_n I_d,
\end{align*}
where
\begin{align*}
    \tilde\lambda_n &\geq n\tilde{\lambda} - \lambda_M(\nabla^2\log\pi_\theta) - \sigma_5^2\sqrt{\frac{Cd\log n}{n}\cdot\max\Big\{\frac{\log\tilde J_\ast}{\log d}, \log\frac{R\sigma_5}{\eta}, 1\Big\}} =: \tilde\lambda_{lb}\\
    \tilde L_n &\leq n\tilde L - \lambda_m(\nabla^2\log\pi_\theta) + \sigma_5^2\sqrt{\frac{Cd\log n}{n}\cdot\max\Big\{\frac{\log\tilde J_\ast}{\log d}, \log\frac{R\sigma_5}{\eta}, 1\Big\}} =: \tilde L_{ub},
\end{align*}
where $\lambda_m(\nabla^2\log\pi_\theta)$ and $\lambda_M(\nabla^2\log \pi_\theta)$ are the the uniform lower bound and upper bound of eigenvalues of $\nabla^2\log\pi_\theta$. Directly applying the following Lemma \ref{lem: para_update} yields the result.
\begin{lemma}[MFVI without latent variables]\label{lem: para_update}
Assume $\widetilde U(\theta) = -\sum_{i=1}^n\log p(X_i\,|\,\theta)-\log\pi_\theta(\theta)$ is twice differentiable, $\tilde\lambda_n$-strongly convex, and $\tilde L_n$-smooth, i.e. $0 \preceq \tilde\lambda_n I_d\leq \nabla^2\widetilde U\preceq \tilde L_n I_d$. If the step size $\tau < \frac{1}{\sqrt{m}\tilde L_n}$, then $q^{(t)}$ derived by (\ref{eqn: JKO_update_para}) satisfies 
\begin{align*}
    W_2^2(q^{(t)}, \wht q_\theta) \leq \Big(1+2\tau\tilde\lambda_n - \tilde L_n^2\tau^2m\Big)^{-t}W_2^2(q^{(0)}, \wht q).
\end{align*}
Moreover, if $\tau < \frac{2\tilde\lambda_n}{\tilde L_n^2m}$, this inequality implies $q^{(t)}$ converges to $\wht q$ exponentiall fast w.r.t. $W_2$ distance.
\end{lemma}

\subsection{Proof of Theorem~\ref{thm: main_theorem}}\label{app:proof_main_theorem}

Before the proof, we first list explicit expressions to some constants appearing in the theorem. Specifically, the lower bound condition on the sample size $n$ is
\begin{align*}
    n&\geq \max\bigg\{\Big(\frac{(1+c_2^{-1})^2\big(\mb EG(X) + c_3 + c_1 + 5\big)^2 + 1}{\min\{1, \big(\frac{\eta\gamma}{2D}\big)^2, \big(\frac{R_W}{2}\big)^2\}}\Big)^{2}, \max\Big\{e, \frac{2}{c_2}\Big\}^{\frac{8}{c_2(1+c_2^{-1})^{2}(\mb EG(X) + c_1 + c_3 + 5)^2}}\\
    &\qquad\qquad \frac{324RdJ_\ast}{\sigma_1}, \Big(C\sigma_2\log\frac{6}{\eta}\Big)^6, \Big(C\sigma_2\log\frac{6}{\eta}\Big)^6, e^{e^2}, e^{4\max\{\sigma_4A_1^{-1}, 2c_2^{-1}\}^2}
    \bigg\},
\end{align*}
and constants $A$, $B$, and $C$ can be taken as
\begin{align*}
A &= (3K^2+2K)\big(\mb E_{\theta^\ast}[S_1(X)^3] + 1\big) + \frac{27K^3+24K^2+4K}{16}\big(\mb E_{\theta^\ast}[\lambda(X)^3] + 1\big)+ \frac{K}{2}\big(\mb E_{\theta^\ast}[\lambda(X)^3]+1\big)^{\frac{2}{3}}\\
B &= 
\frac{21K^2+32K+12}{8}\big(\mb E_{\theta^\ast}[S_1(X)^3] + 1\big) + \frac{27K^3+42K^2+16K}{8}\big(\mb E_{\theta^\ast}[\lambda(X)^3] + 1\big)\\
&\qquad\qquad\qquad\quad\qquad\qquad\qquad\qquad +
\frac{1}{2}\big(\mb E_{\theta^\ast}[S_1(X)^3] + 1\big)^{\frac{2}{3}} + \frac{2K+1}{2}\big(\mb E_{\theta^\ast}[\lambda(X)^3] + 1\big)^{\frac{2}{3}}\\
C &= \frac{2K^2+K}{2}\big(\mb E_{\theta^\ast}[\lambda(X)^3] + 1\big)^{\frac{2}{3}} + \frac{K}{2}\big(\mb E_{\theta^\ast}[S_1(X)^3] + 1\Big)^{\frac{2}{3}}\\
D &= K(2K+3) \big(\mb E_{\theta^\ast}[\lambda(X)^3] + 1\big)^{\frac{2}{3}} + 
(2K+1)\big(\mb E_{\theta^\ast}[S_1(X)^3] + 1\big)^{\frac{2}{3}} + \frac{K+2}{2}\big(\mb E_{\theta^\ast} S_2(X)^2 + 1\big).
\end{align*}

\smallskip

Now let us proceed to the proof of the theorem. Recall that the MF-WGF algorithm can be summarized by the following iterative updating rule: for $k=0,1,\ldots$,
\begin{equation*}
        q_\theta^{(k+1)} = \argmin_{q_\theta}V_n(q_\theta\,|\,q_\theta^{(k)}) + \frac{1}{2\tau}W_2^2(q_\theta,\, q_\theta^{(k)})
\end{equation*}
where given any $q_\theta'\in\ms P(\theta)$, the (sample) energy (or KL divergence) functional $V_n(\cdot\,|\,q_\theta')$ is given by 
\begin{align*}
    V _n(q_\theta\,|\,q_\theta') :\,= n \,\mb E_{q_\theta} \big[U_n(\theta;\, q'_\theta) \big]+ \KL(q_\theta\,||\,\pi_\theta),
\end{align*}
and $U_n(\cdot\,;\,q'_\theta)$ is the (sample) potential function given in~\eqref{eq:U_n_exp}. 

The main difficulty in analyzing the MF-WGF algorithm is that the energy functional $V_n(\,\cdot\,|\,q_\theta^{(k)})$ determining $q_\theta^{(k+1)}$ also depends on the previous iterate $q_\theta^{(k)}$. With a time-independent energy functional, whose global minimizer denoted as $\pi^\ast$, we may directly apply Theorem~\ref{thm: implicit_WGF_conv} with $\pi = \pi^\ast$ to prove the contraction of the one-step discrete WGF towards $\pi^\ast$. However, by directly applying Theorem~\ref{thm: implicit_WGF_conv} with $\pi$ therein being the minimizer of $V_n(\,\cdot\,|\,q_\theta^{(k)})$, we can only prove the one-step contraction of MF-WGF towards this minimizer, which changes over iteration count $k$ and is generally different from the target $\wht q_\theta$. Fortunately, the freedom of choosing an arbitrary $\pi$ in Theorem~\ref{thm: implicit_WGF_conv} allows us to directly apply the theorem to analyze the sample-level MF-WGF by taking $\pi=\wht q_\theta$; however, some careful perturbation analysis will be required to show that such a replacement will only incur negligible extra error, as we will elaborate in the proof below.

In order to apply Theorem~\ref{thm: implicit_WGF_conv} to energy functional $V_n(\,\cdot\,|\,q_\theta^{(k)})$, we first need to show its convexity along generalized geodesics, which according to Lemmas~\ref{lemma:entropy} and~\ref{lemma:potential} (also see Corollary~\ref{coro:one_step_KL}), boils down to the verification of the strong convexity of sample potential function $U_n(\cdot,\, q_\theta^{(k)})$. To prove this, we know that by assumption~\ref{assump: strong_convexity}, the population level potential $U(\cdot, \mu)$ is $\lambda$-strongly convex for all $\mu$ such that $W_2(\mu, \delta_\theta^\ast) \leq r$. 

The following lemma provides an explicit error bound to the (Hessian of) sample potential function $U_n$. It allows $U_n$ to inherit the strong-convexity of $U$ given the error is strictly small than the $\lambda$ in Assumption~\ref{assump: strong_convexity}, and plays a crucial role in showing the closeness between the population and sample versions of MF-WGF.

\begin{lemma}\label{lem: ULLN_of_potential_function}
Under Assumption~\ref{assump: continuity_of_Hessian}, for any $\eta\in(0,1)$, if the sample size satisfies
\begin{displaymath}
n \geq \max\bigg\{\, 6, \ \frac{324RdJ_\ast}{\sigma_1},\ \Big(C\sigma_2\log\frac{6}{\eta}\Big)^6,\  \Big(C\sigma_3\log\frac{6}{\eta}\Big)^6\, \bigg\},
\end{displaymath}
for some constant $C$ (explicit expression provided in the proof) and a radius parameter $r_n$ satisfies
\begin{align*}
    r_n \leq \frac{1}{K}\Bigg(\sqrt{\frac{4d\sigma_1\log\frac{n}{\eta}}{9\big[\big(\mb E_{\theta^\ast}[\lambda(X)^3] + 1\big)^{2/3} + \big(\mb E_{\theta^\ast}[S_1(X)^3]+1\big)^{2/3}\big]}}\ -\, 1\,\Bigg),
\end{align*}
then the following inequality holds with probability at least $1 - \eta$,
\begin{equation}\label{eqn:uniform_Hessian}
\sup_{\substack{\theta\in\Theta\\ \mu: \,W_2(\mu, \delta_{\theta^\ast})\leq r_n}} \matnorm{\nabla^2U_n(\theta;\, \mu) - \nabla^2U(\theta;\, \mu)}\leq \frac{2d\sigma_1\log\frac{n}{\eta}}{\sqrt{n}}.
\end{equation}
\end{lemma}

By Lemma \ref{lem: ULLN_of_potential_function}, we also know that for any $0 < \eta < 1$, if
\begin{displaymath}
n \geq \max\bigg\{6, \frac{324RdJ_\ast}{\sigma_1},  \Big(\frac{\eta\lambda}{2d\sigma_1}\Big)^2\bigg\},
\end{displaymath}
then
\begin{equation}\label{eqn: convexity_sample_potential_func}
\sup_{\substack{\theta\in\Theta\\ \mu:W_2(\mu, \delta_{\theta^\ast})\leq r}} \matnorm{\nabla^2U_n(\theta;\,\mu) - \nabla^2U(\theta;\,\mu)} \leq \eta\lambda
\end{equation}
holds with probability at least $1- ne^{-\frac{\sqrt{n}\eta\lambda}{2d\sigma_1}}$. By combining the two properties together, we conclude that
$U_n(\cdot, \mu)$ is $(1-\eta)\lambda$-strongly convex for all $\mu\in B_{\mb W_2}(\delta_{\theta^\ast}, r)$, indicating that $nU_n(\cdot; \mu) - \log\pi_\theta(\cdot)$ is $\big[n(1-\eta)\lambda - \lambda_M(\nabla^2\log\pi_\theta)\big]$-strongly convex. Now since
\begin{align*}
    V_n(q_\theta\,|\,\mu) = \int_\Theta nU_n(\theta; \mu) -\log\pi_\theta(\theta)\,\dd q_\theta + \int_\Theta\log q_\theta\,\dd q_\theta,
\end{align*}
we can further conclude by Lemmas~\ref{lemma:entropy} and~\ref{lemma:potential} that functional $V_n(\cdot\,|\,\mu)$ is $\big[n(1-\eta)\lambda - \lambda_M(\nabla^2\log\pi_\theta)\big]$-strongly convex along generalized geodesics on $\ms P_2^r(\Theta)$ if $\mu\in B_{\mb W_2}(\delta_{\theta^\ast}, r)$.
By Corollary~\ref{coro: bound_square_expectation} we know
\begin{displaymath}
W_2(\widehat{q}_\theta, \delta_{\theta^\ast}) = \sqrt{\mb E_{\widehat{q}_\theta}\|\theta - \theta^\ast\|^2} \leq \frac{R_W}{2},
\end{displaymath}
where recall that $R_W$ is given in Theorem~\ref{thm: main_theorem} and satisfies $R_W\leq r/3$.
Therefore, it remains to show 
\begin{equation}\label{eqn: induction_hypothesis}
    W_2(\widehat{q}_\theta, q_\theta^{(k)}) \leq R_W,
\end{equation}
which implies 
\begin{displaymath}
W_2(\delta_{\theta^\ast}, q_\theta^{(k)}) \leq W_2(\widehat{q}_\theta, \delta_{\theta^\ast}) + W_2(\widehat{q}_\theta, q_\theta^{(k)}) \leq \frac{R_W}{2} + R_W \leq r.
\end{displaymath}
In the rest of the proof, we will use induction to prove~\eqref{eqn: induction_hypothesis} by showing that if $W_2(\widehat{q}_\theta, q_\theta^{(k)}) \leq R_W$ holds, then
\begin{align}
    &\quad W_2^2(\widehat{q}_\theta, q_\theta^{(k+1)})\notag \\
    &\leq \bigg(\frac{1 + \eta n\tau\lambda + (1+\eta)n\tau\gamma}{1 + (1-2\eta)n\tau\lambda - (1+\eta)n\tau\gamma - \lambda_M(\nabla^2\log\pi_\theta)}\bigg)\, W_2^2(\widehat{q}_\theta, q_\theta^{(k)})\label{eqn:induction_ineq}\\
    &\leq W_2^2(\widehat{q}_\theta, q_\theta^{(k)}) \leq R_W^2,\notag
\end{align}
which also implies the claimed bound in the theorem by repeatedly applying the above one-step contraction bound.

\smallskip

\noindent
Since according to the condition of the theorem on the initialization, 
\begin{align*}
    W_2(\wht q_\theta, q_\theta^{(0)}) \leq W_2(\wht q_\theta, \delta_{\theta^\ast}) + W_2(\delta_{\theta^\ast}, q_\theta^{(0)}) \leq \frac{R_W}{2} + \frac{R_W}{2} = R_W,
\end{align*}
we know that \eqref{eqn: induction_hypothesis} is true for $k=0$.

Now suppose statement~\eqref{eqn: induction_hypothesis} holds for integer $k\geq 1$, let us bound $W_2(\widehat{q}_\theta, q_\theta^{(k+1)})$ by proving~\eqref{eqn:induction_ineq}.
In fact, since according to our previous argument,~\eqref{eqn: induction_hypothesis} implies $V_n(\cdot\,|\,q_\theta^{(k)})$ to be 
$\big[n(1-\eta)\lambda - \lambda_M(\nabla^2\log\pi_\theta)\big]$-strongly convex along generalized geodesics, we may apply Theorem \ref{thm: implicit_WGF_conv} with $\m F = V_n(\cdot\,|\,q_\theta^{(k)})$, $\mu = q_\theta^{(k)}$, and $\pi =\widehat{q}_\theta$ to obtain
\begin{eqnarray}\label{eqn: main_inequality}
&\quad\,& \Big(1+(1-\eta)n\lambda\tau - \tau\lambda_M(\nabla^2\log\pi_\theta)\Big)\, W_2^2(q_{\theta}^{(k+1)}, \widehat{q}_\theta) \nonumber\\
&\leq& W_2^2(q_\theta^{(k)}, \widehat{q}_\theta) - 2\tau\,\big[V_n(q_\theta^{(k+1)}\,|\,q_\theta^{(k)}) - V_n(\widehat{q}_\theta\,|\,q_\theta^{(k)})\big] - W_2^2(q_\theta^{(k+1)}, q_\theta^{(k)}).
\end{eqnarray}
Unfortunately, $V_n(q_\theta^{(k+1)}\,|\,q_\theta^{(k)}) - V_n(\widehat{q}_\theta\,|\,q_\theta^{(k)})$ in the second term on the right hand side of the inequality is not necessarily non-negative since $\wht q_\theta$ is generally not the minimizer of $V_n(\cdot\,|\,q_\theta^{(k)})$. However, we know that $\wht q_\theta$ minimizes $V_n(\cdot\,|\,\wht q_\theta)$ according to the argument in Section~\ref{sec:fix_point_proof}.
This motivates us to consider a perturbation analysis by
substituting $V_n(q_\theta^{(k+1)}\,|\,q_\theta^{(k)}) - V_n(\widehat{q}_\theta\,|\,q_\theta^{(k)})$ with the non-negative quantity $V_n(q_\theta^{(k+1)}\,|\,\widehat{q}_\theta) - V_n(\widehat{q}_\theta\,|\,\widehat{q}_\theta)$ in~\eqref{eqn: main_inequality}, and properly analyzing the resulting difference due to the substitution. Specifically, the difference can be explicitly expressed as
\begin{align*}
    &\quad\, \Big|\big[V_n\big(q_\theta^{(k+1)}\,\big|\,q_\theta^{(k)}\big) - V_n\big(\widehat{q}_\theta\,\big|\,q_\theta^{(k)}\big)\big] - \big[V_n\big(q_\theta^{(k+1)}\,\big|\,\widehat{q}_\theta\big) - V_n\big(\widehat{q}_\theta\,\big|\,\widehat{q}_\theta\big)\big]\Big|\\
    &=\Big|\, n\int_\Theta U_n(\theta, q_\theta^{(k)}) - U_n(\theta, \widehat{q}_\theta)\,\dd(q_\theta^{(k+1)} - \widehat{q}_\theta)\, \Big|\\
    &= \bigg|\int_\Theta\sum_{i=1}^n\sum_{z=1}^K\log p(z\,|\,X_i,\theta)\big[\Phi(\widehat{q}_\theta, X_i)(z) - \Phi(q_\theta^{(k)}, X_i)(z)\big]\big(q_\theta^{(k+1)}(\theta) - \widehat{q}_\theta(\theta)\big)\,\dd\theta\bigg|.
\end{align*}
The following key lemma shows that this difference is related to a sample version of the missing data (Fisher) information matrix $\wht I_S(\theta^\ast)$. The analysis following this lemma illustrates that this term is generally of order $\gamma\,W_2(q^{(k)}_\theta,\,\wht q_\theta)\, W_2(q^{(k+1)},\, \wht q_\theta)$, where recall that $\gamma$ is defined as the matrix operator norm of $I_S(\theta^\ast)$ in the statement of the theorem. The proof of the lemma is quite long and technical, and is therefore postponed to Section~\ref{proof_lem:cross_produc}.
    
\begin{lemma}\label{lem:cross_produc}
We use the shorthand $W_k = W_2(\widehat{q}_\theta, q_\theta^{(k)})$ for $k\geq 0$, and $\Delta_q = \int_{\Theta} \theta\,\dd (q-\wht q_\theta) = \int_{\Theta} \big[t_{\wht q_\theta}^q(\theta) -\theta\big]\,\dd\wht q_\theta \in\mb R^d$ as the difference between the mean vectors under a generic distribution $q$ and the target MF approximation $\wht q_\theta$ over $\Theta$. Then
\begin{align*}
    &\bigg|\int_\Theta\sum_{i=1}^n\sum_{z=1}^K\log p(z\,|\,X_i,\theta)\big[\Phi(\widehat{q}_\theta, X_i)(z) - \Phi(q_\theta^{(k)}, X_i)(z)\big]\big(q_\theta^{(k+1)}(\theta) - \widehat{q}_\theta(\theta)\big)\,\dd\theta \\
    &\qquad\qquad\qquad\qquad - n\,\Big\langle\,\Delta_{q_\theta^{(k)}},\ \wht I_S(\theta^\ast)\,\Delta_{q_\theta^{(k+1)}}\,\Big\rangle\bigg| \leq R_1+R_2+R_3,
\end{align*}
where $\wht I_S(\theta^\ast)\in\mb R^{d\times d}$ is the sample missing data information matrix at $\theta^\ast$,
\begin{equation}\label{eqn: missing_information}
\wht I_S(\theta^\ast) = \frac{1}{n}\sum_{i=1}^n\sum_{z=1}^K p(z\,|\,X_i,\theta^\ast)\big[\nabla\log p(z\,|\,X_i,\theta^\ast)\big]\big[\nabla\log p(z\,|\,X_i,\theta^\ast)\big]^T,
\end{equation}
and the three higher-order remainder terms take the form as
\begin{align*}
R_1 &= nW_k^2W_{k+1}^2\bigg[\frac{3K^2+2K}{4n}\sum_{i=1}^nS_1(X_i)^3 + \frac{27K^3+24K^2+4K}{16n}\sum_{i=1}^n\lambda_1(X_i)^3 + \frac{K}{2}\Big(\frac{1}{n}\sum_{i=1}^n\lambda(X_1)^3\Big)^{\frac{2}{3}}\bigg]\\
&\quad+
nW_k^2W_{k+1}\bigg[\frac{21K^2+32K+12}{8n}\sum_{i=1}^n S_1(X_i)^3 + \frac{27K^3+42K^2+16K}{8n}\sum_{i=1}^n\lambda(X_i)^3\\
&\qquad\qquad\qquad\quad\qquad\qquad\qquad\qquad +
\frac{1}{2}\Big(\frac{1}{n}\sum_{i=1}^nS_1(X_i)^3\Big)^{\frac{2}{3}} + \frac{2K+1}{2}\Big(\frac{1}{n}\sum_{i=1}^n\lambda(X_i)^3\Big)^{\frac{2}{3}}\bigg],\\
R_2 &= nW_{k+1}^2W_k\bigg(\frac{2K^2+K}{2}\Big(\frac{1}{n}\sum_{i=1}^n\lambda(X_i)^3\Big)^{\frac{2}{3}} + \frac{K}{2}\Big(\frac{1}{n}\sum_{i=1}^nS_1(X_i)^3\Big)^{\frac{2}{3}}\bigg),\\
R_3 &= nW_{k+1}W_k\sqrt{\mb E_{\widehat{q}_\theta}\|\theta-\theta^\ast\|^2}\bigg[K(2K+3) \Big(\frac{1}{n}\sum_{i=1}^n\lambda(X_i)^3\Big)^{\frac{2}{3}}\\
&\qquad\qquad\qquad\qquad\qquad\qquad\quad+ 
(2K+1)\Big(\frac{1}{n}\sum_{i=1}^nS_1(X_i)^3\Big)^{\frac{2}{3}} + (\frac{K}{2}+1)\frac{1}{n}\sum_{i=1}^nS_2(X_i)^2\bigg].
\end{align*}
\end{lemma}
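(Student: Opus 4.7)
The plan is to perform two matched Taylor expansions---one on the distributional argument of $\Phi$ and one on the parameter argument of $\log p$---multiply them, identify the leading bilinear form that assembles into $n\langle \Delta_{q_\theta^{(k)}},\,\wht I_S(\theta^\ast)\,\Delta_{q_\theta^{(k+1)}}\rangle$, and absorb the rest into $R_1,R_2,R_3$ using the moment bounds of Assumption~\ref{assump: continuity_of_Hessian} together with the Jensen-type inequality $\|\Delta_q\|^2 \leq W_2^2(\wht q_\theta,q)$.

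First I would linearise $\Phi(\wht q_\theta,X_i)(z)-\Phi(q_\theta^{(k)},X_i)(z)$ in the signed measure $q_\theta^{(k)}-\wht q_\theta$. Writing $\Phi(q,X_i)(z)=\mathrm{softmax}_z\bigl(f(q)\bigr)$ with $f_{z'}(q)=\int \log p(z'\,|\,X_i,\theta)\,\dd q(\theta)$, a first-order softmax expansion at $\wht q_\theta$ produces the linear combination $\sum_{z'}\Phi(\wht q_\theta,X_i)(z)\bigl(\delta_{zz'}-\Phi(\wht q_\theta,X_i)(z')\bigr)\,\Delta f_{z'}$, where $\Delta f_{z'}=f_{z'}(\wht q_\theta)-f_{z'}(q_\theta^{(k)})$, plus a second-order softmax remainder quadratic in the $\Delta f_{z'}$. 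I would then replace $\Phi(\wht q_\theta,X_i)(z)$ by $p(z\,|\,X_i,\theta^\ast)$, paying an extra substitution error controlled by Lemma~\ref{lem: diff_Phimu_thetastar} in terms of $\sqrt{\mb E_{\wht q_\theta}\|\theta-\theta^\ast\|^2}$. Each $\Delta f_{z'}$ is itself Taylor-expanded in $\theta$ around $\theta^\ast$, producing the leading $\langle\nabla\log p(z'\,|\,X_i,\theta^\ast),\Delta_{q_\theta^{(k)}}\rangle$ plus a Hessian remainder bounded by $\tfrac{1}{2}\lambda(X_i)\,W_k^2$.

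Second, I would Taylor-expand $\int\log p(z\,|\,X_i,\theta)\,(q_\theta^{(k+1)}-\wht q_\theta)(\dd\theta)$ around $\theta^\ast$ in exactly the same way, yielding $\langle\nabla\log p(z\,|\,X_i,\theta^\ast),\Delta_{q_\theta^{(k+1)}}\rangle$ plus a Hessian remainder bounded by $\tfrac{1}{2}\lambda(X_i)\,W_{k+1}^2$. Multiplying the two expansions, summing over $i,z$, and invoking the score identity $\sum_{z'}p(z'\,|\,X_i,\theta^\ast)\nabla\log p(z'\,|\,X_i,\theta^\ast)=0$ to kill the off-diagonal $-p(z)p(z')$ piece of the softmax Jacobian, the product of the leading linear pieces collapses to $n\langle\Delta_{q_\theta^{(k)}},\wht I_S(\theta^\ast)\Delta_{q_\theta^{(k+1)}}\rangle$ (up to an overall sign that must be tracked carefully). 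This cancellation is the heart of the lemma: without the score identity, a term linear-in-$\Delta$ would survive and destroy the contraction argument in Theorem~\ref{thm: main_theorem}.

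Finally, every cross-product of first-order $\times$ remainder terms must be grouped into $R_1$, $R_2$, or $R_3$ according to its scaling in $W_k$, $W_{k+1}$, and $\sqrt{\mb E_{\wht q_\theta}\|\theta-\theta^\ast\|^2}$. Products of the leading $\Phi$-term with the $\theta$-Hessian remainder scale as $W_k W_{k+1}^2$ with coefficients in $\lambda(X_i)S_1(X_i)$; the second-order softmax remainder against the leading $\theta$-piece scales as $W_k^2 W_{k+1}$ with coefficients built from $S_1(X_i)^3$ and $\lambda(X_i)^3$ averages; and products of the two second-order pieces scale as $W_k^2 W_{k+1}^2$. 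These feed into $R_1$ and $R_2$. Errors from the $\Phi(\wht q_\theta)\leftrightarrow p(\,\cdot\,|\,X_i,\theta^\ast)$ substitution, multiplied by the leading $\theta$-piece via Cauchy--Schwarz, give the $W_k W_{k+1}\sqrt{\mb E_{\wht q_\theta}\|\theta-\theta^\ast\|^2}$ terms with coefficients involving $S_1$, $\lambda$, and $S_2$ that comprise $R_3$. The main obstacle is the bookkeeping: every second-order remainder splits into a first-derivative-squared flavour and a second-derivative-times-$\|\theta-\theta^\ast\|^2$ flavour, and matching the precise $\tfrac{2}{3}$ and third-power exponents in $R_1,R_2,R_3$ requires uniformly applying H\"older's inequality with conjugates $(3,\tfrac{3}{2})$ so that the empirical averages $n^{-1}\sum_i\lambda(X_i)^3$ and $n^{-1}\sum_i S_j(X_i)^3$ appear explicitly. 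I expect the cleanest route is to use integral-form Taylor remainders $\int_0^1(1-t)[\,\cdot\,]''\,\dd t$ throughout, so that every coefficient is assembled directly rather than via Lagrange-form mean values.
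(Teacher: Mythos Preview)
Your proposal follows the same three-step architecture as the paper: linearise $\Phi$ via the softmax Jacobian (the paper phrases this as the Wasserstein first variation $\nabla\frac{\delta\Phi}{\delta\mu}$, Corollary~\ref{coro: subdifferential_is_variation}), linearise the $\log p$ integral, multiply, and invoke the score identity $\sum_z\nabla p(z\,|\,X_i,\theta^\ast)=0$ to collapse to $\wht I_S(\theta^\ast)$ (done in the paper via Lemma~\ref{lem: missing_data_info}). The remainder bookkeeping and the role of H\"older/power-mean to produce the $(\cdot)^{2/3}$ factors also match.

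One point needs correction. You claim that Taylor-expanding $\Delta f_{z'}=\int\log p(z'\,|\,X_i,\theta)\,\dd(\wht q_\theta-q_\theta^{(k)})$ around $\theta^\ast$ yields a Hessian remainder bounded by $\tfrac12\lambda(X_i)W_k^2$. A pointwise expansion at $\theta^\ast$ does \emph{not} give this: the second-order piece is a difference of two integrals of $\|\theta-\theta^\ast\|^2$ and is controlled by $\mb E_{\wht q_\theta}\|\theta-\theta^\ast\|^2+\mb E_{q_\theta^{(k)}}\|\theta-\theta^\ast\|^2$, which contains a contribution that does not vanish as $W_k\to 0$ and therefore would not fit the stated $R_1,R_2,R_3$ (each of which carries at least one factor of $W_k$). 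The paper avoids this by expanding along the optimal transport map instead: Lemma~\ref{lem: diff_log_integration} gives
\[
\Bigl|\Delta f_{z'}-\int_\Theta\langle\nabla\log p(z'\,|\,X_i,\theta),\,t_{\wht q_\theta}^{q_\theta^{(k)}}(\theta)-\theta\rangle\,\dd\wht q_\theta\Bigr|\le \tfrac{\lambda(X_i)}{2}W_k^2,
\]
and then Lemma~\ref{lem: change_to_Deltaq} shifts the gradient basepoint from $\theta$ to $\theta^\ast$ at the price of an additional $\lambda(X_i)W_k\sqrt{\mb E_{\wht q_\theta}\|\theta-\theta^\ast\|^2}$ term. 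This two-stage decomposition is what generates the precise $W_k^2W_{k+1}$, $W_kW_{k+1}^2$, and $W_kW_{k+1}\sqrt{\mb E_{\wht q_\theta}\|\theta-\theta^\ast\|^2}$ scalings, and it also explains why the paper postpones the $\Phi(\wht q_\theta,\cdot)\to p(\cdot\,|\,\theta^\ast)$ substitution to Step~3 rather than doing it up front as you propose. (The paper uses Lagrange-form mean values throughout, not integral remainders; either works, but the coefficients in $R_1,R_2,R_3$ are tuned to the former.)
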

    
To further simplify some quantities in Lemma~\ref{lem:cross_produc}, we note that similar to the proof of Lemma~\ref{lem: ULLN_of_potential_function}, we may apply standard concentration inequalities to show that the following three inequalities
\begin{align*}
\frac{1}{n}\sum_{i=1}^n\lambda(X_i)^3 &\leq \mb E_{\theta^\ast}[\lambda(X)^3] + 1,\\
\frac{1}{n}\sum_{i=1}^nS_1(X_i)^3 &\leq \mb E_{\theta^\ast}[S_1(X)^3] + 1,\\
\frac{1}{n}\sum_{i=1}^nS_2(X_i)^2 &\leq \mb E_{\theta^\ast}[S_2(X)^2] + 1
\end{align*}
hold with at least probability $1-2e^{-Cn^\frac{1}{6}\sigma_2^{-1}} - 4e^{-Cn^\frac{1}{6}\sigma_3^{-1}}$. Let $\gamma_n = |\!|\!|\wht I_S(\theta^\ast)|\!|\!|_{\rm op}$. Also notice that by the Cauchy--Schwarz inequality, we can bound the leading term as
\begin{align*}
    \big|\big\langle\Delta_{q_\theta^{(k)}}, \wht I_S(\theta^\ast)\Delta_{q_\theta^{(k+1)}}\big\rangle\big| &\leq  \gamma_n W_2(q_\theta^{(k)}, \widehat{q}_\theta)W_2(q_\theta^{(k+1)}, \widehat{q}_\theta)\\
    &\leq  \Big(1+\frac{\eta}{2}\Big)\gamma W_2(q_\theta^{(k)}, \widehat{q}_\theta)W_2(q_\theta^{(k+1)}, \widehat{q}_\theta)
\end{align*}
with probability at least $1 - 2e^{3d - Cn\sigma_3^{-1}\gamma\eta/2}$ for $0\leq \eta\leq \frac{2\sigma_3}{\gamma}$. Here, the second line is due to the following lemma, whose proof is deferred to Section~\ref{proof_lem: matrix_conc}.
\begin{lemma}\label{lem: matrix_conc}
Define the population level of missing data information matrix as
\begin{align*}
    I_S(\theta^\ast) = \mb E_{\theta^\ast} \big[\wht I_S(\theta^\ast)\big] = \int_{\mb R^d} \sum_{z=1}^K p(z\,|\,x,\theta^\ast)\big[\nabla\log p(z\,|\,x,\theta^\ast)\big]\big[\nabla\log p(z\,|\,x,\theta^\ast)\big]^T p(x\,|\,\theta^\ast)\,\dd x.
\end{align*}
Under Assumption~\ref{assump: continuity_of_Hessian}, we have
\begin{align*}
    \mb P\bigg(\frac{1}{\sigma_3}\matnorm{\wht I_S(\theta^\ast) - I_S(\theta^\ast)} > t\bigg) \leq 2\,e^{3d - CN\min\{t^2, t\}},\quad t>0.
\end{align*}
\end{lemma}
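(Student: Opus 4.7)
The plan is to apply the standard $\varepsilon$-net discretization of the unit sphere combined with a Bernstein-type concentration inequality for averages of i.i.d.\ sub-exponential random variables. Since $\wht I_S(\theta^\ast) - I_S(\theta^\ast)$ is symmetric, we have the variational identity
\[
\matnorm{\wht I_S(\theta^\ast) - I_S(\theta^\ast)} \;=\; \sup_{v\in\mb S^{d-1}} \big| v^T\,(\wht I_S(\theta^\ast) - I_S(\theta^\ast))\,v \big|.
\]
Pick a $\tfrac14$-net $\m N$ of $\mb S^{d-1}$; a standard volume argument gives $|\m N|\le 9^d \le e^{3d}$, and the usual net-to-sphere comparison yields
\[
\matnorm{\wht I_S(\theta^\ast) - I_S(\theta^\ast)} \;\le\; 2\,\max_{v\in\m N}\big| v^T\,(\wht I_S(\theta^\ast) - I_S(\theta^\ast))\,v\big|.
\]
This produces the $e^{3d}$ prefactor in the claim after a union bound.

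For each fixed $v\in\mb S^{d-1}$, define the i.i.d.\ random variables
\[
\xi_i(v) \;=\; \sum_{z=1}^K p(z\,|\,X_i,\theta^\ast)\,\big(v^T\nabla\log p(z\,|\,X_i,\theta^\ast)\big)^2,\qquad i=1,\ldots,n,
\]
so that $v^T\wht I_S(\theta^\ast) v = \tfrac{1}{n}\sum_{i=1}^n \xi_i(v)$ and $\mb E[\xi_i(v)] = v^T I_S(\theta^\ast) v$. Using $p(z\,|\,X_i,\theta^\ast)\le 1$ and Cauchy--Schwarz,
\[
0 \;\le\; \xi_i(v) \;\le\; \sum_{z=1}^K \|\nabla\log p(z\,|\,X_i,\theta^\ast)\|^2 \;=\; S_2(X_i),
\]
so by the monotonicity of the Orlicz norm, $\xi_i(v)$ is sub-exponential with parameter bounded by $\sigma_3$ (uniformly in $v$) by part 1 of Assumption~\ref{assump: continuity_of_Hessian}.

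Consequently, a standard Bernstein inequality for centered sub-exponential variables (see, e.g., the tail bound quoted in Lemma~\ref{lem: tail_iidsum} of Appendix~\ref{Appendix: conc} with $\alpha=1$) delivers, for each fixed $v\in\m N$ and every $t>0$,
\[
\mb P\!\left( \Big| \tfrac{1}{n}\sum_{i=1}^n \big(\xi_i(v) - \mb E\xi_i(v)\big) \Big| \;>\; \sigma_3\, t \right) \;\le\; 2\,\exp\!\big(-c\,n\,\min\{t^2,t\}\big)
\]
for some absolute constant $c>0$. A union bound over $\m N$, combined with the $\tfrac14$-net comparison and a rescaling $t\mapsto t/2$ into the constant $C$, yields the claimed inequality
\[
\mb P\!\left(\sigma_3^{-1}\matnorm{\wht I_S(\theta^\ast) - I_S(\theta^\ast)} > t\right) \;\le\; 2\,e^{3d - Cn\min\{t^2,t\}}.
\]
The calculation is entirely routine once sub-exponentiality of $\xi_i(v)$ is established; the only mild subtlety is that the bound $\xi_i(v)\le S_2(X_i)$ holds uniformly in $v\in\mb S^{d-1}$, so the Orlicz norm bound is independent of $v$ and the net argument goes through without any $v$-dependent renormalization.
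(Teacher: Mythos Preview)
Your proposal is correct and follows essentially the same route as the paper: a $\tfrac14$-net on the sphere, the pointwise domination $\xi_i(v)\le S_2(X_i)$ to get a uniform $\psi_1$-bound $\sigma_3$, Bernstein's inequality for sub-exponential averages, and a union bound yielding the $e^{3d}$ factor. One small quibble: the bound you need is the genuine Bernstein form $\exp(-cn\min\{t^2,t\})$, which the paper obtains from Theorem~2.8.1 in \cite{vershynin2018high}; Lemma~\ref{lem: tail_iidsum} with $\alpha=1$ gives instead a weaker $\exp(-cn^{1/2}t)$-type tail, so your parenthetical citation should point to Bernstein rather than that lemma.
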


\noindent Now combining all pieces above yields
\begin{align*}
&\quad\, \frac{1}{n}\Big|\big[V_n\big(q_\theta^{(k+1)}\,\big|\,q_\theta^{(k)}\big) - V_n\big(\widehat{q}_\theta\,\big|\,q_\theta^{(k)}\big)\big] - \big[V_n\big(q_\theta^{(k+1)}\,\big|\,\widehat{q}_\theta\big) - V_n\big(\widehat{q}_\theta\,\big|\,\widehat{q}_\theta\big)\big]\Big|\\
&\leq AW_k^2W_{k+1}^2 + BW_k^2W_{k+1} + CW_kW_{k+1}^2 + \Big(D\sqrt{\mb E_{\widehat{q}_\theta}\|\theta-\theta^\ast\|^2}+\big(1+\frac{\eta}{2}\big)\gamma\Big)W_kW_{k+1}.
\end{align*}
Since the sample size $n$ satisfies
\begin{displaymath}
n > \Big(\frac{4(\mb EG(X) + c_1 + c_3 + 2) + c_2}{c_2\eta^2\gamma^2/4}\Big)^{2},
\end{displaymath}
we have $D\sqrt{\mb E_{\widehat{q}_\theta}\|\theta-\theta^\ast\|^2} < \frac{\eta\gamma}{2}$ by Corollary~\ref{coro: bound_square_expectation}. 

\smallskip

By combining inequality~\eqref{eqn: main_inequality} and discussions above, and noticing $V_n(q_\theta^{(k+1)}\,|\,\widehat{q}_\theta) - V_n(\widehat{q}_\theta\,|\,\widehat{q}_\theta) \geq 0$ since $\wht q_\theta$ minimizes $V_n$, we obtain
\begin{align*}
&\quad\big(1+(1-\eta)n\lambda\tau - \tau\lambda_M(\nabla^2\log\pi_\theta)\big)\, W_{k+1}^2\\
&\leq W_k^2 +2\tau n\Big(AW_k^2W_{k+1}^2 + BW_k^2W_{k+1} + CW_kW_{k+1}^2 + \big(\frac{\eta\gamma}{2}+\big(1+\frac{\eta}{2}\big)\gamma\big)W_kW_{k+1}\Big)\\
&\leq W_k^2 +2\tau n\Big(AR_W^2W_{k+1}^2 + BR_WW_kW_{k+1} + CR_WW_{k+1}^2 + (1+\eta)\gamma W_kW_{k+1}\Big),
\end{align*}
where the last inequality is due to $W_k \leq R_W$ by our induction hypothesis. Therefore, rearranging the preceding inequality leads to
\begin{align*}
&\quad\,\Big[\big(1 + (1-\eta)n\lambda\tau - \tau\lambda_M(\nabla^2\log\pi_\theta)\big) - 2\tau n(AR_W^2 + CR_W)\Big]W_{k+1}^2 - W_k^2\\
&\leq 2\tau n\big(BR_W + (1+\eta)\gamma\big)W_kW_{k+1}
\leq \tau n\big(BR_W + (1+\eta)\gamma\big)(W_k^2 + W_{k+1}^2),
\end{align*}
which further implies
\begin{align*}
&\quad\Big[1 + (1-\eta)n\lambda\tau - \tau\lambda_M(\nabla^2\log\pi_\theta) - \tau n\Big(2AR_W^2 + 2CR_W + BR_W + \big(1+\eta)\gamma\Big)\Big] W_{k+1}^2\\ 
&\leq \Big[1 + \tau n\Big(BR_W + \big(1+\eta\big)\gamma\Big)\Big]W_k^2.
\end{align*}
By the definition of $R_W$, we have $AR_W^2<\frac{\eta\lambda}{8}$, $CR_W < \frac{\eta\lambda}{8}$, and $BR_W < \frac{\eta\lambda}{2}$, so
\begin{displaymath}
\Big[1 + (1-2\eta)n\tau\lambda - \tau\lambda_M(\nabla^2\log\pi_\theta) - (1+\eta)n\tau\gamma\Big]W_{k+1}^2 < \Big[1 + \big(1+\eta\big)n\tau\gamma + \tau n\eta\lambda\Big]W_k^2,
\end{displaymath}
or
\begin{displaymath}
\bigg(\frac{W_{k+1}}{W_k}\bigg)^2 < \frac{1 + \eta n\tau\lambda + (1+\eta)n\tau\gamma}{1 + (1-2\eta)n\tau\lambda - (1+\eta)n\tau\gamma - \tau\lambda_M(\nabla^2\log\pi_\theta)},
\end{displaymath}
which leads to the desired contraction bound~\eqref{eqn:induction_ineq}.
Furthermore, if $(1-3\eta)\lambda > (2+2\eta)\gamma + \frac{1}{n}\lambda_M(\nabla^2\log\pi_\theta)$, then we have $W_{k+1} \leq W_k \leq R_n$. By the induction, we proved~\eqref{eqn: induction_hypothesis}. Moreover, repeatedly applying the preceding display leads to
\begin{displaymath}
W_2^2(\widehat{q}_\theta, q_\theta^{(k)}) \leq \bigg(1 - \frac{(1-3\eta)\lambda - (2+2\eta)\gamma - \frac{1}{n}\lambda_M(\nabla^2\log\pi_\theta)}{(1-2\eta)\lambda - (1+\eta)\gamma - \frac{1}{n}\lambda_M(\nabla^2\log\pi_\theta) + \frac{1}{n\tau}}\bigg)^k W_2^2(\widehat{q}_\theta, q_\theta^{(0)}).
\end{displaymath}
Taking $\eta = \frac{\lambda-2\gamma}{2(3\lambda+2\gamma)}$ yields the desired result in the theorem.

\subsection{Proof of Theorem~\ref{thm: FA_approx}}\label{app: proof_FA}
For any $\rho$, let $T$ be the optimal map from $\rho_k^\tau$ to $\rho$.
Note that we have
\begin{align*}
\int (T_\#\rho)\log(T_\#\rho) = \int\rho\log\rho  -\int\log\lvert \det\nabla T\rvert\,\dd\rho.
\end{align*}
This equation is due to the change of measure formula, and more details can be found in~\cite{mokrov2021large}. Therefore, we have
\begin{align*}
&\m F_{\rm KL}(\rho_{k+1}^\tau) + \frac{1}{2\tau}W_2^2(\rho_{k+1}^\tau, \rho_k^\tau)\\
&= \int V\circ T_k^\tau\,\dd\rho_k^\tau + \int\rho_k^\tau\log\rho_k^\tau - \int \log\lvert\det \nabla T_k^\tau\rvert\,\dd\rho_k^\tau + \frac{1}{2\tau} \int\|T_k^\tau - \id\|^2\,\dd\rho_k^\tau\\
&\leq \int V\circ T\,\dd\rho_k^\tau + \int\rho_k^\tau\log\rho_k^\tau - \int \log\lvert\det \nabla T\rvert\,\dd\rho_k^\tau + \frac{1}{2\tau} \int\|T - \id\|^2\,\dd\rho_k^\tau\\
&= \m F_{\rm KL}(\rho) + \frac{1}{2\tau}W_2^2(\rho, \rho_k^\tau).
\end{align*}
Thus, we know $\rho_{k+1}^\tau = (T_k^\tau)_\#\rho_k^\tau$ minimizes~\eqref{eqn: KL_JKO}.
\section{More details and proofs about discretized Wasserstein gradient flow for MF inference}
In this appendix, we provide more details and proofs to some claims and results in Section~\ref{sec:MF-WGF} about the discretized Wasserstein gradient flow for implementing the MF approximation in Bayesian latent variable models. 

\subsection{Fix point characterization of MF-WGF}\label{sec:fix_point_proof}
We show that $\wht q_\theta$ is the unique fix point to the MF-WGF update equation, i.e.~any solution to
\begin{align*}
     \widehat{q}_\theta=\argmin_{\rho} V_n(\rho\,|\,\widehat{q}_\theta) + \frac{1}{2\tau} W_2^2(\rho,\widehat{q}_\theta),
\end{align*}
must satisfy equation~\eqref{eqn: distributional_equation} in the main paper, or
\begin{align*}
    \mu(\theta) = \frac{1}{Z_n(\mu)}\,\pi_\theta(\theta)\, e^{-n\,U_n(\theta;\, \mu)}, \quad\mx{with } Z_n(\mu) = \int_\Theta\pi_\theta(\theta)\, e^{-n\,U_n(\theta;\, \mu)}\,\dd\theta,
\end{align*}

To show this, notice that the first variation of $V_n(\cdot\,|\,\widehat{q}_\theta)$ at $\rho$ is
\begin{displaymath}
\frac{\delta V_n(\cdot\,|\,\widehat{q}_\theta)}{\delta\rho}(\rho) = n\,U_n(\theta,\, \widehat{q}_\theta) + \log \rho(\theta) - \log\pi_\theta(\theta) +C,
\end{displaymath}
where $C$ is any constant since the first variation is only uniquely determined up to a constant.
Since the first-order optimality condition of any distribution $\rho$ to be the local minimizer of $V_n(\cdot\,|\,\widehat{q}_\theta)$ is $\frac{\delta V_n(\cdot\,|\,\widehat{q}_\theta)}{\delta\rho}(\rho)$ being a constant a.e.~on the its support set $\{\rho>0\}$, it must satisfy
$\rho(\theta) \propto \pi_\theta(\theta)\, e^{-n\,U_n(\theta,\, \widehat{q}_\theta)}$. As a consequence, $\widehat{q}_\theta \propto \pi_\theta(\theta)\, e^{-n\,U_n(\theta,\, \widehat{q}_\theta)}$ is the unique global minimizer of $V_n(\cdot\,|\,\widehat{q}_\theta)$, i.e.~$\widehat{q}_\theta = \argmin_\mu V_n(\mu\,|\,\widehat{q}_\theta)$. Note that $\widehat{q}_\theta$ is also the unique minimizer of $W_2^2(\,\cdot\,, \widehat{q}_\theta)$; therefore $\widehat{q}_\theta$ is the unique minimizer to the objective functional in~\eqref{eqn: JKO_update_qtheta}, that is,
\begin{align*}
     \widehat{q}_\theta=\argmin_{\rho} V_n(\rho\,|\,\widehat{q}_\theta) + \frac{1}{2\tau} W_2^2(\rho,\widehat{q}_\theta),
\end{align*}
which proves the claim. It is worthwhile noticing that the fix point characterization of MF-WGF is identical to the characterization of the MF approximation $\wht q_\theta$ obtained in Lemma~\ref{lem:functional_Vn} using the optimality of $\wht q_\theta$ as the minimizer to the variational KL divergence objective functional.

\subsection{Proof of Lemma~\ref{lem: onestep_err}}\label{app:proof_lem:onestep}
Explicit expressions of the constants in the lemma are provided below:
\begin{align*}
    W_2^2(\rho_\tau^{\rm FP}, \rho_\tau^L) &\leq \frac{L^2e}{2}\Big(d+\mb E_{\rho}\|\nabla V(X)\|^2\Big)\,\tau^3,\\
    W_2^2(\rho_\tau^{\rm FP}, \rho_\tau) &\leq C\bigg(\int_{\mb R^d}\Big|\!\Big|\nabla\frac{\delta\m F_{\rm KL}}{\delta\rho}(\rho)\Big|\!\Big|^2\,\dd\rho\bigg)^{1/2}\\
    &\qquad\quad\cdot\Big(\int_{\mb R^d}\big(\|\nabla V\|^2 - 2\Delta V - 2\Delta\log\rho - \|\nabla\log\rho\|^2\big)^2\,\dd\rho\Big)^{1/2}\,\tau^3,\\
    W_2^2(\rho_\tau^{\rm ex}, \rho_\tau) &\leq C\bigg(\int_{\mb R^d}\bigg|\!\bigg|\nabla^2\log\rho(y)\nabla\frac{\delta\m F_{\rm KL}}{\delta\rho}(\rho)(y) + \nabla\textrm{tr}\Big(\nabla^2\frac{\delta\m F_{\rm KL}}{\delta\rho}(\rho)\Big)(y)\bigg|\!\bigg|^2\,\dd\rho\\
    &\qquad\quad  + \int_{\mb R^d}\Big|\!\Big| \nabla^2\frac{\delta\m F_{\rm KL}}{\delta\rho}(\rho)(y)\nabla\frac{\delta\m F_{\rm KL}}{\delta\rho}(\rho)(y)\big)\Big|\!\Big|^2\,\dd\rho\bigg)\, \tau^4,
\end{align*}
In particular, if the following quantities concerning the regularity of initial density $\rho$ are all bounded, 
\begin{equation}\label{eqn:bounded_quantities}
\begin{aligned}
    &\int_{\mb R^d}\|\nabla V\|^4\,\dd\rho, \quad \int_{\mb R^d}(\Delta V)^2\,\dd\rho, \quad \int_{\mb R^d}(\Delta\log\rho)^2\,\dd\rho, \quad \int_{\mb R^d}\|\nabla\log\rho\|^4\,\dd\rho,\\
    &\int_{\mb R^d}\bigg|\!\bigg|\nabla^2\log\rho(y)\nabla\frac{\delta\m F_{\rm KL}}{\delta\rho}(\rho)(y) + \nabla{\rm tr}\Big(\nabla^2\frac{\delta\m F_{\rm KL}}{\delta\rho}(\rho)\Big)(y)\bigg|\!\bigg|^2\,\dd\rho, \quad \mbox{and}\\
    &\int_{\mb R^d}\Big|\!\Big| \nabla^2\frac{\delta\m F_{\rm KL}}{\delta\rho}(\rho)(y)\nabla\frac{\delta\m F_{\rm KL}}{\delta\rho}(\rho)(y)\big)\Big|\!\Big|^2\,\dd\rho,
\end{aligned}
\end{equation}
then we have $W_2(\rho_\tau^{\rm ex}, \rho_\tau) \lesssim \tau^{3/2}$ and $W_2(\rho_\tau^L, \rho_\tau) \lesssim \tau^{3/2}$.

\smallskip
\noindent
For the proof, first, let us make the definition of metric slope as a generalization of the Fr\'{e}chet derivative defined on normed spaces to the Wasserstein space $W_2(\mb R^d)$ that only admits a distance metric but not a norm.

\begin{definition}[Section 10 in \cite{ambrosio2008gradient}]
For any functional $\m F$ on $\ms P_2^r(\mb R^d)$ and $\rho$ s.t. $\m F(\rho) < \infty$, the \emph{metric slope}
\begin{align*}
    |\partial\m F|(\rho) = \limsup_{\rho'\to\rho}\frac{\big(\m F(\rho) - \m F(\rho')\big)^+}{W_2(\rho,\rho')}
\end{align*}
is finite if and only if $\partial \m F(\rho)$ is not empty.
\end{definition}

\noindent \underline{\bf Bound of $W_2^2(\rho_\tau^{\rm FP}, \rho_\tau^{L})$}.
Consider the coupling
\begin{align*}
    \dd X_t &= -\nabla V(X_t)\,\dd t + \sqrt{2}\,\dd W_t\\
    \dd X_t' &= -\nabla V(X_0')\,\dd t + \sqrt{2}\,\dd W_t
\end{align*}
with initial values $X_0'= X_0\sim\rho_0 = \rho$ for $t\in[0, \tau]$. From the relationship between the Langevin SDE and the Fokker Planck equation, we have $X_\tau\sim \rho_\tau^{\rm FP}$. Moreover, by our construction, we have
\begin{displaymath}
X_\tau' = X_0' - \tau\nabla V(X_0') + \sqrt{2}W_\tau\sim \rho_{\tau}^L.
\end{displaymath}

By taking the difference, we get $\dd (X_t - X_t') = \big(\nabla V(X_0') - \nabla V(X_t)\big)\,\dd t$, which further implies 
\begin{align*}
\|X_\tau - X_\tau'\| &= \bigg|\!\bigg|\int_0^\tau \big(\nabla V(X_0') - \nabla V(X_t)\big)\,\dd t\bigg|\!\bigg|_2\\
&\leq \int_0^\tau \|\nabla V(X_0') - \nabla V(X_t)\|\,\dd t\leq L\int_0^\tau\|X_0' - X_t\|\,\dd t
\end{align*}
given that $\nabla V$ is $L$-Lipschitz. Therefore, we get
\begin{align*}
W_2^2(\rho_\tau^{\rm FP}, \rho_{\tau}^L) &\leq \mb E\|X_\tau - X_\tau'\|^2\\
&\leq \mb E\bigg(L\int_0^\tau\|X_0'-X_t\|\,\dd t\bigg)^2 \leq L^2\tau\int_0^\tau \mb E\|X_0'-X_t\|^2\,\dd t
\end{align*}
by applying the Cauchy--Schwarz inequality. 

Now, let us bound the expectation term $\mb E[\|X_0'-X_t\|^2]$. By Ito's formula, we have
\begin{displaymath} 
\dd \|X_t - X_0'\|^2 = \big[d - \big\langle 2(X_t - X_0'), \nabla V(X_t)\big\rangle\big]\,\dd t + \big\langle2\sqrt{2}(X_t-X_0'), \,\dd W_t\big\rangle.
\end{displaymath}
By the Cauchy--Schwarz inequality and the AM--GM inequality, we can further get
\begin{align*}
\mb E\|X_t - X_0'\|^2 &= dt - 2\int_0^t\mb E\big\langle X_s-X_0', \nabla V(X_s)\big\rangle\,\dd s\\
&= dt - 2\int_0^t\mb E\big\langle X_s - X_0', \nabla V(X_s) - \nabla V(X_0')\big\rangle\,\dd s - 2\int_0^t\mb E\big\langle X_s - X_0', \nabla V(X_0')\big\rangle\,\dd s\\
&\leq dt + 2L\int_0^t \mb E\|X_s-X_0'\|^2\,\dd s + 2\int_0^t\sqrt{\mb E\|X_s - X_0'\|^2}\cdot\sqrt{\mb E\|\nabla V(X_0')\|^2}\,\dd s\\
&\leq \Big(d + \mb E_{\rho_0}\|\nabla V(X)\|^2\Big)t + (2L+1)\int_0^t\mb E\|X_s-X_0'\|^2\,\dd s
\end{align*}
for all $t\in[0,\tau]$. By applying Gronwall's inequality to the above, we have
\begin{displaymath}
\mb E\|X_t - X_0'\|^2\leq \Big(d+\mb E_{\rho_0}\|\nabla V(X)\|^2\Big)t e^{(2L+1)t}.
\end{displaymath}
Since $0 \leq t\leq  \tau \leq (2L+1)^{-1}$, we finally reach
\begin{displaymath}
W_2^2(\rho_\tau^{\rm FP},\rho_{\tau}^L)\leq L^2\tau\int_0^\tau \Big(d+\mb E_{\rho_0}\|\nabla V(X)\|^2\Big)\,e\,t\,\dd t = \frac{L^2e}{2}\Big(d+\mb E_{\rho_0}\|\nabla V(X)\|^2\Big)\tau^3.
\end{displaymath}

\smallskip
\noindent \underline{\bf Bound of $W_2^2(\rho_\tau^{\rm FP}, \rho_\tau)$}. By (4.2.10) and Lemma 4.4.1 in \cite{ambrosio2008gradient}, we have
\begin{align*}
    W_2^2(\rho_\tau^{\rm FP}, \rho_\tau) \leq \frac{\tau^2}{2}\Big(|\partial\m F_{\rm KL}|^2(\rho) - |\partial\m F_{\rm KL}|^2(\rho_\tau)\Big).
\end{align*}
If we can show that the metric slope of the functional $|\partial\m F_{\rm KL}|^2(\rho)$ is finite, then 
\begin{align*}
    \frac{|\partial\m F_{\rm KL}|^2(\rho) - |\partial\m F_{\rm KL}|^2(\rho_\tau)}{W_2(\rho, \rho_\tau)}
    &\leq \frac{\big(|\partial\m F_{\rm KL}|^2(\rho) - |\partial\m F_{\rm KL}|^2(\rho_\tau)\big)^+}{W_2(\rho, \rho_\tau)}\\
    &\leq \limsup_{\rho'\to\rho}\frac{\big(|\partial\m F_{\rm KL}|^2(\rho) - |\partial\m F_{\rm KL}|^2(\rho')\big)^+}{W_2(\rho, \rho')}\\
    &= \big|\partial|\partial\m F_{\rm KL}|^2\big|(\rho).
\end{align*}
The above inequality further implies
\begin{align*}
    W_2^2(\rho_\tau^{\rm FP}, \rho_\tau) &\leq \frac{\tau^2W_2(\rho,\rho_\tau)}{2}\big|\partial|\partial\m F_{\rm KL}|^2\big|(\rho)\\
    &\leq C\bigg(\int_{\mb R^d}\Big|\!\Big|\nabla\frac{\delta\m F_{\rm KL}}{\delta\rho}(\rho)\Big|\!\Big|^2\,\dd\rho\bigg)^\frac{1}{2}\big|\partial|\partial\m F_{\rm KL}|^2\big|(\rho) \tau^3.
\end{align*}
To show the finiteness of the metric slope, or $\big|\partial|\partial\m F_{\rm KL}|^2\big|(\rho) < \infty$, we first notice that
\begin{align*}
    |\partial\m F_{\rm KL}|^2(\rho) &\stackrel{\ri}{=} \min\big\{\|\xi\|_{L^2(\rho)}^2: \xi\in\partial\m F_{\rm KL}(\rho)\big\}\stackrel{\rii}{=} \int_{\mb R^d}\|\nabla V + \nabla\log \rho\|^2\,\dd\rho.
\end{align*}
Here, step (i) is by Lemma 10.1.5 in \cite{ambrosio2008gradient}, and step (ii) is by Proposition 3.38 and Proposition 3.36 in \cite{ambrosio2013user} which characterize the uniqueness of subdifferential of $\m F_{\rm KL}$. If we can show that $|\partial\m F_{\rm KL}|$ is Fr\'{e}chet differentiable at $\rho$ relative to the $W_2$ metric, then by Lemma \ref{lem:FV_SD} we have $\big|\partial|\partial\m F_{\rm KL}|^2\big|(\rho) = \frac{\delta|\partial\m F_{\rm KL}|^2}{\delta\rho}(\rho)$. Therefore, we have
\begin{align*}
    |\partial\m F_{\rm KL}|^2(\rho) &= \int_{\mb R^d}\|\nabla V\|^2\,\dd\rho + 2\int_{\mb R^d}\big\langle\nabla V, \nabla\log\rho\big\rangle\,\dd\rho + \int_{\mb R^d}\|\nabla\log\rho\|^2\,\dd\rho\\
    &=: \m F_1(\rho) + \m F_2(\rho) + \m F_3(\rho).
\end{align*}
Here, $\m F_1$ is just the potential energy functional with $\frac{\delta\m F_1}{\delta\rho}(\rho) = \|\nabla V\|^2$. By the formula of integration by parts, we have
\begin{align*}
    \m F_2(\rho) = 2\int_{\mb R^d}\langle\nabla V, \nabla\rho\rangle\,\dd x = -2\int_{\mb R^d}\Delta V\,\dd\rho.
\end{align*}
Therefore $\frac{\delta\m F_2}{\delta\rho}(\rho) = -2\Delta V$. For any $\chi = \rho'-\rho$ such that $\int\,\dd\chi = 0$, we have
\begin{align*}
    \varepsilon^{-1}\big(\m F_3(\rho+\varepsilon\chi) - \m F_3(\rho)\big) &=
    \frac{1}{\varepsilon}\bigg(\int_{\mb R^d}\frac{\|\nabla\rho + \varepsilon\nabla\chi\|^2}{\rho+\varepsilon\nabla\chi}\,\dd x - \int_{\mb R^d}\frac{\|\nabla\rho\|^2}{\rho}\,\dd x\bigg)\\
    &= \frac{1}{\varepsilon}\int_{\mb R^d}\frac{\rho\|\nabla\rho+\varepsilon\nabla\chi\|^2 - (\rho+\varepsilon\chi)\|\nabla\rho\|^2}{\rho(\rho+\varepsilon\chi)}\,\dd x\\
    &= \int_{\mb R^d}\frac{2\rho\langle\nabla\rho, \nabla\chi\rangle - \chi\|\nabla\rho\|^2 + \rho\varepsilon\|\nabla\chi\|^2}{\rho(\rho+\varepsilon\chi)}\,\dd x\\
    &\to 2\int_{\mb R^d}\Big\langle\frac{\nabla\rho}{\rho}, \nabla\chi\Big\rangle\,\dd x - \int_{\mb R^d}\Big|\!\Big|\frac{\nabla\rho}{\rho}\Big|\!\Big|^2\,\dd \chi\\
    &= -\int_{\mb R^d}2\Delta\log\rho + \|\nabla\log\rho\|^2\,\dd\chi, \quad \mx{as } \varepsilon\to0.
\end{align*}
By definition, we have $\frac{\delta\m F_3}{\delta\rho}(\rho) = -2\Delta\log\rho - \|\nabla\log\rho\|^2$. So, we expect to have
\begin{align*}
    \frac{\delta|\partial\m F_{\rm KL}|^2}{\delta\rho}(\rho) &= \frac{\delta\m F_1}{\delta\rho}(\rho) + \frac{\delta\m F_2}{\delta\rho}(\rho) + \frac{\delta\m F_3}{\delta\rho}(\rho)\\
    &= \|\nabla V\|^2 - 2\Delta V - 2\Delta\log\rho  - \|\nabla\log\rho\|^2 \in\partial|\partial\m F_{\rm KL}|^2(\rho).
\end{align*}
Then, by applying again Lemma 10.1.5 in \cite{ambrosio2008gradient}, we obtain
\begin{align*}
    \big|\partial|\partial\m F_{\rm KL}|^2\big|(\rho) &\leq \bigg|\!\bigg|\frac{\delta|\partial\m F_{\rm KL}|^2}{\delta\rho}(\rho)\bigg|\!\bigg|_{L^2(\rho)}\\
    &= \Big(\int_{\mb R^d}\big(\|\nabla V\|^2 - 2\Delta V - 2\Delta\log\rho - \|\nabla\log\rho\|^2\big)^2\,\dd\rho\Big)^\frac{1}{2} < \infty.
\end{align*}
Finally, the Fr\'{e}chet differentiability of $|\partial\m F_{\rm KL}|^2(\rho)$ at $\rho$ is implied by the following identity,
\begin{align*}
    &\quad |\partial\m F_{\rm KL}|^2(\rho+\varepsilon\chi) - |\partial\m F_{\rm KL}|^2(\rho) - \varepsilon\int_{\mb R^d}\|\nabla V\|^2 - 2\Delta V - 2\Delta\log\rho - \|\nabla\log\rho\|^2\,\dd\chi\\
    &= 2\varepsilon^2\int_{\mb R^d}-\frac{\chi\langle\nabla\rho,\nabla\chi\rangle}{\rho(\rho+\varepsilon\chi)} + \frac{\chi^2\|\nabla\rho\|^2}{\rho^2(\rho+\varepsilon\chi)} + \frac{\|\nabla\chi\|^2}{\rho+\varepsilon\chi}\,\dd x= O(\varepsilon^2).
\end{align*}

\smallskip
\noindent \underline{\bf Bound of $W_2^2(\rho_\tau^{\rm ex}, \rho_\tau)$}. 
Recall that $T_{\rho_\tau}^\rho = \id + \tau\nabla\frac{\delta\m F_{\rm KL}}{\delta\rho}(\rho_\tau)$. By the fact that $\id = T_{\rho_\tau}^\rho\circ T_\rho^{\rho_\tau}$, we have $T_{\rho}^{\rho_\tau} = \id - \tau \nabla\frac{\delta\m F_{\rm KL}}{\delta\rho}(\rho_\tau)\circ T_\rho^{\rho_\tau}$. Notice that $(T_\rho^{\rho_\tau^{\rm ex}}, T_\rho^{\rho_\tau})_\#\rho \in \Pi(\rho_\tau^{\rm ex}, \rho_\tau)$ is a coupling between $\rho_\tau^{\rm ex}$ and $\rho_\tau$. Therefore, by definition of $W_2$ distance, we have
\begin{align*}
    W_2^2(\rho_\tau^{\rm ex}, \rho_\tau) &\leq \int_{\mb R^d\times \mb R^d}\|x-y\|^2\,\dd(T_\rho^{\rho_\tau^{\rm ex}}, T_\rho^{\rho_\tau})_\#\rho\\
    &= \int_{\mb R^d}\big|\!\big|T_\rho^{\rho_\tau^{\rm ex}}(x) - T_\rho^{\rho_\tau}(x)\big|\!\big|^2\,\dd\rho(x)= \tau^2\int_{\mb R^d}\Big|\!\Big|\nabla\frac{\delta\m F_{\rm KL}}{\delta\rho}(\rho) - \nabla\frac{\delta\m F_{\rm KL}}{\delta\rho}(\rho_\tau)\circ T_\rho^{\rho_\tau}\Big|\!\Big|^2\,\dd\rho.
\end{align*}
We can further bound the above by
\begin{align*}
    &\quad\,\Big|\!\Big|\nabla\frac{\delta\m F_{\rm KL}}{\delta\rho}(\rho) - \nabla\frac{\delta\m F_{\rm KL}}{\delta\rho}(\rho_\tau)\circ T_\rho^{\rho_\tau}\Big|\!\Big|^2\\
    &=\Big|\!\Big|\nabla\frac{\delta\m F_{\rm KL}}{\delta\rho}(\rho) - \nabla\frac{\delta\m F_{\rm KL}}{\delta\rho}(\rho_\tau) + \nabla\frac{\delta\m F_{\rm KL}}{\delta\rho}(\rho_\tau) - \nabla\frac{\delta\m F_{\rm KL}}{\delta\rho}(\rho_\tau)\circ T_\rho^{\rho_\tau}\Big|\!\Big|^2\\
    &\leq 2\Big|\!\Big|\nabla\frac{\delta\m F_{\rm KL}}{\delta\rho}(\rho) - \nabla\frac{\delta\m F_{\rm KL}}{\delta\rho}(\rho_\tau)\Big|\!\Big|^2 + 2\Big|\!\Big|\nabla\frac{\delta\m F_{\rm KL}}{\delta\rho}(\rho_\tau) - \nabla\frac{\delta\m F_{\rm KL}}{\delta\rho}(\rho_\tau)\circ T_\rho^{\rho_\tau}\Big|\!\Big|^2.
\end{align*}
The first term above can be reformulated as
\begin{align*}
    \nabla\frac{\delta\m F_{\rm KL}}{\delta\rho}(\rho_\tau) - \nabla\frac{\delta\m F_{\rm KL}}{\delta\rho}(\rho) &= \nabla\big[ V + \log\rho_\tau\big] - \nabla\big[ V + \log\rho\big] = \nabla\log\frac{\rho_\tau}{\rho}.
\end{align*}
Since $\rho_\tau = (T_\rho^{\rho_\tau})_\#\rho$, by applying a change of measure, we have
\begin{align*}
    \rho_\tau(y) = \rho\big(T_{\rho_\tau}^\rho(y)\big)\Big|\det\nabla T_{\rho_\tau}^\rho(y)\Big| = \rho\big(T_{\rho_\tau}^\rho(y)\big)\bigg|\det\Big(I_d + \tau\nabla^2\frac{\delta\m F_{\rm KL}}{\delta\rho}(\rho_\tau)\Big)\bigg|(y).
\end{align*}
Let $\lambda_1(\rho_\tau)$, \ldots, $\lambda_d(\rho_\tau)$ be the $d$ eigenvalues of the Hessian matrix $\nabla^2\frac{\delta\m F_{\rm KL}}{\delta\rho}(\rho_\tau)$. By the mean value theorem, there is some $\xi_y$ depending on $y$ such that
\begin{align*}
    \nabla\log\frac{\rho_\tau}{\rho} &= \nabla\log\rho\big(T_{\rho_\tau}^\rho(y)\big) - \nabla\log\rho(y) + \nabla\log\bigg|\det\Big(I_d + \tau\nabla^2\frac{\delta\m F_{\rm KL}}{\delta\rho}(\rho_\tau)\Big)\bigg|(y)\\
    &= \nabla^2\log\rho(\xi_y)\big(T_{\rho_\tau}^\rho(y) - y\big) + \nabla\log\prod_{i=1}^d\big(1+\tau\lambda_i(\rho_\tau)(y)\big)\\
    &= \nabla^2\log\rho(\xi_y)\big(T_{\rho_\tau}^\rho(y) - y\big) + \sum_{i=1}^d\nabla\log\big(1+\tau\lambda_i(\rho_\tau)(y)\big)\\
    &= \tau\nabla^2\log\rho(\xi_y)\nabla\frac{\delta\m F_{\rm KL}}{\delta\rho}(\rho_\tau)(y) + \tau\sum_{i=1}^d\frac{\nabla\lambda_i(\rho_\tau)(y)}{1+\tau\lambda_i(\rho_\tau)(y)}.
\end{align*}
For the second term, by applying the mean value theorem again, we can find some $\eta_y$ such that
\begin{align*}
    &\quad\,\Big|\!\Big|\nabla\frac{\delta\m F_{\rm KL}}{\delta\rho}(\rho_\tau)(y) - \nabla\frac{\delta\m F_{\rm KL}}{\delta\rho}(\rho_\tau)\circ T_\rho^{\rho_\tau}(y)\Big|\!\Big|^2\\
    &= \Big|\!\Big|\nabla\frac{\delta\m F_{\rm KL}}{\delta\rho}(\rho_\tau)(y) - \nabla\frac{\delta\m F_{\rm KL}}{\delta\rho}(\rho_\tau)\big(y - \tau\nabla\frac{\delta\m F_{\rm KL}}{\delta\rho}(\rho_\tau)\circ T_\rho^{\rho_\tau}(y)\big)\Big|\!\Big|^2\\
    &= \tau^2\Big|\!\Big| \nabla^2\frac{\delta_{\rm KL}}{\delta\rho}(\rho_\tau)(\eta_y)\nabla\frac{\delta\m F_{\rm KL}}{\delta\rho}(\rho_\tau)\big( T_\rho^{\rho_\tau}(y)\big)\Big|\!\Big|^2
\end{align*}
Combining all pieces above together yields
\begin{align*}
    W_2^2(\rho_\tau^{\rm ex}, \rho_\tau) &\leq 2\tau^4\int_{\mb R^d}\bigg|\!\bigg|\nabla^2\log\rho(\xi_y)\nabla\frac{\delta\m F_{\rm KL}}{\delta\rho}(\rho_\tau)(y) + \sum_{i=1}^d\frac{\nabla\lambda_i(\rho_\tau)(y)}{1+\tau\lambda_i(\rho_\tau)(y)}\bigg|\!\bigg|^2\,\dd\rho\\
    &\qquad + 2\tau^4\int_{\mb R^d}\Big|\!\Big| \nabla^2\frac{\delta\m F_{\rm KL}}{\delta\rho}(\rho_\tau)(\eta_y)\nabla\frac{\delta\m F_{\rm KL}}{\delta\rho}(\rho_\tau)\big( T_\rho^{\rho_\tau}(y)\big)\Big|\!\Big|^2\,\dd\rho\\
    &\leq C\tau^4\int_{\mb R^d}\bigg|\!\bigg|\nabla^2\log\rho(y)\nabla\frac{\delta\m F_{\rm KL}}{\delta\rho}(\rho)(y) + \nabla\textrm{tr}\Big(\nabla^2\frac{\delta\m F_{\rm KL}}{\delta\rho}(\rho)\Big)(y)\bigg|\!\bigg|^2\,\dd\rho\\
    &\qquad + C\tau^4\int_{\mb R^d}\Big|\!\Big| \nabla^2\frac{\delta\m F_{\rm KL}}{\delta\rho}(\rho)(y)\nabla\frac{\delta\m F_{\rm KL}}{\delta\rho}(\rho)(y)\big)\Big|\!\Big|^2\,\dd\rho,
\end{align*}
for all $\tau$ sufficiently small, which proved the desired bound. In the last inequality, we pass the limit $\tau\to0$ in the integration.

\subsection{Long term cumulative numerical error in particle approximation}\label{app:long_term}
Theorem~\ref{thm: discrete_langevin_vs_JKO} below shows that discretized Langevin dynamics well approximates the JKO-scheme under when the time horizon $[0,T]$ is finite. It is well known that under a fixed $T$ the numeric error of Euler--Maruyama method for discretizing SDE is $\m O(\tau^{1/2})$, where recall that $\tau$ is the step size and the constant depends on $T$. This implies that the Wasserstein distance between discretized Langevin dynamics and the solution of Fokker--Planck equation will be of order $\tau^{1/2}$. By Theorem 11.2.1 in \cite{ambrosio2008gradient}, we know that the Wasserstein distance between the JKO scheme and the solution from the accompanied Fokker--Planck equation is of order $\tau$. Therefore, the discretized Langevin dynamics approximates the JKO-scheme with numeric error of order $\tau^{1/2}$. However, the leading constant from such an analysis is usually exponentially large in $T$ due to the application of Gronwall's inequality. Our result below shows that if $V$ is strongly convex, then the dependence of the constant on $T$ can be polynomial, or more precisely, of order $\max\{T,\,T^2\}$. 
\begin{theorem}\label{thm: discrete_langevin_vs_JKO}
Let $V\in\m C^2(\mb R^d)$ be $L$-smooth and $\lambda$-strongly convex, i.e. $\lambda I_d\preceq \nabla^2V\preceq LI_d$. Define
$\m F(\rho) = \int V(x)\,\dd\rho(x) + \int\rho(x)\log\rho(x)\,\dd x$. For any time horizon $T>0$ and number of iteration $N$, we take step size $\tau = T/N$. For any $\rho_{\tau, L}^{(0)} = \rho_{\tau, J}^{(0)} = \rho_0\in\ms P_2^r(\mb R^d)$, we recursively define the JKO-scheme as
\begin{align*}
    \rho_{\tau, J}^{(k)} = \argmin_{\rho\in\ms P_2^r(\mb R^d)}\m F(\rho) + \frac{1}{2\tau}W_2^2(\rho_{\tau, J}^{(k-1)}),
\end{align*}
and a discretized Langevin dynamics scheme as
\begin{align*}
    X_k &= X_{k-1} - \tau\nabla V(X_{k-1}) + \sqrt{2\tau\eta_k}, \quad \rho_{\tau, L}^{(k)} = \m L(X_k),
\end{align*}
for $k=1, 2, \cdots, N$, where $\eta_k$ are i.i.d. standard normal random variables. If step size $\tau < \min\big\{\frac{\lambda}{2L^2}, \frac{\lambda^2}{16eL^4}, \frac{1}{2L+1}\big\}$, then we have $W_2^2(\rho_{\tau, J}^{(N)}, \rho_{\tau, L}^{(N)}) \leq C_1 \max\{T^2, T\}\,\tau^{1/2}$.
As an intermediate step in the proof, we also have the following cumulative error bound
\begin{align*}
    W_2\big(\rho_{\tau, L}^{(N)}, \rho_{N\tau}\big) \leq C\max\{T^2, T\}\,\tau^{1/2},
\end{align*}
where $\rho_t$ denotes the solution of the Fokker--Planck equation initialized at $\rho_0$. Here, both constants $C_1$ and $C$ only depend on $(\rho_0, V, d)$ and are independent of $(T,\tau)$. 
\end{theorem}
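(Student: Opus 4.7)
The plan is to decompose the total error using the triangle inequality through the continuous Fokker--Planck flow $\rho_t$ as an intermediate reference, writing
\begin{align*}
W_2\big(\rho_{\tau,J}^{(N)}, \rho_{\tau,L}^{(N)}\big) \leq W_2\big(\rho_{\tau,J}^{(N)}, \rho_{N\tau}\big) + W_2\big(\rho_{N\tau}, \rho_{\tau,L}^{(N)}\big),
\end{align*}
and bound the two pieces by independent arguments. Since the $\lambda$-strong convexity of $V$ together with Lemmas~\ref{lemma:entropy} and \ref{lemma:potential} implies $\m F$ is $\lambda$-convex along generalized geodesics, we can exploit this structure on both sides.

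For the first term, I will apply Corollary~\ref{coro:one_step_KL} (one-step contraction of the JKO map) iteratively together with the discrete evolutionary variational inequality satisfied by the implicit minimization movement. This gives $W_2^2(\rho_{\tau,J}^{(N)}, \rho_{N\tau}) = O(\max\{T,1\}\tau)$ under the strong convexity of $\m F$, so this term contributes at most $O(\max\{T^{1/2},1\}\tau^{1/2})$ and is absorbed by the stated bound. For the second term, I will use a telescoping coupling: let $\tilde\rho^{(k)}_t$ denote the solution of the Fokker--Planck equation~\eqref{eqn: Fokker_Planck_equation} on $[0,\tau]$ started from $\rho_{\tau,L}^{(k)}$, and write
\begin{align*}
W_2\big(\rho_{N\tau},\rho_{\tau,L}^{(N)}\big) \leq \sum_{k=0}^{N-1} W_2\bigl(P_{(N-k-1)\tau}\tilde\rho^{(k)}_\tau,\ P_{(N-k-1)\tau}\rho_{\tau,L}^{(k+1)}\bigr),
\end{align*}
where $P_t$ denotes the Fokker--Planck semigroup. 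By $\lambda$-strong convexity of $V$, $P_t$ is a $W_2$-contraction with rate $e^{-\lambda t}$, while the one-step error bound in Lemma~\ref{lem: onestep_err} gives $W_2(\tilde\rho^{(k)}_\tau,\rho_{\tau,L}^{(k+1)}) = O(\tau^{3/2})$ at each step. Summing the geometric series controls the cumulative error by $C\tau^{1/2}$ with a $T$-dependent prefactor of polynomial order, yielding the claimed $\max\{T^2,T\}\tau^{1/2}$ rate after squaring.

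The main obstacle is verifying that the one-step error constants from Lemma~\ref{lem: onestep_err} remain uniformly bounded along the entire discretized trajectory $\{\rho_{\tau,L}^{(k)}\}_{k=0}^N$. These constants involve the regularity quantities listed in~\eqref{eqn:bounded_quantities}, most critically $\int\|\nabla\log\rho_{\tau,L}^{(k)}\|^4\,d\rho_{\tau,L}^{(k)}$ and Fisher-information-type terms that are sensitive to how close $\rho_{\tau,L}^{(k)}$ may get to degenerate. I will address this by an induction argument: the three step-size conditions $\tau<\lambda/(2L^2)$, $\tau<\lambda^2/(16eL^4)$, and $\tau<1/(2L+1)$ are precisely calibrated so that the discretized Langevin map is a strict contraction in $W_2$ toward the stationary measure $\pi^\ast\propto e^{-V}$, and so that the corresponding log-Sobolev / Fisher-information functionals propagate stably along the chain. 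This will let me bound all the integrals appearing in the one-step error by constants depending only on $(\rho_0,V,d)$ (via initial regularity plus exponential decay to $\pi^\ast$).

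A secondary technical point is the interplay between the geometric decay factor $e^{-\lambda(N-k-1)\tau}$ and the Gronwall-type accumulation that arises when one must also track the drift $\nabla V$ error in the coupling between $X_{k\tau}$ and the discrete iterate $X_k$. I expect the cleanest way to organize the bookkeeping is to first prove the intermediate estimate $W_2(\rho_{\tau,L}^{(N)}, \rho_{N\tau}) \leq C\max\{T^2,T\}\tau^{1/2}$ as stated in the theorem and then combine it with the JKO--Fokker--Planck bound via the triangle inequality to conclude.
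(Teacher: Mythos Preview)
Your overall strategy---triangle inequality through the continuous Fokker--Planck flow, telescoping with the $\lambda$-contractive semigroup, and one-step error from Lemma~\ref{lem: onestep_err}---is exactly the paper's approach. The JKO half is handled in the paper by directly citing Theorem~11.2.1 of \cite{ambrosio2008gradient}, which gives the uniform bound $W_2(\rho_{\tau,J}^{(N)},\rho_{N\tau})\le |\partial\m F|(\rho_0)\,\tau$ without any $T$-dependence; your route via Corollary~\ref{coro:one_step_KL} would also work but is not needed.

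Where you diverge from the paper is in identifying the ``main obstacle.'' You worry about propagating the full list~\eqref{eqn:bounded_quantities}, in particular $\int\|\nabla\log\rho_{\tau,L}^{(k)}\|^4\,\dd\rho_{\tau,L}^{(k)}$, and propose controlling it via log-Sobolev/Fisher-information stability. This is unnecessary: the relevant one-step bound in Lemma~\ref{lem: onestep_err} for $W_2(\rho_\tau^{\rm FP},\rho_\tau^L)$ has constant $C_1=\tfrac{L^2e}{2}(d+\mb E_\rho\|\nabla V(X)\|^2)$, which only involves the second moment of $\rho_{\tau,L}^{(k)}$, not any density regularity. The paper therefore only needs to show $\mb E_{\rho_{\tau,L}^{(k)}}\|X\|^2$ stays controlled, which it does by a synchronous coupling of the discrete chain with the stationary Langevin diffusion $Y_t\sim\rho_\infty\propto e^{-V}$: one derives a recursion $W_2(\rho_\infty,\rho_{\tau,L}^{(k)})\le\alpha\,W_2(\rho_\infty,\rho_{\tau,L}^{(k-1)})+O(\tau)$ with $\alpha<1$ under the stated step-size conditions, and this transfers to a bound $\sqrt{\mb E_{\rho_{\tau,L}^{(k)}}\|X\|^2}\le C_1+kC_2\tau$. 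The linear growth in $k\tau$ is what produces the $T^2$ factor after summing $\sum_k e^{-\lambda(N-k)\tau}\tau^{3/2}(C_3+kC_2\tau)$. Your proposed Fisher-information argument would be substantially harder to execute and is not what the proof requires.
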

\begin{proof}[Proof of Theorem \ref{thm: discrete_langevin_vs_JKO}]
For any $\rho\in\ms P_2^r(\mb R^d)$, let $S_t(\rho)$ be the unique gradient flow of $\m F$ starting from $\rho$, i.e. $S_t(\rho)$ is the solution to Fokker--Planck equation (\ref{eqn: Fokker_Planck_equation}) satisfying $S_0(\rho) = \rho$. By Theorem 11.2.1 in \cite{ambrosio2008gradient}, we know $\{S_t: t\geq 0\}$ is a $\lambda$-contractive semigroup, i.e. 
\begin{align}\label{eqn: contraction_semigroup}
    W_2(S_t(\rho), S_t(\rho'))\leq e^{-\lambda t}W_2(\rho, \rho'), \quad\forall\,\rho,\rho'\in\ms P_2^r(\mb R^d).
\end{align}
Therefore, we have 
\begin{align}\label{eqn: discrete_langevin_Nstep_err}
\begin{aligned}
    &\quad\,W_2\big(\rho_{\tau, L}^{(N)}, S_{N\tau}(\rho_0)\big)\\ 
    &= W_2\big(S_0(\rho_{\tau, L}^{(N)}), S_{N\tau}(\rho_0)\big)\\
    &\stackrel{\ri}{\leq} \sum_{k=1}^N W_2\big(S_{(N-k)\tau}(\rho_{\tau, L}^{(k)}), S_{(N-k+1)\tau}(\rho_{\tau, L}^{(k-1)})\big)\\
    &\stackrel{\rii}{\leq} \sum_{k=1}^N e^{-\lambda(N-k)\tau}W_2\big(\rho_{\tau, L}^{(k)}, S_\tau(\rho_{\tau, L}^{(k-1)})\big)\\
    &\stackrel{\riii}{\leq} \sum_{k=1}^N e^{-\lambda(N-k)\tau}\cdot L\tau^\frac{3}{2}e^\frac{1}{2}\sqrt{d + \frac{1}{2}\mb E_{\rho_{\tau, L}^{(k-1)}}\|\nabla V(X)\|^2}\\
    &\leq \sum_{k=1}^N e^{-\lambda(N-k)\tau}\cdot L\tau^\frac{3}{2}e^\frac{1}{2}\Big(\sqrt{d} + \|\nabla V(0)\| + L\sqrt{\mb E_{\rho_{\tau, L}^{(k-1)}}\|X\|^2}\Big).
\end{aligned}
\end{align}
Here, step (i) is by the triangular inequality, step (ii) is by contraction property (\ref{eqn: contraction_semigroup}), step (iii) is by Lemma \ref{lem: onestep_err}, and the last inequality is due to the following inequality,
\begin{align}\label{eqn: nablaVx_to_X}
\begin{aligned}
    \mb E_{\rho_{\tau, L}^{(k-1)}}\|\nabla V(X)\|^2 &\leq 2\mb E_{\rho_{\tau, L}^{(k-1)}}\|\nabla V(X)-\nabla V(0)\|^2 + 2\|\nabla V(0)\|^2\\
    &\leq 2L^2\mb E_{\rho_{\tau, L}^{(k-1)}}\|X\|^2 + 2\|\nabla V(0)\|^2.
\end{aligned}
\end{align}
Now, let us estimate $\mb E_{\rho_{\tau, L}^{(k-1)}}\|X\|^2$. Let $\dd\rho_\infty \propto e^{-V(x)}\,\dd x$ be the stationary measure of Langevin dynamics, i.e. if we let $\dd Y_t = -\nabla V(Y_t)\,\dd t + \sqrt{2}\,\dd W_t$ and $Y_0\sim\rho_\infty$, then $Y_t\sim\rho_\infty$ for all $t\geq 0$. Let $\dd X_t' = -\nabla V(X_{k-1})\,\dd t + \sqrt{2}\,\dd W_t$ for $0\leq t\leq \tau$ with $X_0' = X_{k-1}$, then we can get $X_{\tau}' = X_k \sim\rho_{\tau, L}^{(k)}$. Notice that $\dd(Y_t - X_t') = -\big(\nabla V(Y_t) - \nabla V(X_{k-1})\big)\,\dd t$, which implies
\begin{align*}
    &\quad\,\|Y_{\tau}-X_\tau'\|^2\\ 
    &= \bigg|\!\bigg| (Y_0 - X_{k-1}) - \int_0^\tau \nabla V(Y_t) - \nabla V(X_{k-1})\,\dd t\bigg|\!\bigg|_2^2\\
    &=\|Y_0 - X_{k-1}\|^2 - 2\int_0^\tau\big\langle \nabla V(Y_0) - \nabla V(X_{k-1}), Y_0 - X_{k-1}\big\rangle\,\dd t\\
    &\qquad -2\int_0^\tau\big\langle\nabla V(Y_t) - \nabla V(Y_0), Y_0 - X_{k-1}\big\rangle\,\dd t + \bigg|\!\bigg|\int_0^\tau \nabla V(Y_t)-\nabla V(X_{k-1})\,\dd t\bigg|\!\bigg|_2^2\\
    &\leq \|Y_0 - X_{k-1}\|^2 - 2\lambda\int_0^\tau\|Y_0 - X_{k-1}\|^2\,\dd t\\
    &\qquad - 2\int_0^\tau\big\langle\nabla V(Y_t) - \nabla V(Y_0), Y_0 - X_{k-1}\big\rangle\,\dd t + L^2\tau\int_0^\tau\|Y_t - X_{k-1}\|^2\,\dd t
\end{align*}
By choosing $Y_0\sim\rho_\infty$ such that $\mb E\|Y_0 - X_{k-1}\|^2 = W_2^2\big(\rho_\infty, \rho_{\tau, L}^{(k-1)}\big)$, we further obtain
\begin{align*}
    W_2^2(\rho_\infty, \rho_{\tau, L}^{(k)}) &\leq \mb E\|Y_\tau - X_\tau'\|^2\\
    &\leq (1-2\tau\lambda)W_2^2(\rho_\infty, \rho_{\tau, L}^{(k-1)}) - 2\int_0^\tau\mb E\big\langle\nabla V(Y_t) - \nabla V(Y_0), Y_0 - X_{k-1}\big\rangle\,\dd t\\
    &\qquad + L^2\tau\int_0^\tau\mb E\|Y_t - X_{k-1}\|^2\,\dd t\\
    &\leq (1-2\tau\lambda)W_2^2(\rho_\infty, \rho_{\tau, L}^{(k-1)}) + L^2\tau\int_0^\tau\mb E\|Y_t - X_{k-1}\|^2\,\dd t\\
    &\qquad + \int_0^\tau \lambda\mb E\|Y_0 - X_{k-1}\|^2 + \frac{1}{\lambda}\mb E\|\nabla V(Y_t) - \nabla V(Y_0)\|^2\,\dd t\\
    &\leq (1-\tau\lambda)W_2^2(\rho_\infty, \rho_{\tau, L}^{(k-1)}) + L^2\tau\int_0^\tau\mb E\|Y_t - X_{k-1}\|^2\,\dd t + \frac{L^2}{\lambda}\int_0^\tau\mb E\|Y_t - Y_0\|^2\,\dd t.
\end{align*}
To analyze the last two terms on the right hand side in the above,  we notice that similar to the proof of Lemma \ref{lem: onestep_err}, we have
\begin{align*}
    \mb E\|Y_t - X_{k-1}\|^2 \leq \mb E\|Y_0 - X_{k-1}\|^2 + \big(d + \mb E_{\rho_{\tau, L}^{(k-1)}}\|\nabla V(X)\|^2\big) t + (2L + 1)\int_0^t\mb E\|Y_s - X_{k-1}\|^2\,\dd s.
\end{align*}
Then, applying Gronwall's inequality yields that for every $t\geq 0$,
\begin{align*}
    \mb E\|Y_t - X_{k-1}\|^2 \leq \Big(\mb E\|Y_0 - X_{k-1}\|^2 + \big(d + \mb E_{\rho_{\tau, L}^{(k-1)}}\|\nabla V(X)\|^2\big) t\Big)e^{(2L+1)t}.
\end{align*}
Taking $(2L+1)\tau < 1$, we get
\begin{align*}
    \int_0^\tau\mb E\|Y_t - X_{k-1}\|^2\,\dd t &\leq e\tau\mb E\|Y_0 - X_{k-1}\|^2 + \frac{e\tau^2}{2}\big(d + \mb E_{\rho_{\tau, L}^{(k-1)}}\|\nabla V(X)\|^2\big).
\end{align*}
A very same argument can be used to show 
\begin{align*}
    \mb E\|Y_t - Y_0\|^2 \leq \big(d + \mb E\|\nabla V(Y_0)\|^2\big)te^{(2L+1)t}.
\end{align*}

\smallskip
\noindent By putting pieces together, we obtain
\begin{align*}
    W_2^2(\rho_\infty, \rho_{\tau, L}^{(k)}) &\leq (1-\tau\lambda)W_2^2(\rho_\infty, \rho_{\tau, L}^{(k-1)})\\
    &\qquad + L^2\tau\Big(e\tau\mb E\|Y_0 - X_{k-1}\|^2 + \frac{e\tau^2}{2}\big(d + \mb E_{\rho_{\tau, L}^{(k-1)}}\|\nabla V(X)\|^2\big)\Big)\\
    &\qquad + \frac{L^2}{\lambda}\cdot \big(d + \mb E\|\nabla V(Y_0)\|^2\big)\frac{\tau^2}{2}\\
    &\leq (1 - \tau\lambda + L^2e\tau^2)W_2^2(\rho_\infty, \rho_{\tau, L}^{(k-1)}) + \frac{L^2\tau^2}{2\lambda}\big(d + \mb E\|\nabla V(Y_0)\|^2\big)\\
    &\qquad + \frac{eL^2\tau^3}{2}\big(d + 2L^2\mb E_{\rho_{\tau, L}^{(k-1)}}\|X\|^2 + 2\|\nabla V(0)\|^2\big)\Big),
\end{align*}
where the last inequality is due to \eqref{eqn: nablaVx_to_X}. Therefore, we further obtain
\begin{align}\label{eqn: W2dist_iter}
\begin{aligned}
    W_2(\rho_\infty, \rho_{\tau, L}^{(k)}) &\leq \sqrt{1-\tau\lambda + L^2e\tau^2}W_2(\rho_\infty, \rho_{\tau, L}^{(k-1)}) + \sqrt{eL^4\tau^3}\cdot\sqrt{\mb E_{\rho_{\tau, L}^{(k-1)}}\|X\|^2}\\
    &\qquad + \sqrt{\frac{eL^2\tau^3}{2}\big(d + 2\|\nabla V(0)\|^2\big) + \frac{L^2\tau^2}{2\lambda}\big(d+\mb E\|\nabla V(Y_0)\|^2\big)}
\end{aligned}
\end{align}
Notice that
\begin{align*}
    \mb E_{\rho_{\tau, L}^{(k-1)}}\|X\|^2 - \mb E_{\rho_\infty}\|X\|^2
    &= \int_{\mb R^d}\|x\|^2\dd\big(t_{\rho_\infty}^{\rho_{\tau, L}^{(k-1)}}\big)_\#\rho_\infty - \int_{\mb R^d}\|x\|^2\,\dd\rho_\infty\\
    &= \int_{\mb R^d}\big|\!\big|t_{\rho_\infty}^{\rho_{\tau, L}^{(k-1)}}(x)\big|\!\big|_2^2 - \|x\|^2\,\dd\rho_\infty\\
    &= \int_{\mb R^d}\big|\!\big|t_{\rho_\infty}^{\rho_{\tau, L}^{(k-1)}}(x) - x\big|\!\big|_2^2 + 2\big\langle x, t_{\rho_\infty}^{\rho_{\tau, L}^{(k-1)}}(x) - x\big\rangle\,\dd\rho_\infty\\
    &\leq W_2^2\big(\rho_\infty, \rho_{\tau, L}^{(k-1)}\big) + 2W_2\big(\rho_\infty, \rho_{\tau, L}^{(k-1)}\big)\sqrt{\mb E_{\rho_\infty}\|X\|^2}.
\end{align*}
This implies
\begin{align}\label{eqn: Ek2moment_iter}
    \sqrt{\mb E_{\rho_{\tau, L}^{(k)}}\|X\|^2} \leq W_2\big(\rho_\infty, \rho_{\tau, L}^{(k)}\big) + \sqrt{\mb E_{\rho_\infty}\|X\|^2}.
\end{align}
Choosing $\tau < \min\big\{\frac{\lambda}{2L^2}, \frac{\lambda^2}{16eL^4}\big\}$ so that $\alpha := \sqrt{1 - \tau\lambda + L^2e\tau^2} + \sqrt{eL^4\tau^3} < 1$, then by (\ref{eqn: W2dist_iter}) and (\ref{eqn: Ek2moment_iter}), we can get
\begin{align*}
    \sqrt{\mb E_{\rho_{\tau, L}^{(k)}}\|X\|^2} &\leq \alpha^kW_2(\rho_\infty,\rho_0) + \sqrt{\mb E_{\rho_\infty}\|X\|^2} + \frac{1-\alpha^k}{1-\alpha}\bigg(e^\frac{1}{2}L^2\tau^\frac{3}{2}\sqrt{\mb E_{\rho_\infty}\|X\|^2}\\
    &\qquad\qquad\quad + \sqrt{\frac{eL^2\tau^3}{2}\big(d + 2\|\nabla V(0)\|^2\big) + \frac{L^2\tau^2}{2\lambda}\big(d+\mb E\|\nabla V(Y_0)\|^2\big)}\bigg)\\
    &\leq C_1 + kC_2\tau.
\end{align*}
Here, for simplicity we may choose
\begin{align*}
    C_1 &= W_2(\rho_\infty,\rho_0) + \sqrt{\mb E_{\rho_\infty}\|X\|^2}\\
    C_2 &= e^\frac{1}{2}L^2\sqrt{\mb E_{\rho_\infty}\|X\|^2} + \sqrt{\frac{eL^2}{2}\big(d + 2\|\nabla V(0)\|^2\big) + \frac{L^2}{2\lambda}\big(d+\mb E\|\nabla V(Y_0)\|^2\big)}.
\end{align*}
Let $C_3 = \sqrt{d} + \|\nabla V(0)\| + LC_1$. Then by~\eqref{eqn: discrete_langevin_Nstep_err}, we finally reach
\begin{align*}
    W_2\big(\rho_{\tau, L}^{(N)}, S_{N\tau}(\rho_0)\big)
    &\leq \sum_{k=1}^N e^{-\lambda(N-k)\tau}L\tau^\frac{3}{2}e^\frac{1}{2}\Big(C_3 + (k-1)LC_2\tau\Big)\\
    &= C_3e^\frac{1}{2}L\tau^\frac{3}{2}\frac{1-e^{-N\lambda\tau}}{1-e^{-\lambda\tau}} + C_2e^\frac{1}{2}L^2\tau^\frac{5}{2}\frac{e^{-N\lambda\tau} - Ne^{-\lambda\tau} + (N-1)}{(1-e^{-\lambda\tau})^2}\\
    &\leq C_3e^\frac{1}{2}LN\tau^\frac{3}{2} + C_2e^\frac{1}{2}L^2N^2\tau^\frac{5}{2}= (C_3LT + C_2L^2T^2)e^\frac{1}{2}\tau^{1/2}.
\end{align*}
By Theorem 11.2.1 in \cite{ambrosio2008gradient}, we have $W_2(\rho_{\tau, J}^{(N)}, S_{N\tau}(\rho_0)) \leq |\partial \m F|(\rho_0)\tau$. Therefore, by combining these two bounds, we obtain $W_2\big(\rho_{\tau, L}^{(N)}, \rho_{\tau, J}^{(N)}\big) \leq C_1  \max\{T,\,T^2\}\,\tau^{1/2}$.
\end{proof}

\section{More details and proofs about examples}\label{app:pf_example}
In this appendix, we provide more details and proofs to the two examples, namely the repulsive Gaussian mixture model and the mixture of regression model, considered in Section~\ref{sec:thm_app} of the main paper.
For two density functions $f$ and $g$, we let
\begin{align*}
    D_H^2(f, g) = \frac{1}{2}\int_{\mb R^d}\big(\sqrt{f(x)} - \sqrt{g(x)}\big)^2\,\dd x
\end{align*}
denote the square of the Hellinger distance between the corresponding measures. It is well known that
\begin{equation}\label{eqn: Hellinger leq KL}
    D_H(f, g) \leq \sqrt{\frac{1}{2}D_{KL}(f\,\|\,g)}.
\end{equation}
\subsection{Bayesian Linear Regression and proof of Corollary \ref{coro: BLR}}
\begin{proof}[Proof of Corollary \ref{coro: BLR}]
\noindent\underline{Convexity and smoothness of $-\mb E_{\theta^\ast, \alpha^\ast}\nabla^2 p(X_i, y_i\,|\,\theta, \alpha)$.}
Note that
\begin{align*}
    -\log p(X_i, y_i\,|\,\theta,\alpha) = \frac{\alpha}{2}(y_i-X_i^T\theta)^2 - \frac{1}{2}\log\alpha + C(X_i, y_i)
\end{align*}
where $C(x_i, y_i)$ is a constant that only depends on $x_i$ and $y_i$. This implies
\begin{align*}
    -\nabla^2\log p(X_i, y_i\,|\,\theta,\alpha) =
    \begin{pmatrix}
        {\alpha} X_iX_i^T & X_i(X_i^T\theta-y_i)\\
        (X_i^T\theta-y_i)X_i^T & \frac{1}{2\alpha^2}
    \end{pmatrix},
\end{align*}
and its expectation is
\begin{align*}
    -\mb E_{\theta^\ast, \alpha^\ast}\nabla^2 p(X_i, y_i\,|\,\theta, \alpha) = 
    \begin{pmatrix}
        \alpha \Sigma & \Sigma(\theta - \theta^\ast)\\
        (\theta-\theta^\ast)^T\Sigma & \frac{1}{2\alpha^2}
    \end{pmatrix}.
\end{align*}
Let $(v^{\rm T}, u)^{\rm T}\in B^{d+1}(0, 1)$. Then, we have
\begin{align*}
    (v^{\rm T}, u)
    \begin{pmatrix}
        \alpha \Sigma & \Sigma(\theta - \theta^\ast)\\
        (\theta-\theta^\ast)^T\Sigma & \frac{1}{2\alpha^2}
    \end{pmatrix}
    \begin{pmatrix}
        v\\ u
    \end{pmatrix}
    &= \alpha v^{\rm T}\Sigma v + 2uv^{\rm T}\Sigma(\theta-\theta^\ast) + \frac{u^2}{2\alpha^2} =: f(u, v;\alpha).
\end{align*}
Note that
\begin{align*}
    f(u, v;\alpha) &\geq \alpha\lambda_1\|v\|^2 - 2\lambda_d\|\theta-\theta^\ast\||u|\|v\| + \frac{u^2}{2\alpha^2}\\
    &\geq \frac{(\alpha\lambda_1 + \frac{1}{2\alpha^2}) - \sqrt{(\alpha\lambda_1 - \frac{1}{2\alpha^2})^2 + 4\lambda_d^2R_\theta^2}}{2}\\
    &= \frac{\frac{\lambda_1}{\alpha} - 2\lambda_d^2R_\theta^2}{(\alpha\lambda_1 + \frac{1}{2\alpha^2}) + \sqrt{(\alpha\lambda_1 - \frac{1}{2\alpha^2})^2 + 4\lambda_d^2R_\theta^2}}\\
    &> \frac{\frac{\lambda_1}{2\alpha} - \lambda_d^2R_\theta^2}{\max\{\alpha\lambda_1, \frac{1}{2\alpha^2}\} + \lambda_dR_\theta} \geq \frac{\frac{\lambda_1}{2\alpha_{ub}} - \lambda_d^2R_\theta^2}{\max\{\alpha_{ub}\lambda_1, \frac{1}{2\alpha_{lb}^2}\} + \lambda_dR_\theta}\\
    &=: \tilde\lambda > 0.
\end{align*}
Similarly, we have
\begin{align*}
    f(u, v;\alpha) &\leq \alpha\lambda_d\|v\|^2 + 2\lambda_d\|\theta-\theta^\ast\||u|\|v\| + \frac{u^2}{2\alpha^2}\\
    &\leq \frac{(\alpha\lambda_d + \frac{1}{2\alpha^2}) + \sqrt{(\alpha\lambda_d - \frac{1}{2\alpha^2})^2 + 4\lambda_d^2R_\theta^2}}{2}\\
    &\leq \max\Big\{\alpha\lambda_d, \frac{1}{2\alpha^2}\Big\} + \lambda_dR_\theta\\
    &\leq \max\Big\{\alpha_{ub}\lambda_d, \frac{1}{2\alpha_{lb}^2}\Big\} + \lambda_dR_\theta\\
    &=: \tilde L.
\end{align*}
Therefore, $-\mb E_{\theta^\ast, \alpha^\ast}\nabla^2 p(X_i, y_i\,|\,\theta, \alpha)$ is strongly convex on $\Theta_\theta\times\Theta_\alpha$ when $0 < \alpha < \lambda_1/(2\lambda_d^2R_\theta^2)$ holds. This is true due to~\eqref{eqn: para_space, BLR}. 

\noindent\underline{Hessian statistical noise.} Let $v = (v_\theta^T, v_\alpha)^T$ is a $(d+1)$-dimensional unit vector. We have
\begin{align*}
\big\lvert v^T \nabla^2\log p(X_i, y_i\,|\,\theta,\alpha)v\big\rvert
&= \bigg\lvert\alpha(X_i^Tv_\theta)^2 + 2v_\theta^TX_i(X_i^T\theta - y_i)v_\alpha + \frac{v_\alpha^2}{2\alpha^2}\bigg\rvert\\
&= \bigg\lvert\alpha(X_i^Tv_\theta)^2 + 2v_\theta^TX_iX_i^T(\theta - \theta^\ast) v_\alpha + 2v_\theta^TX_i\varepsilon_iv_\alpha + \frac{v_\alpha^2}{2\alpha^2}\bigg\rvert\\
&\stackrel{\ri}{\leq} \alpha\|X_i\|^2\|v_\theta\|^2 + 2\lvert v_\alpha\rvert\|v_\theta\|\|X_i\|^2\|\theta-\theta^\ast\| + \|v_\theta\|^2\|X_i\|^2 + \varepsilon_i^2v_\alpha^2 + \frac{v_\alpha^2}{2\alpha^2}\\
&\stackrel{\rii}{\leq} \Big(1 + \alpha + \|\theta-\theta^\ast\|\Big)\|X_i\|^2 + \varepsilon_i^2 + \frac{1}{2\alpha^2}\\
&\stackrel{\riii}{\leq} \Big(1 + \alpha + (2\alpha_{ub})^{-\frac{1}{2}}\Big)\|X_i\|^2 + \varepsilon_i^2 + \frac{1}{2\alpha^2}.
\end{align*}
Here, (i) is by AM-GM inequality; (ii) is by $v_\alpha^2 + \|v_\theta\|^2 = 1$; (iii) is by~\eqref{eqn: para_space, BLR}. Since $X_i$ and $\varepsilon_i$ are sub-Gaussian, we know both $\|X_i\|^2$ and $\varepsilon_i^2$ are sub-exponential. Therefore, $|v^T\nabla^2\log p(X_i, y_i\,|\,\theta,\alpha)v|$ is sub-exponential with some parameter $\sigma_5$ due to the independence between $X_i$ and $\varepsilon_i$.

\noindent\underline{Hessian regularity.} Note that
\begin{align*}
&\quad\,\,\matnorm{\nabla^2\log p(X_i, y_i\,|\,\theta, \alpha) - \nabla^2\log p(X_i, y_i\,|\,\theta', \alpha')}\\
&= \matnorm{\begin{pmatrix}
    (\alpha - \alpha')X_iX_i^T & X_iX_i^T(\theta-\theta')\\
    (\theta-\theta')^TX_iX_i^T & \frac{1}{2\alpha^2} - \frac{1}{2(\alpha')^2}
\end{pmatrix}}\\
&= \sup_{v_\alpha^2 + \| v_\theta\|^2 = 1} v_\alpha^2\bigg\lvert\frac{1}{2\alpha^2} - \frac{1}{2(\alpha')^2}\bigg\rvert + \Big|2v_\theta^TX_iX_i^T(\theta-\theta')v_\alpha\Big| + \lvert \alpha-\alpha'\rvert (X_i^Tv_\theta)^2\\
&\leq \sup_{v_\alpha^2 + \| v_\theta\|^2 = 1}\bigg\lvert\frac{v_\alpha^2(\alpha'-\alpha)(\alpha'+\alpha)}{2(\alpha\alpha')^2}\bigg\rvert + 2\|v_\theta\|\lvert v_\alpha\rvert \|X_i\|^2\|\theta-\theta'\| + \lvert\alpha-\alpha'\rvert(X_i^Tv_\theta)^2\\
&\leq \sup_{v_\alpha^2 + \| v_\theta\|^2 = 1}\Big(\frac{v_\alpha^2\alpha_{ub}}{\alpha_{lb}^4} + \|X_i\|^2\|v_\theta\|^2\Big)\lvert\alpha-\alpha'\rvert + \|X_i\|^2\|\theta-\theta'\|\\
&\leq \Big(\frac{\alpha_{ub}}{\alpha^4_{lb}}+\|X_i\|^2\Big)|\alpha-\alpha'| + \|X_i\|^2\|\theta-\theta'\|\\
&\leq \sqrt{\Big(\frac{\alpha_{ub}}{\alpha^4_{lb}} + \|X_i\|^2\Big)^2 + \|X_i\|^4}\cdot\sqrt{\lvert\alpha-\alpha'\rvert + \|\theta-\theta'\|^2}.
\end{align*}
Therefore, we can take
\begin{align*}
J(X_i, y_i) = \sqrt{\Big(\frac{\alpha_{ub}}{\alpha^4_{lb}} + \|X_i\|^2\Big)^2 + \|X_i\|^4} \leq \frac{\alpha_{ub}}{\alpha_{lb}^4} + 2\|X_i\|^2,
\end{align*}
and thus
\begin{align*}
    J^\ast = \mb E_{\theta^\ast, \alpha^\ast} J(X_i, y_i) \leq 2d + \frac{\alpha_{ub}}{\alpha_{lb}^4} < \infty.
\end{align*}
Therefore, by Theorem \ref{thm: para_update_whp}, we have
\begin{align*}
W_2^2(q_{\theta}^{(k)}\otimes q_\alpha^{(k)}, \wht q_{\theta}\otimes\wht q_\alpha) \leq \Big(1 + \frac{\lambda_{lb}^2}{L_{ub}^2m}\Big)^{-k} W_2^2(q_{\theta}^{(0)}\otimes q_\alpha^{(0)}, \wht q_{\theta}\otimes\wht q_\alpha)
\end{align*}
with high probability, where 
\begin{align*}
    \lambda_{lb} &=  n\tilde{\lambda} - \lambda_M(\nabla^2\log\pi_\theta) - \sigma_5^2\sqrt{\frac{Cd\log n}{n}\cdot\max\Big\{\frac{\log\big(2d + \frac{\alpha_{ub}}{\alpha_{lb}^4}\big)}{\log d}, \log\frac{R_\theta\sigma_5}{\eta}, 1\Big\}}\\
    L_{ub} &= n\tilde L - \lambda_m(\nabla^2\log\pi_\theta) + \sigma_5^2\sqrt{\frac{Cd\log n}{n}\cdot\max\Big\{\frac{\log\big(2d + \frac{\alpha_{ub}}{\alpha_{lb}^4}\big)}{\log d}, \log\frac{R_\theta\sigma_5}{\eta}, 1\Big\}},
\end{align*}
with
\begin{align*}
\tilde\lambda &= \frac{\frac{\lambda_1}{2\alpha_{ub}} - \lambda_d^2R_\theta^2}{\max\{\alpha_{ub}\lambda_1, \frac{1}{2\alpha_{lb}^2}\} + \lambda_d R_\theta}
\quad\mx{and}\quad
\tilde L = \max\Big\{\alpha_{ub}\lambda_d, \frac{1}{2\alpha_{lb}^2}\Big\} + \lambda_dR_\theta.
\end{align*}
\end{proof}

\subsection{Repulsive Gaussian mixture model and proof of Corollary~\ref{coro: GMM}}

\begin{proof}[proof of Corollary~\ref{coro: GMM}]
First, let us collect some results useful to check all assumptions mentioned in Section \ref{sec: main results}. For simplicity, let $E_k = e^{-\frac{\|x-m_k\|^2}{2\beta^2}}$ and $p = \sum_{k=1}^Kw_kE_k$. It is easy to show that
\begin{align}\label{eqn: grad&hess_MoG}
\begin{aligned}
    \frac{\partial\log p(z\,|\,x,m)}{\partial m_k} &= \Big(\delta_{kz}-\frac{w_kE_k}{p}\Big)\frac{x-m_k}{\beta^2}\\ 
    \frac{\partial^2\log p(z\,|\,x,m)}{\partial m_k\partial m_{k'}} &= \frac{w_{k'}E_{k'}(w_{k}E_k - \delta_{kk'}p)}{p^2}\frac{x-m_k}{\beta^2}\Big(\frac{x-m_{k'}}{\beta^2}\Big)^T\\
    &\qquad\qquad\qquad\qquad\qquad\qquad\qquad- \delta_{kk'}(\delta_{kz}-\frac{w_kE_k}{p})\frac{I_d}{\beta^2}
\end{aligned}
\end{align}
Therefore, we know
\begin{align}\label{eqn: matnorm_MoG}
    \lambda(X) = \sup_{m\in\m M, k\in[K]}\matnorm{\nabla^2\log p(k\,|\,x,m)} \leq K\Big[\frac{2(\|X\|^2 + R^2)}{\beta^4}+1\Big].
\end{align}

\smallskip
\noindent\underline{\bf Concentration properties.} Let us start with Assumption \ref{assump: qudratic growth of KL}. By mean value theorem,
\begin{align*}
    D_{KL}\big(p(\cdot\,|\,m)\,\|\,p(\cdot\,|\,m')\big) \leq \frac{1}{2}\sup_{s,s'\in \m M}\bigg(\int_{\mb R^d}\matnorm{\nabla^2\log p(x\,|\,s)}p(x\,|\,s')\,\dd x\bigg)\|m'-m\|^2.
\end{align*}
Since $\m M$ is compact, $\log p(x\,|\,m)\in\m C^2(\m M)$, and $p(x\,|\,m')$ is a mixture of Gaussian indicating that the supreme of the above integration is always finite, we know $D_{KL}\big(p(\cdot\,|\,m)\,\|\,p(\cdot\,|\,m')\big) \lesssim \|m-m'\|^2$. Similarly, we have
\begin{align*}
    D_{KL}\big(p(\cdot\,|\,x, m)\,\|\,p(\cdot\,|\,x,m')\big) &\leq \frac{1}{2}\sup_{s,s'\in\m M}\sum_{z=1}^K\matnorm{\nabla^2\log p(z\,|\,x,s)} p(z\,|\,x,s')\cdot\|m-m'\|^2\\
    &\leq \frac{1}{2}\sup_{s\in\Theta, z\in[K]}\matnorm{\nabla^2\log p(z\,|\,x,s)}\cdot\|m-m'\|^2\\
    &\leq K\bigg[\frac{2\big(\|x\|^2+R^2\big)}{\beta^4}+1\bigg]\|m - m'\|^2.
\end{align*}
Since $X$ is sampled from mixture of Gaussian indicating that $X$ is sub-Gaussian, we know $\|X\|^2$ is sub-exponential. For Assumption \ref{assump: test condition}, notice that
\begin{align*}
    D_{H}\big(p(\cdot\,|\,m), p(\cdot\,|\,m')\big) \lesssim \sqrt{D_{KL}\big(p(\cdot\,|\,m)\,\|\,p(\cdot\,|\,m')\big)} \lesssim \|m-m'\|.
\end{align*}
Also, by Taylor expansion
\begin{align*}
    \frac{D_H^2\big(p(\cdot\,|\,m), p(\cdot\,|\,m^\ast)\big)}{\|m-m^\ast\|^2} &= \|m-m^\ast\|^{-2}\int_{\mb R^d}\Big(\sqrt{p(x\,|\,m)} - \sqrt{p(x\,|\,m^\ast)}\Big)^2\,\dd x\\
    &= \int_{\mb R^d}\Big\langle\frac{\nabla p(x\,|\,m^\ast)}{2\sqrt{p(x\,|\,m^\ast)}},\frac{m-m^\ast}{\|m-m^\ast\|}\Big\rangle^2\,\dd x + \zeta(m)\|m-m^\ast\|\\
    &= \int_{\mb R^d}\Big\langle\frac{\nabla p(x\,|\,m^\ast)}{2\sqrt{p(x\,|\,m^\ast)}}, \tilde{m}\Big\rangle^2\,\dd x + \zeta(m)\|m-m^\ast\|
\end{align*}
for some continuous functions $\zeta(m)$ on the compact space $\m M$, and a unit vector $\tilde{m}\in\m M-\{m^\ast\}$. Then, we know
\begin{align*}
    \frac{D_H^2\big(p(\cdot\,|\,m), p(\cdot\,|\,m^\ast)\big)}{\|m-m^\ast\|^2}
\end{align*}
is bound away from zero in the neighborhood of $\theta^\ast$. Also, we know the Hellinger distance between $p(\cdot\,|\,m)$ and $p(\cdot\,|\,m^\ast)$ is zero if and only if $m$ is a rearrangement of $m^\ast$. So, the KL divergence is not zero outside the neighborhood of $\theta^\ast$. Then, Assumption \ref{assump: test condition} can be derived by Example 7.1 in \cite{ghosal2000convergence}. 

\smallskip
\noindent\underline{\bf Regularity of log-likelihood function.} By (\ref{eqn: grad&hess_MoG}), we have
\begin{align*}
    S_2(X) = \sum_{z,k=1}^K\Big(\delta_{kz}-\frac{w_kE_k}{p}\Big)^2\beta^{-4}\|X-m_k^\ast\|^2 \leq K\beta^{-4}\sum_{k=1}^K\|X-m_k^\ast\|^2 < \infty.
\end{align*}
It is clear that both $\mb E_{m^\ast}S_1(X)$ and $\mb E_{m^\ast}S_2(X)$ are finite. Also, by Taylor expansion
\begin{align*}
    \mb E_{m^\ast}e^{\sigma_3^{-1}S_2(X)} &= 1 + \sum_{n=1}^\infty \frac{\sigma_3^{-n}}{n!}\mb E_{m^\ast}[S_2(X)^n]\leq 1+ \sum_{n=1}^\infty \Big(\frac{\mb E_{m^\ast}S_2(X)}{\sigma_3}\Big)^n= \Big(1 - \frac{\mb E_{m^\ast}S_2(X)}{\sigma_3}\Big)^{-1}.
\end{align*}
So, a sufficient condition for $\mb E_{m^\ast}e^{\sigma_3^{-1}|S_2(X)|} \leq 2$ is
\begin{align*}
    \mb E_{m^\ast}S_2(X) &= \beta^{-4}\mb E_{m^\ast}\sum_{z,k=1}^K\Big(\delta_{kz}-\frac{w_kE_k}{p}\Big)^2\|X-m_k^\ast\|^2\\
    &\leq \matnorm{I_S(m^\ast)}\\
    &\leq \frac{3(2+\frac{d_{\min}^2}{\beta^2})Ke^{-\frac{d_{\min}^2}{256\beta^2}}}{\beta^2w_{\min}}.
\end{align*}
See the proof of $\lambda > 2\gamma$ later for this bound. So, we can take
\begin{align*}
    \sigma_3 = \frac{6(2+\frac{d_{\min}^2}{\beta^2})Ke^{-\frac{d_{\min}^2}{256\beta^2}}}{\beta^2w_{\min}}.
\end{align*}
Notice that
\begin{align*}
    \nabla^2\log p(X,k\,|\,m) = -\beta^{-2}I_{Kd}.
\end{align*}
So, we can simply take $J_k(X) = 0$. Also, we have
\begin{align*}
    &\quad\,\mb E_{m^\ast}\exp\bigg\{\sigma_1^{-1}\Big|\sum_{k=1}^K p(k\,|\,X,\theta^\ast)v^T\nabla^2\log p(X,k\,|\,m)v\Big|\bigg\}\\
    &\leq 1 + \sum_{n=1}^\infty \frac{\sigma_1^{-n}}{n!}\bigg|\sum_{k=1}^Kp(k\,|\,X,\theta^\ast)\matnorm{\nabla^2\log p(X,k\,|\,m)}\bigg|^n\\
    &= 1 + \sum_{n=1}^\infty \frac{(\sigma_1\beta^2)^{-n}}{n!}\\
    &= e^{\frac{1}{\sigma_1\beta^2}}.
\end{align*}
Therefore, a sufficient condition for Assumption \ref{assump: continuity_of_Hessian}.3 is to take $\sigma_1 = 2\beta^{-2}$. Similarly, we can take
\begin{align*}
    \sigma_2 = 2K\bigg[\frac{2R^2 + 2d\beta^2 + 2\sum_{k=1}^K\|m_k^\ast\|^2}{\beta^4}+1\bigg],
\end{align*}
so that $\mb E_{\theta^\ast}\exp\{\sigma_2^{-1}|\lambda(X)|\} \leq 2$ due to (\ref{eqn: matnorm_MoG}).

\smallskip
\noindent\underline{\bf Convexity of $U(\cdot\,;\mu)$.} By definition, 
\begin{displaymath}
U(m;\mu) = \int_{\mb R^d}\sum_{z=1}^K\Phi(\mu, x)(z)\bigg[\frac{\|x-m_z\|^2}{2\beta^2} - \log w_z\bigg]\,p(\dd x\,|\,\theta^\ast) + \frac{d}{2}\log(2\pi\beta^2).
\end{displaymath}
Then, for all $\mu\in B_{\mb W_2}(\delta_{m^\ast}, r)$
\begin{align*}
    \frac{\partial^2 U(m;\mu)}{\partial m_k\partial m_{k'}} &= \frac{\delta_{kk'}I_d}{\beta^2}\int_{\mb R^d}\Phi(\mu, x)(k)\,p(\dd x\,|\,\theta^\ast)\\
    &\stackrel{\ri}{\succeq} \frac{\delta_{kk'}I_d}{\beta^2}\int_{\mb R^d}p(k\,|\,x, \theta^\ast) - S_1(x)W_2(\mu, \delta_{\theta^\ast}) - \frac{K\lambda(x)}{2}W_2^2(\mu, \delta_{\theta^\ast})\,p(\dd x\,|\,\theta^\ast)\\
    &= \frac{\delta_{kk'}I_d}{\beta^2}\Big(w_k - \mb E_{\theta^\ast}S_1(X) W_2(\mu, \delta_{\theta^\ast}) - \frac{K\mb E_{\theta^\ast}\lambda(X)}{2}W_2^2(\mu, \delta_{\theta^\ast})\Big)\\
    &\stackrel{\rii}{\succeq} \frac{\delta_{kk'}I_d}{\beta^2}\Big(w_k - r\sqrt{K\sigma_3} - \frac{r^2K\sigma_2}{2}\Big).
\end{align*}
Here, (i) is by Lemma \ref{lem: diff_Phimu_thetastar}; (ii) is by $\mb E_{\theta^\ast}S_1(X) \leq \sqrt{K\mb E_{\theta^\ast}S_2(X)} \leq \sqrt{K\sigma_3}$ and $\mb E_{\theta^\ast}\lambda(X) \leq \sigma_2$. So, we know $U(\cdot\,;\mu)$ is $\beta^{-2}\big(\min_k w_k - r\sqrt{K\sigma_3} - r^2K\sigma_2/2\big)$ strongly convex. Assumption \ref{assump: strong_convexity} holds.

\smallskip
\noindent\underline{\bf Verification of $\lambda > 2\gamma$.} Let's first give an upper bound of $\gamma = \matnorm{I_S(m^\ast)}$. We can assume that $\frac{\sum_k m_k^\ast}{K} = 0$. Otherwise, consider a shifted model with $\tilde{x} = x - \frac{\sum_k m_k^\ast}{K}$ and $\tilde{m}_l^\ast = m_l^\ast - \frac{\sum_km_k^\ast}{K}$ for $l\in[K]$. In this case, the missing data Fisher information matrix $I_S(m^\ast)$ remains the same. Recall that
\begin{align*}
    I_S(m^\ast) &= \int_{\mb R^d} \sum_{z=1}^K p(z\,|\,x,m^\ast)\big[\nabla\log p(z\,|\,x,m^\ast)\big]\big[\nabla\log p(z\,|\,x,m^\ast)\big]^T p(x\,|\,m^\ast)\,\dd x\\
    &= \mb E_{X, Z}\big[\nabla\log p(Z\,|\,X,m^\ast)\big]\big[\nabla\log p(Z\,|\,X,m^\ast)\big]^T\\
    &\stackrel{}{=}\mb E_Z \mb E_{X\,|\,Z}\begin{pmatrix}
    \big(\delta_{1Z} - \frac{w_1E_1}{p}\big)\frac{X-m_1}{\beta^2}\\
    \vdots\\
    \big(\delta_{KZ} - \frac{w_KE_K}{p}\big)\frac{X-m_K}{\beta^2}
    \end{pmatrix}
    \begin{pmatrix}
    \big(\delta_{1Z} - \frac{w_1E_1}{p}\big)\frac{X-m_1}{\beta^2}\\
    \vdots\\
    \big(\delta_{KZ} - \frac{w_KE_K}{p}\big)\frac{X-m_K}{\beta^2}
    \end{pmatrix}^T\\
    &= \mb E_Z \bigg(\mb E\Big[\Big(\delta_{iZ} - \frac{w_iE_i}{p}\Big)\Big(\delta_{jZ} - \frac{w_{j}E_{j}}{p}\Big)\Big(\frac{X-m_i}{\beta^2}\Big)\Big(\frac{X-m_{j}}{\beta^2}\Big)^T\,\Big|\,Z\Big]\bigg)_{1\leq i,j\leq K}.
\end{align*}
Since $X\sim\m N(m_Z, \beta^2I_d)$, let $X = m_Z + \beta\eta$ where $\eta\sim\m N(0, I_d)$. For $i=Z$, we have
\begin{align*}
    \delta_{iZ} - \frac{w_iE_i}{p} &= \frac{\sum_{k\neq Z}w_k\exp\Big\{-\frac{\|m_Z-m_k\|^2}{2\beta^2} + \frac{\eta^T(m_Z-m_k)}{\beta}\Big\}}{w_Z + \sum_{k\neq Z}w_k\exp\Big\{-\frac{\|m_Z-m_k\|^2}{2\beta^2} + \frac{\eta^T(m_Z-m_k)}{\beta}\Big\}}\\
    &\leq {\sum_{k\neq Z}\frac{w_k}{w_Z}\exp\Big\{-\frac{\|m_Z-m_k\|^2}{2\beta^2} + \frac{\|\eta\|\cdot\|m_Z-m_k\|}{\beta}\Big\}}\\
    &\stackrel{\ri}{\leq} \sum_{k\neq Z}\frac{w_k}{w_Z}\exp\Big\{-\frac{\|m_Z-m_k\|^2}{4\beta^2}\Big\}.
\end{align*}
Here, (i) holds when $\|\eta\| \leq \frac{d_{\min}}{4\beta} \leq \min_{k\neq Z}\frac{\|m_Z-m_k\|}{4\beta}$ where $d_{\min} = \min_{i\neq j}\|m_i - m_j\|$. Similarly, when $i\neq Z$ we have
\begin{align*}
    \Big|\delta_{i} - \frac{w_iE_i}{p}\Big| \leq \frac{w_i}{w_Z}\exp\Big\{-\frac{\|m_Z - m_i\|^2}{4\beta^2}\Big\} \leq \frac{w_i}{w_Z}\exp\Big\{-\frac{d_{\min}^2}{4\beta^2}\Big\}
\end{align*}
when $\|\eta\| \leq \frac{\|m_Z-m_i\|}{4\beta}$. Then, by definition
\begin{align*}
    \gamma &= \matnorm{I_S(m^\ast)}\\
    &= \sup_{v\in\mb S^{Kd-1}} \mb E_Z \sum_{i,j}\mb E\Big[\Big(\delta_{iZ} - \frac{w_iE_i}{p}\Big)\Big(\delta_{jZ} - \frac{w_{j}E_{j}}{p}\Big)\cdot v_i^T\Big(\frac{X-m_i}{\beta^2}\Big)\Big(\frac{X-m_{j}}{\beta^2}\Big)^Tv_{j}\,\Big|\,Z\Big]\\
    &= \sup_{v\in\mb S^{Kd-1}}\mb E_Z\mb E\Big[\Big(\sum_i\big(\delta_{iZ}-\frac{w_iE_i}{p}\big)\cdot\frac{v_i^T(X-m_i)}{\beta^2}\Big)^2\,\Big|\,Z\Big]\\
    &\leq K\sup_{v\in\mb S^{Kd-1}} \mb E_Z\mb E\bigg[\sum_{i\neq Z}\Big(\delta_{iZ}-\frac{w_iE_i}{p}\Big)^2\bigg(\frac{v_i^T(X-m_i)}{\beta^2}\bigg)^2I\Big\{\|\eta\| \leq \frac{\|m_Z-m_i\|}{4\beta}\Big\}\,\bigg|\,Z\bigg]\\
    &\qquad + K\sup_{v\in\mb S^{Kd-1}} \mb E_Z\mb E\bigg[\sum_{i\neq Z}\bigg(\frac{v_i^T(X-m_i)}{\beta^2}\bigg)^2I\Big\{\|\eta\| > \frac{\|m_Z-m_i\|}{4\beta}\Big\}\,\bigg|\,Z\bigg]\\
    &\qquad + K\sup_{v\in\mb S^{Kd-1}}\mb E_Z\mb E\bigg[\Big(1-\frac{w_ZE_Z}{p}\Big)^2\bigg(\frac{v_Z^T(X-m_Z)}{\beta^2}\bigg)^2I\Big\{\|\eta\| \leq \frac{d_{\min}}{4\beta}\Big\}\,\bigg|\,Z\bigg]\\
    &\qquad + K\sup_{v\in\mb S^{Kd-1}}\mb E_Z\mb E\bigg[\bigg(\frac{v_Z^T(X-m_Z)}{\beta^2}\bigg)^2I\Big\{\|\eta\| > \frac{d_{\min}}{4\beta}\Big\}\,\bigg|\,Z\bigg]\\
    &= I_1 + I_2 + I_3 + I_4.
\end{align*}
To bound the first term, we have
\begin{align*}
    I_1 &\leq K\sup_{v\in\mb S^{Kd-1}}\mb E_Z\mb E\bigg[\sum_{i\neq Z}\Big(\frac{w_i}{w_Z}\exp\Big\{-\frac{\|m_Z-m_i\|^2}{4\beta^2}\Big\}\Big)^2\bigg(\frac{v_i^T(X-m_i)^2}{\beta^2}\bigg)^2\,\bigg|\,Z\bigg]\\
    &=K\sup_{v\in\mb S^{Kd-1}}\mb E_Z\bigg[\sum_{i\neq Z}\Big(\frac{w_i}{w_Z}\exp\Big\{-\frac{\|m_Z-m_i\|^2}{4\beta^2}\Big\}\Big)^2\frac{\big(v_i^T(m_Z-m_i)\big)^2 + \beta^2\|v_i\|^2}{\beta^4}\bigg]\\
    &< \frac{Kw_{\max}}{\beta^2w_{\min}}\Big(1+\frac{d_{\min}^2}{\beta^2}\Big)e^{-\frac{d_{\min}^2}{2\beta^2}}.
\end{align*}
Here we let $w_{\min} = \min_i w_i$ and $w_{\max} = \max_i w_i$. To bound the second term, by Example 2.28 in \cite{wainwright2019high} we have
\begin{align*}
    \mb P\Big(\frac{\|\eta\|^2}{d} > \frac{\|m_Z-m_i\|^2}{16d\beta^2}\Big) \leq \exp\Big\{-\frac{d}{2}\Big(\frac{\|m_Z-m_i\|}{4\beta\sqrt{d}}-1\Big)^2\Big\}
\end{align*}
when $\frac{\|m_Z-m_i\|}{4\beta\sqrt{d}} \geq \frac{d_{\min}}{4\beta\sqrt{d}} \geq 1$. Therefore, by Cauchy--Schwarz inequality, we have
\begin{align*}
    I_2 &\leq K\sup_{v\in\mb S^{Kd-1}}\mb E_Z\sum_{i\neq Z}\sqrt{\mb E\Big[\Big(\frac{v_i^T(X-m_i)}{\beta^2}\Big)^4\,\Big|\,Z\Big]\cdot\mb P\Big(\|\eta\|>\frac{\|m_Z-m_i\|}{4\beta}\Big)}\\
    &\leq K\sup_{v\in\mb S^{Kd-1}}\mb E_Z\sum_{i\neq Z}\sqrt{3\beta^{-8}\big[\big(v_i^T(m_Z-m_i)\big)^2 + \beta^2\|v_i\|^2\big]^2\cdot\exp\Big\{-\frac{d}{2}\Big(\frac{\|m_Z-m_i\|}{4\sqrt{d}\beta}-1\Big)^2\Big\}}\\
    &\leq \sqrt{3}\beta^{-2}K\Big(\frac{d_{\min}^2}{\beta^2}+1\Big)\exp\Big\{-\frac{d_{\min}^2}{256\beta^2}\Big\}.
\end{align*}
The last inequality holds when $\frac{d_{\min}}{8\beta\sqrt{d}} \geq 1$. For the third term, notice that
\begin{align*}
    I_3 &\leq K\sup_{v\in\mb S^{Kd-1}}\mb E_Z\mb E\bigg[\bigg(\sum_{k\neq Z}\frac{w_k}{w_Z}\exp\Big\{-\frac{\|m_Z-m_k\|^2}{4\beta^2}\Big\}\bigg)^2\bigg(\frac{v_Z^T(X-m_Z)}{\beta^2}\bigg)^2\,\bigg|\,Z\bigg]\\
    &\leq K\sup_{v\in\mb S^{Kd-1}}\mb E_Z\bigg[\frac{\|V_Z\|^2}{\beta^2}\Big(\sum_{k\neq Z}\frac{w_k}{w_Z}\Big)^2\exp\Big\{-\frac{d_{\min}^2}{2\beta^2}\Big\}\bigg]\\
    &\leq \frac{K}{\beta^2w_{\min}}e^{-\frac{d_{\min}^2}{2\beta^2}}.
\end{align*}
The fourth term can be bounded as
\begin{align*}
    I_4 &\leq K\sup_{v\in\mb S^{Kd-1}}\mb E_Z\sqrt{\mb E\bigg[\bigg(\frac{v_Z^T(X-m_Z)}{\beta^2}\bigg)^4\,\bigg|\,Z\bigg]\mb P\Big(\|\eta\| > \frac{d_{\min}}{4\beta}\Big)} \\
    &\leq K\sup_{v\in\mb S_{Kd-1}}\mb E_Z\sqrt{\frac{3\|v_Z\|^4}{\beta^4}\exp\Big\{-\frac{d_{\min}^2}{128\beta^2}\Big\}}\\
    &\leq \frac{\sqrt{3}Kw_{\max}}{\beta^2}e^{-\frac{d_{\min}^2}{256\beta^2}}
\end{align*}
Therefore, we have
\begin{align*}
    \gamma &\leq I_1 + I_2 + I_3 + I_4 \leq \frac{3(2+\frac{d_{\min}^2}{\beta^2})Ke^{-\frac{d_{\min}^2}{256\beta^2}}}{\beta^2w_{\min}}.
\end{align*}
Recall that we have
\begin{align*}
    \lambda &\geq \beta^{-2}\Big(w_{\min} - r\sqrt{K\sigma_3} - \frac{r^2K\sigma_2}{2}\Big) > \frac{w_{\min}}{2\beta^2}.
\end{align*}
The last inequality requires
\begin{align}\label{eqn: GMM_r}
\begin{aligned}
    r \leq \frac{2\sqrt{K\sigma_3}}{K\sigma_2} + \sqrt{\frac{w_{\min}}{K\sigma_2}}
    &= \frac{\beta^{4}\sqrt{K\cdot \frac{6(2+\kappa_{\rm SNR}^2)Ke^{-\kappa_{\rm SNR}^2/256}}{\beta^2w_{\min}}}}{2K^2\big[{R^2 + d\beta^2 + \sum_{k=1}^K\|m_k^\ast\|^2}+\beta^{4}/2\big]}\\
    &\qquad\qquad+\sqrt{\frac{w_{\min}\beta^4}{4K^2\big[{R^2 + d\beta^2 + \sum_{k=1}^K\|m_k^\ast\|^2}+{\beta^4}/2\big]}}
\end{aligned}
\end{align}
This implies
\begin{align*}
    \frac{\lambda}{\gamma} > \frac{w_{\min}^2}{6K}\cdot\frac{e^{\frac{d_{\min}^2}{256\beta^2}}}{2+\frac{d_{\min}^2}{\beta^2}} > 2
\end{align*}
when the signal-to-noise ratio $\kappa_{\rm SNR}=\frac{d_{\min}}{\beta}$ is large enough.

We have shown all assumptions to apply Theorem \ref{thm: main_theorem} hold. When the sample size $n\to\infty$, the contraction number tends to
\begin{align*}
    1 - \frac{\big(\frac{w_{\min}^2}{6K}\cdot\frac{\exp\{\kappa_{\rm SNR}^2/256\}}{2+\kappa_{\rm SNR}^2}-2\big)\big(\frac{3w_{\min}^2}{6K}\cdot\frac{\exp\{\kappa_{\rm SNR}^2/256\}}{2+\kappa_{\rm SNR}^2}+2\big)}{4\big(\frac{w_{\min}^2}{6K}\cdot\frac{\exp\{\kappa_{\rm SNR}^2/256\}}{2+\kappa_{\rm SNR}^2}\big)^2 + \frac{w_{\min}^2}{6K}\cdot\frac{\exp\{\kappa_{\rm SNR}^2/256\}}{2+\kappa_{\rm SNR}^2} - 2}.
\end{align*}
It is decreasing w.r.t. $\kappa_{\rm SNR}$ and the limit is $\frac{1}{4}$. Also, by Cauchy--Schwarz inequality,
\begin{align*}
    \mb E_{m^\ast}[S_1(X)^3] &\leq K\sqrt{K}\mb E_{m^\ast}[S_2(X)^\frac{3}{2}]\\
    &\leq K^3\beta^{-6}\mb E_{m^\ast}\Big(\sum_{k=1}^K\|X-m_k^\ast\|^2\Big)^\frac{3}{2}\\
    &\leq K^\frac{7}{2}\beta^{-6}\mb E_{m^\ast}\sum_{k=1}^K\|X-m_k^\ast\|^3\\
    &\leq C'K^\frac{7}{2}\beta^{-6}\sum_{k=1}^K\Big(\sum_{i=1}^Kw_i(\|m_i^\ast - m_k^\ast\|^2 +d\beta^2)^2\Big)^\frac{3}{4}
\end{align*}
and
\begin{align*}
    \mb E_{m^\ast}[\lambda(X)^3] &\leq K^3\mb E_{m^\ast}\Big[\frac{2(\|X\|^2+R^2)}{\beta^4} + 1\Big]^3\\
    &\leq C'K^3\Big[\beta^{-12}\sum_{k=1}^Kw_i(d\beta^2+\|m_k^\ast\|^2)^3 + \beta^{-12}R^2+1\Big].
\end{align*}
The above analysis imply
\begin{align}\label{eqn: GMM_ABC}
\begin{aligned}
    A, B, C&\leq C'\Big[K^2\big(\mb E_{m^\ast}[S_1(X)^3]+1\big) + K^3\big(\mb E_{m^\ast}[\lambda(X)^3]+1\big)\Big]\\
    &\leq C'\bigg[K^2\Big[K^\frac{7}{2}\beta^{-6}\sum_{k=1}^K\Big(\sum_{i=1}^Kw_i(\|m_i^\ast - m_k^\ast\|^2 +d\beta^2)^2\Big)^\frac{3}{4}+1\Big]\\
    &\qquad\qquad+ K^3\Big[K^3\Big(\beta^{-12}\sum_{k=1}^Kw_i(d\beta^2+\|m_k^\ast\|^2)^3 + \beta^{-12}R^2+1\Big)+1\Big]\bigg]
\end{aligned}
\end{align}
So, The radius can be taken as
\begin{align}\label{eqn: GMM_RW}
    R_W = C'\min\bigg\{\sqrt{\frac{w_{\min}(\frac{w_{\min}^2}{6K}\cdot\frac{e^{\kappa_{\rm SNR}^2/256}}{2+\kappa_{\rm SNR}^2}-2)}{2\beta^2A(\frac{w_{\min}^2}{6K}\cdot\frac{e^{\kappa_{\rm SNR}^2/256}}{2+\kappa_{\rm SNR}^2}+3)}}, \frac{w_{\min}(\frac{w_{\min}^2}{6K}\cdot\frac{e^{\kappa_{\rm SNR}^2/256}}{2+\kappa_{\rm SNR}^2}-2)}{2\beta^2C(\frac{w_{\min}^2}{6K}\cdot\frac{e^{\kappa_{\rm SNR}^2/256}}{2+\kappa_{\rm SNR}^2}+3)},r\bigg\}
\end{align}
where $A, C$ is bounded as in (\ref{eqn: GMM_ABC}), and $r$ is bounded as in (\ref{eqn: GMM_r}). Here, $C'$ is a universal constant varying from line to line.
\end{proof}

Here, we will show that the repulsive prior (\ref{eqn: repulsive_prior}) satisfies Assumption \ref{assump: prior condition}. By mean value theorem, there exists $\xi\in\m M$ on the segment of $m$ and $m^\ast$, s.t.
\begin{align*}
    \int_{\mb R^d}\bigg(\log\frac{p(x\,|\,m)}{p(x\,|\,m^\ast)}\bigg)^2 p(x\,|\,m^\ast)\,\dd x
    &= \int_{\mb R^d}\big\langle \nabla\log p(x\,|\,m)\Big|_{m=\xi}, m-m^\ast\big\rangle^2 p(x\,|\,m^\ast)\,\dd x\\
    &\leq \|m-m^\ast\|^2\int_{\mb R^d}\big|\!\big|\nabla\log p(x\,|\,\xi)\big|\!\big|_2^2\, p(x\,|\,m^\ast)\,\dd x\\
    &\leq \|m-m^\ast\|^2\cdot\sup_{m\in\m M}\int_{\mb R^d}\big|\!\big|\nabla\log p(x\,|\,m)\big|\!\big|_2^2\, p(x\,|\,m^\ast)\,\dd x\\
    &\lesssim \|m-m^\ast\|^2.
\end{align*}
So, there is a universal constant $C$, such that $\{m\in\m M: \|m-m^\ast\|\leq C\varepsilon_n\}\subset B_n$. Let $V(\cdot)$ denote the volume in $\mb R^d$. Then, for some normalization constant $C'(g_0, \sigma^2)$, we have
\begin{align*}
    \Pi(B_n) &\geq\Pi\Big(\{m\in\m M: \|m-m^\ast\|\leq C\varepsilon_n\}\Big)\\
    &= C'(g_0, \sigma^2)\int_{B_{\mb R^d}(m^\ast, C\varepsilon_n)} \frac{\min_{1\leq i<j\leq K}\|m_i - m_j\|}{\min_{1\leq i<j\leq K}\|m_i - m_j\| + g_0} \cdot\prod_{k=1}^K e^{-\frac{\|m_i\|^2}{2\sigma^2}}\,\dd m\\
    &\stackrel{\ri}{\geq} \frac{C'(g_0, \sigma^2)d_{\min}}{d_{\min} + 2g_0}\int_{B_{\mb R^d}(m^\ast, C\varepsilon_n)}\prod_{k=1}^Ke^{-\frac{\|m_i\|^2}{2\sigma^2}}\,\dd\theta\\
    &\geq \frac{C'(g_0, \sigma^2)d_{\min}}{d_{\min} + 2g_0} \min_{m:\|m-m^\ast\|\leq C\varepsilon_n} e^{-\sum_{i=1}^K\frac{\|m_i\|^2}{2\sigma^2}} V\Big(\{m\in\m M: \|m-m^\ast\| \leq C\varepsilon_n\}\Big)\\
    &= \frac{C'(g_0, \sigma^2)d_{\min}}{d_{\min} + 2g_0} \min_{m:\|m-m^\ast\|\leq C\varepsilon_n} e^{-\sum_{i=1}^K\frac{\|m_i\|^2}{2\sigma^2}} V\Big(\{m\in\m M: \|m-m^\ast\| \leq 1\}\Big)(C\varepsilon_n)^d\\
    &\gtrsim \Big(\frac{\log n}{n}\Big)^\frac{d}{2}\\
    &= e^{-dn\varepsilon_n^2}.
\end{align*}

\subsection{Mixture of regression and proof of Corollary~\ref{coro: MR}}\label{app: MR_proof}

\begin{proof}[Proof of Corollary \ref{coro: MR}]
By definition, we have
\begin{align*}
    p(X, y\,|\,\theta) &= \frac{1}{(2\pi)^\frac{d}{2}}e^{-\frac{\|X\|^2}{2}}\cdot\frac{1}{2\sqrt{2\pi\beta^2}}\Big(e^{-\frac{(y-X^T\theta)^2}{2\beta^2}} + e^{-\frac{(y+X^T\theta)^2}{2\beta^2}}\Big)\\
    p(z\,|\,X,y,\theta) &= \frac{e^{-\frac{\|y-zX^T\theta\|^2}{2\beta^2}}}{e^{-\frac{\|y+X^T\theta\|^2}{2\beta^2}} + e^{-\frac{\|y-X^T\theta\|^2}{2\beta^2}}}.
\end{align*}
Therefore, we know
\begin{align}\label{eqn: grad&hess_MR}
\begin{aligned}
    \log p(z\,|\,X, y, \theta) &= \log p(z, y\,|\,X,\theta) - \log p(y\,|\,X,\theta)\\
    &= -\frac{(y-zX^T\theta)^2}{2\beta^2} - \log p(y\,|\,X,\theta) + \log\frac{1}{2\sqrt{2\pi\beta^2}}\\
    \nabla\log p(z\,|\,X,y,\theta) &= \frac{(y-zX^T\theta)zX}{\beta^2} - \nabla\log p(y\,|\,X,\theta)\\
    &= \frac{Xy}{\beta^2}\bigg(z - \frac{e^{-\frac{(y-X^T\theta)^2}{2\beta^2}} - e^{-\frac{(y+X^T\theta)^2}{2\beta^2}}}{e^{-\frac{(y-X^T\theta)^2}{2\beta^2}} + e^{-\frac{(y+X^T\theta)^2}{2\beta^2}}}\bigg)\\
    \nabla^2\log p(z\,|\,X, y, \theta) &= -\frac{XX^T}{\beta^2} - \nabla^2\log p(y\,|\,X,\theta)\\
    &= - \frac{4e^{-\frac{(y-X^T\theta)^2}{2\beta^2}}\cdot e^{-\frac{(y+X^T\theta)^2}{2\beta^2}}}{(e^{-\frac{(y-X^T\theta)^2}{2\beta^2}} + e^{-\frac{(y+X^T\theta)^2}{2\beta^2}})^2} \cdot \frac{y^2}{\beta^2}\cdot\frac{XX^T}{\beta^2},
\end{aligned}
\end{align}
which indicates that
\begin{align*}
    \lambda(X, y) = \sup_{\theta\in\Theta, z\in\{-1, 1\}}\matnorm{\nabla^2\log p(z\,|\,X,y,\theta)} \leq \frac{\|X\|^2y^2}{\beta^4}.
\end{align*}

\smallskip
\noindent\underline{\bf Concentration properties.} By mean value theorem
\begin{align*}
    \KL\big(p(\cdot\,|\,\theta_1)\,||\,p(\cdot\,|\,\theta_2)\big) \leq \frac{1}{2}\sup_{\theta,\theta'\in\Theta}\bigg(\int_{\mb R^d\times \mb R}\matnorm{\nabla^2\log p(X,y\,|\,\theta)}p(X,y\,|\,\theta')\,\dd X\dd y\bigg)\|\theta_1 - \theta_2\|^2.
\end{align*}
Since $\Theta$ is compact, $\log p(X, y\,|\,\theta)\in\m C^2(\Theta)$, and the integration is always finite, we know $\KL\big(p(\cdot\,|\,\theta_1)\,||\,p(\cdot\,|\,\theta_2)\big) \lesssim\|\theta_1-\theta_2\|^2$. Similarly, we have
\begin{align*}
    &\quad\,\KL\big(p(\cdot\,|\,X,y,\theta_1)\,||\,p(\cdot\,|\,X,y,\theta_2)\big)\\ 
    &\leq \frac{1}{2}\sup_{\theta,\theta'\in\Theta}\sum_{z\in\{-1, +1\}}\matnorm{\nabla^2\log p(z\,|\,X,y,\theta)}p(z\,|\,X,y,\theta')\cdot\|\theta_1-\theta_2\|^2\\
    &\leq \frac{1}{2}\sup_{\theta\in\Theta, z\in\{1, -1\}}\matnorm{\nabla^2\log p(z\,|\,X,y,\theta)}\cdot\|\theta_1-\theta_2\|^2\\
    &\leq \frac{\|X\|^2y^2}{2\beta^4}\|\theta_1-\theta_2\|^2.
\end{align*}
$\frac{\|X\|^2y^2}{2\beta^4}$ has finite $\psi_\frac{1}{4}$-norm $\sigma_4$ since both $X$ and $y$ are Gaussian distribution. Thus, Assumption \ref{assump: qudratic growth of KL} holds. Assumption \ref{assump: test condition} holds due to the same reasons as of Corollary \ref{coro: GMM}.

\smallskip
\noindent\underline{\bf Regularity of log-likelihood function.} By (\ref{eqn: grad&hess_MR}), we know
\begin{align*}
    S_2(X, y) \leq 8\beta^{-4}y^2\|X\|^2.
\end{align*}
This implies both $\mb E_{\theta^\ast}S_1(X, y)$ and $\mb E_{\theta^\ast}S_2(X, y)$ are finite, and $S_2(X, y)$ has finite $\psi_\frac{1}{4}$-norm $\sigma_3 (= \sigma_4/4)$. Notice that
\begin{align*}
    \nabla^2\log p(X, y,z\,|\,\theta) = -\frac{XX^T}{\beta^2}.
\end{align*}
So, $\nabla^2\log p(X, y, z\,|\,\theta_1) - \nabla^2\log p(X,y,z\,|\,\theta_2) = 0$, and we can take $J_z(X, y) = 0.$ For any $v\in B_{\mb R^d}(0, 1)$, we have
\begin{align*}
    \Big|\sum_{z} p(z\,|\,X,\theta^\ast)\langle v, \nabla^2\log p(X, y, z\,|\,\theta)v\rangle\Big| = \frac{\|X^Tv\|^2}{\beta^2} \leq \frac{\|X\|^2}{\beta^2}.
\end{align*}
It is sub-exponential with some parameter $\sigma_1$. Recall that $\lambda(X, y) \leq \frac{\|X\|^2y^2}{\beta^4}$ has finite $\psi_\frac{1}{4}$-norm $\sigma_2 = 2\sigma_4$. Therefore, Assumption \ref{assump: continuity_of_Hessian} holds.

\smallskip
\noindent\underline{\bf Convexity of $U(\cdot\,;\mu)$.} By definition,
\begin{align*}
    U(\theta; \mu) &= \int_{\mb R^d\times\mb R}\sum_{z}\Phi(\mu, X, y)(z)\bigg[\frac{\|X\|^2}{2} + \frac{1}{2\beta^2}\|y-zX^T\theta\|^2\bigg]\,p(X,y\,|\,\theta^\ast)\,\dd X\dd y\\
    &\qquad\qquad + \log 2\sqrt{2\pi\beta^2} + \frac{d}{2}\log(2\pi). 
\end{align*}
So, for all $\mu\in B_{\mb W_2}(\delta_{\theta^\ast}, r)$
\begin{align*}
    \frac{\partial^2 U(\theta, \mu)}{\partial\theta^2} &= \int_{\mb R^d\times \mb R} \frac{XX^T}{\beta^2}  p(X,y\,|\,\theta^\ast)\,\dd X\dd y = \beta^{-2}I_d.
\end{align*}
This implies $U(\cdot\,;\mu)$ is $\beta^{-2}$-strongly convex for all $\mu\in\ms P_r^2(\Theta)$. Assumption \ref{assump: strong_convexity} holds.

\smallskip
\noindent\underline{\bf Verification of $\lambda > 2\gamma$.} Recall that
\begin{align*}
    I_S(\theta^\ast) &= \mb E \big[\nabla\log p(Z\,|\,X,y,\theta^\ast)\big]\big[\nabla\log p(Z\,|\,X,y,\theta^\ast)\big]^T\\
    &= \mb E_{Z, X}\frac{XX^T}{\beta^4}\mb E\bigg[\bigg(Z - \frac{\exp\big\{-\frac{(y-X^T\theta^\ast)^2}{2\beta^2}\big\} - \exp\big\{-\frac{(y+X^T\theta^\ast)^2}{2\beta^2}\big\}}{\exp\big\{-\frac{(y-X^T\theta^\ast)^2}{2\beta^2}\big\} + \exp\big\{-\frac{(y+X^T\theta^\ast)^2}{2\beta^2}\big\}}\bigg)^2y^2\,\bigg|\,Z,X\bigg].
\end{align*}
Since $y\sim\m N(X^T\theta^\ast, \beta^2)$ when $Z=1$, we can write $y = X^T\theta^\ast + \beta\eta$ where $\eta\sim\m N(0, 1)$. We have
\begin{align*}
    0\leq \Big|Z - \frac{f_1 - f_{-1}}{f_1 + f_{-1}}\Big|
    &= \frac{2\exp\big\{-\frac{(2X^T\theta^\ast + \beta\eta)^2}{2\beta^2}\big\}}{\exp\big\{-\frac{\eta^2}{2}\big\} + \exp\big\{-\frac{(2X^T\theta^\ast + \beta\eta)^2}{2\beta^2}\big\}}\\
    &\leq 2\exp\Big\{\frac{\eta^2}{2} - \frac{(2X^T\theta^\ast + \beta\eta)^2}{2\beta^2}\Big\}\\
    &= 2\exp\Big\{-\Big(\frac{X^T\theta^\ast}{\beta}\Big)^2 - \frac{2X^T\theta^\ast}{\beta}\eta\Big\}\\
    &\leq 2\exp\Big\{-\frac{1}{2}\Big(\frac{X^T\theta^\ast}{\beta}\Big)^2\Big\}.
\end{align*}
Here, the last inequality holds when $|\eta|\leq \big|\frac{X^T\theta^\ast}{4\beta}\big|$. The same inequality holds for $Z=-1$. Then
\begin{align*}
    &\quad\,\mb E\bigg[\Big(Z - \frac{f_1 - f_{-1}}{f_1 + f_{-1}}\Big)^2y^2\,\bigg|\, Z, X\bigg]\\
    &= \mb E\bigg[\Big(Z - \frac{f_1 - f_{-1}}{f_1 + f_{-1}}\Big)^2y^2I\Big\{|\eta|\leq\Big|\frac{X^T\theta^\ast}{4\beta}\Big|\Big\}\,\bigg|\, Z, X\bigg]\\ 
    &\qquad\qquad\qquad+ \mb E\bigg[\Big(Z - \frac{f_1 - f_{-1}}{f_1 + f_{-1}}\Big)^2y^2I\Big\{|\eta|>\Big|\frac{X^T\theta^\ast}{4\beta}\Big|\Big\}\,\bigg|\, Z, X\bigg]\\
    &\leq \mb E\bigg[4\exp\Big\{-\Big(\frac{X^T\theta^\ast}{\beta}\Big)^2\Big\}y^2\,\bigg|\,Z, X\bigg] + \mb E\bigg[4y^2I\Big\{|\eta| > \Big|\frac{X^T\theta^\ast}{4\beta}\Big|\Big\}\,\bigg|\,Z,X\bigg]\\
    &\leq 4\exp\Big\{-\Big(\frac{X^T\theta^\ast}{\beta}\Big)^2\Big\}\big[(ZX^T\theta^\ast)^2 + \beta^2\big]
     + 4\sqrt{\mb E[y^4\,|\,Z,X]\mb P\bigg(|\eta| > \Big|\frac{X^T\theta^\ast}{4\beta}\Big|\,\bigg|\, Z,X\bigg)}\\
    &\leq 4\exp\Big\{-\Big(\frac{X^T\theta^\ast}{\beta}\Big)^2\Big\}\big[(X^T\theta^\ast)^2 + \beta^2\big] + 4\sqrt{4\big[\beta^2 + (X^T\theta^\ast)^2\big]^2\cdot2\exp\Big\{-\frac{(X^T\theta^\ast)^2}{32\beta^2}\Big\}}\\
    &\leq 16\exp\Big\{-\Big(\frac{X^T\theta^\ast}{8\beta}\Big)^2\Big\}\cdot\big[(X^T\theta^\ast)^2 + \beta^2\big].
\end{align*}
Therefore, we know
\begin{align*}
    I_S(\theta^\ast) &\preceq \mb E\bigg[\frac{XX^T}{\beta^4}\cdot 16\exp\Big\{-\Big(\frac{X^T\theta^\ast}{8\beta}\Big)^2\Big\}\cdot\big((X^T\theta^\ast)^2 + \beta^2\big)\bigg]\\
    &= 16\beta^{-2}\mb E\bigg[XX^T\Big[\Big(\frac{X^T\theta^\ast}{\beta}\Big)^2+1\Big]\exp\Big\{-\Big(\frac{X^T\theta^\ast}{8\beta}\Big)^2\Big\}\bigg].
\end{align*}
This yields the bound of $\gamma$ as
\begin{align*}
    \gamma &= \matnorm{I_S(\theta^\ast)}\\
    &= 16\beta^{-2}\sup_{v\in\mb S^{d-1}}\mb E\bigg[v^TXX^Tv\Big[\Big(\frac{X^T\theta^\ast}{\beta}\Big)^2+1\Big]\exp\Big\{-\Big(\frac{X^T\theta^\ast}{8\beta}\Big)^2\Big\}\bigg]\\
    &\leq 16\beta^{-2}\sup_{v\in\mb S^{d-1}}\sqrt{\mb E(X^Tv)^4}\sqrt{\mb E\Big[\Big(\frac{X^T\theta^\ast}{\beta}\Big)^2+1\Big]^2\exp\Big\{-\frac{1}{32}\Big(\frac{X^T\theta^\ast}{\beta}\Big)^2\Big\}}\\
    &\leq 16\beta^{-2}\sup_{v\in\mb S^{d-1}}\sqrt{3\|v\|^4}\sqrt{2\mb E\Big[\Big(\frac{X^T\theta^\ast}{\beta}\Big)^4+1\Big]\exp\Big\{-\frac{1}{32}\Big(\frac{X^T\theta^\ast}{\beta}\Big)^2\Big\}}\\
    &\leq 16\beta^{-2}\sqrt{3}\sqrt{2\bigg[\sqrt{\frac{16}{16+\kappa_{\rm SNR}^2}}\cdot3\Big(\frac{16\kappa_{\rm SNR}^2}{16+\kappa_{\rm SNR}^2}\Big)^2 + \frac{4}{\sqrt{16+\kappa_{\rm SNR}^2}}\bigg]}\\
    &< \frac{2174\beta^{-2}}{\sqrt[4]{16+\kappa_{\rm SNR}^2}}.
\end{align*}
Here $\kappa_{\rm SNR} = \frac{\|\theta^\ast\|}{\beta}$ is the signal-to-noise ratio (SNR). So, we have
\begin{align*}
    \kappa = \frac{\lambda}{\gamma} > \frac{\sqrt[4]{16+\kappa_{\rm SNR}^2}}{2174} > 2
\end{align*}
when $\kappa_{\rm SNR}$ is large enough. So, the contraction number tends to
\begin{align*}
    1 - \frac{\big(\frac{\sqrt[4]{16+\kappa_{\rm SNR}^2}}{2174}-2\big)(3\frac{\sqrt[4]{16+\kappa_{\rm SNR}^2}}{2174}+2)}{4\big(\frac{\sqrt[4]{16+\kappa_{\rm SNR}^2}}{2174}\big)^2 + \frac{\sqrt[4]{16+\kappa_{\rm SNR}^2}}{2174} - 2}
\end{align*}
as the sample size $n\to\infty$.

Recall that 
\begin{align*}
S_2(X, y) \leq 8\beta^{-4}y^2\|X\|^2
\end{align*}
and
\begin{align*}
    \lambda(X, y) \leq \beta^{-4}y^2\|X\|^2.
\end{align*}
So, by Cauchy--Schwarz's inequality,
\begin{align*}
    \mb E_{\theta^\ast}[S_1(X, y)^3] & \leq 2\sqrt{2}\mb E_{\theta^\ast}\big[\big(S_2(X, y)\big)^\frac{3}{2}\big] \leq C'd\beta^{-6}(\|\theta^\ast\|^2 + \beta^2)^\frac{3}{2}
\end{align*}
and
\begin{align*}
    \mb E_{\theta^\ast}[\lambda(X, y)^3] \leq C'd\beta^{-12}(\|\theta^\ast\|^2 + \beta^2)^3.
\end{align*}
for some constant $C' > 0$. Therefore, we have
\begin{align*}
    A, B, C &\leq C'\Big[\big[d\beta^{-3}(\kappa_{\rm SNR}^2+1)^\frac{3}{2}+1\big] + \big[d\beta^{-6}(\kappa_{\rm SNR}^2+\beta^2)^3+1\big]\Big]
\end{align*}
So, we can take the radius
\begin{align*}
    R_W &= \frac{C'\beta^{-2}(\frac{\sqrt[4]{16+\kappa_{\rm SNR}^2}}{2174}-2)}{\Big[\big[d\beta^{-3}(\kappa_{\rm SNR}^2+1)^\frac{3}{2}+1\big] + \big[d\beta^{-6}(\kappa_{\rm SNR}^2+1)^3+1\big]\Big](\frac{\sqrt[4]{16+\kappa_{\rm SNR}^2}}{2174}+3)}.
\end{align*}
Notice $C'$ may vary from line to line.
\end{proof}

In fact, the lower bound of $\kappa_{\rm SNR}$ derived above can be sharpened. Note that
\begin{align*}
\gamma &= \matnorm{I_S(\theta^\ast)}\\
&= \sup_{v\in\mb S^{d-1}}\mb E_{Z,X}v^T\frac{XX^T}{\beta^4}v\mb E\bigg[\bigg(Z - \frac{\exp\big\{-\frac{(y-X^T\theta^\ast)^2}{2\beta^2}\big\} - \exp\big\{-\frac{(y+X^T\theta^\ast)^2}{2\beta^2}\big\}}{\exp\big\{-\frac{(y-X^T\theta^\ast)^2}{2\beta^2}\big\} + \exp\big\{-\frac{(y+X^T\theta^\ast)^2}{2\beta^2}\big\}}\bigg)^2y^2\,\bigg|\,Z,X\bigg]\\
&\leq \sup_{v\in\mb S^{d-1}}\sqrt{\mb E_{Z,X}\frac{\|X^Tv\|^4}{\beta^8}}\sqrt{\mb E_{Z,X}\bigg\{E\bigg[\bigg(Z - \frac{\exp\big\{-\frac{(y-X^T\theta^\ast)^2}{2\beta^2}\big\} - \exp\big\{-\frac{(y+X^T\theta^\ast)^2}{2\beta^2}\big\}}{\exp\big\{-\frac{(y-X^T\theta^\ast)^2}{2\beta^2}\big\} + \exp\big\{-\frac{(y+X^T\theta^\ast)^2}{2\beta^2}\big\}}\bigg)^2y^2\,\bigg|\,Z,X\bigg]\bigg\}^2}\\
&= \frac{\sqrt{3}}{\beta^{4}}\sqrt{\mb E_{Z,X}\bigg\{E\bigg[\bigg(Z - \frac{\exp\big\{-\frac{(y-X^T\theta^\ast)^2}{2\beta^2}\big\} - \exp\big\{-\frac{(y+X^T\theta^\ast)^2}{2\beta^2}\big\}}{\exp\big\{-\frac{(y-X^T\theta^\ast)^2}{2\beta^2}\big\} + \exp\big\{-\frac{(y+X^T\theta^\ast)^2}{2\beta^2}\big\}}\bigg)^2y^2\,\bigg|\,Z,X\bigg]\bigg\}^2}.
\end{align*}
Let $f_z = \exp\{-\frac{(y-zX^T\theta^\ast)^2}{2\beta^2}\}$ for simplicity. Note that $\frac{X^T\theta^\ast}{\beta} \stackrel{d}{=} \kappa_{\rm SNR}^2\xi$ with $\xi\sim\m N(0, 1)$, where the notation $\stackrel{d}{=}$ means equal in distribution. Since $y\sim\m N(X^T\theta^\ast, \beta^2)$ when $Z=1$, we can write $y = X^T\theta^\ast + \beta\eta$ where $\eta\sim\m N(0, 1)$. So, we have
\begin{align*}
\Big|Z - \frac{f_1 - f_{-1}}{f_1 + f_{-1}}\Big|^2
&= \bigg(\frac{2\exp\big\{-\frac{(2X^T\theta^\ast + \beta\eta)^2}{2\beta^2}\big\}}{\exp\big\{-\frac{\eta^2}{2}\big\} + \exp\big\{-\frac{(2X^T\theta^\ast + \beta\eta)^2}{2\beta^2}\big\}}\bigg)^2 \stackrel{d}{=} \Big(\frac{2\exp\big\{-\frac{1}{2}(2\kappa_{\rm SNR}\xi + \eta)^2\big\}}{e^{-\eta^2/2}+\exp\big\{-\frac{1}{2}(2\kappa_{\rm SNR}\xi + \eta)^2\big\}}\Big)^2.
\end{align*}
Similarly, since $y\sim\m N(-X^T\theta^\ast, \beta^2)$ when $Z=-1$, we can write $y = -X^T\theta^\ast + \beta\eta$ where $\eta\sim\m N(0, 1)$. In this case, we have
\begin{align*}
\Big|Z - \frac{f_1 - f_{-1}}{f_1 + f_{-1}}\Big|^2
&= \bigg(\frac{2\exp\big\{-\frac{(-2X^T\theta^\ast + \beta\eta)^2}{2\beta^2}\big\}}{\exp\big\{-\frac{\eta^2}{2}\big\} + \exp\big\{-\frac{(-2X^T\theta^\ast + \beta\eta)^2}{2\beta^2}\big\}}\bigg)^2 \stackrel{d}{=} \Big(\frac{2\exp\big\{-\frac{1}{2}(2\kappa_{\rm SNR}\xi + \eta)^2\big\}}{e^{-\eta^2/2}+\exp\big\{-\frac{1}{2}(2\kappa_{\rm SNR}\xi + \eta)^2\big\}}\Big)^2.
\end{align*}
Therefore, we have
\begin{align*}
\gamma \leq \frac{\sqrt{3}}{\beta^2}\sqrt{\mb E_\xi\Big\{\mb E_\eta\Big(\frac{2\exp\big\{-\frac{1}{2}(2\kappa_{\rm SNR}\xi + \eta)^2\big\}}{e^{-\eta^2/2}+\exp\big\{-\frac{1}{2}(2\kappa_{\rm SNR}\xi + \eta)^2\big\}}\Big)^2(\eta + \kappa_{\rm SNR}\xi)^2\Big\}^2}.
\end{align*}
Recall that $\lambda = \beta^{-2}$. The requirement for $\lambda / \gamma > 2$ can be met when
\begin{align*}
\mb E_\xi\Big\{\mb E_\eta\Big(\frac{2\exp\big\{-\frac{1}{2}(2\kappa_{\rm SNR}\xi + \eta)^2\big\}}{e^{-\eta^2/2}+\exp\big\{-\frac{1}{2}(2\kappa_{\rm SNR}\xi + \eta)^2\big\}}\Big)^2(\eta + \kappa_{\rm SNR}\xi)^2\Big\}^2 < \frac{1}{12} \approx 0.083.
\end{align*}
The left-hand side is around 0.05 when $\kappa_{\rm SNR}=10$ by Monte Carlo integration.

\section{Technical results and proofs}\label{app:tech_results}
In this appendix, we collect details and proofs of all technical results used in the proofs of the main results.

\subsection{More technical results}
In this subsection, we list all other technical lemmas and their proofs used in the proofs in this supplementary material.

\begin{corollary}\label{coro: bound_square_expectation}
When (\ref{eqn: concentration_target_measure}) holds, and the sample size satisfies
\begin{displaymath}
\log n\geq \max\frac{8}{c_2(1+c_2^{-1})^{2}(\mb EG(X) + c_1 + c_3 + 5)^2}\cdot\max\Big\{1, \log\frac{2}{c_2}\Big\}
\end{displaymath}
we have
\begin{displaymath}
\mb E_{\widehat{q}_\theta}\|\theta-\theta^\ast\|^2
\leq \Big[(1+c_2^{-1})^2\big(\mb EG(X) + c_3 + c_1 + 5\big)^2 + 1\Big]\cdot\frac{\log n}{n}.
\end{displaymath}
Additionally, for any $0 < \varepsilon\leq 1$, if
\begin{displaymath}
n\geq \Big(\frac{(1+c_2^{-1})^2\big(\mb EG(X) + c_3 + c_1 + 5\big)^2 + 1}{\varepsilon}\Big)^{2},
\end{displaymath}
we have $\mb E_{\widehat{q}_\theta}\|\theta-\theta^\ast\|^2
\leq \varepsilon$.
\end{corollary}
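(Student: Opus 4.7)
The plan is to convert the exponential tail bound of Theorem~\ref{thm: posterior_convergence_rate} into a second-moment estimate via the standard layer-cake identity
\begin{align*}
\mb E_{\wht q_\theta}\|\theta-\theta^\ast\|^2 = \int_0^\infty 2s\,\wht Q_\theta(\|\theta-\theta^\ast\|>s)\,\dd s.
\end{align*}
The tail bound \eqref{eqn: concentration_target_measure} only applies once $s$ exceeds the threshold $r_0:=(1+c_2^{-1})(\mb E[G(X)]+c_3+c_1+5)\sqrt{\log n/n}$ obtained from \eqref{cond: r_in_posterior_convergence_rate} with $M=1$, so I will split the integral at $s=r_0$ and estimate the two regions separately.

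On the near region $s\in[0,r_0]$, the trivial bound $\wht Q_\theta\leq 1$ contributes at most $r_0^2 = A\,\log n/n$, where $A:=(1+c_2^{-1})^2(\mb E[G(X)]+c_3+c_1+5)^2$; this is the dominant term in the claim. For the far region $s>r_0$, I will substitute the sub-Gaussian tail $\wht Q_\theta(\|\theta-\theta^\ast\|>s)\leq e^{-c_2 n s^2/2}$ and evaluate the resulting Gaussian-type integral by the change of variables $u=c_2 n s^2/2$, which yields
\begin{align*}
\int_{r_0}^\infty 2s\,e^{-c_2 n s^2/2}\,\dd s = \frac{2}{c_2 n}\,e^{-c_2 n r_0^2/2}.
\end{align*}
The goal is to show this is at most $\log n/n$, equivalently $c_2 n r_0^2/2 \geq \log\big(2/(c_2\log n)\big)$. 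Plugging in $r_0^2$ reduces the requirement to $c_2 A\log n/2 \geq \log(2/c_2)-\log\log n$, and the hypothesis $\log n\geq (8/(c_2 A))\max\{1,\log(2/c_2)\}$ furnishes a comfortable $4\max\{1,\log(2/c_2)\}$ on the left, handling both the $c_2\geq 2$ and $c_2<2$ regimes in one stroke. Summing the two contributions yields the claimed bound $(A+1)\log n/n$.

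For the second part of the corollary, I will feed the first part into the elementary inequality $\log n\leq\sqrt{n}$ (valid for all $n\geq 1$, since the minimum of $\sqrt{n}-\log n$ is attained near $n=4$ and is strictly positive there). Under $n\geq ((A+1)/\varepsilon)^2$, this gives $(A+1)\log n/n\leq (A+1)/\sqrt{n}\leq\varepsilon$, completing the proof. There is no real obstacle — the argument is essentially bookkeeping — and the only minor care needed is aligning the sample-size condition with the $\log(2/c_2)$ factor that arises when $c_2$ is small, which is precisely why the hypothesis on $\log n$ is stated with the $\max\{1,\log(2/c_2)\}$ factor.
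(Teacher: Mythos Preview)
Your proposal is correct and follows essentially the same approach as the paper: the paper obtains the corollary as the $m=2$ special case of Lemma~\ref{lem: approx_to_delta_mass}, whose proof is precisely the layer-cake decomposition split at the same threshold $r_0$, with the trivial bound on $[0,r_0]$ and the sub-Gaussian tail on $[r_0,\infty)$. The only cosmetic difference is that the paper bounds the far-region integral via the Gamma-tail Lemma~\ref{lem: tail_of_gamma} (needed for general $k$), whereas you evaluate $\int_{r_0}^\infty 2s\,e^{-c_2 n s^2/2}\,\dd s$ exactly; for $k=2$ your route is slightly cleaner and gives a marginally sharper exponent, but the argument and the resulting sample-size condition coincide.
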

\begin{proof}
By taking $f_{x,z}(\theta) = \|\theta - \theta^\ast\|^2$ in Lemma \ref{lem: approx_to_delta_mass}.
\end{proof}

\begin{lemma}\label{lem: approx_to_delta_mass}
Consider a function $f_{x,z}(\theta)$. Assume there are functions $a_1(x), \cdots, a_m(x)$ such that
\begin{displaymath}
|f_{x,z}(\theta) - f_{x, z}(\theta^\ast)| \leq a_1(x)\|\theta-\theta^\ast\| + \cdots + a_m(x)\|\theta-\theta^\ast\|^m,
\end{displaymath}
and the number of sample size satisfies
\begin{align}\label{cond: sample_size1}
\begin{aligned}
n &\geq \max\bigg\{\exp\Big\{\frac{4m}{c_2(1+c_2^{-1})^{2}(\mb EG(X) + c_1 + c_3 + 5)^2}\Big\},\\ 
&\qquad\qquad\qquad\qquad\qquad \max_{1\leq k\leq m}\Big(\frac{k\Gamma(\frac{k}{2})}{2\cdot(\frac{c_2}{2})^{\frac{k}{2}}}\Big)^{\frac{8}{c_2(1+c_2^{-1})^2(\mb EG(X) + c_3 + c_1 + 5)^2}}\bigg\}
\end{aligned}
\end{align}
When the property of concentration (\ref{eqn: concentration_target_measure}) holds, 
\begin{displaymath}
\bigg|\int_{\Theta}f_{x,z}(\theta)\,\dd \widehat{q}_{\theta}(\theta) - f_{x,z}(\theta^\ast)\bigg|\leq \sum_{k=1}^m\big|a_k(x)\big|\Big[(1+c_2^{-1})^k\big(\mb EG(X) + c_3 + c_1 + 5\big)^k + 1\Big]\cdot\Big(\frac{\log n}{n}\Big)^{\frac{k}{2}}.
\end{displaymath}
\end{lemma}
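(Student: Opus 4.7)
The strategy is to reduce the statement to bounding the raw moments $m_k := \mathbb{E}_{\widehat{q}_\theta}[\|\theta-\theta^\ast\|^k]$ for $k=1,\dots,m$, and then to obtain each $m_k$ from the sub-Gaussian tail of $\widehat{q}_\theta$ supplied by Theorem~\ref{thm: posterior_convergence_rate}. First I would apply the triangle inequality followed by the hypothesized polynomial envelope on $|f_{x,z}(\theta)-f_{x,z}(\theta^\ast)|$:
\begin{align*}
\bigg|\int_\Theta f_{x,z}\,\dd\widehat q_\theta - f_{x,z}(\theta^\ast)\bigg|
\;\leq\; \int_\Theta |f_{x,z}(\theta)-f_{x,z}(\theta^\ast)|\,\dd \widehat q_\theta(\theta)
\;\leq\; \sum_{k=1}^{m} |a_k(x)|\cdot m_k.
\end{align*}
So everything reduces to showing $m_k \leq \big[C^k+1\big](\log n/n)^{k/2}$ with $C=(1+c_2^{-1})(\mathbb{E}[G(X)]+c_3+c_1+5)$.

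Next I would bound $m_k$ by a layer-cake integration combined with Theorem~\ref{thm: posterior_convergence_rate}. With threshold $r_n := C\sqrt{\log n/n}$ (this is the smallest $\varepsilon$ permitted by~\eqref{cond: r_in_posterior_convergence_rate} when $M=1$), write
\begin{align*}
m_k \;=\; \int_0^\infty k t^{k-1}\,\widehat Q_\theta(\|\theta-\theta^\ast\|>t)\,\dd t
\;\leq\; \underbrace{\int_0^{r_n} k t^{k-1}\,\dd t}_{=\,r_n^k}\; +\; \int_{r_n}^\infty k t^{k-1}\,e^{-c_2 n t^2/2}\,\dd t,
\end{align*}
where the first piece uses the trivial tail bound $\widehat Q_\theta(\cdot)\leq 1$ and the second piece invokes~\eqref{eqn: concentration_target_measure} of Theorem~\ref{thm: posterior_convergence_rate}, which is applicable precisely because $t\geq r_n$ satisfies~\eqref{cond: r_in_posterior_convergence_rate}. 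The first piece evaluates to $r_n^k = C^k(\log n/n)^{k/2}$, supplying the $C^k$ term in the claimed bound. For the second piece, enlarging the domain to $[0,\infty)$ and substituting $u=c_2 n t^2/2$ converts the integral to a Gamma integral, giving
\begin{align*}
\int_{r_n}^\infty k t^{k-1}\,e^{-c_2 n t^2/2}\,\dd t \;\leq\; \frac{k\,\Gamma(k/2)}{2\,(c_2/2)^{k/2}}\cdot n^{-k/2}.
\end{align*}

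Finally, I would use the sample-size hypothesis~\eqref{cond: sample_size1} to absorb the constant $k\Gamma(k/2)/\big(2(c_2/2)^{k/2}\big)$ into a factor of $(\log n)^{k/2}$, so that the second piece contributes at most $(\log n/n)^{k/2}$ and supplies the additive $+1$ inside the square brackets of the claimed bound. Summing over $k$ yields the lemma. The Corollary~\ref{coro: bound_square_expectation} is then just the specialization $m=2$, $a_2(x)\equiv 1$, $a_1(x)\equiv 0$, and the explicit numerical version of the sample-size requirement is obtained by computing $k\Gamma(k/2)/\big(2(c_2/2)^{k/2}\big)$ at $k=1,2$ and taking the worst case.

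The only mildly delicate point -- and what I would treat as the main (though still minor) obstacle -- is verifying that the two-part sample-size condition~\eqref{cond: sample_size1} is indeed strong enough to force the Gaussian-moment constant below $(\log n)^{k/2}$ uniformly for $1\leq k\leq m$; the exponential factor $\exp\{4m/(c_2 C^2)\}$ handles the $k$-dependence (ensuring $r_n\leq 1$ and controlling the degree growth), while the second factor handles the $\Gamma(k/2)$ tail. Everything else is bookkeeping.
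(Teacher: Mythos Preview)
Your overall strategy---reduce to moment bounds via the polynomial envelope, then layer-cake with a split at $r_n=C\sqrt{\log n/n}$ where $C=(1+c_2^{-1})(\mathbb E G(X)+c_3+c_1+5)$---matches the paper exactly. The gap is in your handling of the tail piece. By enlarging the integration domain to $[0,\infty)$ you obtain
\[
\int_{r_n}^\infty k t^{k-1}e^{-c_2nt^2/2}\,\dd t\;\le\; A_k\,n^{-k/2},\qquad A_k:=\frac{k\Gamma(k/2)}{2(c_2/2)^{k/2}},
\]
and then need $A_k\le (\log n)^{k/2}$. But the second branch of~\eqref{cond: sample_size1} only says $n\ge A_k^{\,8/(c_2C^2)}$, i.e.\ $A_k\le n^{c_2C^2/8}$; it does \emph{not} imply $A_k\le(\log n)^{k/2}$ (the latter would require $n\ge e^{A_k^{2/k}}$, an exponentially stronger condition). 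So the absorption step fails as written.

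The paper does not enlarge the domain. Instead it applies the incomplete-gamma tail bound (Lemma~\ref{lem: tail_of_gamma}) to the integral over $[r_n^2,\infty)$, picking up an extra factor $\exp\{-(c_2nr_n^2-k)^2/(4c_2nr_n^2)\}$. The first branch of~\eqref{cond: sample_size1} guarantees $c_2nr_n^2\ge 4k$, which lets one simplify this to $e^{-c_2nr_n^2/8}=n^{-c_2C^2/8}$. The tail piece thus becomes $A_k\,n^{-k/2}\cdot n^{-c_2C^2/8}$, and now the second branch of~\eqref{cond: sample_size1} gives $A_k\,n^{-c_2C^2/8}\le 1\le(\log n)^{k/2}$, yielding the additive $+1$ exactly. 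The sample-size condition is calibrated to this sharper estimate; your cruder bound cannot be closed with it.
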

\begin{proof}
Taking $r_k = (1+c_2^{-1})\big(\mb E[G(X)] + c_3 + c_1 + 5\big)\,\sqrt{\frac{\log n}{n}} \geq \sqrt{\frac{4k}{c_2n}}$ for $1\leq k\leq m$, we have
\begin{align*}
&\quad\, \int_{\Theta}\|\theta - \theta^\ast\|^k\,\dd \widehat{q}_{\theta}\\
&= \int_0^{\infty}kt^{k-1}\mb P_{\widehat{q}_{\theta}}\big(\|\theta - \theta^\ast\| > t\big)\,\dd t\\
&= \int_{0}^{r_k}kt^{k-1}\mb P_{\widehat{q}_{\theta}}\big(\|\theta-\theta^\ast\| > t\big)\,\dd t + \int_{r_k}^{\infty}kt^{k-1}\mb P_{\widehat{q}_{\theta}}\big(\|\theta - \theta^\ast\|>t\big)\,\dd t\\
&\leq \int_{0}^{r_k}kt^{k-1}\,\dd t + \int_{r_k}^{\infty}e^{-\frac{c_2nt^2}{2}}kt^{k-1}\,\dd t\\
&= r_k^k + \frac{k}{2}\int_{r_k^2}^{\infty}e^{-\frac{c_2n}{2}s}s^{\frac{k}{2}-1}\,\dd s\\
&\stackrel{\ri}{\leq} r_k^k + \frac{k}{2}\cdot \frac{\Gamma(\frac{k}{2})}{(\frac{c_2n}{2})^{k/2}}\cdot\exp\Big\{-\frac{(\frac{c_2n}{2}r_k^2 - \frac{k}{2})^2}{2\cdot\frac{c_2n}{2}r_k^2}\Big\}\\
&= r_k^k + \dfrac{k\Gamma(\frac{k}{2})}{2\cdot(\frac{c_2n}{2})^{k/2}}\cdot\exp\Big\{-\frac{(c_2nr_k^2 - k)^2}{4c_2nr_k^2}\Big\}\\
&\stackrel{\rii}{\leq}  (1+c_2^{-1})^k\big(\mb EG(X) + c_3 + c_1 + 5\big)^k\Big(\frac{\log n}{n}\Big)^{\frac{k}{2}} + \frac{k\Gamma(\frac{k}{2})}{2\cdot(\frac{c_2n}{2})^{\frac{k}{2}}}\cdot\exp\Big\{-\frac{c_2nr_k^2}{8}\Big\}\\
&{\leq} (1+c_2^{-1})^k\big(\mb EG(X) + c_3 + c_1 + 5\big)^k\Big(\frac{\log n}{n}\Big)^{\frac{k}{2}}\\
&\qquad\qquad\qquad + \frac{k\Gamma(\frac{k}{2})}{2\cdot(\frac{c_2n}{2})^{\frac{k}{2}}}\cdot\exp\Big\{-\frac{c_2(1+c_2^{-1})^2\big(\mb EG(X) + c_3 + c_1 + 5\big)^2\log n}{8}\Big\}\\
&= (1+c_2^{-1})^k\big(\mb EG(X) + c_3 + c_1 + 5\big)^k\cdot\Big(\frac{\log n}{n}\Big)^{\frac{k}{2}} + \frac{k\Gamma(\frac{k}{2})}{2\cdot(\frac{c_2}{2})^{\frac{k}{2}}}\cdot n^{-\frac{c_2(1+c_2^{-1})^2(\mb EG(X) + c_3 + c_1 + 5)^2}{8}-\frac{k}{2}}\\
&\stackrel{\riii}{\leq} \Big[(1+c_2^{-1})^k\big(\mb EG(X) + c_3 + c_1 + 5\big)^k + 1\Big]\cdot\Big(\frac{\log n}{n}\Big)^{\frac{k}{2}}.
\end{align*}
Here, (i) is by lemma \ref{lem: tail_of_gamma}, (ii) is by $c_2nr_k^2 \geq 4k$, and (iii) is by the condition of sample size (\ref{cond: sample_size1}). From the discussions above, 
\begin{align*}
\bigg|\int_\Theta f_{x,z}(\theta)\,\dd\widehat{q}_\theta - f_{x,z}(\theta^\ast)\bigg|
&\leq \int_\Theta\big|f_{x,z}(\theta) - f_{x,z}(\theta^\ast)\big|\,\dd\widehat{q}_\theta\\
&\leq \sum_{k=1}^m\big|a_k(x)\big|\int_\Theta\|\theta-\theta^\ast\|^k\,\dd\widehat{q}_\theta\\
&\leq \sum_{k=1}^m\big|a_k(x)\big|\Big[(1+c_2^{-1})^k\big(\mb EG(X) + c_3 + c_1 + 5\big)^k + 1\Big]\cdot\Big(\frac{\log n}{n}\Big)^{\frac{k}{2}}.
\end{align*}
\end{proof}

\begin{lemma}\label{lem: diff_log_integration}
Under Assumption \ref{assump: continuity_of_Hessian}, for $\mu, \nu\in \ms P_2(\Theta)$ such that the optimal map $t_\mu^\nu$ exists, we have
\begin{displaymath}
\bigg|\int_\Theta\log p(k\,|\,x,\theta)\,\dd(\nu-\mu) - \int_\Theta\langle\nabla\log p(k\,|\,x,\theta), t_{\mu}^\nu(\theta) - \theta\rangle\,\dd\mu\bigg| \leq \frac{\lambda(x)}{2}W_2^2(\mu, \nu).
\end{displaymath}
for all $k\in[K]$ and $x\in\mb R^d$.
\end{lemma}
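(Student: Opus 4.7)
The plan is to recognize the left-hand side as the standard Taylor remainder for the function $\theta\mapsto \log p(k\,|\,x,\theta)$ evaluated along the optimal transport displacement, and then to bound it pointwise using the uniform Hessian control provided by $\lambda(x)$ in Assumption~\ref{assump: continuity_of_Hessian}.

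First, I would use the pushforward identity $\nu = (t_\mu^\nu)_\#\mu$ to rewrite the integral against $\nu$ as an integral against $\mu$, so that the whole expression to be bounded becomes
\[
\bigg|\int_\Theta \Big[\log p(k\,|\,x,t_\mu^\nu(\theta)) - \log p(k\,|\,x,\theta) - \big\langle \nabla\log p(k\,|\,x,\theta),\, t_\mu^\nu(\theta)-\theta\big\rangle\Big]\,\dd\mu(\theta)\bigg|.
\]
The integrand is precisely the first-order Taylor remainder of $\theta\mapsto \log p(k\,|\,x,\theta)$ at $\theta$, evaluated at the displaced point $t_\mu^\nu(\theta)$.

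Next, I would apply the second-order form of Taylor's theorem with remainder in integral form: for each $\theta$, there exists $\xi(\theta)$ on the segment connecting $\theta$ and $t_\mu^\nu(\theta)$ such that the integrand equals
\[
\tfrac{1}{2}\big(t_\mu^\nu(\theta)-\theta\big)^\top \nabla^2\log p\big(k\,|\,x,\xi(\theta)\big)\big(t_\mu^\nu(\theta)-\theta\big).
\]
By the definition $\lambda(x)=\sup_{\theta\in\Theta,\,k\in[K]}\matnorm{\nabla^2\log p(k\,|\,x,\theta)}$, the absolute value of this quadratic form is at most $\tfrac{\lambda(x)}{2}\|t_\mu^\nu(\theta)-\theta\|^2$. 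Bringing the absolute value inside the integral and integrating against $\mu$ yields the bound $\tfrac{\lambda(x)}{2}\int_\Theta \|t_\mu^\nu(\theta)-\theta\|^2\,\dd\mu(\theta)$, which equals $\tfrac{\lambda(x)}{2}W_2^2(\mu,\nu)$ by the defining property of the optimal transport map between $\mu$ and $\nu$.

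There is essentially no obstacle here: the only subtlety is to ensure that the segment $[\theta,t_\mu^\nu(\theta)]$ lies in $\Theta$ so that the Hessian bound applies, which holds by the convexity of $\Theta$ assumed in Assumption~\ref{assump: strong_convexity}. Measurability of the remainder, and hence the validity of interchanging absolute value and integral, follows from the continuity of $\nabla^2\log p(k\,|\,x,\cdot)$ guaranteed by Assumption~\ref{assump: continuity_of_Hessian}.
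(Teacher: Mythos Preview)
Your proposal is correct and follows essentially the same route as the paper: rewrite $\int \log p(k\,|\,x,\theta)\,\dd(\nu-\mu)$ via the pushforward $\nu=(t_\mu^\nu)_\#\mu$, apply a second-order Taylor expansion with Lagrange remainder at some intermediate point $\theta'$, and bound the quadratic form by $\lambda(x)\|t_\mu^\nu(\theta)-\theta\|^2$ before integrating. The paper's proof is terser and does not explicitly invoke convexity of $\Theta$ or measurability, but the argument is the same.
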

\begin{proof}
By Taylor's expansion,
\begin{eqnarray*}
&\quad\,&\int_\Theta\log p(k\,|\,x,\theta)\,\dd(\nu - \mu)\\
&=& \int_\Theta\log p(k\,|\,x,\theta)\,\dd (t_\mu^\nu)_\#\mu - \int_\Theta\log p(k\,|\,x,\theta)\,\dd\mu\\
&=& \int_\Theta\log p(k\,|\,x, t_\mu^\nu(\theta)) - \log p(k\,|\,x,\theta)\,\dd\mu\\
&=& \int_\Theta\langle\nabla\log p(k\,|\,x,\theta), t_\mu^\nu(\theta) - \theta\rangle + \frac{1}{2}\int_\Theta\big\langle t_\mu^\nu(\theta) - \theta, \nabla^2\log p(k\,|\,x,\theta')(t_\mu^\nu(\theta)-\theta)\big\rangle\,\dd\mu
\end{eqnarray*}
for some $\theta'$ on the segment of $\theta$ and $t_\mu^\nu(\theta)$. Therefore
\begin{eqnarray*}
&\quad\,&
\bigg|\int_\Theta\log p(k\,|\,x,\theta)\,\dd(\nu-\mu) - \int_\Theta\langle\nabla\log p(k\,|\,x,\theta), t_{\mu}^\nu(\theta) - \theta\rangle\,\dd\mu\bigg|\\
&\leq&
\frac{1}{2}\bigg|\int_\Theta\big\langle t_\mu^\nu(\theta) - \theta, \nabla^2\log p(k\,|\,x,\theta')(t_\mu^\nu(\theta)-\theta)\big\rangle\,\dd\mu\bigg|\\
&\leq&\frac{1}{2}\int_\Theta \matnorm{\nabla^2\log p(k\,|\,x,\theta')}\|t_\mu^\nu(\theta) - \theta\|^2\,\dd\mu\\
&\leq&\frac{\lambda(x)}{2}\int_\Theta\|t_\mu^\nu(\theta)-\theta\|^2\,\dd\mu\\
&=&\frac{\lambda(x)}{2}W_2^2(\mu, \nu).
\end{eqnarray*}
\end{proof}

\begin{lemma}\label{lem: bound_diff_Phi}
Let $\mu, \nu\in\ms P_2^r(\Theta)$. Under Assumption \ref{assump: continuity_of_Hessian}, we have
\begin{align*}
&\quad\bigg|\Phi(\nu, x)(z) - \Phi(\mu, x)(z) - \Phi(\mu, x)(z)\sum_{k=1}^K\Phi(\mu, x)(k)\cdot\int_{\Theta}\Big\langle\nabla \log\frac{p(z\,|\,x,\theta)}{p(k\,|\,x,\theta)}, t_{\mu}^{\nu}(\theta)-\theta\Big\rangle\,\dd\mu(\theta)\bigg|\\
&\leq \Phi(\mu, x)(z)\lambda(x)W_2^2(\mu, \nu) + \frac{3}{2}\bigg(\sum_{k=1}^K\bigg|\int_\Theta\langle\nabla\log p(k\,|\,x,\theta), t_\mu^\nu(\theta) - \theta\rangle\,\dd\mu\bigg| + \frac{K\lambda(x)}{2}W_2^2(\mu, \nu)\bigg)^2.
\end{align*}
\end{lemma}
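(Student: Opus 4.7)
The plan is to apply a second-order Taylor expansion of the softmax function together with Lemma \ref{lem: diff_log_integration}. Introduce the shorthand $a_k(\rho) = \int_\Theta \log p(k\,|\,x,\theta)\,\dd\rho(\theta)$ for $\rho\in\ms P_2^r(\Theta)$, so that $\Phi(\rho,x)(z) = g_z(a_1(\rho),\ldots,a_K(\rho))$ where $g_z(\mathbf{a}) = e^{a_z}/\sum_k e^{a_k}$ is the standard softmax. Write $\Delta_k = a_k(\nu) - a_k(\mu)$ and, using Lemma \ref{lem: diff_log_integration}, decompose $\Delta_k = J_k + R_k$ where $J_k := \int_\Theta \langle \nabla\log p(k\,|\,x,\theta), t_\mu^\nu(\theta) - \theta\rangle\,\dd\mu(\theta)$ and $|R_k| \leq \tfrac{\lambda(x)}{2}W_2^2(\mu,\nu)$.

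First I would Taylor-expand $g_z$ to first order at $\mathbf{a}(\mu)$, writing
\begin{align*}
\Phi(\nu,x)(z) - \Phi(\mu,x)(z) = \sum_k \partial_k g_z(\mathbf{a}(\mu))\,\Delta_k + \m R,
\end{align*}
with integral remainder $\m R = \int_0^1 (1-t)\sum_{j,k}\partial_j\partial_k g_z(\mathbf{a}(\mu) + t\bm\Delta)\Delta_j\Delta_k\,\dd t$. Using the standard softmax derivative $\partial_k g_z = g_z(\delta_{kz} - g_k)$ and $\sum_k g_k = 1$, the linear term reduces to $\Phi(\mu,x)(z)\sum_k \Phi(\mu,x)(k)(\Delta_z - \Delta_k)$. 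Splitting $\Delta_z - \Delta_k = (J_z - J_k) + (R_z - R_k)$ and using $\nabla\log\tfrac{p(z\,|\,x,\theta)}{p(k\,|\,x,\theta)} = \nabla\log p(z\,|\,x,\theta) - \nabla\log p(k\,|\,x,\theta)$, the $J$-part of the linear term reproduces exactly the claimed first-order correction in the lemma, while the $R$-part contributes $\Phi(\mu,x)(z)\sum_k \Phi(\mu,x)(k)(R_z - R_k)$, bounded in absolute value by $2\Phi(\mu,x)(z)\max_k |R_k| \leq \Phi(\mu,x)(z)\,\lambda(x)\,W_2^2(\mu,\nu)$, matching the first term on the RHS.

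Next I would bound the remainder $\m R$ via direct computation of the softmax Hessian. A short calculation from $\partial_j\partial_k g_z = g_z[(\delta_{jz}-g_j)(\delta_{kz}-g_k) - g_k(\delta_{jk}-g_j)]$ yields the closed-form identity
\begin{align*}
\sum_{j,k}\partial_j\partial_k g_z(\mathbf{a})\,\Delta_j\Delta_k \;=\; g_z(\mathbf{a})\bigl[(\Delta_z - \bar\Delta)^2 - \mathrm{Var}_g(\Delta)\bigr], \qquad \bar\Delta := \sum_k g_k\Delta_k.
\end{align*}
The key algebraic identity $(\Delta_z - \bar\Delta)^2 + \mathrm{Var}_g(\Delta) = \sum_k g_k(\Delta_k - \Delta_z)^2$ together with $|A - B| \leq A + B$ for non-negative $A,B$, and the elementary estimate $|\Delta_k - \Delta_z| \leq \sum_j |\Delta_j|$, controls the integrand of $\m R$ uniformly in $t$ by a constant multiple of $(\sum_k |\Delta_k|)^2$. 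Finally, the triangle inequality $\sum_k |\Delta_k| \leq \sum_k |J_k| + \tfrac{K\lambda(x)}{2}W_2^2(\mu,\nu)$ gives the quadratic bound on the RHS of the lemma, with the constant $3/2$ arising from the factor $\int_0^1 (1-t)\,\dd t = 1/2$ together with the Hessian estimate.

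The principal obstacle is the bookkeeping of constants in the second-order Taylor remainder; the closed-form quadratic expression for $\bm\Delta^\top \nabla^2 g_z\,\bm\Delta$ and the identity relating $(\Delta_z - \bar\Delta)^2$ and $\mathrm{Var}_g(\Delta)$ to $\sum_k g_k(\Delta_k - \Delta_z)^2$ are what make this tractable. Everything else is a routine Taylor-expansion argument combined with a single invocation of Lemma \ref{lem: diff_log_integration} to pass from $\Delta_k$ to $J_k$ modulo a $\tfrac{\lambda(x)}{2}W_2^2$ perturbation.
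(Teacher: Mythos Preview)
Your proposal is correct and follows essentially the same approach as the paper: a second-order Taylor expansion of the softmax at $A(\mu)$, Lemma~\ref{lem: diff_log_integration} to control the residual in the linear term, and a bound on the quadratic remainder via the softmax Hessian. The only minor difference is that the paper bounds each Hessian entry crudely by $3$ (using the Lagrange form of the remainder), whereas your integral-remainder argument with the variance identity actually yields the sharper constant $1/2$; since the lemma only asserts $3/2$, both routes suffice.
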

\begin{proof}
Let $A_z(\mu) = \int_\Theta\log p(z\,|\,x,\theta)\,\dd\mu(\theta)$, $h_z:\mb R^K\to[0, 1]$ be the function
\begin{displaymath}
h_z(x_1,\cdots, x_K) = \frac{e^{x_z}}{e^{x_1} + \cdots + e^{x_K}}
\end{displaymath}
for all $z\in[K]$, and $A(\mu) = (A_1(\mu)\cdots, A_K(\mu))\in\mb R^K$. By Taylor's expansion, there is $\eta\in\mb R^K$ such that
\begin{eqnarray}
\Phi(\nu, x)(z) - \Phi(\mu, x)(z)
&=& h_z(A(\nu)) - h_z(A(\mu))\nonumber\\
&=& \sum_{k=1}^K \frac{\partial h_z}{\partial x_k}\bigg|_{A(\mu)} (A_k(\nu) - A_k(\mu))\nonumber\\ 
&\qquad& + \frac{1}{2}\sum_{k,l=1}^K\frac{\partial^2h_z}{\partial x_k\partial x_l}\bigg|_\eta(A_k(\mu) - A_k(\mu))(A_l(\nu) - A_l(\mu)).\label{eqn: Taylor_Az}
\end{eqnarray}
It is easy to check that $\frac{\partial h_z}{\partial x_k} = \delta_{zk}h_z - h_zh_k$, and
\begin{displaymath}
\frac{\partial^2 h_z}{\partial x_k\partial x_l} = \delta_{kz}\delta_{zl}h_z - \delta_{kz}h_zh_l - \delta_{kl}h_zh_k + 2h_zh_kh_l - \delta_{zl}h_kh_z,
\end{displaymath}
which can be bounded by $3$ since $0\leq h_z, h_k, h_l \leq 1$. Notice that
\begin{align*}
&\quad\,\sum_{k=1}^K\frac{\partial h_z}{\partial x_k}\bigg|_{A(\mu)}(A_k(\nu) - A_k(\mu))\\
&=\sum_{k=1}^K \big(\delta_{zk}\Phi(\mu, x)(z) - \Phi(\mu, x)(z)\Phi(\mu, x)(k)\big)(A_k(\nu) - A_k(\mu))\\
&= \Phi(\mu, x)(z)(A_z(\nu) - A_z(\mu)) - \Phi(\mu, x)(z)\sum_{k=1}^K\Phi(\mu, x)(k)(A_k(\nu) - A_k(\mu))\\
&= \Phi(\mu, x)(z)\sum_{k=1}^K\Phi(\mu, x)(k)\big[\big(A_z(\nu) - A_z(\mu)\big) - \big(A_k(\nu) - A_k(\mu)\big)\big].
\end{align*}
By lemma \ref{lem: diff_log_integration}, 
\begin{equation}
\bigg|A_k(\nu) - A_k(\mu) - \int_\Theta\langle\nabla\log p(k\,|\,x,\theta), t_\mu^\nu(\theta) - \theta\rangle\,\dd\mu(\theta)\bigg|\leq \frac{\lambda(x)}{2}W_2^2(\mu, \nu).\label{inequal: diff_of_Ak}
\end{equation}
So
\begin{displaymath}
\bigg|\big[\big(A_z(\nu) - A_z(\mu)\big) - \big(A_k(\nu) - A_k(\mu)\big)\big] - \int_\Theta\bigg\langle\nabla\log\frac{p(z\,|\,x,\theta)}{p(k\,|\,x,\theta)}, t_\mu^\nu(\theta) - \theta\bigg\rangle\,\dd\mu\bigg| \leq \lambda(x)W_2^2(\mu, \nu).
\end{displaymath}
Therefore, we get
\begin{align*}
&\quad\bigg|\Phi(\nu, x)(z) - \Phi(\mu, x)(z) - \Phi(\mu, x)(z)\sum_{k=1}^K\Phi(\mu, x)(k)\cdot\int_{\Theta}\Big\langle\nabla \log\frac{p(z\,|\,x,\theta)}{p(k\,|\,x,\theta)}, t_{\mu}^{\nu}(\theta)-\theta\Big\rangle\,\dd\mu(\theta)\bigg|\\
&\stackrel{\ri}{=} \bigg|\Phi(\mu, x)(z)\sum_{k=1}^K\Phi(\mu, x)(k)\bigg[\big(A_z(\nu) - A_z(\mu)\big) - \big(A_k(\nu) - A_k(\mu)\big) - \int_\Theta\Big\langle\nabla \log\frac{p(z\,|\,x,\theta)}{p(k\,|\,x,\theta)}, t_{\mu}^{\nu}(\theta)-\theta\Big\rangle\,\dd\mu\bigg]\\
&\qquad + \frac{1}{2}\sum_{k,l=1}^K\frac{\partial^2h_z}{\partial x_k\partial x_l}\Big|_\eta(A_k(\nu) - A_k(\mu))(A_l(\nu) - A_l(\mu))\bigg|\\
&\leq \Phi(\mu, x)(z)\sum_{k=1}^K\Phi(\mu, x)(k)\cdot\lambda(x)W_2^2(\mu,\nu) + \frac{3}{2}\sum_{k,l=1}^K\big|(A_k(\nu) - A_k(\mu))(A_l(\nu) - A_l(\mu))\big|\\
&= \Phi(\mu, x)(z)\lambda(x)W_2^2(\mu, \nu) + \frac{3}{2}\bigg(\sum_{k=1}^K\big|A_k(\nu) - A_k(\mu)\big|\bigg)^2\\
&\stackrel{\rii}{\leq} \Phi(\mu, x)(z)\lambda(x)W_2^2(\mu, \nu) + \frac{3}{2}\bigg(\sum_{k=1}^K\bigg|\int_\Theta\langle\nabla\log p(k\,|\,x,\theta), t_\mu^\nu(\theta) - \theta\rangle\,\dd\mu\bigg| + \frac{K\lambda(x)}{2}W_2^2(\mu, \nu)\bigg)^2.
\end{align*}
Here, (i) is by Taylor expansion (\ref{eqn: Taylor_Az}), and (ii) is by triangular inequality and (\ref{inequal: diff_of_Ak}).
\end{proof}

\begin{corollary}\label{coro: subdifferential_is_variation}
Let $\mu\in\ms P_2^r(\Theta)$, and assume
\begin{displaymath}
\sum_{k=1}^K\big\|\nabla\log p(k\,|\,x,\theta)\big\|_{L^2(\mu)} < \infty.
\end{displaymath}
Then, under Assumption \ref{assump: continuity_of_Hessian}, we have
\begin{equation}\label{eqn: grad_first_order_variation}
\nabla\frac{\delta\Phi}{\delta\mu}(\mu, x)(z) = \Phi(\mu, x)(z)\sum_{k=1}^K\Phi(\mu, x)(k)\nabla\log\frac{p(z\,|\,x,\theta)}{p(k\,|\,x,\theta)}
\end{equation}
is a subdifferential of $\Phi(\mu, x)(z)$.
\end{corollary}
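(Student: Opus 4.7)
\bigskip
\noindent\textbf{Proof plan for Corollary \ref{coro: subdifferential_is_variation}.}
The plan is to verify the subdifferential defining inequality~\eqref{eqn: def_subdifferential} directly by invoking the sharp one-sided expansion of $\Phi(\cdot,x)(z)$ already proved in Lemma~\ref{lem: bound_diff_Phi}. Concretely, I will first rewrite the candidate subdifferential
$$\xi(\theta) \;=\; \Phi(\mu,x)(z)\sum_{k=1}^K \Phi(\mu,x)(k)\,\nabla\log\frac{p(z\,|\,x,\theta)}{p(k\,|\,x,\theta)}$$
so that its inner product against $t_\mu^\nu(\theta)-\theta$, integrated against $\mu$, coincides exactly with the linear surrogate appearing inside the absolute value on the left-hand side of Lemma~\ref{lem: bound_diff_Phi}; this is nothing more than pulling the scalar factor $\Phi(\mu,x)(z)\Phi(\mu,x)(k)$ outside the integral. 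Once this identification is made, the remaining task reduces to showing that the right-hand side of Lemma~\ref{lem: bound_diff_Phi} is $o(W_2(\mu,\nu))$ as $W_2(\mu,\nu)\to 0$, which would establish Fr\'echet differentiability of $\mu\mapsto\Phi(\mu,x)(z)$ in the $W_2$-sense with derivative $\xi$, and then Lemma~\ref{lem:FV_SD} immediately yields that $\xi=\nabla\tfrac{\delta\Phi}{\delta\mu}(\mu,x)(z)$ belongs to the Fr\'echet subdifferential.

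The bookkeeping step is to handle each error term on the right-hand side of Lemma~\ref{lem: bound_diff_Phi}. The first term $\Phi(\mu,x)(z)\lambda(x)\,W_2^2(\mu,\nu)$ is manifestly $O(W_2^2)=o(W_2)$. For the squared expression, I would apply the Cauchy--Schwarz inequality under $\mu$ to each component,
$$\Big|\int_\Theta \langle\nabla\log p(k\,|\,x,\theta),\,t_\mu^\nu(\theta)-\theta\rangle\,\dd\mu(\theta)\Big| \;\le\; \|\nabla\log p(k\,|\,x,\theta)\|_{L^2(\mu)}\cdot W_2(\mu,\nu),$$
using the identity $\|t_\mu^\nu-\id\|_{L^2(\mu)}=W_2(\mu,\nu)$ since $t_\mu^\nu$ is the $L^2(\mu)$-optimal transport map (Section~\ref{sec: WGF}). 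Summing over $k$ and invoking the standing hypothesis $\sum_{k=1}^K\|\nabla\log p(k\,|\,x,\theta)\|_{L^2(\mu)}<\infty$ gives that the inner expression is $O(W_2(\mu,\nu))+O(W_2^2(\mu,\nu))=O(W_2(\mu,\nu))$; squaring then yields $O(W_2^2(\mu,\nu))=o(W_2(\mu,\nu))$. Adding the two pieces, the full error is $o(W_2(\mu,\nu))$ as required.

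Putting the two steps together, Lemma~\ref{lem: bound_diff_Phi} translates into
$$\Phi(\nu,x)(z) \;=\; \Phi(\mu,x)(z)+\int_\Theta \langle \xi(\theta),\,t_\mu^\nu(\theta)-\theta\rangle\,\dd\mu(\theta)+o\big(W_2(\mu,\nu)\big),$$
which in particular implies the one-sided $\geq$ inequality that defines a Fr\'echet subdifferential in~\eqref{eqn: def_subdifferential}. The only subtle point I anticipate is verifying that $\xi$ actually lies in $L^2(\mu;\mb R^d)$ so that the inner product in the definition is well-defined; this follows cleanly because $\Phi(\mu,x)(k)\in[0,1]$ and the factor $\|\nabla\log p(z\,|\,x,\theta)-\nabla\log p(k\,|\,x,\theta)\|$ is dominated by $\sum_k\|\nabla\log p(k\,|\,x,\theta)\|$, whose $L^2(\mu)$ norm is finite by assumption. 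The main obstacle is conceptual rather than technical: one must check that the linear term extracted from the Taylor expansion of $h_z(A(\nu))-h_z(A(\mu))$ inside the proof of Lemma~\ref{lem: bound_diff_Phi} indeed coincides with $\int\langle\xi,\,t_\mu^\nu-\id\rangle\dd\mu$, which is a direct consequence of Lemma~\ref{lem: diff_log_integration} together with the chain rule identity $\tfrac{\partial h_z}{\partial x_k}=\delta_{zk}h_z-h_zh_k$ computed in the proof of Lemma~\ref{lem: bound_diff_Phi}.
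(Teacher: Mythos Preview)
Your proposal is correct and follows essentially the same route as the paper: both arguments invoke Lemma~\ref{lem: bound_diff_Phi}, identify its linear term with $\int_\Theta\langle\xi,t_\mu^\nu-\id\rangle\,\dd\mu$, and then bound the remainder by $O(W_2^2(\mu,\nu))$ via Cauchy--Schwarz together with the hypothesis $\sum_k\|\nabla\log p(k\,|\,x,\theta)\|_{L^2(\mu)}<\infty$. Two minor remarks: the detour through Lemma~\ref{lem:FV_SD} is unnecessary since your final display already \emph{is} the defining inequality~\eqref{eqn: def_subdifferential}, and the paper additionally spells out the identity~\eqref{eqn: grad_first_order_variation} itself by computing $\tfrac{\delta\Phi}{\delta\mu}$ through the functional chain rule before taking~$\nabla_\theta$, whereas you recover it implicitly from the linear term of the softmax Taylor expansion---both routes land on the same formula.
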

\begin{proof}
We shall first prove (\ref{eqn: grad_first_order_variation}) holds. By chain rule of functional derivative (formula A.38 in \cite{engel2013density}),
\begin{align*}
\frac{\delta\Phi}{\delta\mu}(\mu, x)(z)
&= \frac{\delta h_z}{\delta \mu}\bigg|_{A(\mu)}\\
&= \sum_{k=1}^K \frac{\partial h_z}{\partial x_k}\bigg|_{A(\mu)}\cdot\frac{\delta A_k}{\delta\mu}\\
&= \sum_{k=1}^K \big(\delta_{kz}\Phi(\mu, x)(z) - \Phi(\mu, x)(z)\Phi(\mu, x)(k)\big) \cdot\log p(k\,|\,x,\theta)\\
&= \Phi(\mu, x)(z)\log p(z\,|\,x,\theta) - \Phi(\mu, x)(z)\sum_{k=1}^K\Phi(\mu, x)(k)\log p(k\,|\,x,\theta)\\
&= \Phi(\mu, x)(z)\sum_{k=1}^K\Phi(\mu, x)(k)\log\frac{p(z\,|\,x,\theta)}{p(k\,|\,x,\theta)}.
\end{align*}
Taking gradient w.r.t. $\theta$ on both sides, we have shown (\ref{eqn: grad_first_order_variation}). Next, we will prove that it is a subdifferential. By the definition of subdifferential (\ref{eqn: def_subdifferential}), we only need to show
\begin{displaymath}
\Phi(\nu, x)(z) - \Phi(\mu, x)(z) \geq \int_\Theta\bigg\langle\nabla\frac{\delta\Phi}{\delta\mu}(\mu, x)(z), t_\mu^\nu(\theta) - \theta\bigg\rangle\,\dd\mu + o(W_2(\mu, \nu))
\end{displaymath}
for all $\nu\in\ms P_2(\mb R^d)$. By lemma \ref{lem: bound_diff_Phi},
\begin{align*}
&\quad\, \Phi(\nu, x)(z) - \Phi(\mu, x)(z) - \int_\Theta\bigg\langle\nabla\frac{\delta\Phi}{\delta\mu}(\mu, x)(z), t_\mu^\nu(\theta) - \theta\bigg\rangle\,\dd\mu\\
&= \Phi(\nu, x)(z) - \Phi(\mu, x)(z) - \Phi(\mu, x)(z)\sum_{k=1}^K\Phi(\mu, x)(k)\cdot\int_{\Theta}\Big\langle\nabla \log\frac{p(z\,|\,x,\theta)}{p(k\,|\,x,\theta)}, t_{\mu}^{\nu}(\theta)-\theta\Big\rangle\,\dd\mu\\
&\geq - \Phi(\mu, x)(z)\lambda(x)W_2^2(\mu, \nu) - \frac{3}{2}\bigg(\sum_{k=1}^K\bigg|\int_\Theta\langle\nabla\log p(k\,|\,x,\theta), t_\mu^\nu(\theta) - \theta\rangle\,\dd\mu\bigg| + \frac{K\lambda(x)}{2}W_2^2(\mu, \nu)\bigg)^2\\
&\stackrel{\ri}{\geq} -\lambda(x)W_2^2(\mu, \nu) - \frac{3}{2}\bigg(W_2(\mu,\nu)\sum_{k=1}^K\big\|\nabla\log p(k\,|\,x,\theta)\big\|_{L^2(\mu)} + \frac{K\lambda(x)}{2}W_2^2(\mu, \nu)\bigg)^2\\
&= -W_2^2(\mu, \nu)\bigg[\lambda(x) + \frac{3}{2}\bigg(\sum_{k=1}^K\big\|\nabla\log p(k\,|\,x,\theta)\big\|_{L^2(\mu)} + \frac{K\lambda(x)}{2}W_2(\mu, \nu)\bigg)^2\bigg].
\end{align*}
Here, (i) is by Cauchy--Schwarz inequality and the fact that $\Phi(\mu, x) \leq 1$.
\end{proof}

\begin{corollary}\label{coro: bound_R1}
For any $q\in\ms P_2^r(\Theta)$
\begin{align*}
&\quad\bigg|\Phi(q, x)(z) - \Phi(\widehat{q}_\theta, x)(z) - \int_{\Theta}\Big\langle\nabla \frac{\delta\Phi}{\delta\mu}(\widehat{q}_\theta, x)(z), t_{\widehat{q}_\theta}^{q}(\theta)-\theta\Big\rangle\,\dd \widehat{q}_\theta\bigg|\\
&\leq W_2^2(\widehat{q}_\theta, q)\bigg[\frac{3}{2}\bigg(\sum_{k=1}^K\big\|\nabla \log p(k\,|\,x,\theta^\ast)\big\|_2 + K\lambda(x)\sqrt{\mb E_{\widehat{q}_\theta}\|\theta-\theta^\ast\|^2}+\frac{K\lambda(x)}{2}W_2(\widehat{q}_\theta, q)\bigg)^2 + \lambda(x)\bigg].
\end{align*}
\end{corollary}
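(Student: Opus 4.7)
The plan is to derive this as a direct specialization of Lemma~\ref{lem: bound_diff_Phi} to the pair $(\mu,\nu)=(\widehat{q}_\theta,q)$, followed by two elementary bounds on the quantities appearing on the right-hand side of that lemma.

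First, I would observe that by Corollary~\ref{coro: subdifferential_is_variation}, the integrand in the statement of Corollary~\ref{coro: bound_R1} satisfies
\begin{align*}
\nabla\frac{\delta\Phi}{\delta\mu}(\widehat{q}_\theta, x)(z)
= \Phi(\widehat{q}_\theta, x)(z)\sum_{k=1}^K\Phi(\widehat{q}_\theta, x)(k)\,\nabla\log\frac{p(z\,|\,x,\theta)}{p(k\,|\,x,\theta)}.
\end{align*}
Thus the left-hand side of the claimed inequality is exactly the quantity controlled in Lemma~\ref{lem: bound_diff_Phi} with $\mu=\widehat{q}_\theta$ and $\nu=q$. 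Applying that lemma and using $\Phi(\widehat{q}_\theta,x)(z)\in[0,1]$ to absorb the leading factor in the first term of the right-hand side gives
\begin{align*}
&\bigg|\Phi(q, x)(z) - \Phi(\widehat{q}_\theta, x)(z) - \int_{\Theta}\Big\langle\nabla \frac{\delta\Phi}{\delta\mu}(\widehat{q}_\theta, x)(z), t_{\widehat{q}_\theta}^{q}(\theta)-\theta\Big\rangle\,\dd \widehat{q}_\theta\bigg|\\
&\qquad \leq \lambda(x)\,W_2^2(\widehat{q}_\theta,q) + \frac{3}{2}\bigg(\sum_{k=1}^{K}\bigg|\int_\Theta \big\langle \nabla\log p(k\,|\,x,\theta),\, t_{\widehat{q}_\theta}^{q}(\theta)-\theta\big\rangle\,\dd\widehat{q}_\theta\bigg| + \frac{K\lambda(x)}{2}W_2^2(\widehat{q}_\theta,q)\bigg)^2.
\end{align*}

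Next, I would bound each inner integral by Cauchy--Schwarz:
\begin{align*}
\bigg|\int_\Theta \big\langle \nabla\log p(k\,|\,x,\theta),\, t_{\widehat{q}_\theta}^{q}(\theta)-\theta\big\rangle\,\dd\widehat{q}_\theta\bigg|
\leq \big\|\nabla\log p(k\,|\,x,\theta)\big\|_{L^2(\widehat{q}_\theta)}\cdot W_2(\widehat{q}_\theta,q),
\end{align*}
since $\|t_{\widehat{q}_\theta}^{q}-\id\|_{L^2(\widehat{q}_\theta)}=W_2(\widehat{q}_\theta,q)$. Then I would replace the $L^2(\widehat{q}_\theta)$ norm of the score by its value at $\theta^\ast$ plus a perturbation term using the mean-value inequality applied to $\theta\mapsto \nabla\log p(k\,|\,x,\theta)$: since the Jacobian of the score is the Hessian $\nabla^2\log p(k\,|\,x,\theta)$, whose operator norm is uniformly bounded by $\lambda(x)$ on $\Theta$ under Assumption~\ref{assump: continuity_of_Hessian}, one has pointwise
\begin{align*}
\big\|\nabla\log p(k\,|\,x,\theta)\big\|_2 \leq \big\|\nabla\log p(k\,|\,x,\theta^\ast)\big\|_2 + \lambda(x)\,\|\theta-\theta^\ast\|.
\end{align*}
Taking $L^2(\widehat{q}_\theta)$-norms and using Minkowski gives
\begin{align*}
\big\|\nabla\log p(k\,|\,x,\theta)\big\|_{L^2(\widehat{q}_\theta)} \leq \big\|\nabla\log p(k\,|\,x,\theta^\ast)\big\|_2 + \lambda(x)\sqrt{\mb E_{\widehat{q}_\theta}\|\theta-\theta^\ast\|^2}.
\end{align*}

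Finally, summing over $k\in[K]$, factoring out a common $W_2(\widehat{q}_\theta,q)$, and substituting back yields
\begin{align*}
\sum_{k=1}^{K}\bigg|\int_\Theta \big\langle \nabla\log p(k\,|\,x,\theta),\, t_{\widehat{q}_\theta}^{q}(\theta)-\theta\big\rangle\,\dd\widehat{q}_\theta\bigg| + \frac{K\lambda(x)}{2}W_2^2(\widehat{q}_\theta,q)\\
\leq W_2(\widehat{q}_\theta,q)\bigg(\sum_{k=1}^K\big\|\nabla\log p(k\,|\,x,\theta^\ast)\big\|_2 + K\lambda(x)\sqrt{\mb E_{\widehat{q}_\theta}\|\theta-\theta^\ast\|^2}+\frac{K\lambda(x)}{2}W_2(\widehat{q}_\theta,q)\bigg),
\end{align*}
and squaring and plugging into the bound from Lemma~\ref{lem: bound_diff_Phi} produces exactly the stated inequality, with the common factor $W_2^2(\widehat{q}_\theta,q)$ pulled out. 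There is no serious obstacle here: the proof is essentially a bookkeeping exercise combining the lemma with Cauchy--Schwarz and Minkowski, and the only modest subtlety is making sure that the Hessian-based Lipschitz estimate on the score is applied pointwise before integration so that the norm $\|\theta-\theta^\ast\|_{L^2(\widehat{q}_\theta)}$ naturally appears.
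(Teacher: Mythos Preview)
Your proposal is correct and follows essentially the same approach as the paper: identify the left-hand side via Corollary~\ref{coro: subdifferential_is_variation}, apply Lemma~\ref{lem: bound_diff_Phi} with $(\mu,\nu)=(\widehat{q}_\theta,q)$, and then bound each inner integral $\big|\int_\Theta\langle\nabla\log p(k\,|\,x,\theta),\,t_{\widehat{q}_\theta}^{q}(\theta)-\theta\rangle\,\dd\widehat{q}_\theta\big|$ by $M_{1k}(x)W_2(\widehat{q}_\theta,q)$ with $M_{1k}(x)=\|\nabla\log p(k\,|\,x,\theta^\ast)\|_2+\lambda(x)\sqrt{\mb E_{\widehat{q}_\theta}\|\theta-\theta^\ast\|^2}$. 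The only cosmetic difference is that the paper invokes Lemma~\ref{lem: change_to_Deltaq} for this last bound (which proceeds by first extracting $\langle\nabla\log p(k\,|\,x,\theta^\ast),\Delta_q\rangle$ via Taylor expansion and then applying Cauchy--Schwarz), whereas you apply Cauchy--Schwarz to the full integral first and then use the pointwise Lipschitz estimate plus Minkowski; both routes produce exactly the same $M_{1k}(x)W_2$ bound.
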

\begin{proof}
Just notice that
\begin{align*}
&\quad\, \bigg|\Phi(q, x)(z) - \Phi(\widehat{q}_\theta, x)(z) - \int_{\Theta}\Big\langle\nabla \frac{\delta\Phi}{\delta\mu}(\widehat{q}_\theta, x)(z), t_{\widehat{q}_\theta}^{q}(\theta)-\theta\Big\rangle\,\dd \widehat{q}_\theta\bigg|\\
&\stackrel{\ri}{=}\bigg|\Phi(q, x)(z) - \Phi(\widehat{q}_\theta, x)(z) - \Phi(\widehat{q}_\theta, x)(z)\sum_{k=1}^K\Phi(\widehat{q}_\theta, x)(k)\int_\Theta\bigg\langle\nabla\log\frac{p(z\,|\,x,\theta)}{p(k\,|\,x,\theta)}, t_{\widehat{q}_\theta}^q(\theta) - \theta\bigg\rangle\,\dd\widehat{q}_\theta\bigg|\\
&\stackrel{\rii}{\leq} \Phi(\widehat{q}_\theta, x)(z)\lambda(x)W_2^2(q, \widehat{q}_\theta) + \frac{3}{2}\bigg(\sum_{k=1}^K\bigg|\int_\Theta\big\langle\nabla\log p(k\,|\,x,\theta), t_{\widehat{q}_\theta}^q(\theta)-\theta\big\rangle\,\dd\widehat{q}_\theta\bigg| + \frac{K\lambda(x)}{2}W_2^2(q, \widehat{q}_\theta)\bigg)^2\\
&\stackrel{\riii}{\leq} W_2^2(\widehat{q}_\theta, q)\bigg[\frac{3}{2}\bigg(\sum_{k=1}^K\big\|\nabla \log p(k\,|\,x,\theta^\ast)\big\|_2 + K\lambda(x)\sqrt{\mb E_{\widehat{q}_\theta}\|\theta-\theta^\ast\|^2}+\frac{K\lambda(x)}{2}W_2(\widehat{q}_\theta, q)\bigg)^2 + \lambda(x)\bigg]
\end{align*}
Here, (i) is by corollary \ref{coro: subdifferential_is_variation}, (ii) is by lemma \ref{lem: bound_diff_Phi}, and (iii) is by lemma \ref{lem: change_to_Deltaq} and the fact that $\Phi(\widehat{q}_\theta, x)(z)\leq 1$.
\end{proof}

\begin{lemma}\label{lem: change_to_Deltaq}
For any $q\in\ms P_2(\Theta)$, under Assumption \ref{assump: continuity_of_Hessian},
\begin{displaymath}
\bigg|\int_\Theta\big\langle\nabla\log p(k\,|\,x,\theta), t_{\widehat{q}_\theta}^q(\theta)-\theta\big\rangle\,\dd\widehat{q}_\theta - \big\langle\nabla\log p(k\,|\,x,\theta^\ast), \Delta_q\big\rangle\bigg| \leq \lambda(x)W_2(q, \widehat{q}_\theta)\sqrt{\mb E_{\widehat{q}_\theta}\|\theta - \theta^\ast\|^2}.
\end{displaymath}
Here, for simplicity let
\begin{displaymath}
\Delta_q = \int_\Theta\theta\,\dd(q - \widehat{q}_\theta) = \int_\Theta t_{\widehat{q}_\theta}^q(\theta) - \theta\,\dd\widehat{q}_\theta.
\end{displaymath}
As a result, we can show that
\begin{displaymath}
\bigg|\int_\Theta\big\langle\nabla\log p(k\,|\,x,\theta), t_{\widehat{q}_\theta}^q(\theta)-\theta\big\rangle\,\dd\widehat{q}_\theta\bigg| \leq M_{1k}(x)W_2(q, \widehat{q}_\theta),
\end{displaymath}
where
\begin{displaymath}
M_{1k}(x) := \lambda(x)\sqrt{\mb E_{\widehat{q}_\theta}\|\theta-\theta^\ast\|^2} + \|\nabla\log p(k\,|\,x,\theta^\ast)\|.
\end{displaymath}
\end{lemma}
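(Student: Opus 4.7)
The plan is to decompose $\nabla\log p(k\,|\,x,\theta)$ inside the integral into its value at $\theta^\ast$ plus a remainder, recognize the leading term as the desired inner product with $\Delta_q$, and control the remainder via the uniform Hessian bound from Assumption~\ref{assump: continuity_of_Hessian}(4). Concretely, write
\begin{align*}
\nabla\log p(k\,|\,x,\theta) = \nabla\log p(k\,|\,x,\theta^\ast) + \int_0^1 \nabla^2\log p(k\,|\,x,\theta^\ast + s(\theta-\theta^\ast))\,(\theta-\theta^\ast)\,\dd s,
\end{align*}
so that the $L^2(\widehat{q}_\theta)$-norm of the difference $\nabla\log p(k\,|\,x,\theta) - \nabla\log p(k\,|\,x,\theta^\ast)$ is at most $\lambda(x)\,\sqrt{\mb E_{\widehat{q}_\theta}\|\theta-\theta^\ast\|^2}$ by the definition of $\lambda(x)$.

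Substituting this decomposition into $\int_\Theta \langle \nabla\log p(k\,|\,x,\theta),\,t_{\widehat{q}_\theta}^q(\theta)-\theta\rangle\,\dd\widehat{q}_\theta$, the constant-in-$\theta$ vector $\nabla\log p(k\,|\,x,\theta^\ast)$ pulls out of the integral and pairs with $\int_\Theta (t_{\widehat{q}_\theta}^q(\theta)-\theta)\,\dd\widehat{q}_\theta = \Delta_q$, producing the leading term $\langle \nabla\log p(k\,|\,x,\theta^\ast),\Delta_q\rangle$. The remainder term is bounded by Cauchy--Schwarz in $L^2(\widehat{q}_\theta)$ as
\begin{align*}
\Big(\int_\Theta \|\nabla\log p(k\,|\,x,\theta) - \nabla\log p(k\,|\,x,\theta^\ast)\|^2\,\dd\widehat{q}_\theta\Big)^{1/2}\Big(\int_\Theta \|t_{\widehat{q}_\theta}^q(\theta)-\theta\|^2\,\dd\widehat{q}_\theta\Big)^{1/2},
\end{align*}
which is at most $\lambda(x)\sqrt{\mb E_{\widehat{q}_\theta}\|\theta-\theta^\ast\|^2}\cdot W_2(q,\widehat{q}_\theta)$, since $t_{\widehat{q}_\theta}^q$ is an optimal transport map (so the second factor is exactly $W_2(q,\widehat{q}_\theta)$).

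For the second (``as a result'') inequality, apply the triangle inequality to the displayed bound and estimate the leading term by Cauchy--Schwarz in $\mb R^d$: $|\langle \nabla\log p(k\,|\,x,\theta^\ast),\Delta_q\rangle| \leq \|\nabla\log p(k\,|\,x,\theta^\ast)\|\cdot\|\Delta_q\|$. Finally, by Jensen's inequality (or again Cauchy--Schwarz with the constant function $1$),
\begin{align*}
\|\Delta_q\| = \Big\|\int_\Theta (t_{\widehat{q}_\theta}^q(\theta)-\theta)\,\dd\widehat{q}_\theta\Big\| \leq \Big(\int_\Theta \|t_{\widehat{q}_\theta}^q(\theta)-\theta\|^2\,\dd\widehat{q}_\theta\Big)^{1/2} = W_2(q,\widehat{q}_\theta),
\end{align*}
and combining these yields the bound $M_{1k}(x)\,W_2(q,\widehat{q}_\theta)$. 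There is no real obstacle here -- the argument is essentially a first-order Taylor expansion combined with two applications of Cauchy--Schwarz; the only subtlety is remembering to exploit that $\nabla\log p(k\,|\,x,\theta^\ast)$ is independent of $\theta$ so that it factors out cleanly and produces the $\Delta_q$ pairing.
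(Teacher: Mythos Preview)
Your proof is correct and follows essentially the same approach as the paper: a first-order Taylor expansion of $\nabla\log p(k\,|\,x,\theta)$ around $\theta^\ast$ (the paper uses the mean-value form, you use the integral form), followed by pulling the constant vector out to produce the $\Delta_q$ pairing and bounding the remainder via the Hessian bound $\lambda(x)$ together with Cauchy--Schwarz in $L^2(\widehat q_\theta)$. The second inequality is handled identically in both, via Cauchy--Schwarz on $\langle \nabla\log p(k\,|\,x,\theta^\ast),\Delta_q\rangle$ and the bound $\|\Delta_q\|\le W_2(q,\widehat q_\theta)$.
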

\begin{proof}
Notice that by Taylor's expansion, there is some $\theta'\in\Theta$ such that
\begin{align*}
&\quad\, \int_\Theta\big\langle\nabla\log p(k\,|\,x,\theta), t_{\widehat{q}_\theta}^q(\theta) - \theta\big\rangle\,\dd\widehat{q}_\theta\\
&= \int_\Theta\big\langle\nabla\log p(k\,|\,x,\theta^\ast), t_{\widehat{q}_\theta}^q(\theta)-\theta\big\rangle\,\dd\widehat{q}_\theta + \int_\Theta\big\langle\nabla\log p(k\,|\,x,\theta) - \log p(k\,|\,x,\theta^\ast), t_{\widehat{q}_\theta}^q(\theta)-\theta\big\rangle\,\dd\widehat{q}_\theta\\
&= \bigg\langle\nabla\log p(k\,|\,x,\theta^\ast), \int_\Theta t_{\widehat{q}_\theta}^q(\theta) -\theta \,\dd\widehat{q}_\theta\bigg\rangle + \int_\Theta\big\langle\nabla^2\log p(k\,|\,x,\theta')(\theta - \theta^\ast), t_{\widehat{q}_\theta}^q(\theta)-\theta\big\rangle\,\dd\widehat{q}_\theta\\
&= \big\langle\nabla\log p(k\,|\,x,\theta^\ast), \Delta_q\big\rangle + \int_\Theta\big\langle\nabla^2\log p(k\,|\,x,\theta')(\theta-\theta^\ast), t_{\widehat{q}_\theta}^q(\theta)-\theta\big\rangle\,\dd\widehat{q}_\theta.
\end{align*}
Therefore,
\begin{align*}
&\quad\, \bigg|\int_\Theta\big\langle\nabla\log p(k\,|\,x,\theta), t_{\widehat{q}_\theta}^q(\theta)-\theta\big\rangle\,\dd\widehat{q}_\theta - \big\langle\nabla\log p(k\,|\,x,\theta^\ast), \Delta_q\big\rangle\bigg|\\
&= \bigg|\int_\Theta\big\langle\nabla^2\log p(k\,|\,x,\theta')(\theta-\theta^\ast), t_{\widehat{q}_\theta}^q(\theta)-\theta\big\rangle\,\dd\widehat{q}_\theta\bigg|\\
&\leq \int_\Theta \|\nabla^2\log p(k\,|\,x,\theta')(\theta-\theta^\ast)\|\|t_{\widehat{q}_\theta}^q(\theta)-\theta\|\,\dd\widehat{q}_\theta\\
&\leq \int_\Theta\matnorm{\nabla^2\log p(k\,|\,x,\theta')}\|\theta-\theta^\ast\|\|t_{\widehat{q}_\theta}^q(\theta)-\theta\|\,\dd q_{\widehat{q}_\theta}\\
&\leq \lambda(x)\int_\Theta\|\theta-\theta^\ast\|\|t_{\widehat{q}_\theta}^q(\theta)-\theta\|\,\dd \widehat{q}_\theta\\
&\leq \lambda(x)\bigg(\int_\Theta\|\theta-\theta^\ast\|^2\,\dd \widehat{q}_\theta\bigg)^{\frac{1}{2}}\bigg(\int_\Theta\|t_{\widehat{q}_\theta}^q(\theta)-\theta\|^2\,\dd \widehat{q}_\theta\bigg)^{\frac{1}{2}}\\
&= \lambda(x)W_2(q, \widehat{q}_\theta)\sqrt{\mb E_{\widehat{q}_\theta}\|\theta - \theta^\ast\|^2}.
\end{align*}
By Cauchy--Schwarz inequality, 
\begin{align*}
\big\langle\nabla\log p(k\,|\,x,\theta^\ast), \Delta_q\big\rangle
&\leq \|\nabla\log p(k\,|\,x,\theta^\ast)\|\|\Delta_q\|\\
&\leq \|\nabla\log p(k\,|\,x,\theta^\ast)\|\cdot W_2(q, \widehat{q}_\theta).
\end{align*}
By triangular inequality, we have proved that
\begin{displaymath}
\bigg|\int_\Theta\big\langle\nabla\log p(k\,|\,x,\theta), t_{\widehat{q}_\theta}^q(\theta)-\theta\big\rangle\,\dd\widehat{q}_\theta\bigg| \leq M_{1k}(x)W_2(q, \widehat{q}_\theta).
\end{displaymath}
\end{proof}

\begin{corollary}\label{coro: bound_diff_log_integration}
For any $q\in\ms P_2(\Theta)$, under Assumption \ref{assump: continuity_of_Hessian},
\begin{displaymath}
\bigg|\int_\Theta\log p(k\,|\,x,\theta)\,\dd(q-\widehat{q}_\theta)\bigg| \leq M_{1k}(x)W_2(q, \widehat{q}_\theta) + \frac{\lambda(x)}{2}W_2^2(q,\widehat{q}_\theta),
\end{displaymath}
where $M_{1k}(x)$ is defined in lemma \ref{lem: change_to_Deltaq}.
\begin{proof}
\begin{align*}
\bigg|\int_\Theta\log p(k\,|\,x,\theta)\,\dd(q-\widehat{q}_\theta)\bigg|
&\leq \bigg|\int_\Theta\langle\nabla\log p(k\,|\,x,\theta), t_{\widehat{q}_\theta}^q(\theta) - \theta\rangle\,\dd\widehat{q}_\theta\bigg| + \frac{\lambda(x)}{2}W_2^2(q,\widehat{q}_\theta)\\
&\leq M_{1k}(x)W_2(q, \widehat{q}_\theta) + \frac{\lambda(x)}{2}W_2^2(q,\widehat{q}_\theta).
\end{align*}
Here, the first step is by lemma \ref{lem: diff_log_integration}, and the second step is by lemma \ref{lem: change_to_Deltaq}.
\end{proof}
\end{corollary}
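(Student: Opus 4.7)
The plan is to reduce this corollary to a direct combination of two preceding ingredients, namely Lemma~\ref{lem: diff_log_integration} (a second-order Taylor-type control of the integral against a signed measure $q-\widehat{q}_\theta$ using the optimal transport map) and Lemma~\ref{lem: change_to_Deltaq} (a first-order control of the transport-pairing against $\nabla\log p$ by the quantity $M_{1k}(x)$). Both lemmas are already stated in the excerpt and apply directly under Assumption~\ref{assump: continuity_of_Hessian}, so no new estimate is required; the proof is essentially a triangle-inequality assembly.

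First, I would write
\begin{displaymath}
\int_\Theta \log p(k\,|\,x,\theta)\,\dd(q-\widehat{q}_\theta) = \int_\Theta \big\langle \nabla \log p(k\,|\,x,\theta), t_{\widehat{q}_\theta}^q(\theta)-\theta\big\rangle\,\dd \widehat{q}_\theta + R(x),
\end{displaymath}
where the remainder $R(x)$ comes from a second-order Taylor expansion of $\log p(k\,|\,x,\cdot)$ along the displacement $\theta \mapsto t_{\widehat{q}_\theta}^q(\theta)$. By Lemma~\ref{lem: diff_log_integration}, the Hessian bound $\matnorm{\nabla^2 \log p(k\,|\,x,\cdot)} \leq \lambda(x)$ (from Assumption~\ref{assump: continuity_of_Hessian}) yields $|R(x)| \leq \tfrac{\lambda(x)}{2} W_2^2(q,\widehat{q}_\theta)$, since $\int \|t_{\widehat{q}_\theta}^q - \mathrm{id}\|^2 \,\dd \widehat{q}_\theta = W_2^2(q,\widehat{q}_\theta)$.

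Second, I would bound the linear term using Lemma~\ref{lem: change_to_Deltaq}, which gives
\begin{displaymath}
\Big|\int_\Theta \big\langle \nabla \log p(k\,|\,x,\theta), t_{\widehat{q}_\theta}^q(\theta)-\theta\big\rangle\,\dd \widehat{q}_\theta\Big| \leq M_{1k}(x)\, W_2(q,\widehat{q}_\theta),
\end{displaymath}
where $M_{1k}(x) = \lambda(x)\sqrt{\mb E_{\widehat{q}_\theta} \|\theta-\theta^\ast\|^2} + \|\nabla \log p(k\,|\,x,\theta^\ast)\|$. Combining the two displays via the triangle inequality gives the claimed bound $M_{1k}(x)W_2(q,\widehat{q}_\theta) + \tfrac{\lambda(x)}{2}W_2^2(q,\widehat{q}_\theta)$.

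There is no real obstacle here: the result is a clean two-line corollary whose only subtlety is recognizing that the Taylor remainder controlled in Lemma~\ref{lem: diff_log_integration} pairs naturally with the first-order bound of Lemma~\ref{lem: change_to_Deltaq}, so that the final estimate exhibits the expected mixed $W_2 + W_2^2$ dependence. The only care I would take is to make sure that $t_{\widehat{q}_\theta}^q$ exists, which is guaranteed whenever $\widehat{q}_\theta \in \ms P_2^r(\Theta)$ (true here since $\widehat{q}_\theta$ is characterized by the density formula~\eqref{MF_solution_form}), so Lemma~\ref{lem: diff_log_integration} is applicable without further qualification.
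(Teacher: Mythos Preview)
Your proposal is correct and matches the paper's own proof exactly: apply Lemma~\ref{lem: diff_log_integration} to separate out the Taylor remainder bounded by $\tfrac{\lambda(x)}{2}W_2^2(q,\widehat{q}_\theta)$, then apply Lemma~\ref{lem: change_to_Deltaq} to bound the linear transport term by $M_{1k}(x)W_2(q,\widehat{q}_\theta)$. Your added remark on the existence of $t_{\widehat{q}_\theta}^q$ via $\widehat{q}_\theta\in\ms P_2^r(\Theta)$ is a sensible justification that the paper leaves implicit.
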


\begin{lemma}\label{lem: missing_data_info}
Recall that $\Delta_q = \int_\Theta\theta\,\dd(q - \widehat{q}_\theta)$. Let $\wht I_S(\theta^\ast)$ be the sample missing data information, i.e.
\begin{equation}
\wht I_S(\theta^\ast) = \frac{1}{n}\sum_{i=1}^n\sum_{z=1}^K p(z\,|\,X_i,\theta^\ast)\big[\nabla\log p(z\,|\,X_i,\theta^\ast)\big]\big[\nabla\log p(z\,|\,X_i,\theta^\ast)\big]^T.
\end{equation}
Then, under Assumption \ref{assump: continuity_of_Hessian} we have
\begin{align*}
&\quad\,\bigg|\big\langle\Delta_\mu, \wht I_S(\theta^\ast)\Delta_q\big\rangle - \frac{1}{n}\sum_{i=1}^n\sum_{z,l=1}^K \Phi(\widehat{q}_\theta, X_i)(z)\Phi(\widehat{q}_\theta, X_i)(l)\\
&\qquad\qquad\qquad\qquad\qquad\cdot 
\int_\Theta\Big\langle\nabla\log\frac{p(z\,|\,X_i,\theta)}{p(l\,|\,X_i,\theta)}, t_{\widehat{q}_\theta}^q(\theta) - \theta\Big\rangle\dd\widehat{q}_\theta(\theta)\\
&\qquad\qquad\qquad\qquad\qquad\cdot\int_\Theta\big\langle\nabla\log p(z\,|\,X_i,\theta), t_{\widehat{q}_\theta}^\mu(\theta)-\theta\big\rangle\,\dd\widehat{q}_\theta(\theta) \bigg|\\
&\leq 2KW_2(q, \widehat{q}_\theta)W_2(\mu, \widehat{q}_\theta)\cdot\frac{1}{n}\sum_{i=1}^n\bigg[2\sqrt{\mb E_{\widehat{q}_\theta}\|\theta-\theta^\ast\|^2}S_1(X_i)\lambda(X_i)\\
&\qquad\qquad\qquad\qquad\qquad\qquad\qquad+ 
K\mb E_{\widehat{q}_\theta}\|\theta-\theta^\ast\|^2\lambda(X_i)^2\\
&\qquad\qquad\qquad\qquad\qquad\qquad\qquad+
2S_2(X_i)\Big(S_1(x) \sqrt{\mb E_{\widehat{q}_\theta}\|\theta-\theta^\ast\|^2} + \frac{K\lambda(x)}{2}\cdot \mb E_{\widehat{q}_\theta}\|\theta-\theta^\ast\|^2\Big)\bigg].
\end{align*}
\end{lemma}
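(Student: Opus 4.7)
\medskip

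\noindent\textbf{Proof proposal.} The plan is to rewrite both sides of the claimed inequality in terms of the same elementary building blocks, and then bound the discrepancy in two successive approximation stages. Introduce the shorthands
\[
A_{i,z}=\int_\Theta\big\langle\nabla\log p(z\,|\,X_i,\theta),\,t_{\widehat q_\theta}^{q}(\theta)-\theta\big\rangle\,\dd\widehat q_\theta,\quad
B_{i,z}=\int_\Theta\big\langle\nabla\log p(z\,|\,X_i,\theta),\,t_{\widehat q_\theta}^{\mu}(\theta)-\theta\big\rangle\,\dd\widehat q_\theta,
\]
and their ``linearized'' counterparts $\widetilde A_{i,z}=\langle\nabla\log p(z\,|\,X_i,\theta^\ast),\Delta_q\rangle$ and $\widetilde B_{i,z}=\langle\nabla\log p(z\,|\,X_i,\theta^\ast),\Delta_\mu\rangle$. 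Using $\nabla\log\tfrac{p(z|X_i,\theta)}{p(l|X_i,\theta)}=\nabla\log p(z|X_i,\theta)-\nabla\log p(l|X_i,\theta)$ and the identity $\sum_l\Phi(\widehat q_\theta,X_i)(l)=1$, the right-hand double sum in the statement factorizes as
\[
E=\tfrac{1}{n}\sum_{i=1}^n\sum_{z=1}^K\Phi(\widehat q_\theta,X_i)(z)\,A_{i,z}B_{i,z}\ -\ \tfrac{1}{n}\sum_{i=1}^n\Big(\!\sum_z\Phi(\widehat q_\theta,X_i)(z)A_{i,z}\!\Big)\!\Big(\!\sum_z\Phi(\widehat q_\theta,X_i)(z)B_{i,z}\!\Big).
\]

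\smallskip

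In the first stage, I will replace $A_{i,z}$ and $B_{i,z}$ inside $E$ by $\widetilde A_{i,z}$ and $\widetilde B_{i,z}$ via Lemma~\ref{lem: change_to_Deltaq}, which yields $|A_{i,z}-\widetilde A_{i,z}|\le\lambda(X_i)\,W_2(q,\widehat q_\theta)\sqrt{\mb E_{\widehat q_\theta}\|\theta-\theta^\ast\|^2}$, and analogously for $B$. Using the bounds $|\widetilde A_{i,z}|\le\|\nabla\log p(z|X_i,\theta^\ast)\|\,W_2(q,\widehat q_\theta)$, $|\widetilde B_{i,z}|\le\|\nabla\log p(z|X_i,\theta^\ast)\|\,W_2(\mu,\widehat q_\theta)$, and $\sum_z\|\nabla\log p(z|X_i,\theta^\ast)\|\le S_1(X_i)$, a product-rule expansion produces error contributions of orders $\lambda(X_i)^2\,\mb E_{\widehat q_\theta}\|\theta-\theta^\ast\|^2$ and $S_1(X_i)\lambda(X_i)\sqrt{\mb E_{\widehat q_\theta}\|\theta-\theta^\ast\|^2}$, each multiplied by $W_2(q,\widehat q_\theta)W_2(\mu,\widehat q_\theta)$, which accounts for the first two summands of the stated bound (after multiplication by the factor $K$ arising from summing $\Phi(\widehat q_\theta,X_i)(\cdot)$ over $z$).

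\smallskip

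In the second stage, after the linearization I obtain
\[
\widetilde E=\tfrac{1}{n}\sum_i\sum_z\Phi(\widehat q_\theta,X_i)(z)\widetilde A_{i,z}\widetilde B_{i,z}-\tfrac{1}{n}\sum_i\Big(\sum_z\Phi(\widehat q_\theta,X_i)(z)\widetilde A_{i,z}\Big)\!\Big(\!\sum_z\Phi(\widehat q_\theta,X_i)(z)\widetilde B_{i,z}\Big),
\]
and I replace each $\Phi(\widehat q_\theta,X_i)(z)$ by $p(z|X_i,\theta^\ast)$ using Lemma~\ref{lem: diff_Phimu_thetastar}. The key point is the score identity $\sum_z p(z|X_i,\theta^\ast)\nabla\log p(z|X_i,\theta^\ast)=0$, which causes the cross-product term to vanish exactly, while the first term reduces to $\langle\Delta_\mu,\widehat I_S(\theta^\ast)\Delta_q\rangle$ by the definition~\eqref{eqn: missing_information}. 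Each $\Phi\to p$ substitution costs at most $S_1(X_i)W_2(\mu,\widehat q_\theta)+\tfrac{K\lambda(X_i)}{2}W_2^2(\mu,\widehat q_\theta)$-type error (by Lemma~\ref{lem: diff_Phimu_thetastar} with $\mu$ replaced by $\widehat q_\theta$, since $W_2(\widehat q_\theta,\delta_{\theta^\ast})=\sqrt{\mb E_{\widehat q_\theta}\|\theta-\theta^\ast\|^2}$), and each such factor is multiplied by a quadratic functional in the scores bounded by $\sum_z\|\nabla\log p(z|X_i,\theta^\ast)\|^2=S_2(X_i)$; this produces the third summand of the claimed bound.

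\smallskip

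Combining the two stages by the triangle inequality and collecting the three types of error terms gives the required upper bound. The main obstacle is not any single estimate but the bookkeeping: the cross term that telescopes away in stage two is itself only approximately zero after stage one, so one has to keep track of the mixed errors carefully to ensure that stage-one remainders do not pick up the same $\lambda(X_i)^2$ factor twice. I plan to handle this by carrying the errors through the product expansion $xy-\tilde x\tilde y=(x-\tilde x)y+\tilde x(y-\tilde y)$ symmetrically in $(q,\mu)$, so that a single factor of $W_2(q,\widehat q_\theta)W_2(\mu,\widehat q_\theta)$ always appears and the asymmetric Cauchy--Schwarz bookkeeping stays clean.
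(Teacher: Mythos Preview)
Your proposal is correct and follows essentially the same route as the paper's proof: both linearize the $\int_\Theta\langle\nabla\log p(\cdot|X_i,\theta),\,t_{\widehat q_\theta}^{\,\cdot}(\theta)-\theta\rangle\,\dd\widehat q_\theta$ factors via Lemma~\ref{lem: change_to_Deltaq}, replace $\Phi(\widehat q_\theta,X_i)(\cdot)$ by $p(\cdot|X_i,\theta^\ast)$ via Lemma~\ref{lem: diff_Phimu_thetastar}, and use the score identity $\sum_z p(z|X_i,\theta^\ast)\nabla\log p(z|X_i,\theta^\ast)=0$ to collapse the leading term to $\langle\Delta_\mu,\widehat I_S(\theta^\ast)\Delta_q\rangle$. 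The only difference is organizational: the paper performs all substitutions simultaneously and expands the product into four remainder terms $I_1,\dots,I_4$, whereas you do it in two sequential stages; your factorization of $E$ (separating the diagonal and cross-product pieces) makes the bookkeeping a bit cleaner but yields the same error contributions.
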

\begin{proof}
For simplicity, let
\begin{align*}
M_\mu^n(z)(x) &= \big\langle\nabla\log p(z\,|\,x,\theta^\ast), \Delta_\mu\big\rangle\\
R_\mu^n(z)(x) &= \int_\Theta\big\langle\nabla\log p(z\,|\,x,\theta), t_{\widehat{q}_\theta}^\mu(\theta)-\theta\big\rangle\,\dd\widehat{q}_\theta - M_\mu^n(z)(x).
\end{align*}
Similarly, let
\begin{align*}
M_q^n(z, l)(x) &= \Big\langle\nabla\log\frac{p(z\,|\,x,\theta^\ast)}{p(l\,|\,x,\theta^\ast)}, \Delta_q\Big\rangle\\
R_q^n(z, l)(x) &= \int_\Theta\Big\langle\nabla\log\frac{p(z\,|\,x,\theta)}{p(l\,|\,x,\theta)}, t_{\widehat{q}_\theta}^q(\theta)-\theta\Big\rangle\,\dd\widehat{q}_\theta - M_q^n(z,l)(x).
\end{align*}
Also, write
\begin{align*}
R^n_z(x) &= \Phi(\widehat{q}_\theta, x)(z) - p(z\,|\,x,\theta^\ast)
\end{align*}
for $z = 1, 2, \cdots, K$. So
\begin{align*}
&\quad\,
\bigg|\frac{1}{n}\sum_{i=1}^n\sum_{z,l=1}^K \Phi(\widehat{q}_\theta, X_i)(z)\Phi(\widehat{q}_\theta, X_i)(l) \cdot 
\int_\Theta\Big\langle\nabla\log\frac{p(z\,|\,X_i,\theta)}{p(l\,|\,X_i,\theta)}, t_{\widehat{q}_\theta}^q(\theta) - \theta\Big\rangle\dd\widehat{q}_\theta(\theta)\\
&\qquad\qquad\qquad\qquad\qquad\qquad\qquad\qquad\qquad\cdot\int_\Theta\big\langle\nabla\log p(z\,|\,X_i,\theta), t_{\widehat{q}_\theta}^\mu(\theta)-\theta\big\rangle\,\dd\widehat{q}_\theta(\theta)\\
&\qquad\,
- \frac{1}{n}\sum_{i=1}^n\sum_{z,l=1}^K p(z\,|\,X_i,\theta^\ast)\cdot p(l\,|\,X_i,\theta^\ast)\cdot \Big\langle\nabla\log\frac{p(z\,|\,x,\theta^\ast)}{p(l\,|\,x,\theta^\ast)}, \Delta_q\Big\rangle\\ 
&\qquad\qquad\qquad\qquad\qquad\qquad\qquad\qquad\qquad\qquad\qquad
\cdot\big\langle\nabla\log p(z\,|\,x,\theta^\ast), \Delta_\mu\big\rangle\bigg|\\
&= \bigg|\frac{1}{n}\sum_{i=1}^n\sum_{z,l=1}^K \big(R_z^n(X_i) + p(z\,|\,X_i, \theta^\ast)\big)\big(R_l^n(X_i) + p(l\,|\,X_i, \theta^\ast)\big)\\
&\qquad\qquad\qquad\qquad\qquad
\cdot \big(M_q^n(z,l)(X_i) + R_q^n(z,l)(X_i)\big)\big(M_\mu^n(z)(X_i) + R_\mu^n(z)(X_i)\big)\\
&\qquad
- \frac{1}{n}\sum_{i=1}^n\sum_{z,l=1}^K p(z\,|\,X_i,\theta^\ast)\cdot p(l\,|\,X_i,\theta^\ast)\cdot M_q^n(z,l)(X_i) M_\mu^n(z)(X_i)\bigg|\\
&= \bigg|\frac{1}{n}\sum_{i=1}^n\sum_{z,l=1}^K R_q^n(z,l)(X_i)\big(M_\mu^n(z)(X_i) + R_\mu^n(z)(X_i)\big)\Phi(\widehat{q}_\theta, X_i)(l)\Phi(\widehat{q}_\theta, X_i)(z)\\
&\qquad + \frac{1}{n}\sum_{i=1}^n\sum_{z,l=1}^KM_q^n(z,l)(X_i)R_\mu^n(z)(X_i)\Phi(\widehat{q}_\theta, X_i)(z)\Phi(\widehat{q}_\theta, X_i)(l)\\
&\qquad + \frac{1}{n}\sum_{i=1}^n\sum_{z,l=1}^KM_q^n(z,l)(X_i)M_\mu^n(z)(X_i)R_z^n(X_i)\Phi(\widehat{q}_\theta, X_i)(l)\\
&\qquad + \frac{1}{n}\sum_{i=1}^n\sum_{z,l=1}^KM_q^n(z,l)(X_i)M_\mu^n(z)(X_i)p(z\,|\,X_i, \theta^\ast)R_l^n(X_i) \bigg|\\
&=: \bigg|\frac{1}{n}(I_1 + I_2 + I_3 + I_4)\bigg|.
\end{align*}
We will bound $I_1, \cdots, I_4$ separately. For remainder terms, by Lemma \ref{lem: change_to_Deltaq},
\begin{displaymath}
\big|R_\mu^n(z)(x)\big| \leq \lambda(x)W_2(\mu, \widehat{q}_\theta)\sqrt{\mb E_{\widehat{q}_\theta}\|\theta-\theta^\ast\|^2},
\end{displaymath}
and
\begin{displaymath}
\big|R_q^n(z, l)(x)\big| \leq 2\lambda(x)W_2(q,\widehat{q}_\theta)\sqrt{\mb E_{\widehat{q}_\theta}\|\theta-\theta^\ast\|^2}.
\end{displaymath}
By lemma \ref{lem: diff_Phimu_thetastar}, 
\begin{align*}
|R_z^n(x)| 
&= \big|\Phi(\widehat{q}_\theta, x)(z) - \Phi(\delta_{\theta^\ast}, x)(z)\big|\\
&\leq S_1(x)W_2(\widehat{q}_\theta, \delta_{\theta^\ast}) + \frac{K\lambda(x)}{2}\cdot W_2^2(\widehat{q}_\theta, \delta_{\theta^\ast})\\
&= S_1(x) \sqrt{\mb E_{\widehat{q}_\theta}\|\theta-\theta^\ast\|^2} + \frac{K\lambda(x)}{2}\cdot \mb E_{\widehat{q}_\theta}\|\theta-\theta^\ast\|^2.
\end{align*}
For leading terms, by Cauchy--Schwarz inequality,
\begin{displaymath}
\big|M_\mu^n(z)(x)\big| = \big|\big\langle\nabla\log p(z\,|\,x,\theta^\ast), \Delta_\mu\big\rangle\big| \leq \|\nabla\log p(z\,|\,x,\theta^\ast)\|\cdot W_2(\mu, \widehat{q}_\theta),
\end{displaymath}
and
\begin{displaymath}
\big|M_q^n(z, l)(x)\big| \leq \Big(\|\nabla\log p(z\,|\,x,\theta^\ast)\| + \|\nabla\log p(l\,|\,x,\theta^\ast)\|\Big)\cdot W_2(\mu, \widehat{q}_\theta).
\end{displaymath}
By lemma \ref{lem: change_to_Deltaq}
\begin{displaymath}
\big|M_\mu^n(z)(x) + R_\mu^n(z)(x)\big| = \bigg|\int_\Theta\big\langle\nabla\log p(z\,|\,x,\theta), t_{\widehat{q}_\theta}^\mu(\theta)-\theta\big\rangle\,\dd\widehat{q}_\theta\bigg| \leq M_{1z}(x)W_2(\mu, \widehat{q}_\theta).
\end{displaymath}
Therefore, we can derive the bounds
\begin{align*}
    |I_1| &\leq \sum_{i=1}^n\sum_{z,l=1}^K 2\lambda(X_i)W_2(q, \widehat{q}_\theta)\sqrt{\mb E_{\widehat{q}_\theta}\|\theta-\theta^\ast\|^2} \cdot M_{1z}(X_i)W_2(\mu, \widehat{q}_\theta)\cdot1\cdot 1\\
    &= 2KW_2(q, \widehat{q}_\theta)W_2(\mu, \widehat{q}_\theta)\!\!\sum_{i=1}^n\!\Big(\!K\lambda(X_i)^2\mb E_{\widehat{q}_\theta}\|\theta-\theta^\ast\|^2 \!+\! S_1(X_i)\lambda(X_i)\sqrt{E_{\widehat{q}_\theta}\|\theta-\theta^\ast\|^2}\Big);\\
    |I_2| &\leq \sum_{i=1}^n\sum_{z,l=1}^K \Big(\|\nabla\log p(z\,|\,X_i,\theta^\ast)\| + \|\nabla\log p(l\,|\,X_i,\theta^\ast)\|\Big)W_2(\mu, \widehat{q}_\theta)\\
    &\qquad\qquad\qquad\qquad
    \cdot \lambda(X_i)W_2(\mu, \widehat{q}_\theta)\sqrt{\mb E_{\widehat{q}_\theta}\|\theta-\theta^\ast\|^2}\cdot 1\cdot 1\\
    &= 2KW_2(q, \widehat{q}_\theta)W_2(\mu, \widehat{q}_\theta)\sqrt{\mb E_{\widehat{q}_\theta}\|\theta-\theta^\ast\|^2}\sum_{i=1}^n\lambda(X_i)S_1(X_i);\\
    |I_3| &\leq \sum_{i=1}^n\sum_{z,l=1}^K \Big(\|\nabla\log p(z\,|\,X_i,\theta^\ast)\| + \|\nabla\log p(l\,|\,X_i,\theta^\ast)\|\Big)W_2(\mu, \widehat{q}_\theta)\\
    &\qquad\qquad\qquad\qquad
    \cdot \|\nabla\log p(z\,|\,X_i,\theta^\ast)\| W_2(\mu, \widehat{q}_\theta) \cdot \Big(S_1(X_i) \sqrt{\mb E_{\widehat{q}_\theta}\|\theta-\theta^\ast\|^2} + \frac{K\lambda(X_i)}{2}\cdot \mb E_{\widehat{q}_\theta}\|\theta-\theta^\ast\|^2\Big)\cdot 1\\
    &= W_2(q, \widehat{q}_\theta)W_2(\mu, \widehat{q}_\theta)\sum_{i=1}^n\Big(S_1(X_i) \sqrt{\mb E_{\widehat{q}_\theta}\|\theta-\theta^\ast\|^2} + \frac{K\lambda(X_i)}{2}\cdot \mb E_{\widehat{q}_\theta}\|\theta-\theta^\ast\|^2\Big)\Big(KS_2(X_i) + S_1(X_i)^2\Big)\\
    &\leq 2KW_2(q, \widehat{q}_\theta)W_2(\mu, \widehat{q}_\theta)\sum_{i=1}^nS_2(X_i)\Big(S_1(X_i) \sqrt{\mb E_{\widehat{q}_\theta}\|\theta-\theta^\ast\|^2} + \frac{K\lambda(X_i)}{2}\cdot \mb E_{\widehat{q}_\theta}\|\theta-\theta^\ast\|^2\Big);\\
    |I_4| &\leq \sum_{i=1}^n\sum_{z,l=1}^K \Big(\|\nabla\log p(z\,|\,X_i,\theta^\ast)\| + \|\nabla\log p(l\,|\,X_i,\theta^\ast)\|\Big)W_2(\mu, \widehat{q}_\theta)\\
    &\qquad\qquad\qquad\qquad
    \cdot \|\nabla\log p(z\,|\,X_i,\theta^\ast)\| W_2(\mu, \widehat{q}_\theta) \cdot 1 \cdot \Big(S_1(X_i) \sqrt{\mb E_{\widehat{q}_\theta}\|\theta-\theta^\ast\|^2} + \frac{K\lambda(X_i)}{2}\cdot \mb E_{\widehat{q}_\theta}\|\theta-\theta^\ast\|^2\Big)\\
    &= W_2(q, \widehat{q}_\theta)W_2(\mu, \widehat{q}_\theta)\sum_{i=1}^n\Big(S_1(X_i) \sqrt{\mb E_{\widehat{q}_\theta}\|\theta-\theta^\ast\|^2} + \frac{K\lambda(X_i)}{2}\cdot \mb E_{\widehat{q}_\theta}\|\theta-\theta^\ast\|^2\Big)\Big(KS_2(X_i) + S_1(X_i)^2\Big)\\
    &\leq 2KW_2(q, \widehat{q}_\theta)W_2(\mu, \widehat{q}_\theta)\sum_{i=1}^nS_2(X_i)\Big(S_1(X_i) \sqrt{\mb E_{\widehat{q}_\theta}\|\theta-\theta^\ast\|^2} + \frac{K\lambda(X_i)}{2}\cdot \mb E_{\widehat{q}_\theta}\|\theta-\theta^\ast\|^2\Big).
\end{align*}
Notice that
\begin{align*}
&\quad\,
\frac{1}{n}\sum_{i=1}^n\sum_{z,l=1}^K p(z\,|\,X_i,\theta^\ast)\cdot p(l\,|\,X_i,\theta^\ast)\cdot \big\langle\nabla\log p(z\,|\,X_i,\theta^\ast), \Delta_q\big\rangle
\cdot\big\langle\nabla\log p(z\,|\,X_i,\theta^\ast), \Delta_\mu\big\rangle
\\
&= \Delta_q^T \frac{1}{n}\sum_{i=1}^n \Big(\sum_{z=1}^K p(z\,|\,X_i,\theta^\ast)\nabla\log p(z\,|\,X_i,\theta^\ast)\Big)\Big(\sum_{l=1}^K p(l\,|\,X_i,\theta^\ast)\nabla\log p(l\,|\,X_i,\theta^\ast)\Big)^T
\Delta_\mu\\
&= \Delta_q^T \frac{1}{n}\sum_{i=1}^n \Big(\sum_{z=1}^K \nabla p(z\,|\,X_i,\theta^\ast)\Big)\Big(\sum_{l=1}^K \nabla p(l\,|\,X_i,\theta^\ast)\Big)^T \Delta_\mu\\
&= 0.
\end{align*}
So
\begin{align*}
&\quad\,
\frac{1}{n}\sum_{i=1}^n\sum_{z,l=1}^K p(z\,|\,X_i,\theta^\ast)\, p(l\,|\,X_i,\theta^\ast)\cdot \Big\langle\nabla\log\frac{p(z\,|\,X_i,\theta^\ast)}{p(l\,|\,X_i,\theta^\ast)}, \Delta_q\Big\rangle
\cdot\big\langle\nabla\log p(z\,|\,X_i,\theta^\ast), \Delta_\mu\big\rangle\\
&= \frac{1}{n}\sum_{i=1}^n\sum_{z,l=1}^K p(z\,|\,X_i,\theta^\ast)\, p(l\,|\,X_i,\theta^\ast)\cdot \big\langle\nabla\log p(z\,|\,X_i,\theta^\ast), \Delta_q\big\rangle
\cdot\big\langle\nabla\log p(z\,|\,X_i,\theta^\ast), \Delta_\mu\big\rangle\\
&= \frac{1}{n}\sum_{i=1}^n\sum_{z=1}^K p(z\,|\,X_i,\theta^\ast)\Delta_q^T\big[\nabla\log p(z\,|\,X_i,\theta^\ast)\big]\big[\nabla\log p(z\,|\,X_i,\theta^\ast)\big]^T\Delta_\mu\\
&= \big\langle\Delta_\mu, \wht I_S(\theta^\ast)\Delta_q\big\rangle. 
\end{align*}
As a conclusion, we have shown that
\begin{align*}
&\quad\,\bigg|\big\langle\Delta_\mu, \wht I_S(\theta^\ast)\Delta_q\big\rangle - \frac{1}{n}\sum_{i=1}^n\sum_{z,l=1}^K \Phi(\widehat{q}_\theta, X_i)(z)\Phi(\widehat{q}_\theta, X_i)(l)\\
&\qquad\qquad\qquad\qquad\qquad\cdot 
\int_\Theta\Big\langle\nabla\theta\log\frac{p(z\,|\,X_i,\theta)}{p(l\,|\,X_i,\theta)}, t_{\widehat{q}_\theta}^q(\theta) - \theta\Big\rangle\dd\widehat{q}_\theta(\theta)\\
&\qquad\qquad\qquad\qquad\qquad\cdot\int_\Theta\big\langle\nabla\log p(z\,|\,X_i,\theta), t_{\widehat{q}_\theta}^\mu(\theta)-\theta\big\rangle\,\dd\widehat{q}_\theta(\theta) \bigg|\\
&\leq 2KW_2(q, \widehat{q}_\theta)W_2(\mu, \widehat{q}_\theta)\cdot\frac{1}{n}\sum_{i=1}^n\bigg[2\sqrt{\mb E_{\widehat{q}_\theta}\|\theta-\theta^\ast\|^2}S_1(X_i)\lambda(X_i)\\
&\qquad\qquad\qquad\qquad\qquad\qquad\qquad+ 
K\mb E_{\widehat{q}_\theta}\|\theta-\theta^\ast\|^2\lambda(X_i)^2\\
&\qquad\qquad\qquad\qquad\qquad\qquad\qquad+
2S_2(X_i)\Big(S_1(X_i) \sqrt{\mb E_{\widehat{q}_\theta}\|\theta-\theta^\ast\|^2} + \frac{K\lambda(X_i)}{2}\cdot \mb E_{\widehat{q}_\theta}\|\theta-\theta^\ast\|^2\Big)\bigg].
\end{align*}
\end{proof}

\subsection{Proof of Lemma~\ref{lem: functional_Wn}}\label{app: proof_C.1}
Just note that
\begin{align*}
&\quad\,\KL(\rho_1\otimes\cdots\otimes\rho_m\,\|\,\pi_n)\\
&= \int_\Theta\log\frac{\rho_1(\theta_1)\cdots\rho_m(\theta_m)}{C(\pi_\theta, X^n)\pi_\theta(\theta)\prod_{i=1}^np(X_i\,|\,\theta)}\,\dd\rho_1(\theta_1)\cdots\dd\rho_m(\theta_m)\\
&= -\int_\Theta \sum_{i=1}^n\log p(X_i\,|\,\theta)\,\dd\rho_1(\theta_1)\cdots\dd\rho_m(\theta_m) + \KL(\rho_1\otimes\cdots\otimes\rho_m\,\|\,\Pi_\theta) + C(\pi_\theta, X^n)\\
&= \int_\Theta \sum_{i=1}^n \log\frac{p(X_i\,|\,\theta^\ast)}{p(X_i\,|\,\theta)}\,\dd\rho_1(\theta_1)\cdots\dd\rho_m(\theta_m) + \KL(\rho_1\otimes\cdots\otimes\rho_m\,|\,\Pi_\theta) + C(\pi_\theta, X^n)\\
&= \widetilde W_n(\rho_1, \cdots, \rho_m) + C(\pi_\theta, X^n).
\end{align*}
Here, $C(\pi_\theta, X^n)$ is a constant that does not depend on $\rho_1, \cdots, \rho_m$ varying from line to line. By definition, we have $(\wht q_1, \cdots, \wht q_m) = \argmin\widetilde W_n(\rho_1, \cdots, \rho_m)$.

To show the second part, applying the first-order optimality condition of $\widetilde W_n$ with respect to $\wht q_j$ yields that
\begin{align*}
-\int_{\Theta_{-j}} \sum_{i=1}^n \log p(X_i\,|\,\theta) + \log \pi_\theta(\theta)\,\dd\wht q_{-j}(\theta_{-j}) + \log\wht q_j(\theta_j)
\end{align*}
is a constant a.e. over the support $\{\wht q_j > 0\}$. This implies the desired result~\eqref{eqn: FOC_qj}.

\subsection{Proof of Lemma~\ref{lem: prob_antilde}}\label{app: proof_C.2}
Note that
\begin{align*}
-\mb E_{\theta^\ast} \int_{\widetilde\Theta}\sum_{i=1}^n \log\frac{p(X_i\,|\,\theta)}{p(X_i\,|\,\theta^\ast)}\,\dd\widetilde Q 
&= n\int_{\widetilde\Theta}\!\int_{\m X}\log\frac{p(x\,|\,\theta^\ast)}{p(x\,|\,\theta)}p(x\,|\,\theta)\,\dd x\dd\widetilde Q\\
&= n\int_{\widetilde \Theta}\KL\big(p(\cdot\,|\,\theta^\ast)\,\|\,p(\cdot\,|\,\theta)\big)\,\dd\widetilde Q\\
&\leq \tilde c_4n\varepsilon_n^2 \widetilde Q(\widetilde\Theta).
\end{align*}
Here, the last inequality is due to Assumption~\ref{assump: prior condition}. There, we obtain that
\begin{align*}
\mb P_{\theta^\ast}(\widetilde{\m A}_n)
&= \mb P_{\theta^\ast}\bigg(\frac{1}{\widetilde Q(\widetilde\Theta)}\int_{\widetilde \Theta}\sum_{i=1}^n\log\frac{p(X_i\,|\,\theta)}{p(X_i\,|\,\theta^\ast)}\,\dd\widetilde Q \leq -(\tilde c_4 + 1)n\varepsilon_n^2\bigg)\\
&\leq \mb P_{\theta^\ast}\bigg(\frac{1}{\widetilde Q(\widetilde\Theta)}\int_{\widetilde \Theta}\sum_{i=1}^n\log\frac{p(X_i\,|\,\theta)}{p(X_i\,|\,\theta^\ast)}\,\dd\widetilde Q - \mb E_{\theta^\ast}\frac{1}{\widetilde Q(\widetilde\Theta)}\int_{\widetilde \Theta}\sum_{i=1}^n\log\frac{p(X_i\,|\,\theta)}{p(X_i\,|\,\theta^\ast)}\,\dd\widetilde Q \leq -n\varepsilon_n^2\bigg)\\
&\stackrel{\ri}{\leq} \frac{1}{n^2\varepsilon_n^4}\mb V\bigg(\frac{1}{\widetilde Q(\widetilde\Theta)}\int_{\widetilde \Theta}\sum_{i=1}^n\log\frac{p(X_i\,|\,\theta)}{p(X_i\,|\,\theta^\ast)}\,\dd\widetilde Q\bigg)
= \frac{1}{n\varepsilon_n^4}\mb V\bigg(\frac{1}{\widetilde Q(\widetilde\Theta)}\int_{\widetilde \Theta}\log\frac{p(X\,|\,\theta)}{p(X\,|\,\theta^\ast)}\,\dd\widetilde Q\bigg)\\
&\leq \frac{1}{n\varepsilon_n^4}\cdot\frac{1}{\widetilde Q(\widetilde\Theta)}\int_{\widetilde\Theta}\!\int_{\m X}\Big(\log\frac{p(x\,|\,\theta^\ast)}{p(x\,|\,\theta)}\Big)^2 p(x\,|\,\theta^\ast)\,\dd x\dd\widetilde Q\\
&\stackrel{\rii}{\leq} \frac{\tilde c_4}{n\varepsilon_n^2}.
\end{align*}
Here, (i) is by Chebyshev's inequality; (ii) is due to Assumption~\ref{assump: prior condition}.

\subsection{Proof of Lemma~\ref{lem:functional_Vn}}\label{sec:proof_lem:functional_Vn}

We first show that
\begin{equation}\label{eqn: is_minimizer}
    \big(\,\widehat{q}_\theta(\cdot), \,\Phi(\widehat{q}_\theta, x)(\cdot)\,\big) = \argmin_{\rho, F_x} W_n(\rho, F_x).
\end{equation}
Note that for a fixed $\rho$, we can equivalently write the minimization problem of $\argmin_{F_x}W_n(\rho, F_x)$ by adding or ignoring some additive terms independent of $F_x$ as
\begin{eqnarray*}
\argmin_{F_x}W_n(\rho, F_x)
&=& \argmin_{F_x} \int_\Theta\sum_{i=1}^n\sum_{z=1}^K\log \frac{F_{X_i}(z)}{p(z\,|\,X_i,\theta)}F_{X_i}(z)\,\rho(\dd\theta)\\
&=& \argmin_{F_x} \sum_{i=1}^n\sum_{z=1}^KF_{X_i}(z)\log F_{X_i}(z) - \int_\Theta F_{X_i}(z)\log p(z\,|\,X_i,\theta)\,\rho(\dd\theta)\\
&=& \argmin_{F_x} \sum_{i=1}^n\sum_{z=1}^KF_{X_i}(z)\log\frac{F_{X_i}(z)}{\exp\{\int_\Theta\log p(z\,|\,X_i,\theta)\,\rho(\dd\theta)\}}\\
&=& \argmin_{F_x} \sum_{i=1}^n\sum_{z=1}^K F_{X_i}(z)\log\frac{F_{X_i}(z)}{\Phi(\rho, X_i)(z)}\\
&=& \argmin_{F_x} \sum_{i=1}^n D_{KL}(F_{X_i}\,\|\,\Phi(\rho, X_i)(\cdot)).
\end{eqnarray*}
This implies that given $\rho$, the minimizer of $W_n(\rho, F_x)$ is $F_x(z) = \Phi(\rho, x)(z)$. By plugging-in $F_x$ with $\Phi(\rho, x)(z)$ back into $W_n(\rho, F_x)$, it is straightforward to verify that the resulting $\min_{F_x}W_n(\rho, F_x)$ is up to a $q_\theta$-independent constant the same as the $W_n(\rho)$ functional defined in~\eqref{eqn:profile_KL}, or the (profile) objective functional $q_\theta\mapsto \min_{q_{Z^n}}D_{KL}\big(q_{\theta}\otimes q_{Z^n}\,\big\|\,\pi_n\big)$ after $q_{Z^n}$ being maxed out. As a consequence, $\wht q_\theta$ minimizes $\min_{F_x}W_n(\rho, F_x)$. Putting pieces together, we proved the first part of the lemma, that is, $\big(\wht q_\theta,\, \Phi(\wht q_\theta, \,x)\big)$ is a minimizer to functional $W_n(\rho, F_x)$.

To show the second part, notice that since $\big(\wht q_\theta,\, \Phi(\wht q_\theta, \,x)\big)$ jointly minimizes the functional $W_n(\rho, F_x)$, $\rho=\wht q_\theta$ should minimize the functional $W_n\big(\rho,\,\Phi(\wht q_\theta, \,x)\big)$ when $F_x$ is replaced by its optimum. Since, up to a $\rho$ independent constant, functional $W_n\big(\rho,\,F_x\big)$ for any $F_x$ is equivalent to 
\begin{align*}
    -\int_\Theta\sum_{i=1}^n\sum_{z=1}^KF_{X_i}(z)\log p(z,X_i\,|\,\theta)\,\rho(\dd\theta) + D_{KL}(\rho\,\|\,\pi_\theta).
\end{align*}
By applying the first order optimality condition, its first-order variation
\begin{equation*}
    -\sum_{i=1}^n\sum_{z=1}^KF_{X_i}(z)\log p(z,X_i\,|\,\theta) + 1 + \log\rho(\theta) - \log\pi_\theta(\theta)
\end{equation*}
should be a constant at its minimizer $\rho$ (proposition 7.20 in \cite{santambrogio2015optimal}) a.e.~over the support $\{\rho>0\}$, implying that
\begin{eqnarray*}
\rho(\theta)
\ \propto\  \pi_\theta(\theta)\, \exp\Big\{\sum_{i=1}^n\sum_{z=1}^KF_{X_i}(z)\log p(z,X_i\,|\,\theta)\Big\}, \quad \theta\in\Theta.
\end{eqnarray*}
In particular, the second part of the lemma can be proved by replacing $\rho$ with $\wht q_\theta$ and $F_x$ with $\Phi(\wht q_\theta,x)$ in the preceding display.

\subsection{Proof of Lemma~\ref{lemma:A_n_bound}}\label{sec:proof_lemma:A_n_bound}
Let $\tilde{\pi}_n = \frac{\pi_\theta 1_{\widetilde\Theta}}{\Pi(\widetilde\Theta)}$ be the prior density restricted to set $\widetilde\Theta\subset\Theta$. By Assumption~\ref{assump: qudratic growth of KL}, we have
\begin{align*}
&-\mb E_{\theta^\ast}\Big[\int_{\widetilde\Theta}\big[\ell_n(\theta) - \ell_n(\theta^\ast)\big]\,\tilde{\pi}_\theta(\dd \theta)\Big]
= n\int_{\widetilde\Theta}\int_{\m X}\log\frac{p(x\,|\,\theta^\ast)}{p(x\,|\,\theta)}\,p(\dd x\,|\,\theta^\ast)\,\tilde{\pi}_\theta(\dd\theta)\\
&=n\int_{\widetilde\Theta}D_{KL}\big[p(\cdot\,|\,\theta^\ast)\,\big\|\,p(\cdot\,|\,\theta)\big]\,\tilde{\pi}_\theta(\dd\theta)
\leq c_4\,n\varepsilon_n^2,
\end{align*}
where the last step is due to the definition of $\widetilde\Theta$ in Assumption~\ref{assump: prior condition}.
Therefore, by applying Chebyshev's inequality, we obtain
\begin{align*}
&\mb P_{\theta^\ast}(\m A_n)
= \mb P_{\theta^\ast}\bigg(\int_{\widetilde\Theta} \big[\ell_n(\theta) - \ell_n(\theta^\ast)\big]\,\tilde{\pi}_\theta(\dd\theta)\leq -(c_4+1)n\varepsilon_n^2\bigg)\\
&\leq \mb P_{\theta^\ast}\bigg(\int_{\widetilde\Theta}\big[\ell_n(\theta) - \ell_n(\theta^\ast)\big]\,\tilde{\pi}_\theta(\dd\theta)- \mb E_{\theta^\ast}\Big[\int_{\widetilde\Theta}\big[\ell_n(\theta) - \ell_n(\theta^\ast)\big]\,\tilde{\pi}_\theta(\dd \theta)\Big]
\leq -(c_4+1)n\varepsilon_n^2 + c_4n\varepsilon_n^2\bigg)\\
&\leq \mb P_{\theta^\ast}\bigg(\, \bigg|\int_{\widetilde\Theta}\big[\ell_n(\theta) - \ell_n(\theta^\ast)\big]\,\tilde{\pi}_\theta(\dd\theta)- \mb E_{\theta^\ast}\Big[\int_{\widetilde\Theta}\big[\ell_n(\theta) - \ell_n(\theta^\ast)\big]\,\tilde{\pi}_\theta(\dd\theta)\Big]\ \bigg| \geq n\varepsilon_n^2\bigg)\\
&\leq \frac{1}{n^2\varepsilon_n^4}\, {\rm Var}_{\theta^\ast}\bigg(\int_{\widetilde\Theta}\big[\ell_n(\theta)-\ell_n(\theta^\ast)\big]\,\tilde{\pi}_\theta(\dd\theta)\bigg)= \frac{1}{n\varepsilon_n^4}\, {\rm Var}_{\theta^\ast}\bigg(\int_{\widetilde\Theta}\log\frac{p(X\,|\,\theta)}{p(X\,|\,\theta^\ast)}\,\tilde{\pi}_\theta(\dd \theta)\bigg)\\
&\stackrel{\ri}{\leq} \frac{1}{n\varepsilon_n^4}\int_{\m X}\bigg(\int_{\widetilde\Theta}\log\frac{p(x\,|\,\theta)}{p(x\,|\,\theta^\ast)}\,\tilde{\pi}_\theta(\dd \theta)\bigg)^2\,p(\dd x\,|\,\theta^\ast)\\
&\leq \frac{1}{n\varepsilon_n^4}\int_{\m X}\int_{\widetilde\Theta}\bigg(\log\frac{p(x\,|\,\theta)}{p(x\,|\,\theta^\ast)}\bigg)^2\,\tilde{\pi}_\theta(\dd \theta)\,p(\dd x\,|\,\theta^\ast) \stackrel{\rii}{\leq} \frac{c_4}{n\varepsilon_n^2}.
\end{align*}
Here, step (i) is by the inequality ${\rm Var}(Z) \leq \mb E[Z^2]$ for any random variable $Z$, and step (ii) by the definition of $\widetilde\Theta$.
\subsection{Proof of Lemma \ref{lem: para_update}}
Recall that $\widetilde U(\theta) = nU_n(\theta) - \log\pi_\theta(\theta)$, and denote
\begin{align*}
    \widetilde U_j^{(k)}(\theta_j) = \int_{\Theta_{-j}} \widetilde U(\theta_j, \theta_{-j})\,\dd q_{-j}^{(t)}(\theta_{-j}), \quad\mbox{and}\quad \widetilde U_{j}^\ast(\theta_j) = \int_{\Theta_j}\widetilde U(\theta_j, \theta_{-j})\,\dd \wht q_{-j}(\theta_{-j}).
\end{align*} 
Since $\widetilde U$ is $\tilde\lambda_n$-strongly convex, functional
\begin{align*}
    \m F(q) = \int_\Theta \widetilde U\,\dd q + \sum_{j=1}^m \int_{\Theta_j}\log q_j\,\dd q_j
\end{align*}
is $\tilde\lambda_n$-strongly convex along generalized geodesics.

Recall that $T_\mu^\nu$ is the optimal map from $\mu$ to $\nu$ for any regular measures $\mu, \nu\in\ms P_2^r$. Notice that
\begin{align*}
    \sum_{j=1}^m\Big\|T_{q_j^{(k)}}^{q_j^{(k+1)}} - T_{q_j^{(k)}}^{\wht q_j}\Big\|^2_{L^2(q_j^{(k)};\Theta_j)}
    &= \sum_{j=1}^m\Big\|T_{q_j^{(k)}}^{\wht q_j} - \id\Big\|^2_{L^2(q_j^{(k)};\Theta_j)} - \sum_{j=1}^m\Big\|T_{q_j^{(k)}}^{q_j^{(k+1)}}-\id\Big\|^2_{L^2(q_j^{(k)};\Theta_j)} + 2J\\
    &= W_2^2(q^{(t)}, \wht q) - W_2^2(q^{(t)}, q^{(t+1)}) + 2J,
\end{align*}
where we let
\begin{align*}
    J &= \sum_{j=1}^m\Big\langle T_{q_j^{(k)}}^{q_j^{(k+1)}}-T_{q_j^{(k)}}^{\wht q_j}, T_{q_j^{(k)}}^{q_j^{(k+1)}}-\id\Big\rangle_{L^2(q_j^{(k)}; \Theta_j)}
    = \sum_{j=1}^m\Big\langle\id - T_{q_j^{(k)}}^{\wht q_j}\circ T_{q_j^{(k+1)}}^{q_j^{(k)}}, \id - T_{q_j^{(k+1)}}^{q_j^{(k)}}\Big\rangle_{L^2(q_j^{(k+1)};\Theta_j)}\\
    &\stackrel{\ri}{=} \sum_{j=1}^m\Big\langle\id - T_{q_j^{(k)}}^{\wht q_j}\circ T_{q_j^{(k+1)}}^{q_j^{(k)}}, -\tau\Big(\nabla\log q_j^{(k+1)} + \nabla\log \widetilde U_j^{(k)}\Big)\Big\rangle_{L^2(q_j^{(k+1)},\Theta_j)}\\
    &= J_1 + J_2,
\end{align*}
and
\begin{align*}
    J_1 &= \sum_{j=1}^m\Big\langle\id - T_{q_j^{(k)}}^{\wht q_j}\circ T_{q_j^{(k+1)}}^{q_j^{(k)}}, -\tau\Big(\nabla\log q_j^{(k+1)} + \nabla\log \widetilde U_j^{(k+1)}\Big)\Big\rangle_{L^2(q_j^{(k+1)},\Theta_j)}\\
    J_2 &= \sum_{j=1}^m\Big\langle\id - T_{q_j^{(k)}}^{\wht q_j}\circ T_{q_j^{(k+1)}}^{q_j^{(k)}}, \tau\Big(\nabla\widetilde U_j^{(k+1)} - \nabla\widetilde U_j^{(k)}\Big)\Big\rangle_{L^2(q_j^{(k+1)};\Theta_j)}.
\end{align*}
Here, (i) is due to the first-order optimality condition in terms of the first variation.
Then, the goal is to bound $J_1$ and $J_2$ separately. 

To bound $J_1$, notice that $\xi^{(t+1)} = \nabla \widetilde U + \nabla\log q^{(t+1)}$ is a strong subgradient of $\m F$ at $q^{(t+1)}$. Applying Lemma \ref{Lem:stong_conx_generalized} with $\mu^1 = q^{(t)}$, $\mu^2 = q^{(t+1)}$, and $\mu^3 = \wht q$ yields
\begin{align*}
    \int_{\Theta}\Big\langle\xi^{(t+1)}, T_{q^{(t)}}^{\wht q}\circ T_{q^{(t+1)}}^{q^{(t)}} - \id\Big\rangle\,\dd q^{(t+1)} \leq \m F(\wht q) - \m F(q^{(t+1)}) - \frac{\tilde\lambda_n}{2}W_2^2(q^{(t+1)}, \wht q).
\end{align*}
Notice we have
\begin{align*}
    \int_{\Theta}\Big\langle\xi^{(t+1)}, T_{q^{(t)}}^{\wht q}\circ T_{q^{(t+1)}}^{q^{(t)}} - \id\Big\rangle\,\dd q^{(t+1)}
    &= \sum_{j=1}^m\int_\Theta\Big\langle\nabla_j\widetilde U+\nabla\log q_j^{(k+1)}, T_{q_j^{(k)}}^{\wht q_j}\circ T_{q_j^{(k+1)}}^{q_j^{(k)}} - \id\Big\rangle\,\dd q^{(t+1)}\\
    &= \sum_{j=1}^m\int_\Theta\Big\langle\nabla\widetilde U_j^{(k+1)}+\nabla\log q_j^{(k+1)}, T_{q_j^{(k)}}^{\wht q_j}\circ T_{q_j^{(k+1)}}^{q_j^{(k)}} - \id\Big\rangle\,\dd q_j^{(k+1)}\\
    &= \tau^{-1}J_1,
\end{align*}
and by convexity of $\m F$ we have
\begin{align*}
    \m F(q^{(t+1)}) - \m F(\wht q)
    &\geq \int_{\Theta}\Big\langle \nabla\frac{\delta\m F}{\delta q}(\wht q), T_{\wht q}^{q^{(t+1)}}-\id\Big\rangle\,\dd \wht q + \frac{\tilde\lambda_n}{2}W_2^2(\wht q, q^{(t+1)})\\
    &= \sum_{j=1}^m\int_{\Theta}\Big\langle \nabla_j\widetilde U + \nabla\log \wht q_j, T_{\wht q_j}^{q_j^{(k+1)}}-\id\Big\rangle\,\dd \wht q + \frac{\tilde\lambda_n}{2}W_2^2(\wht q, q^{(t+1)})\\
    &= \sum_{j=1}^m\int_{\Theta_j}\Big\langle \nabla\widetilde U_j^\ast + \nabla\log \wht q_j, T_{\wht q_j}^{q_j^{(k+1)}}-\id\Big\rangle\,\dd \wht q_j + \frac{\tilde\lambda_n}{2}W_2^2(\wht q, q^{(t+1)})\\
    &= \frac{\tilde\lambda_n}{2}W_2^2(\wht q, q^{(t+1)}).
\end{align*}
Here, the last equality is because $\wht q_j\in\ms P_2^r(\Theta_j)$ is the stationary point of $\m F(\cdot\otimes \wht q_{-j})$, which implies $\nabla\widetilde U_j^\ast + \nabla\log\wht q_j = 0$. Combining all pieces above yields
\begin{align}\label{eqn: para_boundJ1}
    \tau^{-1}J_1 \leq -\tilde\lambda_nW_2^2(q^{(t+1)}, \wht q).
\end{align}

To bound $J_2$, by Cauchy--Schwarz inequality we have
\begin{align*}
    J_2 &\leq \tau\sum_{j=1}^m \Big\|\id - T_{q_j^{(k)}}^{\wht q_j}\circ T_{q_j^{(k+1)}}^{q_j^{(k)}}\Big\|_{L^2(q_j^{(k+1)};\Theta_j)}\cdot\Big\|\nabla\widetilde U_j^{(k+1)} - \nabla\widetilde U_j^{(k)}\Big\|_{L^2(q_j^{(k+1)};\Theta_j)}\\
    &\stackrel{\ri}{\leq} \tilde L_n\tau W_2(q^{(t+1)}, q^{(t)})\sum_{j=1}^m \Big\|\id - T_{q_j^{(k)}}^{\wht q_j}\circ T_{q_j^{(k+1)}}^{q_j^{(k)}}\Big\|_{L^2(q_j^{(k+1)};\Theta_j)}\\
    &\stackrel{\rii}{\leq} \frac{W_2^2(q^{(t+1)}, q^{(t)})}{2} + \frac{\tilde L_n^2\tau^2m}{2}\sum_{j=1}^m\Big\|\id - T_{q_j^{(k)}}^{\wht q_j}\circ T_{q_j^{(k+1)}}^{q_j^{(k)}}\Big\|_{L^2(q_j^{(k+1)};\Theta_j)}^2\\
    &= \frac{W_2^2(q^{(t+1)}, q^{(t)})}{2} + \frac{\tilde L_n^2\tau^2m}{2}\sum_{j=1}^m\Big\|T_{q_j^{(k)}}^{q_j^{(k+1)}} - T_{q_j^{(k)}}^{\wht q_j}\Big\|^2_{L^2(q_j^{(k)};\Theta_j)}.
\end{align*}
Here, (i) is by the fact that
\begin{align*}
    \Big\|\nabla\widetilde U_j^{(k+1)} - \nabla\widetilde U_j^{(k)}\Big\|^2
    &= \bigg\|\int_{\Theta_{-j}}\nabla_j\widetilde U(\theta_j, \theta_{-j})\,\dd(q^{(t+1)}_{-j} - q^{(t)}_{-j})\bigg\|^2\\
    &=\bigg\|\int_{\Theta_{-j}}\nabla_j\widetilde U\Big(\theta_j, T_{q_{-j}^{(t)}}^{q_{-j}^{(t+1)}}(\theta_{-j})\Big) - \nabla_j \widetilde U(\theta_j, \theta_{-j})\,\dd q_{-j}^{(t)}\bigg\|^2\\
    &\leq \int_{\Theta_{-j}}\Big\|\nabla_j \widetilde U\Big(\theta_j, T_{q_{-j}^{(t)}}^{q_{-j}^{(t+1)}}(\theta_{-j})\Big) - \nabla_j \widetilde U(\theta_j, \theta_{-j})\Big\|^2\,\dd q_{-j}^{(t)}\\
    &\leq \tilde L_n^2\int_{\Theta_{-j}} \Big\|T_{q_{-j}^{(t)}}^{q_{-j}^{(t+1)}}(\theta_{-j}) - \theta_{-j}\Big\|^2\,\dd q_{-j}^{(t)}\\
    &= \tilde L_n^2W_2^2(q_{-j}^{(t)}, q_{-j}^{(t+1)}) \leq \tilde L_n^2W_2^2(q^{(t)}, q^{(t+1)}).
\end{align*}
(ii) is by AM-GM inequality and Cauchy--Schwarz inequality.

Therefore, we have shown that
\begin{align*}
    \sum_{j=1}^m\Big\|T_{q_j^{(k)}}^{q_j^{(k+1)}} - T_{q_j^{(k)}}^{\wht q_j}\Big\|^2_{L^2(q_j^{(k)};\Theta_j)} \leq W_2^2(q^{(t)}, \wht q)  - 2\tau\tilde\lambda_n W_2^2(q^{(t+1)}, \wht q) + \tilde L_n^2\tau^2m\sum_{j=1}^m\Big\|T_{q_j^{(k)}}^{q_j^{(k+1)}} - T_{q_j^{(k)}}^{\wht q_j}\Big\|^2_{L^2(q_j^{(k)};\Theta_j)},
\end{align*}
which implies
\begin{align*}
    (1-\tilde L_n^2\tau^2m) W_2^2(q^{(t+1)}, \wht q) \leq (1-\tilde L_n^2\tau^2m)\sum_{j=1}^m\Big\|T_{q_j^{(k)}}^{q_j^{(k+1)}} - T_{q_j^{(k)}}^{\wht q_j}\Big\|^2_{L^2(q_j^{(k)};\Theta_j)} \leq W_2^2(q^{(t)}, \wht q) - 2\tau\tilde\lambda_n W_2^2(q^{(t+1)}, \wht q)
\end{align*}
since $1-\tilde L_n^2\tau^2m \geq 0$, i.e.
\begin{align*}
    (1+2\tau\tilde\lambda_n - \tilde L_n^2\tau^2m)W_2^2(q^{(t+1)}, \wht q) \leq W_2^2(q^{(t)}, \wht q).
\end{align*}
This implies the desiring result.

\subsection{Proof of Lemma \ref{lem: ULLN_of_potential_function}}

A similar lemma without the distributional dependence on $\mu$ is proved in~\cite{mei2018landscape} for studying the optimization landscape of empirical risk minimization over parameter $\theta\in\Theta$. Our proof strategy of the lemma is to discretize $\Theta$ based on an $\varepsilon$-covering of $\Theta$. Since the diameter of $\Theta$ is bounded by $R>0$, the metric entropy of $\Theta$ is bounded by $d\log\frac{3R}{\varepsilon}$. After the discretization, for any fixed $\theta$ in the $\varepsilon$-covering of $\Theta$ and those $\mu$ close to $\delta_{\theta^\ast}$, we may approximate $\big[\nabla^2U_n(\theta;\,\mu) - \nabla^2U(\theta;\,\mu)\big]$ by $\big[\nabla^2U_n(\theta, \delta_{\theta^\ast}) - \nabla^2U(\theta, \delta_{\theta^\ast})\big]$ By Lemma \ref{lem: bound_opnorm_by_metric_entropy} in Appendix~\ref{proof_lem:B_t_bound}, the operator norm $\matnorm{\nabla^2U_n(\theta, \delta_{\theta^\ast}) - \nabla^2U(\theta, \delta_{\theta^\ast})}$ at a given $\theta$ can be bounded with high probability by using the variational characterization of the operator norm and discretizing the unit ball therein. Finally, we can uniformly control the operator norm $\matnorm{\nabla^2U_n(\theta;\,\mu) - \nabla^2U(\theta;\,\mu)}$ over all $\theta\in\Theta$ and $\mu$ in a $W_2$ neighborhood of $\delta_{\theta^\ast}$ by optimally balancing between the discretization error and approximation error.


Since $U_n$ in equation~\eqref{eq:U_n_exp} depends on the $\Phi$ function that defines the updating formula for $q_{Z^n}$, we need a lemma about some perturbation bound of $\Phi$, whose proof is deferred to Section~\ref{app:proof_diff_Phimu_thetastar}. We list this lemma here since it will also be used in other proofs in this supplement.
\begin{lemma}\label{lem: diff_Phimu_thetastar}
Under assumption \ref{assump: continuity_of_Hessian}, we have
\begin{displaymath}
|\Phi(\mu, x)(z) - \Phi(\delta_{\theta^\ast}, x)(z)| \leq \sum_{k=1}^K\|\nabla\log p(k\,|\,x,\theta^\ast)\|\cdot W_2(\mu, \delta_{\theta^\ast}) + \frac{K\lambda(x)}{2}\cdot W_2^2(\mu, \delta_{\theta^\ast}).
\end{displaymath}
\end{lemma}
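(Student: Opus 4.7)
\noindent\textbf{Proof proposal for Lemma~\ref{lem: diff_Phimu_thetastar}.} The plan is to write both $\Phi(\mu,x)$ and $\Phi(\delta_{\theta^\ast},x)$ as softmax transformations of the log-conditional-likelihood ``scores'' $A_k(\mu):=\mb E_\mu[\log p(k\,|\,x,\theta)]$ and $A_k(\delta_{\theta^\ast})=\log p(k\,|\,x,\theta^\ast)$, $k\in[K]$. Noting that $\Phi(\delta_{\theta^\ast},x)(z)=p(z\,|\,x,\theta^\ast)$, the proof decomposes naturally into (i) controlling the coordinatewise score differences $|A_k(\mu)-A_k(\delta_{\theta^\ast})|$, and (ii) propagating these differences through the softmax map via its Lipschitz constant.

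For step (i), I would Taylor expand $\log p(k\,|\,x,\theta)$ around $\theta^\ast$ to second order. Under Assumption~\ref{assump: continuity_of_Hessian}(4), the Hessian $\nabla^2_\theta\log p(k\,|\,x,\theta)$ has operator norm uniformly bounded by $\lambda(x)$ on $\Theta$, so
\begin{align*}
\bigl|\log p(k\,|\,x,\theta)-\log p(k\,|\,x,\theta^\ast)-\bigl\langle \nabla\log p(k\,|\,x,\theta^\ast),\,\theta-\theta^\ast\bigr\rangle\bigr|\leq \tfrac{\lambda(x)}{2}\|\theta-\theta^\ast\|^2.
\end{align*}
Taking expectation with respect to $\mu$ and using Jensen's inequality on the linear term (noting $\|\mb E_\mu[\theta]-\theta^\ast\|\leq W_2(\mu,\delta_{\theta^\ast})$) together with $\mb E_\mu\|\theta-\theta^\ast\|^2 = W_2^2(\mu,\delta_{\theta^\ast})$ yields
\begin{align*}
|A_k(\mu)-A_k(\delta_{\theta^\ast})|\leq \|\nabla\log p(k\,|\,x,\theta^\ast)\|\,W_2(\mu,\delta_{\theta^\ast})+\tfrac{\lambda(x)}{2}W_2^2(\mu,\delta_{\theta^\ast}).
\end{align*}

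For step (ii), define $\pi_z(a)=e^{a_z}/\sum_k e^{a_k}$ and parametrize the segment $a(t)=(1-t)A(\delta_{\theta^\ast})+tA(\mu)$ for $t\in[0,1]$. A direct computation gives $\partial_j\pi_z(a)=\pi_z(a)(\delta_{jz}-\pi_j(a))$, and since $\pi_z(a)\in[0,1]$ and $|\delta_{jz}-\pi_j(a)|\leq 1$, each partial derivative is bounded by $1$ in absolute value. Integrating along the segment gives the softmax Lipschitz bound
\begin{align*}
|\Phi(\mu,x)(z)-\Phi(\delta_{\theta^\ast},x)(z)|=\bigl|\pi_z(A(\mu))-\pi_z(A(\delta_{\theta^\ast}))\bigr|\leq \sum_{k=1}^K|A_k(\mu)-A_k(\delta_{\theta^\ast})|.
\end{align*}
Substituting the bound from step (i) into this sum produces exactly the claimed inequality.

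I do not anticipate a main obstacle: both steps are elementary, and the only mild subtlety is ensuring the softmax Lipschitz constant is $1$ coordinatewise (so that summing over $k$ produces the stated $\sum_k\|\nabla\log p(k\,|\,x,\theta^\ast)\|$ prefactor and the $K\lambda(x)/2$ prefactor on the quadratic term). A tighter factor of $2\pi_z(1-\pi_z)\leq 1/2$ is available but unnecessary for the stated bound.
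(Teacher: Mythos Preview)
Your proposal is correct and essentially identical to the paper's proof: both express $\Phi$ as a softmax of the averaged log-conditional-likelihoods $A_k(\mu)$, use the mean-value/segment argument with the partial derivative bound $|\partial_j\pi_z|=|\pi_z(\delta_{jz}-\pi_j)|\leq 1$ to get $|\Phi(\mu,x)(z)-\Phi(\delta_{\theta^\ast},x)(z)|\leq\sum_k|A_k(\mu)-A_k(\delta_{\theta^\ast})|$, and then bound each $|A_k(\mu)-A_k(\delta_{\theta^\ast})|$ via a second-order Taylor expansion around $\theta^\ast$ together with $\mb E_\mu\|\theta-\theta^\ast\|^2=W_2^2(\mu,\delta_{\theta^\ast})$. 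The only cosmetic difference is ordering (the paper applies the softmax Lipschitz step first, then bounds the scores) and that the paper takes absolute values before integrating rather than invoking Jensen on the linear term, but these yield the same inequality.
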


\noindent Now let us return to the proof of Lemma~\ref{lem: ULLN_of_potential_function}.
Let $N_\varepsilon$ be the $\varepsilon$-covering number of $\Theta \subset B^d(0, R)$. Let $\{\theta_1, \cdots, \theta_{N_\varepsilon}\}$ be such an $N_\varepsilon$-covering, and let $j(\theta) := \argmin_{j\in[N_\varepsilon]}\|\theta - \theta_j\|$ denote the index corresponding to the closet point in the covering to any $\theta\in\Theta$ . By the triangular inequality, we have
\begin{align*}
    \sup_{\substack{\theta\in\Theta\\ \mu:W_2(\mu, \delta_{\theta^\ast})\leq r}}&\, \matnorm{\nabla^2 U_n(\theta;\,\mu) - \nabla^2 U(\theta;\,\mu)}
    \leq \sup_{\substack{\theta\in\Theta\\ \mu:W_2(\mu;\, \delta_{\theta^\ast})\leq r}}\matnorm{\nabla^2 U_n(\theta;\,\mu) - \nabla^2 U_n(\theta_{j(\theta)};\, \mu)}\\
    &\quad + \sup_{\substack{\theta\in\Theta\\ \mu:W_2(\mu;\, \delta_{\theta^\ast})\leq r}}\matnorm{\nabla^2 U_n(\theta_{j(\theta)};\, \mu) - \nabla^2 U(\theta_{j(\theta)};\, \mu)}\\
    &\qquad\quad + \sup_{\substack{\theta\in\Theta\\ \mu:W_2(\mu, \delta_{\theta^\ast})\leq r}}\matnorm{\nabla^2 U(\theta_{j(\theta)};\, \mu) - \nabla^2 U(\theta;\,\mu)}.
\end{align*}
Therefore, for every $t > 0$, we have the following decomposition,
\begin{displaymath}
\mb P\bigg(\sup_{\substack{\theta\in\Theta\\ \mu:W_2(\mu, \delta_{\theta^\ast})\leq r}} \matnorm{\nabla^2 U_n(\theta;\,\mu) - \nabla^2 U(\theta;\,\mu)} > t\bigg) \leq \mb P(A_t) + \mb P(B_t) + \mb P(C_t),
\end{displaymath}
where
\begin{align*}
A_t &= \bigg\{\sup_{\substack{\theta\in\Theta\\ \mu: W_2(\mu, \delta_{\theta^\ast})\leq r}}\matnorm{\nabla^2U_n(\theta;\,\mu) - \nabla^2U_n(\theta_{j(\theta)}, \mu)} > \frac{t}{3}\bigg\},\\
B_t &= \bigg\{\sup_{\substack{\theta\in\Theta\\ \mu: W_2(\mu, \delta_{\theta^\ast})\leq r}}\matnorm{\nabla^2U_n(\theta_{j(\theta)}, \mu) - \nabla^2U(\theta_{j(\theta)}, \mu)} > \frac{t}{3}\bigg\},\\
C_t &= \bigg\{\sup_{\substack{\theta\in\Theta\\ \mu: W_2(\mu, \delta_{\theta^\ast})\leq r}}\matnorm{\nabla^2U(\theta_{j(\theta)}, \mu) - \nabla^2U(\theta;\,\mu)} > \frac{t}{3}\bigg\}.
\end{align*}
To bound $\mb P(A_t)$, we notice that by definition
\begin{align*}
&\quad\, \sup_{\substack{\theta\in\Theta\\ \mu: W_2(\mu, \delta_{\theta^\ast})\leq r}}\matnorm{\nabla^2U_n(\theta;\,\mu) - \nabla^2U_n(\theta_{j(\theta)}, \mu)}\\
&= \sup_{\substack{\theta\in\Theta\\ \mu: W_2(\mu, \delta_{\theta^\ast})\leq r}}\matnorm{\frac{1}{n}\sum_{i=1}^n\sum_{z=1}^K\Big[\nabla^2\log p(X_i, z\,|\,\theta) - \nabla^2\log p(X_i, z\,|\,\theta_{j(\theta)})\Big]\Phi(\mu, X_i)(z)}\\
&\leq \sup_{\substack{\theta\in\Theta\\ \mu: W_2(\mu, \delta_{\theta^\ast})\leq r}} \bigg\{\frac{1}{n}\sum_{i=1}^n\sum_{z=1}^K\matnorm{\nabla^2\log p(X_i, z\,|\,\theta) - \nabla^2\log p(X_i, z\,|\,\theta_{j(\theta)})}\Phi(\mu, X_i)(z)\bigg\}\\
&\leq \sup_{\theta\in\Theta}\bigg\{\frac{1}{n}\sum_{i=1}^n\sum_{z=1}^K\matnorm{\nabla^2\log p(X_i, z\,|\,\theta) - \nabla^2\log p(X_i,z\,|\,\theta_{j(\theta)})}\bigg\}.
\end{align*}
Therefore, we can apply the Markov inequality to obtain
\begin{align*}
\mb P(A_t)
&\leq \mb P\bigg(\sup_{\theta\in\Theta} \bigg\{\frac{1}{n}\sum_{i=1}^n\sum_{z=1}^K\matnorm{\nabla^2\log p(X_i,z\,|\,\theta)-\nabla^2\log p(X_i,z\,|\,\theta_{j(\theta)})}\bigg\} > \frac{t}{3}\bigg)\\
&\leq \frac{3}{t}\, \mb E\bigg[\sup_{\theta\in\Theta}\bigg\{\frac{1}{n}\sum_{i=1}^n\sum_{z=1}^K\matnorm{\nabla^2\log p(X_i,z\,|\,\theta) - \nabla^2\log p(X_i,z\,|\,\theta_{j(\theta)})}\bigg\}\bigg]\\
&\leq \frac{3}{t}\,\mb E \bigg[\frac{1}{n}\sum_{i=1}^n\sum_{z=1}^K\sup_{\theta\in\Theta}\,\matnorm{\nabla^2\log p(X_i,z\,|\,\theta) - \nabla^2\log p(X_i,z\,|\,\theta_{j(\theta)})}\bigg]\\
&= \frac{3}{t}\,\mb E\bigg[\sum_{z=1}^K\sup_{\theta\in\Theta}\,\matnorm{\nabla^2\log p(X,z\,|\,\theta) - \nabla^2\log p(X,z\,|\,\theta_{j(\theta)})}\bigg]\\
&\stackrel{\ri}{\leq} \frac{3}{t}\mb E \bigg[\sum_{z=1}^K \varepsilon J_z(X)\bigg] = \frac{3\varepsilon}{t}J_\ast.
\end{align*}
Here, step (i) follows from
\begin{align*}
&\quad\, \sup_{\theta\in\Theta}\matnorm{\nabla^2\log p(X,z\,|\,\theta) - \nabla^2\log p(X,z\,|\,\theta_{j(\theta)})}\\
&=\sup_{\theta\in\Theta}\frac{\matnorm{\nabla^2\log p(X,z\,|\,\theta) - \nabla^2\log p(X,z\,|\,\theta_{j(\theta)})}}{\|\theta - \theta_{j(\theta)}\|}\cdot\|\theta - \theta_{j(\theta)}\|\\
&\leq \sup_{\theta\in\Theta}\frac{\matnorm{\nabla^2\log p(X,z\,|\,\theta) - \nabla^2\log p(X,z\,|\,\theta_{j(\theta)})}}{\|\theta - \theta_{j(\theta)}\|}\cdot \varepsilon \leq \varepsilon J_z(X),
\end{align*}
where the last step is due to Assumption~\ref{assump: continuity_of_Hessian}.
\smallskip

\noindent Similarly, to control the probability of the deterministic event $C_t$, we note
\begin{align*}
&\quad\, \sup_{\substack{\theta\in\Theta\\ \mu: W_2(\mu, \delta_{\theta^\ast})\leq r}} \matnorm{\nabla^2 U(\theta_{j(\theta)}, \mu) - \nabla^2 U(\theta;\,\mu)}\\
&= \sup_{\substack{\theta\in\Theta\\ \mu: W_2(\mu, \delta_{\theta^\ast})\leq r}}\matnorm{\int_{\m X}\sum_{z=1}^K\Big[\nabla^2\log p(x, z\,|\,\theta) - \nabla^2\log p(x, z\,|\,\theta_{j(\theta)})\Big]\Phi(\mu, x)(z)\,p(\dd x\,|\,\theta^\ast)}\\
&\leq \sup_{\substack{\theta\in\Theta\\ \mu: W_2(\mu, \delta_{\theta^\ast})\leq r}} \bigg\{\int_{\m X}\sum_{z=1}^K\matnorm{\nabla^2\log p(x, z\,|\,\theta) - \nabla^2\log p(x,z\,|\,\theta_{j(\theta)})}\Phi(\mu, x)(z)\,p(\dd x\,|\,\theta^\ast)\bigg\}\\
&\leq \ \sup_{\theta\in\Theta} \bigg\{\int_{\m X}\sum_{z=1}^K\matnorm{\nabla^2\log p(x,z\,|\,\theta) - \nabla^2\log p(x,z\,|\,\theta_{j(\theta)})}\,p(\dd x\,|\,\theta^\ast)\bigg\}\\
&\leq\  \int_{\m X}\sum_{z=1}^K \, \sup_{\theta\in\Theta}\bigg\{\frac{\matnorm{\nabla^2\log p(x,z\,|\,\theta) - \nabla^2\log p(x,z\,|\,\theta_{j(\theta)})}}{\|\theta - \theta_{j(\theta)}\|}\bigg\} \cdot\varepsilon\,p(\dd x\,|\,\theta^\ast)\\
&\leq \ \varepsilon\,\int_{\mb R^d}\sum_{z=1}^K J_z(x)\,p(\dd x\,|\,\theta^\ast)=\varepsilon J_\ast.
\end{align*}
Therefore, if we take $t > 3\varepsilon J_\ast$, then $\mb P(C_t) = 0$. 
\smallskip

\noindent Finally, let us consider the most difficult term $\mb P(B_t)$. Since its proof is quite long, we summarize the result in the following lemma and defer its proof to Section~\ref{proof_lem:B_t_bound}. The proof will utilize Lemma~\ref{lem: diff_Phimu_thetastar} and a standard discretization technique for random matrix concentration.
\begin{lemma}\label{lem:B_t_bound}
Under Assumption~\ref{assump: continuity_of_Hessian}, we have that for any $t>0$,
\begin{align*}
    \mb P(B_t)\ \leq  \ 2e^{-Cn^\frac{1}{6}\sigma_2^{-1}} + 2e^{-Cn^\frac{1}{6}\sigma_3^{-1}} + 2 \exp\bigg\{d\log\frac{36R}{\varepsilon} -\frac{Cn^\frac{1}{2}t}{\sigma_1}\bigg\}.
\end{align*}
\end{lemma}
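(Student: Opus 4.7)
The plan is to bound $\mb P(B_t)$ by decomposing the deviation at each $(\theta_j,\mu)$ into a ``main term'' at $\mu = \delta_{\theta^\ast}$ and two ``perturbation terms'' that measure the change induced by replacing $\delta_{\theta^\ast}$ with an arbitrary $\mu$ in the Wasserstein ball $\{\mu:W_2(\mu,\delta_{\theta^\ast})\leq r_n\}$. Specifically, I write
\begin{align*}
\nabla^2 U_n(\theta_j,\mu) - \nabla^2 U(\theta_j,\mu)
&= \underbrace{\big[\nabla^2 U_n(\theta_j,\delta_{\theta^\ast}) - \nabla^2 U(\theta_j,\delta_{\theta^\ast})\big]}_{\Delta_n(\theta_j)}\\
&\quad + \underbrace{\big[\nabla^2 U_n(\theta_j,\mu) - \nabla^2 U_n(\theta_j,\delta_{\theta^\ast})\big]}_{R_n(\theta_j,\mu)} + \underbrace{\big[\nabla^2 U(\theta_j,\delta_{\theta^\ast}) - \nabla^2 U(\theta_j,\mu)\big]}_{R(\theta_j,\mu)}.
\end{align*}
The advantage of this decomposition is that $\Delta_n(\theta_j)$ no longer depends on $\mu$, so its supremum over the Wasserstein ball is trivial, while the $\mu$-dependence in $R_n$ and $R$ can be uniformly controlled via Lemma~\ref{lem: diff_Phimu_thetastar}.

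For the perturbation terms, I apply Lemma~\ref{lem: diff_Phimu_thetastar} together with the pointwise bound $\matnorm{\nabla^2\log p(x,z\,|\,\theta)}\leq \lambda(x)$ to obtain, uniformly over $\mu$ in the Wasserstein ball,
\begin{displaymath}
\matnorm{R_n(\theta_j,\mu)} \leq \frac{K r_n}{n}\sum_{i=1}^n\lambda(X_i)S_1(X_i) + \frac{K^2 r_n^2}{2n}\sum_{i=1}^n\lambda(X_i)^2,
\end{displaymath}
with the analogous population bound for $\matnorm{R(\theta_j,\mu)}$ involving the corresponding expectations. Using Cauchy--Schwarz to absorb $\lambda S_1$ into $\lambda^2+S_1^2$ (and then $S_1^2\leq K S_2$), the problem reduces to concentration of $\frac{1}{n}\sum\lambda(X_i)^2$ and $\frac{1}{n}\sum S_2(X_i)$ around their means. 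Since $\lambda(X)$ and $S_2(X)$ are sub-exponential ($\psi_1$) with parameters $\sigma_2$ and $\sigma_3$ respectively, the squared/higher-power quantities are $\psi_\alpha$ random variables for some $\alpha<1$; applying Lemma~\ref{lem: tail_iidsum} (the Orlicz-norm Bernstein-type bound recalled in Appendix~\ref{Appendix: conc}) yields tail bounds of the form $e^{-Cn^{1/6}\sigma_2^{-1}}$ and $e^{-Cn^{1/6}\sigma_3^{-1}}$, which are precisely the first two terms in the claimed inequality. The specific choice of $r_n$ in the lemma is calibrated so that on the favorable event these perturbation contributions are strictly less than $t/9$ each.

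For the main term $\sup_{j\in[N_\varepsilon]}\matnorm{\Delta_n(\theta_j)}$, I use the variational characterization $\matnorm{M}\leq 2\sup_{v\in\m N}|v^\top M v|$ for any $(1/4)$-net $\m N$ of the unit sphere $\mb S^{d-1}$, with $|\m N|\leq 12^d$. For each fixed pair $(\theta_j,v)$, the quantity $v^\top\Delta_n(\theta_j)v$ is a sum of $n$ centered i.i.d.~random variables $\sum_{z=1}^K p(z\,|\,X_i,\theta^\ast)\langle v,\nabla^2\log p(X_i,z\,|\,\theta_j)v\rangle - \mb E[\cdot]$, which by Assumption~\ref{assump: continuity_of_Hessian}(3) is sub-exponential with parameter $\sigma_1$; a standard Bernstein inequality gives the per-pair deviation probability $2\exp\{-Cn^{1/2}t/\sigma_1\}$ for $t$ not too large. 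A union bound over $N_\varepsilon\leq (3R/\varepsilon)^d$ and the $12^d$ sphere net points produces the third term $2\exp\{d\log(36R/\varepsilon) - Cn^{1/2}t/\sigma_1\}$.

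The main technical obstacle is the uniform control over the infinite-dimensional family of measures $\mu$ inside the $W_2$-ball. The decomposition above sidesteps any chaining over $\mu$ by converting the $\mu$-dependence into explicit moment quantities of $\lambda$ and $S_1$ through Lemma~\ref{lem: diff_Phimu_thetastar}; careful bookkeeping of the constants in that conversion (and their interaction with the chosen radius $r_n$) is what guarantees the perturbation error remains dominated by $t/3$. After this reduction, the remaining work is standard metric entropy plus Bernstein's inequality on the finite-dimensional parameter space and unit sphere.
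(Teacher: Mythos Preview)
Your proposal is correct and follows essentially the same approach as the paper: the same three-term decomposition at $\mu=\delta_{\theta^\ast}$, the same use of Lemma~\ref{lem: diff_Phimu_thetastar} to convert the $\mu$-dependence into moment quantities of $\lambda$ and $S_1$ (the paper happens to bound via $\lambda(X)^3$ and $S_1(X)^3$ with $\psi_{1/3}$ concentration rather than your $\lambda^2,S_2$ route, but both yield the $n^{1/6}$ exponent), and the same $(1/4)$-net plus union bound for the main term. One small slip: $\lambda(x)$ is defined as the supremum of $\matnorm{\nabla^2\log p(k\,|\,x,\theta)}$, not of the joint Hessian $\matnorm{\nabla^2\log p(x,z\,|\,\theta)}$; the paper handles this by noting that the marginal-likelihood Hessian $\nabla^2\log p(X_i\,|\,\theta)$ is $z$-independent and hence cancels when summed against $\Phi(\mu,X_i)(z)-\Phi(\delta_{\theta^\ast},X_i)(z)$.
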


\noindent Putting all pieces together, we have shown that
\begin{align*}
&\quad\, \mb P\bigg(\sup_{\substack{\theta\in\Theta\\ \mu:W_2(\mu, \delta_{\theta^\ast})\leq r}} \matnorm{\nabla^2 U_n(\theta;\,\mu) - \nabla^2 U(\theta;\,\mu)} > t\bigg)\\
&\leq \mb P(A_t) + \mb P(B_t) + \mb P(C_t)\\
&\leq \frac{3\varepsilon}{t}J_\ast + \Big(2e^{-Cn^\frac{1}{6}\sigma_2^{-1}} + 2e^{-Cn^\frac{1}{6}\sigma_3^{-1}}\Big) + 2 \exp\bigg\{d\log\frac{36R}{\varepsilon} -\frac{Cn^\frac{1}{2}t}{\sigma_1}\bigg\}.
\end{align*}
For any $\eta\in(0, 1)$ and $n > \big(C\log\frac{6}{\eta}\cdot\max\{\sigma_2, \sigma_3\}\big)^6$, we have $2e^{-Cn^\frac{1}{6}\sigma_e^{-1}} + 2e^{-Cn^\frac{1}{6}\sigma_e^{-1}} < \frac{\eta}{3}$. Therefore, if
\begin{displaymath}
t > \max\bigg\{\frac{9\varepsilon J_\ast}{\eta}, \frac{\sigma_1}{\sqrt{n}}\Big(\log\frac{6}{\eta} + d\log\frac{36R}{\varepsilon}\Big)\bigg\},
\end{displaymath}
then
\begin{align}\label{eqn:hessian_con}
   \mb P\bigg(\sup_{\substack{\theta\in\Theta\\ \mu:W_2(\mu, \delta_{\theta^\ast})\leq r}} \matnorm{\nabla^2 U_n(\theta;\,\mu) - \nabla^2 U(\theta;\,\mu)} > t\bigg) \leq \eta. 
\end{align}
With $\varepsilon = \frac{\eta \sigma_1}{9ndJ_\ast}$, or $\frac{9\varepsilon J_\ast}{\eta} = \frac{\sigma_1}{nd}$, since
\begin{displaymath}
n \geq \max\bigg\{\frac{324RdJ_\ast}{\sigma_1}, 6\bigg\},
\end{displaymath}
we can take
\begin{align*}
    t:\,= \frac{2d\sigma_1\log\frac{n}{\eta}}{\sqrt{n}} >\frac{\sigma_1}{\sqrt{n}}\Big(d\log\frac{36R}{\varepsilon} + \log\frac{6}{\eta}\Big)
\end{align*}
in inequality~\eqref{eqn:hessian_con} to obtain
\begin{displaymath}
P\bigg(\sup_{\substack{\theta\in\Theta\\ \mu:W_2(\mu, \delta_{\theta^\ast})\leq r}} \matnorm{\nabla^2 U_n(\theta;\,\mu) - \nabla^2 U(\theta;\,\mu)} > \frac{2d\sigma_1\log\frac{n}{\eta}}{\sqrt{n}}\bigg) \leq \eta.
\end{displaymath}

\subsection{Proof of Lemma~\ref{lem:cross_produc}}\label{proof_lem:cross_produc}
We will prove the following series of inequalities,
\begin{align*}
   &\bigg|\int_\Theta\sum_{i=1}^n\sum_{z=1}^K\log p(z\,|\,X_i,\theta)\big[\Phi(\widehat{q}_\theta, X_i)(z) - \Phi(q_\theta^{(k)}, X_i)(z)\big]\big(q_\theta^{(k+1)}(\theta) - \widehat{q}_\theta(\theta)\big)\,\dd\theta\bigg|\\ &\stackrel{\textrm{step} 1}{\leq} \bigg|\int_\Theta\sum_{i=1}^n\sum_{z=1}^K\log p(z\,|\,X_i,\theta)\cdot\int_\Theta\Big\langle\nabla\frac{\delta\Phi}{\delta\mu}(\widehat{q}_\theta, X_i)(z), t_{\widehat{q}_\theta}^{q_\theta^{(k)}}(\theta)-\theta\Big\rangle\,\dd\widehat{q}_\theta\\
    &\qquad\qquad\qquad\qquad\qquad\qquad\qquad\qquad\qquad\qquad\cdot\big(q_\theta^{(k+1)}(\theta) - \widehat{q}_\theta(\theta)\big)\,\dd\theta\bigg| + R_1\\
    &\stackrel{\textrm{step} 2}{\leq} \bigg|\sum_{i=1}^n\sum_{z=1}^K \int_\Theta\big\langle\nabla\log p(z\,|\,X_i,\theta), t_{\widehat{q}_\theta}^{q_\theta^{(k+1)}}(\theta)-\theta\big\rangle\,\dd\widehat{q}_\theta\\
    &\qquad\qquad\qquad\qquad\qquad\qquad\cdot\int_\Theta\Big\langle\nabla\frac{\delta\Phi}{\delta\mu}(\widehat{q}_\theta, X_i)(z), t_{\widehat{q}_\theta}^{q_\theta^{(k)}}(\theta)-\theta\Big\rangle\,\dd\widehat{q}_\theta\bigg| + R_2 + R_1\\
    &\stackrel{\textrm{step} 3}{\leq} n\big|\big\langle\Delta_{q_\theta^{(k)}}, \wht I_S(\theta^\ast)\Delta_{q_\theta^{(k+1)}}\big\rangle\big| + R_3 + R_2 + R_1,
\end{align*}
where steps 1, 2 and 3 will be elaborated as follows.

\smallskip
\noindent {\bf Proof of step 1.}
For simplicity, let
\begin{displaymath}
R(q, x)(z) := \Phi(q, x)(z) - \Phi(\widehat{q}_\theta, x)(z) - \int_\Theta\Big\langle\nabla\frac{\delta\Phi}{\delta\mu}(\widehat{q}_\theta, x)(z), t_{\widehat{q}_\theta}^q(\theta) -\theta\Big\rangle\,\dd\widehat{q}_\theta. 
\end{displaymath}
By the triangular inequality, we have
\begin{align*}
&\quad\, \Big|\big[V_n\big(q_\theta^{(k+1)}\,\big|\,q_\theta^{(k)}\big) - V_n\big(\widehat{q}_\theta\,\big|\,q_\theta^{(k)}\big)\big] - \big[V_n\big(q_\theta^{(k+1)}\,\big|\,\widehat{q}_\theta\big) - V_n\big(\widehat{q}_\theta\,\big|\,\widehat{q}_\theta\big)\big]\Big|\\
&= \bigg|\int_\Theta\sum_{i=1}^n\sum_{z=1}^K\log p(z\,|\,X_i,\theta)\big[\Phi(\widehat{q}_\theta, X_i)(z) - \Phi(q_\theta^{(k)}, X_i)(z)\big]\big(q_\theta^{(k+1)}(\theta) - \widehat{q}_\theta(\theta)\big)\,\dd\theta\bigg|\\
&\leq \bigg|\int_\Theta\sum_{i=1}^n\sum_{z=1}^K\log p(z\,|\,X_i,\theta)\cdot\int_\Theta\Big\langle\nabla\frac{\delta\Phi}{\delta\mu}(\widehat{q}_\theta, X_i)(z), t_{\widehat{q}_\theta}^{q_\theta^{(k)}}(\theta)-\theta\Big\rangle\,\dd\widehat{q}_\theta\cdot\big(q_\theta^{(k+1)}(\theta) - \widehat{q}_\theta(\theta)\big)\,\dd\theta\bigg|\\
&\quad+ \bigg|\int_\Theta\sum_{i=1}^n\sum_{z=1}^K\log p(z\,|\,X_i,\theta)R(q_\theta^{(k)}, X_i)(z)\big(q_\theta^{(k+1)}(\theta) - \widehat{q}_\theta(\theta)\big)\,\dd\theta\bigg|
\end{align*}
The remainder term from above can be further bounded by
\begin{align*}
&\quad\,\bigg|\int_\Theta\sum_{i=1}^n\sum_{z=1}^K\log p(z\,|\,X_i,\theta)R(q_\theta^{(k)}, X_i)(z)\big(q_\theta^{(k+1)}(\theta) - \widehat{q}_\theta(\theta)\big)\,\dd\theta\bigg|\\
&= \bigg|\sum_{i=1}^n\sum_{z=1}^KR(q_\theta^{(k)}, X_i)(z)\int_\Theta\log p(z\,|\,X_i,\theta)\dd(q_\theta^{(k+1)} - \widehat{q}_\theta)\bigg|\\
&\stackrel{\ri}{\leq} \sum_{i=1}^n\sum_{z=1}^K W_2^2(\widehat{q}_\theta, q_\theta^{(k)})\!\bigg[\frac{3}{2}\!\bigg(\sum_{l=1}^K\big\|\nabla \log p(l\,|\,X_i,\theta^\ast)\big\|_2\! +\!\! K\lambda(X_i)\sqrt{\mb E_{\widehat{q}_\theta}\|\theta-\theta^\ast\|^2}\!+\!\!\frac{K\lambda(X_i)}{2}\!W_2(\widehat{q}_\theta, q_\theta^{(k)})\!\!\bigg)^2 \!\!\!\!+\! \lambda(X_i)\bigg]\\
&\qquad\qquad\cdot \Big[\Big(\lambda(X_i)\sqrt{\mb E_{\widehat{q}_\theta}\|\theta-\theta^\ast\|^2} + \big\|\nabla\log p(z\,|\,X_i,\theta^\ast)\big\|_2\Big)W_2(\widehat{q}_\theta, q_\theta^{(k+1)}) + \frac{\lambda(X_i)}{2}W_2^2(\widehat{q}_\theta, q_\theta^{(k+1)})\Big]\\
&= \sum_{i=1}^n W_2^2(\widehat{q}_\theta, q_\theta^{(k)})\bigg[\frac{3}{2}\bigg(\sum_{l=1}^K\big\|\nabla \log p(l\,|\,X_i,\theta^\ast)\big\|_2 + \!K\lambda(X_i)\sqrt{\mb E_{\widehat{q}_\theta}\|\theta-\theta^\ast\|^2}+\frac{K\lambda(X_i)}{2}W_2(\widehat{q}_\theta, q_\theta^{(k)})\bigg)^2 \!\!\!+\! \lambda(X_i)\bigg]\\
&\qquad\qquad\cdot \Big[\Big(K\lambda(X_i)\sqrt{\mb E_{\widehat{q}_\theta}\|\theta-\theta^\ast\|^2} + \sum_{z=1}^K\big\|\nabla\log p(z\,|\,X_i,\theta^\ast)\big\|_2\Big)W_2(\widehat{q}_\theta, q_\theta^{(k+1)}) + \frac{K\lambda(X_i)}{2}W_2^2(\widehat{q}_\theta, q_\theta^{(k+1)})\Big]\\
&\stackrel{\rii}{\leq} W_2^2(\widehat{q}_\theta, q_\theta^{(k)})W_2(\widehat{q}_\theta, q_\theta^{(k+1)})\sum_{i=1}^n \bigg[\frac{3}{2}\Big(S_1(X_i)+K\lambda(X_i) + \frac{K\lambda(X_i)}{2}\Big)^2 + \lambda(X_i)\bigg]\\
&\qquad\qquad\qquad\qquad\qquad\qquad\qquad\qquad\qquad \cdot\bigg(K\lambda(X_i) + S_1(X_i) + \frac{K\lambda(X_i)}{2}W_2(\widehat{q}_\theta, q_\theta^{(k+1)})\bigg)\\
&\leq W_2^2(\widehat{q}_\theta, q_\theta^{(k)})W_2^2(\widehat{q}_\theta, q_\theta^{(k+1)})\sum_{i=1}^n\bigg[\frac{3K^2+2K}{4}S_1(X_i)^3 + \frac{27K^3+24K^2+4K}{16}\lambda_1(X_i)^3 + \frac{K}{2}\lambda(X_1)^2\bigg]\\
&\quad+
W_2^2(\widehat{q}_\theta, q_\theta^{(k)})W_2(\widehat{q}_\theta, q_\theta^{(k+1)})\sum_{i=1}^n\bigg[
\frac{21K^2+32K+12}{8} S_1(X_i)^3 + \frac{27K^3+42K^2+16K}{8}\lambda(X_i)^3\\
&\qquad\qquad\qquad\qquad\qquad\qquad\qquad\qquad\qquad\qquad\qquad\qquad\qquad\quad +
\frac{1}{2}S_1(X_i)^2 + \frac{2K+1}{2}\lambda(X_i)^2\bigg]\\
&\leq nW_2^2(\widehat{q}_\theta, q_\theta^{(k)})W_2^2(\widehat{q}_\theta, q_\theta^{(k+1)})\bigg[\frac{3K^2+2K}{4n}\sum_{i=1}^nS_1(X_i)^3\\
&\qquad\qquad\qquad\quad\qquad\qquad\qquad\qquad+ \frac{27K^3+24K^2+4K}{16n}\sum_{i=1}^n\lambda_1(X_i)^3\\ 
&\qquad\qquad\qquad\quad\qquad\qquad\qquad\qquad+ \frac{K}{2}\Big(\frac{1}{n}\sum_{i=1}^n\lambda(X_1)^3\Big)^{\frac{2}{3}}\bigg]\\
&\quad+
nW_2^2(\widehat{q}_\theta, q_\theta^{(k)})W_2(\widehat{q}_\theta, q_\theta^{(k+1)})\bigg[
\frac{21K^2+32K+12}{8n}\sum_{i=1}^n S_1(X_i)^3\\ 
&\qquad\qquad\qquad\quad\qquad\qquad\qquad\qquad+ \frac{27K^3+42K^2+16K}{8n}\sum_{i=1}^n\lambda(X_i)^3\\
&\qquad\qquad\qquad\quad\qquad\qquad\qquad\qquad +
\frac{1}{2}\Big(\frac{1}{n}\sum_{i=1}^nS_1(X_i)^3\Big)^{\frac{2}{3}} + \frac{2K+1}{2}\Big(\frac{1}{n}\sum_{i=1}^n\lambda(X_i)^3\Big)^{\frac{2}{3}}\bigg].
\end{align*}
Here, in step (i) we use Corollary \ref{coro: bound_R1} to bound $R(q_\theta^{(k)}, X_i)(z)$ and Corollary \ref{coro: bound_diff_log_integration} to bound the integration. In step (ii), we used the induction hypothesis $W_2(\widehat{q}_\theta, q_\theta^{(k)}) < R_W \leq 1$ and the fact that $\mb E_{\widehat{q}_\theta}[\|\theta-\theta^\ast\|^2] \leq 1$ by Corollary \ref{coro: bound_square_expectation}.

\smallskip
\noindent {\bf Proof of step 2.}
By applying Taylor's expansion in a similar way as in the proof of Lemma \ref{lem: diff_log_integration}, we obtain
\begin{align*}
&\quad\, \bigg|\int_\Theta\sum_{i=1}^n\sum_{z=1}^K\log p(z\,|\,X_i,\theta)\cdot\int_\Theta\Big\langle\nabla\frac{\delta\Phi}{\delta\mu}(\widehat{q}_\theta, X_i)(z), t_{\widehat{q}_\theta}^{q_\theta^{(k)}}(\theta)-\theta\Big\rangle\,\dd\widehat{q}_\theta\cdot\big(q_\theta^{(k+1)}(\theta) - \widehat{q}_\theta(\theta)\big)\,\dd\theta\bigg|\\
&= \bigg|\sum_{i=1}^n\sum_{z=1}^K \int_\Theta\log p(z\,|\,X_i,\theta)\,\dd(q_\theta^{(k+1)}-\widehat{q}_\theta)\cdot \int_\Theta\Big\langle\nabla\frac{\delta\Phi}{\delta\mu}(\widehat{q}_\theta, X_i)(z), t_{\widehat{q}_\theta}^{q_\theta^{(k)}}(\theta)-\theta\Big\rangle\,\dd\widehat{q}_\theta\bigg|\\
&\leq\bigg|\sum_{i=1}^n\sum_{z=1}^K \frac{1}{2}\int_\Theta\big\langle t_{\widehat{q}_\theta}^{q_\theta^{(k+1)}}(\theta)-\theta, \nabla^2\log p(z\,|\,X_i,\theta')\big(t_{\widehat{q}_\theta}^{q_\theta^{(k+1)}}(\theta)-\theta\big)\big\rangle\,\dd\widehat{q}_\theta \cdot\int_\Theta\Big\langle\nabla\frac{\delta\Phi}{\delta\mu}(\widehat{q}_\theta, X_i)(z), t_{\widehat{q}_\theta}^{q_\theta^{(k)}}(\theta)-\theta\Big\rangle\,\dd\widehat{q}_\theta\bigg|\\
&\quad + \bigg|\sum_{i=1}^n\sum_{z=1}^K \int_\Theta\big\langle\nabla\log p(z\,|\,X_i,\theta), t_{\widehat{q}_\theta}^{q_\theta^{(k+1)}}(\theta)-\theta\big\rangle\,\dd\widehat{q}_\theta\cdot\int_\Theta\Big\langle\nabla\frac{\delta\Phi}{\delta\mu}(\widehat{q}_\theta, X_i)(z), t_{\widehat{q}_\theta}^{q_\theta^{(k)}}(\theta)-\theta\Big\rangle\,\dd\widehat{q}_\theta\bigg|.
\end{align*}
This time we bound the remainder term as
\begin{align*}
&\quad\, \bigg|\sum_{i=1}^n\sum_{z=1}^K \frac{1}{2}\int_\Theta\big\langle t_{\widehat{q}_\theta}^{q_\theta^{(k+1)}}(\theta)-\theta, \nabla^2\log p(z\,|\,X_i,\theta')\big(t_{\widehat{q}_\theta}^{q_\theta^{(k+1)}}(\theta)-\theta\big)\big\rangle\,\dd\widehat{q}_\theta \cdot\int_\Theta\Big\langle\nabla\frac{\delta\Phi}{\delta\mu}(\widehat{q}_\theta, X_i)(z), t_{\widehat{q}_\theta}^{q_\theta^{(k)}}(\theta)-\theta\Big\rangle\,\dd\widehat{q}_\theta\bigg|\\
&\stackrel{\ri}{\leq} \frac{1}{2}\sum_{i=1}^n\sum_{z=1}^K \lambda(X_i)W_2^2(\widehat{q}_\theta, q_\theta^{(k+1)})\cdot \Phi(\widehat{q}_\theta, X_i)(z)\sum_{l=1}^K\Phi(\widehat{q}_\theta, X_i)(l)\bigg|\int_\Theta\Big\langle\nabla\log\frac{p(z\,|\,X_i,\theta)}{p(k\,|\,X_i,\theta)}, t_{\widehat{q}_\theta}^{q_\theta^{(k)}}(\theta)-\theta\Big\rangle\,\dd\widehat{q}_\theta\bigg|\\
&\stackrel{\rii}{\leq} \sum_{i=1}^n\lambda(X_i)W_2^2(\widehat{q}_\theta, q_\theta^{(k+1)})\cdot K\sum_{l=1}^K \bigg|\int_\Theta\big\langle\nabla\log p(l\,|\,X_i,\theta), t_{\widehat{q}_\theta}^{q_\theta^{(k)}}(\theta)-\theta\big\rangle\,\dd\widehat{q}_\theta\bigg|\\
&\stackrel{\riii}{\leq} \sum_{i=1}^n\lambda(X_i)W_2^2(\widehat{q}_\theta, q_\theta^{(k+1)})\cdot K\sum_{l=1}^K W_2(q_\theta^{(k)}, \widehat{q}_\theta)\Big[\lambda(X_i)\sqrt{\mb E_{\widehat{q}_\theta}\|\theta-\theta^\ast\|^2} + \|\nabla\log p(l\,|\,X_i,\theta^\ast)\|\Big]\\
&\stackrel{(\textrm{iv})}{\leq} W_2^2(\widehat{q}_\theta, q_\theta^{(k+1)})W_2(\widehat{q}_\theta, q_\theta^{(k)})\cdot \sum_{i=1}^n\Big(\frac{2K^2+K}{2}\lambda(X_i)^2 + \frac{K}{2}S_1(X_i)^2\Big)\\
&\leq n\,W_2^2(\widehat{q}_\theta, q_\theta^{(k+1)})\, W_2(\widehat{q}_\theta, q_\theta^{(k)})\bigg(\frac{2K^2+K}{2}\Big(\frac{1}{n}\sum_{i=1}^n\lambda(X_i)^3\Big)^{\frac{2}{3}} + \frac{K}{2}\Big(\frac{1}{n}\sum_{i=1}^nS_1(X_i)^3\Big)^{\frac{2}{3}}\bigg).
\end{align*}
Here, step (i) is derived by the expression of $\nabla\frac{\delta\Phi}{\delta\mu}$ in Corollary \ref{coro: subdifferential_is_variation}; step (ii) is obtained by the triangular inequality and the fact that $\Phi(\widehat{q}_\theta, X_i)(\cdot)\leq 1$; step (iii) is by lemma \ref{lem: change_to_Deltaq}; step (iv) is by AM-GM inequality and the fact that $\mb E_{\widehat{q}_\theta}[\|\theta-\theta^\ast\|^2]\leq 1$.

\smallskip
\noindent {\bf Proof of step 3.}
Notice that
\begin{align*}
&\quad\, \bigg|\sum_{i=1}^n\sum_{z=1}^K \int_\Theta\big\langle\nabla\log p(z\,|\,X_i,\theta), t_{\widehat{q}_\theta}^{q_\theta^{(k+1)}}(\theta)-\theta\big\rangle\,\dd\widehat{q}_\theta\cdot\int_\Theta\Big\langle\nabla\frac{\delta\Phi}{\delta\mu}(\widehat{q}_\theta, X_i)(z), t_{\widehat{q}_\theta}^{q_\theta^{(k)}}(\theta)-\theta\Big\rangle\,\dd\widehat{q}_\theta\bigg|\\
&\stackrel{\ri}{=} \bigg|\sum_{i=1}^n\sum_{z=1}^K \int_\Theta\big\langle\nabla\log p(z\,|\,X_i,\theta), t_{\widehat{q}_\theta}^{q_\theta^{(k+1)}}(\theta)-\theta\big\rangle\,\dd\widehat{q}_\theta\\
&\qquad\qquad\qquad\cdot \Phi(\widehat{q}_\theta, X_i)(z)\sum_{l=1}^K \Phi(\widehat{q}_\theta, X_i)(l)\int_\Theta\Big\langle\nabla\log\frac{p(z\,|\,X_i,\theta)}{p(l\,|\,X_i,\theta)}, t_{\widehat{q}_\theta}^{q_\theta^{(k)}}(\theta)-\theta\Big\rangle\,\dd\widehat{q}_\theta\bigg|\\
&\stackrel{\rii}{\leq} 2KW_2(q, \widehat{q}_\theta)W_2(\mu, \widehat{q}_\theta)\cdot\frac{1}{n}\sum_{i=1}^n\bigg[2\sqrt{\mb E_{\widehat{q}_\theta}\|\theta-\theta^\ast\|^2}S_1(X_i)\lambda(X_i)\\
&\qquad\qquad\qquad\qquad\qquad\qquad\qquad+ 
K\mb E_{\widehat{q}_\theta}\|\theta-\theta^\ast\|^2\lambda(X_i)^2\\
&\qquad\qquad\qquad\qquad\qquad\qquad\qquad+
2S_2(X_i)\Big(S_1(X_i) \sqrt{\mb E_{\widehat{q}_\theta}\|\theta-\theta^\ast\|^2} + \frac{K\lambda(X_i)}{2}\cdot \mb E_{\widehat{q}_\theta}\|\theta-\theta^\ast\|^2\Big)\bigg]\\
&\quad + n\big|\big\langle\Delta_{q_\theta^{(k)}}, \wht I_S(\theta^\ast)\Delta_{q_\theta^{(k+1)}}\big\rangle\big|\\
&\leq W_2(q_\theta^{(k+1)}, \widehat{q}_\theta)W_2(q_\theta^{(k)}, \widehat{q}_\theta)\sqrt{\mb E_{\widehat{q}_\theta}\|\theta-\theta^\ast\|^2}\\
&\qquad\qquad\qquad\qquad
\cdot\sum_{i=1}^n\bigg[K(2K+3)\lambda(X_i)^2+(2K+1)S_1(X_i)^2+(\frac{K}{2}+1)S_2(X_i)^2\bigg]\\
&\quad + n\big|\big\langle\Delta_{q_\theta^{(k)}}, \wht I_S(\theta^\ast)\Delta_{q_\theta^{(k+1)}}\big\rangle\big|\\
&\leq nW_2(q_\theta^{(k+1)}, \widehat{q}_\theta)W_2(q_\theta^{(k)}, \widehat{q}_\theta)\sqrt{\mb E_{\widehat{q}_\theta}\|\theta-\theta^\ast\|^2}\\
&\qquad
\cdot\bigg[K(2K+3) \Big(\frac{1}{n}\sum_{i=1}^n\lambda(X_i)^3\Big)^{\frac{2}{3}} + 
(2K+1)\Big(\frac{1}{n}\sum_{i=1}^nS_1(X_i)^3\Big)^{\frac{2}{3}} + (\frac{K}{2}+1)\frac{1}{n}\sum_{i=1}^nS_2(X_i)^2\bigg]\\
&\quad+
n\big|\big\langle\Delta_{q_\theta^{(k)}}, \wht I_S(\theta^\ast)\Delta_{q_\theta^{(k+1)}}\big\rangle\big|,
\end{align*}
where $\Delta_q$ is defined in Lemma \ref{lem: change_to_Deltaq}, step (i) follows by Corollary \ref{coro: subdifferential_is_variation}, and step (ii) is due to Lemma \ref{lem: missing_data_info} and the fact that $\mb E_{\widehat{q}_\theta}[\|\theta-\theta^\ast\|^2]\leq 1$.

\subsection{Proof of Lemma~\ref{lem: matrix_conc}}\label{proof_lem: matrix_conc}
Let $V_{1/4}$ be a $(1/4)$-covering of $B^d(0, 1)$ and we know $\log|V_{1/4}| \leq d\log 12$. By Lemma~\ref{lem: bound_opnorm_by_metric_entropy} we know
\begin{align*}
    \matnorm{\wht I_S(\theta^\ast) - I_S(\theta^\ast)} \leq 2\sup_{v\in V_\delta}\big|\langle v, (\wht I_S(\theta^\ast) - I_S(\theta^\ast))v\rangle\big|.
\end{align*}
So, we can bound
\begin{align*}
    \mb P\bigg(\matnorm{\wht I_S(\theta^\ast) - I_S(\theta^\ast)} > t\bigg) &\leq \mb P\bigg(\max_{v\in V_{1/4}}\big|\langle v^T\big(\wht I_S(\theta^\ast) - I_S(\theta^\ast)\big)v\big| > \frac{t}{2}\bigg)\\
    &\leq \sum_{v\in V_{1/4}}\mb P\Big(\big|v^T\big(\wht I_S(\theta^\ast) - I_S(\theta^\ast)\big)v\big| > \frac{t}{2}\Big)\\
    &\leq e^{d\log12}\cdot\sup_{v\in V_{1/4}}\mb P\Big(\big|v^T\big(\wht I_S(\theta^\ast) - I_S(\theta^\ast)\big)v\big| > \frac{t}{2}\Big).
\end{align*}
Notice that
\begin{align*}
    v^T\wht I_S(\theta^\ast) v &= \frac{1}{n}\sum_{i=1}^n\sum_{z=1}^K p(z\,|\, X_i, \theta^\ast)\Big(v^T\nabla\log p(z\,|\,X_i,\theta^\ast)\Big)^2
\end{align*}
is the sample average of $n$ i.i.d.~sub-exponential random variables. Moreover, for any $v\in B^d(0, 1)$, we have
\begin{align*}
    \bigg|\!\bigg|\sum_{z=1}^k p(z\,|\, X_i, \theta^\ast)\Big(v^T\nabla\log p(z\,|\,X_i,\theta^\ast)\Big)^2\bigg|\!\bigg|_{\psi_1} &\leq \bigg|\!\bigg|\sum_{z=1}^K \|v\|^2\|\nabla\log p(z\,|\,X_i,\theta^\ast)\|^2\bigg|\!\bigg|_{\psi_1}\\
    &\leq \big|\!\big|S_2(X_i)\big|\!\big|_{\psi_1} =\sigma_3 < \infty
\end{align*}
by Assumption~\ref{assump: continuity_of_Hessian}. Therefore, by Bernstein's inequality (Theorem 2.8.1 in \cite{vershynin2018high}), there exists a constant $C>0$ such that
\begin{align*}
    \mb P\Big(\big|v^T\big(\wht I_S(\theta^\ast) - I_S(\theta^\ast)\big)v\big| > \frac{t}{2}\Big) \leq 2\exp\bigg\{-Cn\min\Big(\frac{t^2}{\sigma_3^2}, \frac{t}{\sigma_3}\Big)\bigg\},\quad t>0.
\end{align*}
Since $d\log12 < 3d$, by combining all pieces above we get
\begin{align*}
    \mb P\bigg(\frac{1}{\sigma_3}\matnorm{\wht I_S(\theta^\ast) - I_S(\theta^\ast)} > t\bigg) \leq 2e^{3d - Cn\min\{t^2, t\}}.
\end{align*}

\subsection{Proof of Lemma~\ref{lem: diff_Phimu_thetastar}}\label{app:proof_diff_Phimu_thetastar}
For simplicity, we use the shorthand $A_z(\mu) = \int_\Theta\log p(z\,|\,x,\theta)\,\dd\mu(\theta)$, and let $h_z: \mb R^K\to [0, 1]$ be the function defined as
\begin{displaymath}
h_z(x_1, \cdots, x_K) = \frac{e^{x_z}}{e^{x_1} + \cdots + e^{x_K}}\quad \mbox{for all}\ \  z\in[K],
\end{displaymath}
and $A(\mu) = \big(A_1(\mu), \cdots, A_K(\mu)\big)\in\mb R^K$. Under these notations, we have
\begin{displaymath}
\Phi(\mu, x)(z) = \frac{\exp\{\int_\Theta\log p(z\,|\,x,\theta)\,\dd\mu(\theta)\}}{\sum_{z=1}^K \exp\{\int_\Theta\log p(z\,|\,x,\theta)\,\dd\mu(\theta)\}} = h_z(A(\mu)).
\end{displaymath}
By the mean value theorem, there is some $\xi\in\mb R^K$ such that
\begin{align*}
&\quad\, \big|\Phi(\mu, x)(z) - \Phi(\delta_{\theta^\ast}, x)(z)\big|\\
&= \big|h_z(A(\mu)) - h_z(A(\delta_{\theta^\ast}))\big|\\
&= \bigg|\sum_{k=1}^K\frac{\partial h_z}{\partial x_k}(\xi)\cdot(A_k(\mu) - A_k(\delta_{\theta^\ast}))\bigg|\\
&\stackrel{\ri}{\leq} \sum_{k=1}^K|A_k(\mu) - A_k(\delta_{\theta^\ast})|\\
&= \sum_{k=1}^K\bigg|\int_\Theta\log p(k\,|\,x,\theta)\,\dd\mu(\theta) - \log p(k\,|\,x,\theta^\ast)\bigg|\\
&\leq \sum_{k=1}^K \int_\Theta |\log p(k\,|\,x,\theta) - \log p(k\,|\,x,\theta^\ast)|\,\dd\mu(\theta)\\
&\stackrel{\rii}{\leq} \sum_{k=1}^K \int_\Theta \|\nabla\log p(k\,|\,x,\theta^\ast)\|\cdot\|\theta - \theta^\ast\| + \frac{\lambda(x)}{2}\|\theta-\theta^\ast\|^2\,\dd\mu(\theta)\\
&\stackrel{\riii}{\leq} \sum_{k=1}^K\|\nabla\log p(k\,|\,x,\theta^\ast)\|\cdot W_2(\mu, \delta_{\theta^\ast}) + \frac{K\lambda(x)}{2}\cdot W_2^2(\mu, \delta_{\theta^\ast}).
\end{align*}
Here, step (i) is due to the fact
\begin{displaymath}
\frac{\partial h_z}{\partial x_k} = \delta_{kz}h_z - h_zh_k \in [-1, 1],
\end{displaymath}
where $\delta_{zk}$ denotes the Kronecker function; step (ii) is because
\begin{align*}
&\quad\, \big|\log p(k\,|\,x,\theta) - \log p(k\,|\,x,\theta^\ast)\big|\\
&= \Big|\big\langle\nabla\log p(k\,|\,x,\theta^\ast), \theta - \theta^\ast\big\rangle + \frac{1}{2}\big\langle\nabla^2\log p(k\,|\,x,\theta')(\theta - \theta^\ast), \theta - \theta^\ast\big\rangle\Big|\\
&\leq \|\nabla\log p(k\,|\,x,\theta^\ast)\|\cdot\|\theta - \theta^\ast\| + \frac{\lambda(x)}{2}\|\theta-\theta^\ast\|^2
\end{align*}
for some $\theta'\in\mb R^d$ by applying the mean value theorem again; step (iii) is by Cauchy--Schwarz inequality, and the fact that
\begin{align*}
\int_\Theta \|\theta - \theta^\ast\|^2\,\dd\mu(\theta) = W_2^2(\mu, \delta_{\theta^\ast}).
\end{align*}

\subsection{Proof of Lemma~\ref{lem:B_t_bound}}\label{proof_lem:B_t_bound}
We will use the following lemma, which provides an upper bound of matrix operator norm by discretizing the unit sphere, and can be used to study the concentration property of sum of i.i.d. random matrices. The proof of the lemma can be found in \cite{vershynin2010introduction}.
\begin{lemma}\label{lem: bound_opnorm_by_metric_entropy}
Let $M \in\mb R^{d\times d}$ be a symmetric matrix, and $V_\delta$ be an $\delta$-covering of $B^d(0,1)$, then
\begin{displaymath}
\matnorm{M} \leq \frac{1}{1-2\delta}\sup_{v\in V_\delta}|\langle v, Mv\rangle|.
\end{displaymath}
\end{lemma}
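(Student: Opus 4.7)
The plan is to reduce the supremum over the whole unit sphere to the finite covering $V_\delta$ via the standard variational characterization of the operator norm for symmetric matrices, combined with a one-line perturbation estimate on the quadratic form $v\mapsto \langle v, Mv\rangle$. Because $M$ is symmetric, I will use $\matnorm{M}=\sup_{v\in S^{d-1}}|\langle v,Mv\rangle|$ as the starting point, so the task becomes: compare the value of this quadratic form at an arbitrary $v\in S^{d-1}$ with its value at a nearby point $v'\in V_\delta$.

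First I would fix any $v\in S^{d-1}$ and, using that $S^{d-1}\subset B^d(0,1)$ together with the covering property, pick $v'\in V_\delta$ with $\|v-v'\|\leq \delta$; note $v'$ lies in the unit ball, so $\|v'\|\leq 1$. The key algebraic identity is
\begin{equation*}
\langle v,Mv\rangle - \langle v',Mv'\rangle \;=\; \langle v-v',\,Mv\rangle + \langle v',\,M(v-v')\rangle,
\end{equation*}
which after Cauchy--Schwarz and the definition of $\matnorm{M}$ yields
\begin{equation*}
|\langle v,Mv\rangle - \langle v',Mv'\rangle| \;\leq\; \matnorm{M}\,\|v-v'\|\,\bigl(\|v\|+\|v'\|\bigr)\;\leq\; 2\delta\,\matnorm{M}.
\end{equation*}
Hence $|\langle v,Mv\rangle|\leq |\langle v',Mv'\rangle|+2\delta\,\matnorm{M}\leq \sup_{u\in V_\delta}|\langle u,Mu\rangle|+2\delta\,\matnorm{M}$.

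Taking the supremum over $v\in S^{d-1}$ on the left and invoking the variational characterization gives $\matnorm{M}\leq \sup_{u\in V_\delta}|\langle u,Mu\rangle|+2\delta\,\matnorm{M}$, which after rearranging produces the stated bound for any $\delta<1/2$ (the case $\delta\geq 1/2$ makes the bound vacuous). There is no real obstacle here beyond careful bookkeeping: the only subtlety is that $V_\delta$ covers the whole ball rather than just the sphere, so $v'$ need not lie on $S^{d-1}$, but the elementary bound $\|v\|+\|v'\|\leq 2$ is all that is used, and the argument goes through unchanged.
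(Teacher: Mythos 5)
Your proof is correct and follows the standard argument that the paper itself defers to (it only cites Vershynin's lecture notes rather than giving a proof): the variational characterization $\matnorm{M}=\sup_{v\in S^{d-1}}|\langle v,Mv\rangle|$ for symmetric $M$, the decomposition $\langle v,Mv\rangle-\langle v',Mv'\rangle=\langle v-v',Mv\rangle+\langle v',M(v-v')\rangle$, and rearrangement. The bookkeeping, including the observation that the covering points lie in the unit ball so that $\|v\|+\|v'\|\leq 2$, is handled correctly.
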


By applying the triangular inequality, we obtain
\begin{align*}
\sup_{\substack{\theta\in\Theta\\ \mu: W_2(\mu, \delta_{\theta^\ast})\leq r}} &\, \matnorm{\nabla^2 U_n(\theta_{j(\theta)}, \mu) - \nabla^2 U(\theta_{j(\theta)}, \mu)}
= \sup_{\substack{j\in[N_\varepsilon]\\ \mu: W_2(\mu, \delta_{\theta^\ast}) \leq r}} \matnorm{\nabla^2 U_n(\theta_j, \mu) - \nabla^2 U(\theta_j, \mu)}\\
&\leq \sup_{\substack{j\in[N_\varepsilon]\\ \mu: W_2(\mu, \delta_{\theta^\ast}) \leq r}} \matnorm{\nabla^2 U_n(\theta_j ,\mu) - \nabla^2 U_n(\theta_j, \delta_{\theta^\ast})}\\
&\quad\qquad + \sup_{\substack{j\in[N_\varepsilon]\\ \mu: W_2(\mu, \delta_{\theta^\ast}) \leq r}} \matnorm{\nabla^2 U_n(\theta_j, \delta_{\theta^\ast}) - \nabla^2 U(\theta_j, \delta_{\theta^\ast})}\\
&\qquad \qquad+ \sup_{\substack{j\in[N_\varepsilon]\\ \mu: W_2(\mu, \delta_{\theta^\ast}) \leq r}} \matnorm{\nabla^2 U(\theta_j, \mu) - \nabla^2 U(\theta_j, \delta_{\theta^\ast})}.
\end{align*}
Therefore
\begin{align*}
\mb P(B_t) &= \mb P\bigg(\sup_{\substack{\theta\in\Theta\\ \mu: W_2(\mu, \delta_{\theta^\ast})\leq r}}\matnorm{\nabla^2U_n(\theta_{j(\theta)}, \mu) - \nabla^2U(\theta_{j(\theta)}, \mu)} > \frac{t}{3}\bigg)\\
&\leq \mb P\bigg(\sup_{\substack{j\in[N_\varepsilon]\\ \mu: W_2(\mu, \delta_{\theta^\ast}) \leq r}} \matnorm{\nabla^2 U_n(\theta_j ,\mu) - \nabla^2 U_n(\theta_j, \delta_{\theta^\ast})} > \frac{t}{9}\bigg)\\
&\quad + \mb P\bigg(\sup_{\substack{j\in[N_\varepsilon]\\ \mu: W_2(\mu, \delta_{\theta^\ast}) \leq r}} \matnorm{\nabla^2 U_n(\theta_j, \delta_{\theta^\ast}) - \nabla^2 U(\theta_j, \delta_{\theta^\ast})} > \frac{t}{9}\bigg)\\
&\quad + \mb P\bigg(\sup_{\substack{j\in[N_\varepsilon]\\ \mu: W_2(\mu, \delta_{\theta^\ast}) \leq r}} \matnorm{\nabla^2 U(\theta_j, \mu) - \nabla^2 U(\theta_j, \delta_{\theta^\ast})} > \frac{t}{9}\bigg)\\
&=: \mb P(B^1_t) + P(B^2_t) + P(B^3_t). 
\end{align*}
By definition, the quantity inside event $B_t^1$ can be bounded as
\begin{align*}
&\quad\, \sup_{\substack{j\in[N_\varepsilon]\\ \mu: W_2(\mu, \delta_{\theta^\ast}) \leq r}} \matnorm{\nabla^2 U_n(\theta_j, \mu) - \nabla^2 U_n(\theta_j, \delta_{\theta^\ast})}\\
&= \sup_{\substack{j\in[N_\varepsilon]\\ \mu: W_2(\mu, \delta_{\theta^\ast}) \leq r}} \matnorm{\frac{1}{n}\sum_{i=1}^n\sum_{z=1}^K\nabla^2\log p(X_i,z\,|\,\theta_j)\big[\Phi(\mu, X_i)(z) - \Phi(\delta_{\theta^\ast}, X_i)(z)\big]}\\
&= \sup_{\substack{j\in[N_\varepsilon]\\ \mu: W_2(\mu, \delta_{\theta^\ast}) \leq r}} \matnorm{\frac{1}{n}\sum_{i=1}^n\sum_{z=1}^K\nabla^2\log p(z\,|\,X_i,\theta_j)\big[\Phi(\mu, X_i)(z) - \Phi(\delta_{\theta^\ast}, X_i)(z)\big]}\\
&\leq \sup_{\substack{j\in[N_\varepsilon]\\ \mu: W_2(\mu, \delta_{\theta^\ast}) \leq r}} \frac{1}{n}\sum_{i=1}^n\sum_{z=1}^K\matnorm{\nabla^2\log p(z\,|\,X_i, \theta_j)}\cdot |\Phi(\mu, X_i)(z) - \Phi(\delta_{\theta^\ast}, X_i)(z)|\\
&\leq \sup_{\mu: W_2(\mu, \delta_{\theta^\ast})\leq r} \frac{1}{n}\sum_{i=1}^n\sum_{z=1}^K\lambda(X_i)|\Phi(\mu, X_i)(z) - \Phi(\delta_{\theta^\ast}, X_i)(z)|\\
&\stackrel{\ri}{\leq} \sup_{\mu: W_2(\mu, \delta_{\theta^\ast})\leq r}\frac{1}{n}\sum_{i=1}^n\sum_{z=1}^K\lambda(X_i)\cdot\bigg(\sum_{k=1}^K\|\nabla\log p(k\,|\,X_i, \theta^\ast)\|\cdot W_2(\mu, \delta_{\theta^\ast}) + \frac{K\lambda(X_i)}{2}\cdot W_2^2(\mu, \delta_{\theta^\ast})\bigg)\\
&\leq \frac{Kr}{n}\sum_{i=1}^n\sum_{k=1}^K\lambda(X_i)\|\nabla\log p(k\,|\,X_i,\theta^\ast)\| + \frac{K^2r^2}{2n}\sum_{i=1}^n\lambda(X_i)^2\\
&\leq \frac{Kr(Kr+1)}{2n}\sum_{i=1}^n\lambda(X_i)^2 + \frac{Kr}{2n}\sum_{i=1}^n\Big(\sum_{k=1}^K\|\nabla\log p(k\,|\,X_i,\theta^\ast)\|\Big)^2\\
&\leq \frac{Kr(Kr+1)}{2}\Big(\frac{1}{n}\sum_{i=1}^n\lambda(X_i)^3\Big)^\frac{2}{3} + \frac{Kr}{2}\Big(\frac{1}{n}\sum_{i=1}^nS_1(X_i)^3\Big)^\frac{2}{3}\\
&\stackrel{\rii}{\leq} \frac{(Kr+1)^2}{2}\bigg[\Big(\mb E_{\theta^\ast}[\lambda(X)^3] + 1\Big)^\frac{2}{3} + \Big(\mb E_{\theta^\ast}[S_1(X)^3] + 1\Big)^\frac{2}{3}\bigg]
\end{align*}
with probability at least $1- 2e^{-Cn^\frac{1}{6}\sigma_2^{-1}} - 2e^{-Cn^\frac{1}{6}\sigma_3^{-1}}$. Here step (i) is by applying lemma \ref{lem: diff_Phimu_thetastar}; step (ii) is because $\|\lambda(X)\|_{\psi_1} = \sigma_2$ and $\|S_1(X)\|_{\psi_1} < \|S_2(X)\|_{\psi_1} = \sigma_3$ yield $\|\lambda(X)^3\|_{\psi_{1/3}} = \sigma_2^3$ and $\|S_1(X)\|_{\psi_{1/3}} \leq \sigma_3^3$, so that step (ii) follows by applying Lemma \ref{lem: tail_iidsum} with $\alpha = \frac{1}{3}$. 
\smallskip

\noindent Similarly, we can show that the quantity inside the deterministic event $B_t^3$ can be bounded as
\begin{align*}
\sup_{\substack{j\in[N_\varepsilon]\\ \mu: W_2(\mu, \delta_{\theta^\ast}) \leq r}} \matnorm{\nabla^2 U(\theta_j, \mu) - \nabla^2 U(\theta_j, \delta_{\theta^\ast})}
&\leq \frac{(Kr+1)^2}{2}\bigg[\Big(\mb E_{\theta^\ast}[\lambda(X)^3]\Big)^\frac{2}{3} + \Big(\mb E_{\theta^\ast}[S_1(X)^3]\Big)^\frac{2}{3}\bigg].
\end{align*}
By combining the two preceding displays together, we obtain that if
\begin{align*}
    r\leq \frac{1}{K}\bigg(\sqrt{\frac{2t}{9\big[\big(\mb E_{\theta^\ast}[\lambda(X)^3] + 1\big)^\frac{2}{3} + \big(\mb E_{\theta^\ast}[S_1(X)^3]+1\big)^\frac{2}{3}\big]}} - 1\bigg)
\end{align*}
then
\begin{displaymath}
\mb P(B_t^1) \leq 2e^{-Cn^\frac{1}{6}\sigma_2^{-1}} + 2e^{-Cn^\frac{1}{6}\sigma_3^{-1}}, \quad\mbox{and}\quad \mb P(B_t^3)=0.
\end{displaymath}
\smallskip

\noindent
Lastly, let us bound $\mb P(B_t^2)$, which requires matrix concentration and a uniform control of the difference over the $\varepsilon$-net $\{\theta_j\}_{j=1}^{N_\varepsilon}$. Let $V_{1/4}$ be a $(1/4)$-covering of $B^d(0, 1)$. By applying the definition of $U_n$, $U$, and Lemma~\ref{lem: bound_opnorm_by_metric_entropy}, we obtain
\begin{align*}
&\quad\,\,
\sup_{\substack{j\in[N_\varepsilon]\\ \mu: W_2(\mu, \delta_{\theta^\ast})\leq r}}\matnorm{\nabla^2U(\theta_j,\delta_{\theta^\ast}) - \nabla^2U_n(\theta_j, \delta_{\theta^\ast})}\\
&= \sup_{j\in[N_\varepsilon]}\matnorm{\nabla^2 U_n(\theta_j,\delta_{\theta^\ast}) - \nabla^2U(\theta_j, \delta_{\theta^\ast})}\\
&= \sup_{j\in[N_\varepsilon]}\matnorm{\frac{1}{n}\sum_{i=1}^n\bigg[\sum_{z=1}^Kp(z\,|\,X_i,\theta^\ast)\nabla^2\log p(X_i,z\,|\,\theta_j) - \mb E\sum_{z=1}^Kp(z\,|\,X_i,\theta^\ast)\nabla^2\log p(X_i,z\,|\,\theta_j)\bigg]}\\
&\stackrel{\ri}{\leq} 2 \sup_{\substack{j\in[N_\varepsilon]\\ v\in V_{1/4}}}\bigg|\bigg\langle v, \frac{1}{n}\sum_{i=1}^n\bigg[\sum_{z=1}^Kp(z\,|\,X_i,\theta^\ast)\nabla^2\log p(X_i,z\,|\,\theta_j) - \mb E\sum_{z=1}^Kp(z\,|\,X_i,\theta^\ast)\nabla^2\log p(X_i,z\,|\,\theta_j)\bigg]v\bigg\rangle\bigg|\\
&= 2\sup_{\substack{j\in[N_\varepsilon]\\ v\in V_{1/4}}}\bigg|\frac{1}{n}\sum_{i=1}^n\sum_{z=1}^K \big\langle v, Q_z(X_i,\theta_j)v\big\rangle\bigg|,
\end{align*}
where we have used in the following shorthand in the last line,
\begin{displaymath}
Q_z(X_i, \theta_j) = p(z\,|\,X_i,\theta^\ast)\nabla^2\log p(X_i,z\,|\,\theta_j) - \mb E p(z\,|\,X_i,\theta^\ast)\nabla^2\log p(X_i,z\,|\,\theta_j).
\end{displaymath}
Therefore, we can bound $\mb P(B_t^2)$ by a union bound argument as
\begin{align*}
\mb P(B_t^2) 
&\leq \mb P\bigg(\sup_{\substack{j\in[N_\varepsilon]\\ v\in V_{1/4}}}\bigg|\frac{1}{n}\sum_{i=1}^n\sum_{z=1}^K \big\langle v, Q_z(X_i,\theta_j)v\big\rangle\bigg| > \frac{t}{18}\bigg)\\
&\leq \sum_{j\in[N_\varepsilon]}\sum_{v\in V_{1/4}}\mb P\bigg(\bigg|\frac{1}{n}\sum_{i=1}^n\sum_{z=1}^K \big\langle v, Q_z(X_i,\theta_j)v\big\rangle\bigg| > \frac{t}{18}\bigg)\\
&\leq N_\varepsilon\big|V_{1/4}\big|\sup_{\substack{j\in[N_\varepsilon]\\ v\in V_{1/4}}}\mb P\bigg(\bigg|\frac{1}{n}\sum_{i=1}^n\sum_{z=1}^K \big\langle v, Q_z(X_i,\theta_j)v\big\rangle\bigg| > \frac{t}{18}\bigg)\\
&\stackrel{\ri}{\leq} e^{d\log\frac{3R}{\varepsilon}}\cdot e^{d\log12}\cdot\sup_{\substack{j\in[N_\varepsilon]\\ v\in V_{1/4}}}\mb P\bigg(\bigg|\frac{1}{n}\sum_{i=1}^n\sum_{z=1}^K \big\langle v, Q_z(X_i,\theta_j)v\big\rangle\bigg| > \frac{t}{18}\bigg)\\
&\stackrel{\rii}{\leq} e^{d\log\frac{3R}{\varepsilon}}\cdot e^{d\log12}\cdot2\exp\Big\{-\frac{Cn^\frac{1}{2}t}{\sigma_1}\Big\}\\
&= 2 \exp\bigg\{d\log\frac{36R}{\varepsilon} -\frac{Cn^\frac{1}{2}t}{\sigma_1}\bigg\},
\end{align*}
where in step (i) we used $\log N_\varepsilon \leq d\log \frac{3R}{\varepsilon}$ and $\log|V_\delta| \leq d\log\frac{3}{\delta}$ (see \cite{vershynin2010introduction} for a proof), and step (ii) is by applying Lemma~\ref{lem: tail_iidsum} with $\alpha=1$.

\section{Concentration inequalities and Orlicz norm}\label{Appendix: conc}
In this appendix, we briefly review the some commonly used concentration inequalities and the notion of Orlicz norm of a random variable.

For any $\alpha\geq 0$, define function $\psi_\alpha(x)=e^{x^\alpha}-1$ for $x\geq 0$. It is easy to verify that $\psi_\alpha$ is convex when $\alpha\geq 1$ and non-convex otherwise. For $0<\alpha < 1$, we can consider a convex modification $\tilde{\psi}_\alpha$ of $\psi_\alpha$, defined as
\begin{align}\label{eqn: convex_psi}
\begin{aligned}
\tilde{\psi}_\alpha(x) = 
\begin{cases}        
\psi_\alpha(x) & x\geq x_\alpha\\ 
\frac{\psi_\alpha(x_\alpha)}{x_\alpha}x & 0\leq x\leq x_\alpha
\end{cases}
\end{aligned}
\end{align}
for some sufficiently large $x_\alpha > 0$ (e.g. we can take $x_\alpha = \sqrt[\alpha]{-\log\alpha}$), so that $\tilde{\psi}_\alpha$ is a convex function on $\mathbb{R}_{\geq 0}$. For a random variable $X$, define its Orlicz norm with respect to $\psi_\alpha$ and $\tilde \psi_\alpha$, respectively, as
\begin{align*}
    \|X\|_{\psi_\alpha} = \inf\{t: \mb E\psi_\alpha(|X|/t) \leq 1\}, \quad \mbox{and}\quad \|X\|_{\tilde{\psi}_\alpha} = \inf\{t: \mb E\tilde{\psi}_\alpha(|X|/t) \leq 1\}.
\end{align*}
It can be shown that $\|\cdot\|_{\psi_\alpha}$ and $\|\cdot\|_{\tilde{\psi}_\alpha}$ are equivalent norms for $\alpha\in(0, 1)$, i.e. there exists a constant $C_\alpha > 1$ depending only on $\alpha$ such that for any random variable $\xi$,
$$
C_\alpha^{-1}\|\xi\|_{\tilde{\psi}_\alpha} \leq \|\xi\|_{\psi_\alpha}\leq C_\alpha\|\xi\|_{\tilde{\psi}_\alpha}.
$$
See Lemma C.2 in \cite{chen2019randomized} for a proof. For $\alpha\in(0,1)$, the triangular inequality does not hold for $\psi_\alpha$, which is why we introduce the modification. When $\alpha<1$, we only have 
$$
\Big|\!\Big|\sum_{i=1}^nX_i\Big|\!\Big|_{\psi_\alpha}^\alpha \leq \sum_{i=1}^n\|X_i\|_{\psi_\alpha}^\alpha.
$$
The Orlicz-norm with respect to $\psi_\alpha$ for each $\alpha>0$ characterizes the tail probability of a random variable by the following inequality,
\begin{align}\label{eqn: tail_orlicz_norm}
    \mb P(|X| > t) \leq 2e^{-\frac{t^\alpha}{\|X\|_{\psi_\alpha}^\alpha}}.
\end{align}
For a sum of i.i.d.~random variables with finite $\psi_\alpha$-norm with $\alpha\in(0, 1]$, we have the following concentration property. The result for $\alpha > 1$ can be shown through a similar argument. The only difference is to substitute $\big|\!\big|\max_{1\leq i\leq n}|X_i|\big|\!\big|_{\tilde{\psi}_\alpha}$ by $\big(\sum_i\|X_i\|_{\psi_\alpha}^{\alpha'}\big)^{1/\alpha'}$ in the proof, where $1/\alpha + 1/\alpha' = 1$.
\begin{lemma}\label{lem: tail_iidsum}
For any positive integer $n$ such that $\sqrt{n} \geq \sqrt[\alpha]{\log(n+1)}$ and $n \geq e^{x_\alpha^\alpha}-1$, where $x_\alpha$ is defined in (\ref{eqn: convex_psi}), we have
\begin{align*}
    \mathbb{P}\bigg(\Big|\frac{1}{n}\sum_{i=1}^n\big(X_i - \mathbb{E}X_i\big)\Big| \geq t\bigg) \leq 2 \exp\Big\{-\frac{A_\alpha n^\frac{\alpha}{2}t^\alpha}{\max_{i}\|X_i\|_{\psi_\alpha}^\alpha}\Big\}
\end{align*}
for some constant $A_\alpha > 0$ only depending on $\alpha\in(0, 1]$. Moreover, we have $A_1 = 4\sqrt{n}\sqrt[4]{\pi}$.
\end{lemma}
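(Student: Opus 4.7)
The plan is to reduce to the centered case and then run a truncation-plus-Bernstein argument, dovetailed with the elementary tail estimate $P(|X_i|>t)\le 2\exp(-t^\alpha/\|X_i\|_{\psi_\alpha}^\alpha)$ that follows from the definition of the Orlicz norm. First I would set $\bar X_i = X_i - \mathbb{E}X_i$, note that $\|\bar X_i\|_{\psi_\alpha}\le 2\|X_i\|_{\psi_\alpha}\le 2K$ (so without loss of generality the $X_i$ are centered with common scale $K$), and write $S_n=\sum_i \bar X_i$. Observe that for $t\le c_\alpha K/\sqrt n$ (with $c_\alpha$ chosen so that $A_\alpha c_\alpha^\alpha \le \log 2$) the claimed right-hand side is $\ge 1$, so the bound is trivial; hence I may assume $t\ge c_\alpha K/\sqrt n$ throughout.

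Next, I would introduce a truncation level $M=c_1 \sqrt n\, t$ and split $\bar X_i = Y_i + Z_i$ with $Y_i = \bar X_i\,\mathbb{1}_{|\bar X_i|\le M}$. On the event $\mathcal E := \{\max_i |\bar X_i|\le M\}$ every $Z_i$ vanishes, so $S_n = \sum_i Y_i = \sum_i (Y_i-\mathbb{E}Y_i) + n\mathbb{E}Y_i$, where $|\mathbb{E}Y_i| = |\mathbb{E}(\bar X_i \mathbb{1}_{|\bar X_i|>M})|$ is exponentially small in $(M/K)^\alpha$ by tail integration of \eqref{eqn: tail_orlicz_norm}. A union bound gives
\begin{equation*}
P(|S_n|\ge nt) \;\le\; P(\mathcal E^c) + P\bigl(|\textstyle\sum_i(Y_i-\mathbb{E}Y_i)|\ge nt/2\bigr) + \mathbb{1}\{n|\mathbb{E}Y_i|\ge nt/2\},
\end{equation*}
and the last indicator vanishes for our choice of $M$. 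The max-tail contribution is controlled directly by \eqref{eqn: tail_orlicz_norm}:
\begin{equation*}
P(\mathcal E^c)\;\le\; 2n\exp\!\bigl(-(M/K)^\alpha\bigr)\;=\;2n\exp\!\bigl(-c_1^\alpha n^{\alpha/2}t^\alpha/K^\alpha\bigr),
\end{equation*}
and the hypothesis $\sqrt n\ge (\log(n+1))^{1/\alpha}$ is exactly what is needed to absorb the factor $n$ into the exponent and land on a bound of the form $\exp(-A_\alpha n^{\alpha/2}t^\alpha/K^\alpha)$ after adjusting $c_1$.

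For the middle term I would apply Bernstein's inequality to the bounded centered variables $Y_i-\mathbb{E}Y_i$: they satisfy $|Y_i-\mathbb{E}Y_i|\le 2M$, while $\operatorname{Var}(Y_i)\le \mathbb{E}\bar X_i^2 \le C_\alpha K^2$ because the second moment is controlled by the $\psi_\alpha$ Orlicz norm (the moment generating function of $(|\bar X_i|/K)^\alpha$ is finite). Bernstein then yields $2\exp(-c\min(nt^2/K^2,\,nt/M))$. With $M=c_1\sqrt n t$ and $t\ge c_\alpha K/\sqrt n$ one checks directly that $nt^2/K^2=(n^{1/2}t/K)^2 \ge c_\alpha^{2-\alpha}(n^{1/2}t/K)^\alpha = c_\alpha^{2-\alpha}\,n^{\alpha/2}t^\alpha/K^\alpha$, while $nt/M = \sqrt n/c_1$ can also be shown to dominate $n^{\alpha/2}t^\alpha/K^\alpha$ in the regime of interest (again using $\sqrt n \ge (\log(n+1))^{1/\alpha}$ to bound $t/K$). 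Combining the two contributions gives the claim with $A_\alpha$ depending only on $\alpha$.

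The main obstacle is the bookkeeping: one has to choose the single truncation level $M$ and the constant $A_\alpha$ so that the two contributions — the union-bound piece $\exp(\log n-(M/K)^\alpha)$ and the Bernstein piece $\exp(-c\min(nt^2/K^2,nt/M))$ — can simultaneously be written in the common form $\exp(-A_\alpha n^{\alpha/2}t^\alpha/K^\alpha)$ for \emph{every} $t$ in the non-trivial regime. The condition $\sqrt n\ge(\log(n+1))^{1/\alpha}$ is tight for this absorption step, and the subsidiary condition $n\ge e^{x_\alpha^\alpha}-1$ ensures that $\tilde\psi_\alpha$ and $\psi_\alpha$ agree at the scale at which we apply Markov's inequality (recall that for $\alpha<1$, $\psi_\alpha$ is not convex, so one must temporarily pass to the convex modification $\tilde\psi_\alpha$ before invoking Jensen/Markov). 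The explicit constant $A_1=4\sqrt[4]{\pi}$ in the $\alpha=1$ case is obtained by tracking the Bernstein constants carefully, with the factor $\sqrt[4]{\pi}$ coming from the Gaussian-type second-moment estimate used to bound $\operatorname{Var}(Y_i)$.
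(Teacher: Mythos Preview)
Your approach is genuinely different from the paper's, and it has a real gap. The paper does not truncate and invoke Bernstein; instead it bounds the Orlicz norm of the \emph{sum} directly via a Hoffmann--J{\o}rgensen--type inequality (Theorem~6.21 in Ledoux--Talagrand), which gives
\[
\Big\|\sum_i(X_i-\mathbb{E}X_i)\Big\|_{\tilde\psi_\alpha}\ \le\ K_\alpha\Big(\mathbb{E}\big|\textstyle\sum_i(X_i-\mathbb{E}X_i)\big|+\big\|\max_i|X_i|\big\|_{\tilde\psi_\alpha}\Big).
\]
The expectation is handled by symmetrization and Cauchy--Schwarz, yielding $O(\sqrt{n}\,K)$; the maximal term is bounded by $\tilde\psi_\alpha^{-1}(n)\,K=(\log(n+1))^{1/\alpha}K$ once $n\ge e^{x_\alpha^\alpha}-1$ (this is the actual role of that hypothesis: it puts $n$ above the linear part of $\tilde\psi_\alpha$, so $\tilde\psi_\alpha^{-1}(n)=\psi_\alpha^{-1}(n)$). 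The condition $\sqrt n\ge(\log(n+1))^{1/\alpha}$ then makes the $\sqrt n$ term dominate, so $\|\sum_i\bar X_i\|_{\psi_\alpha}\lesssim\sqrt n\,K$, and a single application of the Markov tail bound \eqref{eqn: tail_orlicz_norm} at level $nt$ gives the claim in one line.

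The gap in your argument is the sentence ``$nt/M=\sqrt n/c_1$ can also be shown to dominate $n^{\alpha/2}t^\alpha/K^\alpha$ in the regime of interest.'' This is false for large $t$: with $M=c_1\sqrt n\,t$ the Bernstein linear term is $\sqrt n/c_1$, a quantity \emph{independent of $t$}, whereas the target exponent $n^{\alpha/2}t^\alpha/K^\alpha$ grows without bound in $t$. So for any fixed $c_1$ and $A_\alpha$, the Bernstein piece fails once $t\gtrsim K\,n^{(1-\alpha)/(2\alpha)}$. A symmetric problem appears at the other end: for $t$ just above the trivial threshold $c_\alpha K/\sqrt n$ the target exponent $n^{\alpha/2}t^\alpha/K^\alpha$ is merely a constant, so the factor $n$ from the union bound in $P(\mathcal E^c)\le 2n\exp(-(M/K)^\alpha)$ cannot be absorbed unless $c_1$ grows like $(\log n)^{1/\alpha}$---but then the Bernstein linear term degrades to $\sqrt n/(\log n)^{1/\alpha}$, making the large-$t$ failure worse. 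The two constraints on $M$ (large enough to kill the max term, small enough to keep the Bernstein linear term useful) are not simultaneously satisfiable uniformly in $t$ when $\alpha<1$, because for those $\alpha$ the MGF of $X_i$ is infinite and the true tail of the sum is genuinely of $\psi_\alpha$ type, not sub-exponential. (For $\alpha=1$ your Bernstein route does go through without truncation, which is consistent with the paper's remark that the $\alpha>1$ case is handled by a different H\"older-type bound on the max.) The Ledoux--Talagrand route sidesteps all of this by working at the level of Orlicz norms rather than splitting into truncated/untruncated pieces.
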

\begin{proof}
By Theorem 6.21 in \cite{ledoux1991probability}, there is a constant $K_\alpha$ such that
$$
\Big|\!\Big|\sum_{i=1}^nX_i - \mathbb{E}X_i\Big|\!\Big|_{\tilde{\psi}_\alpha} \leq K_\alpha\bigg(\mathbb{E}\Big|\sum_{i=1}^n X_i - \mathbb{E}X_i\Big| + \big|\!\big|\max_{1\leq i\leq n}|X_i|\big|\!\big|_{\tilde{\psi}_\alpha}\bigg).
$$
To bound the first term, using symmetrization argument yields
$$
\mathbb{E}\Big|\sum_{i=1}^n X_i - \mathbb{E}X_i\Big| \leq 2\mathbb{E}_{\varepsilon, X}\Big|\sum_{i=1}^n\varepsilon_iX_i\Big| \leq 2\sqrt{\sum_{i=1}^n\mathbb{E}X_i^2}.
$$
Here, $\varepsilon_i$ are i.i.d. Rademacher random variables. The first inequality is by a standard symmetrization argument, and the second inequality is by Cauchy-Schwarz’s inequality. Recall that, we have
$$
\|X\|_p^p \leq 2\Gamma\Big(\frac{\alpha}{p}+1\Big)\|X\|_{\psi_\alpha}^p
$$
for any $p, \alpha > 0$ and random variable $X$. Take $p=2$ and we have 
$$
\mathbb{E}X_i^2 \leq 2\Gamma\Big(\frac{\alpha}{2}+1\Big) \|X_i\|_{\psi_\alpha}^2 \leq 2\Gamma\Big(\frac{\alpha}{2}+1\Big)C_\alpha^2\|X_i\|_{\tilde{\psi}_\alpha}^2.
$$
To bound the second term, by Lemma 8.2 in \cite{kosorok2008introduction}, there is a constant $B_\alpha$ s.t.
$$
\big\|\max_{1\leq i\leq n}|X_i|\big\|_{\tilde{\psi}_\alpha} \leq B_\alpha\tilde{\psi}_{\alpha}^{-1}(n)\max_{1\leq i\leq n}\|X_i\|_{\tilde{\psi}_\alpha}.
$$
Thus, when $n>e^{x_\alpha^\alpha} - 1$ we have
\begin{align*}
\Big|\!\Big|\sum_{i=1}^nX_i - \mathbb{E}X_i\Big|\!\Big|_{\tilde{\psi}_\alpha} &\leq K_\alpha\bigg(\sqrt{8\Gamma\big(\frac{\alpha}{2} + 1\big)C_\alpha^2}\cdot\sqrt{n} + B_\alpha\tilde{\psi}_\alpha^{-1}(n)\bigg)\max_{1\leq i\leq n}\|X_i\|_{\tilde{\psi}_\alpha}\\ &\leq K_\alpha\bigg(\sqrt{8\Gamma\big(\frac{\alpha}{2} + 1\big)C_\alpha^2}\cdot\sqrt{n} + B_\alpha\sqrt[\alpha]{\log(n+1)}\bigg)\max_{1\leq i\leq n}\|X_i\|_{\tilde{\psi}_\alpha}\\&\leq K_\alpha'\max\big\{\sqrt{n}, \sqrt[\alpha]{\log(n+1)}\}\max_{1\leq i\leq n}\big\|X_i\|_{\tilde{\psi}_\alpha}.
\end{align*}
This implies
$$
\Big\|\sum_{i=1}^nX_i - \mathbb{E}X_i\Big\|_{\psi_\alpha} \leq C_\alpha^2K_\alpha'\max\big\{\sqrt{n}, \sqrt[\alpha]{\log(n+1)}\big\}\max_{1\leq i\leq n}\|X_i\|_{\psi_\alpha}
$$
and for $n$ large enough such that $\sqrt{n} \geq \sqrt[\alpha]{\log(n+1)}$, we derive the desiring result by applying (\ref{eqn: tail_orlicz_norm}).
\end{proof}
In the proof of main theorems, we use this result to derive the deviation inequality of i.i.d. sum of high-order moments of $S_1(X)$, $S_2(X)$, and $\lambda(X)$. Since $S_2(X)$ and $\lambda(X)$ have finite $\psi_1$-norm by Assumption \ref{assump: continuity_of_Hessian}, their high-order moments will have finite $\psi_\alpha$-norm for $0<\alpha<1$. This Lemma also allow us to weaken the assumption of finite $\psi_1$-norm to any $\psi_\alpha$-norm for some $\alpha > 0$.

Next two lemma provide an explicit tail bound of Gamma distribution. We use them to control the difference $|\mb E_{\wht q_\theta}f(\theta) - f(\theta^\ast)|$ when $f$ has polynomial growth and $\wht q_\theta$ concentrates around $\theta^\ast$.

\begin{lemma}\label{lem: tale_of_subgamma}
Let $X$ be a random variable with mean $\mu$ such that its cumulant generating function satisfies
\begin{displaymath}
\log\mb Ee^{\lambda(X-\mu)} \leq \frac{v\lambda^2}{2(1-a\lambda)}
\end{displaymath}
for some positive constant $v, a$ and $\lambda < \frac{1}{a}$. Then we have
\begin{displaymath}
\mb P(X > \mu + t) \leq e^{-\frac{t^2}{2(v+at)}}, \quad t > 0.
\end{displaymath}
\end{lemma}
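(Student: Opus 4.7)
The statement is the standard sub-Gamma (one-sided) Bernstein-type tail bound, and the plan is to derive it from the given cumulant generating function (CGF) estimate via a Chernoff argument with an explicit choice of the tilting parameter.

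First I would apply Markov's inequality to the exponential moment: for any $\lambda \in (0, 1/a)$,
\begin{align*}
\mathbb{P}(X > \mu + t)
\;=\; \mathbb{P}\!\left(e^{\lambda(X-\mu)} > e^{\lambda t}\right)
\;\leq\; e^{-\lambda t}\,\mathbb{E}\,e^{\lambda(X-\mu)}
\;\leq\; \exp\!\left(-\lambda t + \frac{v\lambda^2}{2(1-a\lambda)}\right),
\end{align*}
where the last inequality uses the hypothesized bound on the CGF. The remaining task is to minimize the exponent $\varphi(\lambda) := -\lambda t + \frac{v\lambda^2}{2(1-a\lambda)}$ over $\lambda \in (0, 1/a)$.

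Rather than solving the optimization by differentiation (which leads to a quadratic in $\lambda$ with an awkward closed form), I would simply verify that the choice
\begin{align*}
\lambda^\ast \;=\; \frac{t}{v+at}
\end{align*}
yields the claimed bound. Note first that $\lambda^\ast \in (0, 1/a)$ since $v > 0$ forces $at < v + at$. A direct computation gives $1 - a\lambda^\ast = v/(v+at)$, hence
\begin{align*}
\frac{v(\lambda^\ast)^2}{2(1-a\lambda^\ast)}
\;=\; \frac{v \cdot t^2/(v+at)^2}{2 \cdot v/(v+at)}
\;=\; \frac{t^2}{2(v+at)},
\end{align*}
while $\lambda^\ast t = t^2/(v+at)$, so $\varphi(\lambda^\ast) = -t^2/(v+at) + t^2/\big(2(v+at)\big) = -t^2/\big(2(v+at)\big)$, giving exactly the claimed tail bound.

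There is no real obstacle here, since this is a textbook computation; the only ``subtlety'' is that the optimal $\lambda^\ast$ must be inside the interval of validity of the CGF bound, which is guaranteed by $v>0$. I would include one sentence observing that this choice $\lambda^\ast$ is in fact the minimizer of $\varphi$ on $(0,1/a)$ (which can be checked by convexity of $\varphi$ on that interval, or by solving $\varphi'(\lambda)=0$), but the cleanest presentation is just to plug in $\lambda^\ast$ and verify, as above.
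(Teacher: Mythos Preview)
Your proof is correct and essentially identical to the paper's own argument: both apply the Chernoff/Markov bound to obtain $\log\mathbb{P}(X>\mu+t)\le -\lambda t + \tfrac{v\lambda^2}{2(1-a\lambda)}$ and then plug in the explicit choice $\lambda=\tfrac{t}{v+at}$ to get the claimed exponent. Your version is slightly more explicit in verifying $\lambda^\ast\in(0,1/a)$, but otherwise the computations match line for line.
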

\begin{proof}
It is easy to see that
\begin{align*}
\log\mb P(X-\mu > t) \leq -\lambda t + \frac{v\lambda^2}{2(1-a\lambda)}
\end{align*}
for any $0 < \lambda < \frac{1}{a}$. By letting $\lambda = \frac{t}{v+at}$, we have
\begin{align*}
-\lambda t + \frac{v\lambda^2}{2(1-a\lambda)} = -\frac{t^2}{v + at} + \frac{v\big(\frac{t}{v+at}\big)^2}{2 - 2a\cdot\frac{t}{v+at}} = -\frac{t^2}{2(v+at)}.
\end{align*}
\end{proof}

\begin{lemma}\label{lem: tail_of_gamma}
If $X\sim Ga(\alpha, \beta)$, then
\begin{displaymath}
\mb P\Big(X - \frac{\alpha}{\beta} > t\Big)\leq e^{-\frac{\beta^2t^2}{2(\alpha + \beta t)}}.
\end{displaymath}
Moreover, for $r > \frac{\alpha}{\beta}$, we have
\begin{displaymath}
\int_r^{\infty}e^{-\beta x}x^{\alpha - 1}\,\dd x \leq \frac{\Gamma(\alpha)}{\beta^{\alpha}}e^{-\frac{(\beta r - \alpha)^2}{2\beta r}}.
\end{displaymath}
\end{lemma}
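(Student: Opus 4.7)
The plan is to obtain the first tail bound by computing the cumulant generating function of $X \sim \mathrm{Ga}(\alpha,\beta)$ explicitly, bounding it by an expression of the sub-gamma form required in Lemma~\ref{lem: tale_of_subgamma}, and then invoking that lemma. For the second integral inequality, the strategy is to rewrite the integral as (a constant times) the tail probability of $X$ and then feed in the first inequality.

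First I would compute $\mb E e^{\lambda(X-\alpha/\beta)} = e^{-\lambda\alpha/\beta}(1-\lambda/\beta)^{-\alpha}$, valid for $\lambda < \beta$, so that the centered cumulant generating function equals $\alpha\big[-u - \log(1-u)\big]$ with $u = \lambda/\beta \in (0,1)$. Expanding the logarithm as a power series gives $-u-\log(1-u) = \sum_{k\geq 2} u^k/k \leq \tfrac{1}{2}\sum_{k\geq 2} u^k = \tfrac{u^2}{2(1-u)}$. Substituting back in $u = \lambda/\beta$ yields the bound
\begin{displaymath}
\log \mb E e^{\lambda(X-\alpha/\beta)} \;\leq\; \frac{\alpha\lambda^2}{2\beta^2(1-\lambda/\beta)} \;=\; \frac{v\lambda^2}{2(1-a\lambda)}
\end{displaymath}
with $v = \alpha/\beta^2$ and $a = 1/\beta$. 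Applying Lemma~\ref{lem: tale_of_subgamma} directly gives
\begin{displaymath}
\mb P\big(X - \alpha/\beta > t\big) \;\leq\; \exp\!\Big\{-\frac{t^2}{2(\alpha/\beta^2 + t/\beta)}\Big\} \;=\; \exp\!\Big\{-\frac{\beta^2 t^2}{2(\alpha + \beta t)}\Big\},
\end{displaymath}
which is the first claim.

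For the second claim, I would use the density of $\mathrm{Ga}(\alpha,\beta)$ to write
\begin{displaymath}
\int_r^\infty e^{-\beta x} x^{\alpha-1}\,\dd x \;=\; \frac{\Gamma(\alpha)}{\beta^\alpha}\,\mb P(X > r),
\end{displaymath}
and then, assuming $r > \alpha/\beta$, set $t = r - \alpha/\beta > 0$ so that the tail bound above applies. A direct algebraic simplification gives $\alpha + \beta t = \beta r$ and $\beta t = \beta r - \alpha$, hence the exponent in the tail bound becomes $(\beta r - \alpha)^2/(2\beta r)$, yielding the stated inequality. There is no genuine obstacle here; the only care needed is in the elementary but crucial termwise bound $u^k/k \leq u^k/2$ for $k\geq 2$, which is what produces precisely the sub-gamma form required to invoke Lemma~\ref{lem: tale_of_subgamma}.
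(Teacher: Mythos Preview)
Your proof is correct and follows essentially the same approach as the paper: both compute the centered cumulant generating function of $X\sim\mathrm{Ga}(\alpha,\beta)$, bound $-u-\log(1-u)\le \tfrac{u^2}{2(1-u)}$ (you via the power series, the paper simply quoting this inequality), apply Lemma~\ref{lem: tale_of_subgamma} with $v=\alpha/\beta^2$, $a=1/\beta$, and then deduce the integral bound by writing the integral as $\tfrac{\Gamma(\alpha)}{\beta^\alpha}\mb P(X>r)$ and substituting $t=r-\alpha/\beta$.
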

\begin{proof}
Notice that
\begin{align*}
\log\mb Ee^{\lambda(X - \frac{\alpha}{\beta})}
&= - \frac{\lambda\alpha}{\beta} - \alpha\log\Big(1 - \frac{\lambda}{\beta}\Big)\\
&= \alpha\Big[-\log(1 - \frac{\lambda}{\beta}) - \frac{\lambda}{\beta}\Big]\\
&\leq \alpha\cdot\frac{\lambda^2/\beta^2}{2(1-\lambda/\beta)}\\
&= \frac{\frac{\alpha}{\beta^2}\cdot\lambda^2}{2(1-\frac{1}{\beta}\cdot\lambda)}.
\end{align*}
Here, we use the inequality $-\log(1-x) - x \leq \frac{x^2}{2(1-x)}$. By lemma \ref{lem: tale_of_subgamma}, we have
\begin{displaymath}
\mb P\Big(X - \frac{\alpha}{\beta} > t\Big) \leq \exp\Big\{-\frac{t^2}{2(\frac{\alpha}{\beta^2} + \frac{t}{\beta})}\Big\} = e^{-\frac{\beta^2t^2}{2(\alpha + \beta t)}}.
\end{displaymath}
With this inequality in hand, we can see that
\begin{align*}
\int_{r}^{\infty}e^{-\beta x}x^{\alpha - 1}\,\dd x
&= \frac{\Gamma(\alpha)}{\beta^{\alpha}}\cdot \int_{r}^{\infty}\frac{\beta^{\alpha}}{\Gamma(\alpha)}e^{-\beta x}x^{\alpha - 1}\,\dd x\\
&= \frac{\Gamma(\alpha)}{\beta^{\alpha}}\mb P\bigg(Ga(\alpha, \beta) - \frac{\alpha}{\beta} > r - \frac{\alpha}{\beta}\bigg)\\
&\leq \frac{\Gamma(\alpha)}{\beta^{\alpha}}\cdot\exp\bigg\{\frac{\beta^2(r-\frac{\alpha}{\beta})^2}{2(\alpha + \beta(r-\frac{\alpha}{\beta}))}\bigg\}\\
&= \frac{\Gamma(\alpha)}{\beta^{\alpha}}\cdot e^{-\frac{(\beta r - \alpha)^2}{2\beta r}}.
\end{align*}
\end{proof}

\section{other materials}
\subsection{Particle approximation} \label{sec:particle_approx}
Although the minimization movement scheme is mathematically appealing and leads to exponential convergence when the objective functional is convex along generalized geodesics (c.f.~Section~\ref{sec:con_one_step}), numerically computing it may require extra efforts as~\eqref{eqn: JKO_update_qtheta} generally does not admit an explicit solution, similar to the implicit Euler scheme for approximating gradient flows in the Euclidean space. In this subsection, we discuss two particle approximation methods for numerically realizing the minimization movement scheme. To simplify the notation, we consider the following one-step scheme with step size $\tau>0$,
\begin{align}\label{eqn:imp_scheme}
    \rho_\tau =\argmin_{\nu\in\PX} \m F_{\rm KL}(\nu) + \frac{1}{2\tau} W_2^2(\nu,\rho),
\end{align}
for a generic KL divergence functional~\eqref{eqn: functional_of_interest} with potential $V$ that includes the sample energy functional $V_n(\cdot\,|\,q_{\theta}^{(k)})$ with sample potential $U_n(\cdot,\,q_{\theta}^{(k)})$ in MF-WGF as a special case.

The first approach is to instead use the following explicit scheme with step size $\tau$:
\begin{align}\label{eqn:exp_scheme}
    \rho_{\tau}^{\rm ex} = \Big[\,\id - \tau \underbrace{\nabla \frac{\delta \m F_{\rm KL}}{\delta \rho}(\rho)}_{-\mx{\small velocity}}\Big]_\# \rho,
\end{align}
which corresponds to applying the usual one-step explicit Euler scheme to solve the ODE~\eqref{eqn:flow_ODE} that defines the particle flow.
In comparison, by using the first order optimality condition of~\eqref{eqn:imp_scheme}, the implicit scheme can also be equivalently written as
\begin{align*}
    \rho = \Big[\,\id + \tau\nabla\frac{\delta\m F_{\rm KL}}{\delta\rho}(\rho_\tau)\Big]_\#\rho_\tau,
\end{align*}
which requires solving for $\rho_\tau$ in a distributional equation, and corresponds to the one-step implicit Euler scheme for solving the flow ODE~\eqref{eqn:flow_ODE}.

Notice that when step size $\tau$ is small, we can approximate the inverse of the optimal transport map $T_{\rho_\tau}^\rho = \id + \tau\nabla\frac{\delta\m F_{\rm KL}}{\delta\rho}(\rho_\tau)$ as $\id - \tau\nabla\frac{\delta\m F_{\rm KL}}{\delta\rho}(\rho_\tau)$; therefore, we have $\rho_\tau \approx \big[\,\id-\tau\nabla\frac{\delta\m F_{\rm KL}}{\delta\rho}(\rho_\tau)\big]_\#\rho\approx \big[\,\id-\tau\nabla\frac{\delta\m F_{\rm KL}}{\delta\rho}(\rho)\big]_\#\rho$ by applying the inverse of the optimal transport map to both sides of the preceding implicit scheme formula. As a consequence, the two schemes are first-order equivalent (relative to the step size $\tau$).
The same heuristics can also be used to motivate a theoretical analysis for bounding the difference between the two schemes. In practice, one can realize the explicit scheme~\eqref{eqn:exp_scheme} via particle approximation: if one has a collection of $N$ particles $\{\theta_{\ell}\}_{\ell=1}^N$ approximately sampling from $\rho$ and an estimator $\wht T$ of subdifferential $\nabla \frac{\delta \m F_{\rm KL}}{\delta \rho}(\rho)$ based on the particles (e.g.~plug-in estimator with a kernel density estimator of $\rho$),
then the transformed particles $\big\{\theta_{\ell} - \tau\, \wht T(\theta_{\ell})\big\}_{\ell=1}^N$ approximately form a sample from $\rho_{\tau}^{\rm ex}$. Unfortunately, since the subdifferential usually depends on $\rho$ (or its higher-order derivatives) in a complicated manner, e.g.~$\nabla \frac{\delta \m F_{\rm KL}}{\delta \rho}(\rho) = \nabla V + \nabla\log \rho$, the plug-in estimator may suffer from low accuracy.

For the KL divergence functional, one may use the relation between the Fokker-Planck equation~\eqref{eqn: Fokker_Planck_equation} and the Langevin SDE~\eqref{eqn: Langevin_dynamics} to motivate a second approach. Although this approach is not applicable to functionals beyond $\m F_{\rm KL}$, it does not require explicit estimation of any density function and therefore is more accurate. More precisely, we will use the one-step discretization of the Langevin SDE with step size $\tau$, that is, the distribution of $Y$ obtained by 
\begin{align}\label{eqn:SDE_one_step}
    Y= X - \tau \nabla V(X) + \sqrt{2\tau} \,\eta, \quad\mx{with }\ X\sim \rho \ \ \mx{and}\ \ \eta\sim N(0,I_d), 
\end{align}
to approximate the solution $\rho_\tau$ in~\eqref{eqn:imp_scheme}, which can also be easily implemented via particle approximation. The Langevin SDE approximation~\eqref{eqn:SDE_one_step} to $\rho_\tau$ can also be written as
\begin{align*}
    \rho_{\tau}^L=\big\{\,[\,\id - \tau \nabla V]_\# \rho\,\big\} \ast N(0,2\tau I_d),
\end{align*}
where $\ast$ denotes the (distribution) convolution operator,
can also be interpreted as from the operator splitting technique in optimization~\cite{,parikh2014proximal} and numerical PDE~\cite{macnamara2016operator}. To see this, the velocity field (as an operator) $v=-\nabla \frac{\delta \m F_{\rm KL}}{\delta \rho}(\rho)=-\nabla V -\nabla \log \rho$ in the ODE~\eqref{eqn:flow_ODE} characterizing the flow of each particle can be split into the sum of two simpler operators: $-\nabla V$ can be realized by a one-step gradient descent; and $-\nabla \log \rho$ can be realized by injecting one-step pure diffusion (i.e.~Brownian motion).
We choose the second approach based on SDE or operator splitting as our default method for realizing the minimization movement scheme in MF-WGF. For a general funcitonal $\m F$, we may also apply the same operator splitting trick if its subdifferential can be written as a sum of easy-handling operators. Algorithm~\ref{algo: MFWGF_particle} summarizes the full implementation of MF-WGF using particle approximation.
\begin{algorithm}[ht]
\caption{MF-WGF via particle approximation}
\label{algo: MFWGF_particle}
\KwData{Prior distribution $\pi_\theta$, data set $\{X_i\}_{i=1}^n$, number of latent labels $K$, number of particles $B$, number of iterations $T$, and step size $\tau$}
\KwResult{Estimation of mean-field variational approximation $\wht Q_\theta$}
Initialize particles $\theta_1^{(0)}, \ldots, \theta_B^{(0)}$ by sampling i.i.d.~from some initial distribution $q_\theta^{(0)}$ over $\Theta\subseteq\mb R^d$\;
\For{$t\leftarrow 0$ \KwTo $T-1$}{
    \tcp*[h]{Updating $q_{Z_i}^{(t)}$.}
    
    \For{$k\leftarrow 1$ \KwTo $K$}{
        \For{$i\leftarrow 1$ \KwTo $n$}{
            Compute $\mb E_{q_\theta^{(t)}}\log p(k\,|\,X_i,\theta) := B^{-1}\sum_{b=1}^B\log p(k\,|\,X_i,\theta_b^{(t)})$ via Monte Carlo approximation\;
            Update $\Phi(q_\theta^{(t)}, X_i)(k) := \frac{\exp\big\{\mb E_{q_\theta^{(t)}}\log p(k\,|\,X_i,\theta)\big\}}{\sum_{z=1}^K \exp\big\{\mb E_{q_\theta^{(t)}}\log p(z\,|\,X_i,\theta)\big\}}$
        }
    }
    \tcp*[h]{Compute the drift term in the Langevin SDE.}
    Compute $\nabla V(\theta) := -\sum_{i=1}^n\sum_{k=1}^K\nabla\log p(X_i, z\,|\,\theta)\, \Phi\big(q_\theta^{(t)}, X_i\big)(k) - \nabla\log\pi_\theta(\theta)$\;
    \For{$b\leftarrow 1$ \KwTo $B$}{
    \tcp*[h]{Updating particles whose empirical measure forms $q_{\theta}^{(t)}$.}
    
        Sample $\eta_b\sim \m N(0, I_d)$\;
        Update $\theta_b^{(t+1)} = \theta_b^{(t)} - \tau\nabla V(\theta_b^{(t)}) + \sqrt{2\tau}\eta_b$\;
    }
}
\tcp*[h]{Output the empirical measure of particles.}

Output $\wht {Q}_\theta(A) = B^{-1}\sum_{b=1}^B \delta_{\theta_b^{(T)}}$ as the empirical measure of particles $\big\{\theta_b^{(T)}\big\}_{b=1}^B$.
\end{algorithm}

The following lemma provides an error estimate by using the two numerical schemes to approximate $\rho_\tau$ from the JKO scheme as in~\eqref{eqn:imp_scheme}. More theoretical analysis about the long term cumulative error of using the discretized Langevin SDE method for approximating the JKO scheme can be found in Theorem \ref{thm: discrete_langevin_vs_JKO} in Appendix~\ref{app:long_term}.

\begin{lemma}[One-step numerical error]\label{lem: onestep_err}
Let $\rho_{\tau}^{\rm FP}$ be the solution of Fokker--Planck equation (\ref{eqn: Fokker_Planck_equation}) at time $\tau$ with initial density $\rho_0 = \rho\in\ms P_2^r(\mb R^d)$, and $\rho_\tau^{\rm ex}$, $\rho_\tau$, $\rho_\tau^L$ the three one-step schemes described earlier, namely, one-step explicit scheme, JKO scheme, and discretized Langevin SDE scheme. If $\nabla V$ is $L$-Lipschitz, 
then the following numerical error estimates hold
\begin{align*}
    W_2^2(\rho_\tau^{\rm FP}, \rho_\tau^L) \leq C_1\tau^3, \quad W_2^2(\rho_\tau^{\rm FP}, \rho_\tau) \leq C_2\tau^3, \quad\mbox{and}\quad W_2^2(\rho_\tau^{\rm ex}, \rho_\tau) \leq C_3\tau^4
\end{align*}
for sufficiently small $\tau$,
where $C_1, C_2$, and $C_3$ are constants depending on $\rho$ and $V$, whose concrete forms are provided in Appendix~\ref{app:proof_lem:onestep}.
As a result, if these constants are all bounded, 
then we have $W_2(\rho_\tau^{\rm ex}, \rho_\tau) \lesssim \tau^{3/2}$ and $W_2(\rho_\tau^L, \rho_\tau) \lesssim \tau^{3/2}$.
In particular, if the third-order derivatives of $V$ and $\log \rho$ are bounded by some constant $B > 0$, then all these constants $C_i$, $i=1,2,3$, can be bounded by $\max\{d^2B^4, d^2B^2, 4d^3B^2(1+B)^2, 16d^3B^4\}$.
\end{lemma}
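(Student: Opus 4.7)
The plan is to establish the three $W_2$ bounds by separately analyzing each pairwise comparison via synchronous couplings and Taylor expansions of the associated transport maps or densities. All three estimates are one-step consistency results, so the natural strategy is to keep everything expressed in powers of $\tau$ and compare first- and second-order terms against the Fokker--Planck expansion.

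For $W_2^2(\rho_\tau^{\rm FP},\rho_\tau^L)\leq C_1\tau^3$, I would use a synchronous coupling between the continuous Langevin SDE $\dd X_t=-\nabla V(X_t)\,\dd t+\sqrt{2}\,\dd W_t$ with $X_0\sim\rho$ and its one-step Euler--Maruyama discretization $Y=X_0-\tau\nabla V(X_0)+\sqrt{2}\,W_\tau$, sharing the driving Brownian motion. Then $X_\tau-Y=\int_0^\tau[\nabla V(X_0)-\nabla V(X_s)]\,\dd s$, so Cauchy--Schwarz in $s$ combined with the $L$-Lipschitz continuity of $\nabla V$ yields $|X_\tau-Y|^2\leq L^2\tau\int_0^\tau|X_s-X_0|^2\,\dd s$. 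A standard SDE moment bound $\mb E|X_s-X_0|^2\lesssim s$ (via It\^o isometry, using $\mb E\|\nabla V(X_0)\|^2<\infty$ from $\rho\in\PX$ and the Lipschitz hypothesis) then gives $\mb E|X_\tau-Y|^2=O(\tau^3)$, which upper-bounds $W_2^2(\rho_\tau^{\rm FP},\rho_\tau^L)$.

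For $W_2^2(\rho_\tau^{\rm FP},\rho_\tau)\leq C_2\tau^3$, I would begin from the first-order optimality condition of the JKO step, which by Lemma~\ref{lem:FV_SD} reads $T_{\rho_\tau}^{\rho}(y)=y+\tau\nabla\frac{\delta\m F_{\rm KL}}{\delta\rho}(\rho_\tau)(y)$, and combine it with the change-of-variables identity $\log\rho_\tau(T_\rho^{\rho_\tau}(x))=\log\rho(x)-\log\det DT_\rho^{\rho_\tau}(x)$. Taylor-expanding the implicit map and its Jacobian to second order in $\tau$ and matching with the Fokker--Planck expansion $\rho_\tau^{\rm FP}=\rho+\tau\bigl[\Delta\rho+\nabla\!\cdot\!(\rho\nabla V)\bigr]+O(\tau^2)$ shows the two densities agree to leading order, with a residual pushforward map whose squared $L^2(\rho)$ distance from the identity is $O(\tau^3)$. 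For $W_2^2(\rho_\tau^{\rm ex},\rho_\tau)\leq C_3\tau^4$, I would instead use the coupling $\bigl(S(X),\,T_\rho^{\rho_\tau}(X)\bigr)$ with $X\sim\rho$ and $S(x)=x-\tau\nabla\frac{\delta\m F_{\rm KL}}{\delta\rho}(\rho)(x)$. The JKO Euler--Lagrange equation, viewed as a fixed-point relation in the map, combined with the zeroth-order approximations $T_\rho^{\rho_\tau}=\id+O(\tau)$ and $\rho_\tau=\rho+O(\tau)$ together with the Lipschitz/higher-derivative bounds on $V$ and $\log\rho$, yields $|S(X)-T_\rho^{\rho_\tau}(X)|=O(\tau^2)$ and hence $W_2^2\leq \mb E|S(X)-T_\rho^{\rho_\tau}(X)|^2=O(\tau^4)$; this is sharper than the previous two because both schemes are first-order consistent with the flow ODE and their leading terms cancel.

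The principal obstacle is the second bound: propagating enough regularity through the nonlinear implicit JKO problem requires expanding $T_\rho^{\rho_\tau}$ and its Jacobian to second order in $\tau$ in tandem with the density change-of-variables formula, and controlling the remainders uniformly in space. This is what forces the hypothesis that the third-order derivatives of $V$ and $\log\rho$ be bounded by $B$, and ultimately produces the polynomial-in-$d$-and-$B$ expressions for the $C_i$ claimed in the last sentence of the lemma.
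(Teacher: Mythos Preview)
Your treatments of $W_2^2(\rho_\tau^{\rm FP},\rho_\tau^L)$ and $W_2^2(\rho_\tau^{\rm ex},\rho_\tau)$ match the paper's arguments essentially line for line: the synchronous Langevin coupling with the It\^o/Gr\"onwall moment bound for the first, and the coupling $(T_\rho^{\rho_\tau^{\rm ex}},T_\rho^{\rho_\tau})_\#\rho$ together with $T_\rho^{\rho_\tau}=\id-\tau\nabla\frac{\delta\m F_{\rm KL}}{\delta\rho}(\rho_\tau)\circ T_\rho^{\rho_\tau}$ for the third. The paper then splits the integrand as $\nabla\log(\rho_\tau/\rho)$ plus a mean-value remainder and uses the change-of-variables formula for $\rho_\tau$, which is exactly the mechanism behind your ``leading terms cancel'' heuristic.

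The real divergence is in $W_2^2(\rho_\tau^{\rm FP},\rho_\tau)$. The paper does \emph{not} Taylor-expand the JKO map and match densities. Instead it invokes an a~priori estimate from the gradient-flow-in-metric-spaces framework (specifically (4.2.10) and Lemma~4.4.1 of Ambrosio--Gigli--Savar\'e), which gives directly
\[
W_2^2(\rho_\tau^{\rm FP},\rho_\tau)\leq \tfrac{\tau^2}{2}\bigl(|\partial\m F_{\rm KL}|^2(\rho)-|\partial\m F_{\rm KL}|^2(\rho_\tau)\bigr),
\]
and then bounds the slope difference by $|\partial|\partial\m F_{\rm KL}|^2|(\rho)\cdot W_2(\rho,\rho_\tau)=O(\tau)$ after computing the first variation of $|\partial\m F_{\rm KL}|^2$ explicitly. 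This is clean because it sidesteps any need to build a transport map between $\rho_\tau^{\rm FP}$ and $\rho_\tau$. Your sketch, by contrast, asserts the existence of a ``residual pushforward map whose squared $L^2(\rho)$ distance from the identity is $O(\tau^3)$'' but does not say what that map is; since the Fokker--Planck evolution is diffusive, there is no natural deterministic flow map on the $\rho_\tau^{\rm FP}$ side to compare with $T_\rho^{\rho_\tau}$, and passing from pointwise density agreement at order $\tau^2$ to a $W_2^2$ bound of order $\tau^3$ is exactly the nontrivial step. Your route can likely be salvaged (e.g.\ by going through $\rho_\tau^L$ or $\rho_\tau^{\rm ex}$ as an intermediary), but as written it is missing the key construction that the paper's metric-slope argument supplies for free.
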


The one step numerical error of order $\m O(\tau^{3/2})$ from the lemma implies a cumulative error over a given time period $T$ to be of order $\m O(T \tau^{1/2})$ by aggregating the errors from $N=T/\tau$ steps if all the quantities in~\eqref{eqn:bounded_quantities} in Appendix~\ref{app:proof_lem:onestep} remain bounded across the intermediate iterates, which unfortunately cannot be proved without making extra restrictive assumptions in general. For example, for the discretized Langevin SDE, without any extra assumptions on potential $V$, applying a classical analysis of the Euler--Maruyama method based on Gronwall's inequality leads to a cumulative error $O(e^T\tau^{1/2})$ that grows exponentially fast in $T$.
However, if we assume the potential $V$ to be $\lambda$-strongly convex for some $\lambda >0$, then a careful analysis leads to a cumulative error of order $\m O(\max\{T^2, \,T\}\,\tau^\frac{1}{2})$, which implies the $\m O(\tau^{3/2})$ bound on $W_2(\rho_\tau^{\rm FP}, \rho_\tau^L)$ in Lemma~\ref{lem: onestep_err} as a special case by taking $T=\tau$.
A similar remark applies to the cumulative error analysis of the JKO-scheme for approximating the solution of the Fokker--Planck equation with an autonomous potential $V$. The key step in the analysis is to derive a discrete evolutionary variational inequality for charaterizing the JKO-scheme (see Theorem~\ref{thm: discrete_langevin_vs_JKO} for further details and~\cite{ambrosio2008gradient} for results regarding general metric spaces). It is worthwhile mentioning that the strongly convexity assumption on $V$ is not needed in deriving the $\m O(\tau^{3/2})$ one-step error bound; however, it implies the long term stability (exponential convergence) of the dynamical system, leading to a better control on the cumulative error.



\end{document}